\newcommand{\+}{\nobreakdash-}
\renewcommand{\:}{\colon}
\newcommand{\rarrow}{\longrightarrow}
\newcommand{\ot}{\otimes}
\newcommand{\bu}{{\text{\smaller\smaller$\scriptstyle\bullet$}}}
\newcommand{\lrarrow}{\mskip.5\thinmuskip\relbar\joinrel\relbar\joinrel
 \rightarrow\mskip.5\thinmuskip\relax}
\newcommand{\llarrow}{\mskip.5\thinmuskip\leftarrow\joinrel\relbar
 \joinrel\relbar\mskip.5\thinmuskip\relax}
\DeclareMathOperator{\Hom}{Hom}
\DeclareMathOperator{\Ext}{Ext}
\DeclareMathOperator{\coker}{coker}
\DeclareMathOperator{\Tot}{Tot}
\newcommand{\Modl}{{\operatorname{\mathsf{--Mod}}}}
\newcommand{\Qcoh}{{\operatorname{\mathsf{--Qcoh}}}}
\newcommand{\Comodl}{{\operatorname{\mathsf{--Comod}}}}
\newcommand{\Contra}{{\operatorname{\mathsf{--Contra}}}}
\newcommand{\sA}{\mathsf A}
\newcommand{\sB}{\mathsf B}
\newcommand{\sC}{\mathsf C}
\newcommand{\sD}{\mathsf D}
\newcommand{\sE}{\mathsf E}
\newcommand{\sF}{\mathsf F}
\newcommand{\sG}{\mathsf G}
\newcommand{\sH}{\mathsf H}
\newcommand{\sK}{\mathsf K}
\newcommand{\sS}{\mathsf S}
\newcommand{\sX}{\mathsf X}
\newcommand{\cA}{\mathcal A}
\newcommand{\cF}{\mathcal F}
\newcommand{\cC}{\mathcal C}
\newcommand{\cO}{\mathcal O}
\newcommand{\cE}{\mathcal E}
\newcommand{\cM}{\mathcal M}
\newcommand{\fR}{\mathfrak R}
\newcommand{\VF}{\mathsf{VF}}
\newcommand{\CA}{\mathsf{CA}}
\newcommand{\Flat}{\mathsf{Flat}}
\newcommand{\Cot}{\mathsf{Cot}}
\newcommand{\Hot}{\mathsf{Hot}}
\newcommand{\Com}{\mathsf{Com}}
\newcommand{\Ac}{\mathsf{Ac}}
\newcommand{\DG}{\mathsf{DG}}
\newcommand{\Fil}{\mathsf{Fil}}
\newcommand{\proj}{\mathsf{proj}}
\newcommand{\inj}{\mathsf{inj}}
\renewcommand{\flat}{\mathsf{flat}}
\renewcommand{\cot}{\mathsf{cot}}
\newcommand{\vfl}{\mathsf{vfl}}
\newcommand{\vflp}{\mathsf{vflp}}
\newcommand{\cta}{\mathsf{cta}}
\newcommand{\bco}{\mathsf{bco}}
\newcommand{\bctr}{\mathsf{bctr}}
\newcommand{\dcot}{{\operatorname{\mathsf{-cot}}}}
\newcommand{\dcta}{{\operatorname{\mathsf{-cta}}}}
\newcommand{\dflpr}{{\operatorname{\mathsf{-flpr}}}}
\newcommand{\dvflp}{{\operatorname{\mathsf{-vflp}}}}
\newcommand{\dproj}{{\operatorname{\mathsf{-proj}}}}
\newcommand{\pdo}{{{\operatorname{\mathsf{pd-}}}1}}
\newcommand{\pdn}{{{\operatorname{\mathsf{pd-}}}n}}
\newcommand{\dpdo}{{{\operatorname{\mathsf{-pd-}}}1}}
\newcommand{\dflat}{{\operatorname{\mathsf{-flat}}}}
\newcommand{\id}{\mathrm{id}}
\newcommand{\boZ}{\mathbb Z}
\newcommand{\Section}[1]{\bigskip\section{#1}\medskip}
\theoremstyle{plain}
\newtheorem{thm}{Theorem}[section]
\newtheorem{prop}[thm]{Proposition}
\newtheorem{lem}[thm]{Lemma}
\newtheorem{cor}[thm]{Corollary}
\newtheorem{conj}[thm]{Conjecture}
\theoremstyle{definition}
\newtheorem{rem}[thm]{Remark}
\newtheorem{ex}[thm]{Example}
\newtheorem{exs}[thm]{Examples}
\begin{document}

\title{Coderived and contraderived categories \\
for a cotorsion pair, flat-type cotorsion pairs, \\
and relative periodicity}

\author{Leonid Positselski}

\address{Institute of Mathematics, Czech Academy of Sciences \\
\v Zitn\'a~25, 115~67 Prague~1 \\ Czech Republic} 

\email{positselski@math.cas.cz}

\begin{abstract}
 Given a hereditary complete cotorsion pair $(\sA,\sB)$ generated by
a set of objects in a Grothendieck category $\sK$, we construct
a natural equivalence between the Becker coderived category of
the left-hand class $\sA$ and the Becker contraderived category of
the right-hand class~$\sB$.
 We show that a nested pair of cotorsion pairs $(\sA_1,\sB_1)\le
(\sA_2,\sB_2)$ provides an adjunction between the related
co/contraderived categories, which is induced by a Quillen adjunction
between abelian model structures.
 Then we specialize to the cotorsion pairs $(\sF,\sC)$ sandwiched
between the projective and the flat cotorsion pairs in a module
category, and prove that the related co/contraderived categories for
$(\sF,\sC)$ are the same as for the projective and flat cotorsion pairs
if and only if two periodicity properties hold for $\sF$ and~$\sC$.
 The same applies to the cotorsion pairs sandwiched between the very
flat and the flat cotorsion pairs in the category of quasi-coherent
sheaves over a quasi-compact semi-separated scheme.
 More generally, we define and discuss cotorsion pairs of the very flat
type and of the flat type in Grothendieck categories (as well as exact
categories of the flat type), and work with a cotorsion pair sandwiched
between one of the very flat type and one of the flat type.
 The motivating examples of the classes of flaprojective modules and
relatively cotorsion modules for a ring homomorphism are discussed,
and periodicity conjectures formulated for them.
\end{abstract}

\maketitle

\tableofcontents

\section*{Introduction}
\medskip

 This paper combines three topics in homological algebra: derived
categories of the second kind, cotorsion pairs, and periodicity
properties.
 Let us explain the meaning of these terms one by one.

\subsection{{}} \label{introd-derived-subsecn}
 Given an abelian category $\sK$ with enough injective objects,
the bounded below derived category $\sD^+(\sK)$ can be equivalently
defined in two ways.
 The derived category $\sD^+(\sK)$ is the Verdier quotient category of
the homotopy category $\Hot^+(\sK)$ by the thick subcategory of
acyclic complexes $\Ac^+(\sK)\subset\Hot^+(\sK)$, and it is also 
equivalent to the homotopy category of the additive category
$\sK^\inj$ of injective objects in~$\sK$,
\begin{equation} \label{bounded-below-derived}
 \sD^+(\sK)=\Hot^+(\sK)/\Ac^+(\sK)\simeq\Hot^+(\sK^\inj).
\end{equation}
 Dually, if there are enough projective objects in $\sK$, then
the bounded above derived category $\sD^-(\sK)$ can be equivalently
defined as the Verdier quotient category of the homotopy category
$\Hot^-(\sK)$ by the thick subcategory of acyclic complexes
$\Ac^-(\sK)\subset\Hot^-(\sK)$, or as the homotopy category of
the additive category $\sK_\proj$ of projective objects in~$\sK$,
\begin{equation} \label{bounded-above-derived}
 \sD^-(\sK)=\Hot^-(\sK)/\Ac^-(\sK)\simeq\Hot^-(\sK_\proj).
\end{equation}
 Both the definitions of the derived category as the triangulated
Verdier quotient category and as the full subcategory of 
injective/projective resolutions in the homotopy category are important
for performing various tasks with derived categories, such as
constructing functors between them.
 Both the equivalences~\eqref{bounded-below-derived}
and~\eqref{bounded-above-derived} also hold for an exact category~$\sK$
(in the sense of Quillen) instead of an abelian one.

 For unbounded complexes, the triangulated equivalences
like~\eqref{bounded-below-derived} and~\eqref{bounded-above-derived}
continue to hold under additional assumptions, the most important and
restrictive of which is that the abelian category~$\sK$ should have 
finite homological dimension.
 However, in the general case, the unbounded versions
of~(\ref{bounded-below-derived}\+-\ref{bounded-above-derived})
fail, as the following thematic example illustrates.
 Let $\Lambda=k[\epsilon]/(\epsilon^2)$ be the algebra of dual numbers
over a field~$k$, and let $K^\bu$ be the complex of $\Lambda$\+modules
\begin{equation} \label{thematic-acyclic-complex-over-dual-numbers}
 \dotsb\overset{\epsilon*}\lrarrow\Lambda\overset{\epsilon*}\lrarrow
 \Lambda\overset{\epsilon*}\lrarrow\Lambda\overset{\epsilon*}\lrarrow
 \dotsb
\end{equation}
 Here $\Lambda$ is the free $\Lambda$\+module with one generator
and $\epsilon*\:\Lambda\rarrow\Lambda$ is the map of multiplication
with~$\epsilon$.
 Then $K^\bu$ is an acyclic, noncontractible complex of
projective-injective objects in the abelian category of modules
$\sK=\Lambda\Modl$.
 So $K^\bu$ represents a nonzero object in the unbounded homotopy
category $\Hot(\sK^\inj)=\Hot(\sK_\proj)$, but a zero object in
the unbounded derived category~$\sD(\sK)$.
 See~\cite[Examples~3.3]{Pkoszul}, \cite[Prologue]{Prel},
or~\cite[Section~7.4]{Pksurv} for a further discussion of
the complex~\eqref{thematic-acyclic-complex-over-dual-numbers}.

 The \emph{coderived category} of an exact category $\sK$ is most simply
defined as the homotopy category of unbounded complexes of injective
objects in~$\sK$.
 Similarly, the \emph{contraderived category} of $\sK$ is the homotopy
category of unbounded complexes of projective objects in~$\sK$.
 In the spirit of the discussion above, we prefer the somewhat more
complicated definitions representing the co/contraderived categories
as Verdier quotient categories of the homotopy category $\Hot(\sK)$
rather than the full subcategories of resolutions.
 Then the related triangulated equivalences become theorems which
can be proved in well-behaved cases:
\begin{alignat}{2}
 \sD^\bco(\sK) &= \Hot(\sK)/\Ac^\bco(\sK) &&\simeq\Hot(\sK^\inj),
 \label{introd-becker-coderived} \\
 \sD^\bctr(\sK) &= \Hot(\sK)/\Ac^\bctr(\sK) &&\simeq\Hot(\sK_\proj).
 \label{introd-becker-contraderived}
\end{alignat}

 Let us explain the notation in~(\ref{introd-becker-coderived}\+-%
\ref{introd-becker-contraderived}) and related terminology.
 There are two approaches to the co/contraderived categories:
the one named after the present author (as in~\cite{Psemi,Pkoszul,EP})
and the one named after H.~Becker~\cite{Bec}.
 The latter approach, going back to the papers~\cite{Jor,Kra,Neem},
was further developed in~\cite{Sto2,PS4,PS5}.
 In this paper, we consider the coderived and contraderived
categories in the sense of Becker.
 The letter~``$\mathsf{b}$'' in the superindices $\bco$ and $\bctr$
above stands for Becker.

 The thick subcategory of \emph{Becker-coacyclic complexes}
$\Ac^\bco(\sK)\subset\Hot(\sK)$ consists of all complexes $A^\bu$
in $\sK$ such that, for every complex of injective objects
$J^\bu\in\Hot(\sK^\inj)$, all morphisms of complexes
$A^\bu\rarrow J^\bu$ are homotopic to zero.
 Dually, the thick subcategory of \emph{Becker-contraacyclic complexes}
$\Ac^\bctr(\sK)\subset\Hot(\sK)$ consists of all complexes $B^\bu$
in $\sK$ such that, for every complex of projective objects
$P^\bu\in\Hot(\sK_\proj)$, all morphisms of complexes
$P^\bu\rarrow B^\bu$ are homotopic to zero.
 The \emph{Becker coderived category} and the \emph{Becker contraderived
category} of $\sK$ are constructed as the triangulated Verdier
quotient categories $\sD^\bco(\sK)=\Hot(\sK)/\Ac^\bco(\sK)$ and
$\sD^\bctr(\sK)=\Hot(\sK)/\Ac^\bctr(\sK)$.
 The coderived and contraderived categories are collectively referred
to as the \emph{derived categories of the second kind}.
 We refer to~\cite[Remark~9.2]{PS4} and~\cite[Section~7]{Pksurv}
for discussions of the history and philosophy of derived categories
of the second kind.

\subsection{{}}
 A \emph{cotorsion pair} in an abelian category $\sK$ can be
defined informally as a pair of classes of ``somewhat projective'' and
``somewhat injective'' objects that fit together in a suitable sense.
 More precisely, a pair of classes of objects $\sA$ and $\sB\subset\sK$
is said to be a cotorsion pair if $\Ext^1_\sK(A,B)=0$ for all objects
$A\in\sA$ and $B\in\sB$, and both the classes $\sA$ and $\sB$ are
maximal with this property with respect to each other.
 For example, $\sA=\sK$ and $\sB=\sK^\inj$ is a cotorsion pair in $\sK$,
called the \emph{injective cotorsion pair}.
 Dually, $\sA=\sK_\proj$ and $\sB=\sK$ is also a cotorsion pair in
$\sK$, called the \emph{projective cotorsion pair}.
 These are two trivial examples of cotorsion pairs.

 A nontrivial, and thematic, example of a cotorsion pair is the pair
of classes (flat modules, cotorsion modules) in the abelian category
$\sK=R\Modl$ of modules over an associative ring~$R$.
 Here a left $R$\+module $C$ is said to be \emph{cotorsion}
(in the sense of Enochs~\cite{En2}) if $\Ext^1_R(F,C)=0$ for all flat
left $R$\+modules~$F$.
 We denote the class of flat $R$\+modules by $R\Modl_\flat\subset
R\Modl$ and the class of cotorsion $R$\+modules by
$R\Modl^\cot\subset R\Modl$.
 The pair of classes $\sA=R\Modl_\flat$ and $\sB=R\Modl^\cot$ is called
the \emph{flat cotorsion pair} in $R\Modl$.

 The notion of a cotorsion pair (originally called ``cotorsion
theory'') was introduced by Salce~\cite{Sal}.
 The main result about cotorsion pairs, defining the modern shape of
the theory, is the theorem of Eklof and Trlifaj~\cite[Theorems~2
and~10]{ET} claiming that the cotorsion pair in $R\Modl$
generated by any \emph{set} (rather than a proper class)
of $R$\+modules is \emph{complete}.
 Let us explain what this means.

 In the context of injective or projective objects in an abelian
category, it is important to have enough of them.
 This was already mentioned in Section~\ref{introd-derived-subsecn}.
 What does it mean that there are enough objects in the classes $\sA$
and $\sB$ for a cotorsion pair $(\sA,\sB)$ in an abelian
category~$\sK$\,?
 The definition of a \emph{complete cotorsion pair} answers this
question, and the answer is unexpected and nontrivial.
 A cotorsion pair $(\sA,\sB)$ in $\sK$ is said to be complete if,
for every object $K\in\sK$, there exist short exact sequences
\begin{align*}
 & 0\rarrow B'\rarrow A\rarrow K\rarrow 0, \\
 & 0\rarrow K\rarrow B\rarrow A'\rarrow 0
\end{align*}
in $\sK$ with objects $A$, $A'\in\sA$ and $B$, $B'\in\sB$.

 A cotorsion pair $(\sA,\sB)$ in $\sK$ is said to be \emph{generated
by} a class of objects $\sS\subset\sA$ if, for any object $B\in\sK$,
one has $B\in\sB$ if and only if $\Ext^1_\sK(S,B)=0$ for all $S\in\sS$.
 In other words, $(\sA,\sB)$ is generated by $\sS$ if $\sA$ is
the (obviously unique) minimal left-hand class of a cotorsion pair
in $\sK$ such that $\sS\subset\sA$.
 In particular, the flat cotorsion pair in $R\Modl$ is generated
by the set of all flat $R$\+modules of the cardinality not exceeding
$\aleph_0$ plus the cardinality of~$R$.
 It follows that the flat cotorsion pair in $R\Modl$ is complete.
 These are the results of the paper~\cite[Section~2]{BBE}.

 A complete cotorsion pair $(\sA,\sB)$ in $\sK$ is said to be
\emph{hereditary} if $\Ext^n_\sK(A,B)=0$ for all $A\in\sA$, \,$B\in\sB$,
and $n\ge1$.
 Equivalently, $(\sA,\sB)$ is hereditary if and only if the class
$\sA$ is closed under kernels of epimorphisms in $\sK$, and if and only
if the class $\sB$ is closed under cokernels of monomorphisms in~$\sK$.
 It is clear from these definitions that (assuming existence of enough
injective/projective objects in~$\sK$) the injective and projective
cotorsion pairs in $\sK$ are hereditary, and so is the flat cotorsion
pair in $R\Modl$.
 We refer to the introduction to the paper~\cite{Pctrl} for a further
introductory discussion of cotorsion pairs.

\subsection{{}} \label{introd-periodicity-subsecn}
 \emph{Periodicity theorems} are a class of results mitigating
the counterexample~\eqref{thematic-acyclic-complex-over-dual-numbers}.
 In particular, the \emph{flat and projective periodicity theorem}
of Benson--Goodearl~\cite[Theorem~2.5]{BG} (rediscovered by
Neeman~\cite[Remark~2.15]{Neem}) can be restated by saying that
in any acyclic complex of projective modules with flat modules of
cocycles, the modules of cocycles are actually projective.
 The \emph{cotorsion periodicity theorem} of Bazzoni,
Cort\'es-Izurdiaga, and Estrada~\cite[Theorem~1.2(2),
Proposition~4.8(2), or Theorem~5.1(2)]{BCE} claims that in any
acyclic complex of cotorsion modules, the modules of cocycles
are cotorsion.
 A survey of the most important periodicity theorems for modules over
associative rings can be found in the introduction to
the paper~\cite{BHP}.

 In this paper, we are interested in certain relative versions of
periodicity properties.
 The word ``relative'' means that the classes of modules involved
arise in the context of a ring homomorphism $R\rarrow A$.
 Following~\cite[Section~1.6]{Pdomc} and~\cite{PBas}, we say
that a left $A$\+module $F$ is \emph{$A/R$\+flaprojective} if
$\Ext^1_A(F,C)=0$ for every left $A$\+module $C$ that is \emph{cotorsion
as a left $R$\+module}.
 Assuming that the left $R$\+module $A$ is projective, we also say
that a left $A$\+module $C$ is \emph{$A/R$\+cotorsion} if
$\Ext^1_A(F,C)=0$ for every flat left $A$\+module $F$ that is
\emph{projective as a left $R$\+module}.
 The aim of this paper is to discuss the following two conjectures.

\begin{conj} \label{flaprojective-conjecture}
 Let $R\rarrow A$ be a homomorphism of associative rings, and let
$F^\bu$ be a complex of $A/R$\+flaprojective left $A$\+modules.
 Assume that the complex $F^\bu$ is acyclic with flat $A$\+modules
of cocycles.
 Then the $A$\+modules of cocycles of $F^\bu$ are actually
$A/R$\+flaprojective.
\end{conj}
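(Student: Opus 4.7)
The plan is to reduce the conjecture to Neeman's pure-acyclicity theorem for acyclic complexes of flat modules, combined with a double complex argument based on pure-injective coresolutions.

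First I would establish two preliminary facts in the natural setting where $A$ is flat (for instance, projective) as a left $R$-module. (a)~The class $\sF_{A/R}$ is contained in $A\Modl_\flat$: the change-of-rings adjunction gives $\Ext^1_A(A\otimes_R G, C')\cong\Ext^1_R(G, C'|_R)$ for any left $A$-module $C'$ and any left $R$-module $G$, and specializing to $A$-cotorsion modules $C'$ shows that every $A$-cotorsion module is $R$-cotorsion, whence $\sC_{A/R}$ contains all $A$-cotorsion modules and dually $\sF_{A/R}\subseteq A\Modl_\flat$. (b)~The cotorsion pair $(\sF_{A/R},\sC_{A/R})$ is hereditary, by the same adjunction in higher degrees combined with the Eklof lemma applied to the generating set $\{A\otimes_R G : G \in R\Modl_\flat\}$. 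Consequently $F^\bu$ is an acyclic complex of flat $A$-modules with flat cocycles, so by Neeman's theorem it is pure-acyclic in $A\Modl$.

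For an arbitrary $R$-cotorsion $A$-module $C$, I construct iteratively a coresolution $0\to C\to C_0\to C_1\to \dotsb$ in which each $C_i$ is both pure-injective in $A\Modl$ and $R$-cotorsion as an $A$-module. Begin with $C_0 = C^{++}$, the double character module: this is pure-injective in $A\Modl$, while as an $R$-module it coincides with $(C|_R)^{++}$, which is pure-injective over $R$, hence cotorsion over~$R$, so $C^{++}$ is $R$-cotorsion as an $A$-module. The canonical inclusion $C\hookrightarrow C^{++}$ is pure in $A\Modl$ and a~fortiori in $R\Modl$, so by heredity of the flat cotorsion pair over~$R$ the cokernel $K_1$ remains $R$-cotorsion as an $A$-module. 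Iterating yields the coresolution.

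Now form the bicomplex $X^{p,q} = \Hom_A(F^p, C_q)$, with $p\in\boZ$ and $q\ge 0$. Each row $\Hom_A(F^\bu, C_q)$ is acyclic by the standard characterization of pure-acyclic complexes via pure-injective targets. Each column $\Hom_A(F^p, C_\bu)$ has cohomology $\Hom_A(F^p, C)$ concentrated in degree~$0$, because $\Ext^k_A(F^p, C_q) = 0$ for $k\ge 1$ by heredity (with $F^p\in\sF_{A/R}$ and $C_q\in\sC_{A/R}$). Comparing the two spectral sequences---working with $\Tot^\Pi$ to accommodate the unbounded $p$-direction---shows that $\Hom_A(F^\bu, C)$ is acyclic. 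A direct unwinding using the short exact sequences $0\to Z^k\to F^k\to Z^{k+1}\to 0$ together with the vanishing $\Ext^1_A(F^k, C)=0$ identifies $H^n(\Hom_A(F^\bu, C))$ with $\Ext^1_A(Z^m, C)$ (for $m$ a suitable shift of $n$), whence $\Ext^1_A(Z^n, C) = 0$ for all $n$ and every $R$-cotorsion $A$-module~$C$, which is exactly $A/R$-flaprojectivity of the cocycles.

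The principal obstacle is the convergence of the double-complex spectral sequences when $F^\bu$ is unbounded in both directions; this should be handled via $\Tot^\Pi$ and a careful argument with complete Hausdorff filtrations, or failing that, by a truncation-and-limit technique. A secondary difficulty is checking that the iterative pure-injective coresolution really stays within $\sC_{A/R}$ at every stage---the argument rests essentially on heredity of the flat cotorsion pair over~$R$ and on the fact that pure exactness in $A\Modl$ descends to pure exactness in $R\Modl$.
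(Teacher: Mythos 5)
The statement you are attacking is Conjecture~\ref{flaprojective-conjecture}, which the paper states as an \emph{open problem}: the paper only proves equivalent reformulations of it (Corollary~\ref{flaprojective-conjecture-corollary}) and verifies it under extra finiteness hypotheses (Example~\ref{Noetherian-finite-Krull-dim-example}). Your preliminary steps are fine: flaprojective modules are flat over $A$, so $F^\bu$ is pure-acyclic; the double-character coresolution $0\to C\to C_0\to C_1\to\dotsb$ has $A$-pure-injective terms and $R$-cotorsion syzygies; each row $\Hom_A(F^\bu,C_q)$ is acyclic and each column of $X^{p,q}=\Hom_A(F^p,C_q)$ has cohomology $\Hom_A(F^p,C)$ concentrated in degree~$0$; and the final unwinding ($\Hom_A(F^\bu,C)$ acyclic $\Leftrightarrow$ $\Ext^1_A(Z^n,C)=0$ for all~$n$) is correct. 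The genuine gap is exactly the step you flag as the ``principal obstacle'', and it is not a technicality one can expect to fix by ``careful filtration arguments''. With $\Tot^\Pi$, exactness of the rows together with the vanishing of the bicomplex for $q<0$ does give acyclicity of the total complex (the inductive construction of a preimage proceeds in the direction of increasing~$q$ and starts at $q=0$). But the other half of the comparison --- identifying $H^*(\Tot^\Pi(X))$ with $H^*(\Hom_A(F^\bu,C))$ --- amounts to proving that the augmented bicomplex, whose columns are exact, has acyclic product total complex. That is precisely the excluded case of the acyclic assembly lemma: the columns are bounded below in~$q$ while the vertical differential increases~$q$, so the induction would have to run in the direction of decreasing~$q$ along antidiagonals that are infinite towards $p\to-\infty$, $q\to+\infty$, and it has no starting point; equivalently, the column ($p$-)filtration of $\Tot^\Pi$ is not exhaustive, so its spectral sequence need not converge to $H^*(\Tot^\Pi)$. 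Switching to $\Tot^\oplus$ reverses the difficulty: then the column comparison converges, but the row-wise contraction (which produces elements with infinitely many nonzero components) no longer lands in the direct sum. No single choice of totalization makes both halves of your argument work when $F^\bu$ is unbounded and the coresolution is infinite.

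The proposed repairs do not close this gap. Truncating the coresolution at a finite stage $N$ replaces the last term by the syzygy $K_N$, which is $R$-cotorsion but not pure-injective, so acyclicity of the row $\Hom_A(F^\bu,K_N)$ is unknown --- indeed it is equivalent to $\Ext^1_A(Z^n,K_N)=0$ for all~$n$, an instance of the conjecture itself; truncating $F^\bu$ destroys either acyclicity or the flaprojectivity of the terms and cocycles. ``Complete Hausdorff filtration'' arguments only apply to the $q$-filtration, which you already have. Your scheme does become correct whenever one of the two directions is effectively finite --- e.g.\ when $C$ admits a finite coresolution of the required kind, or when flaprojective modules have uniformly bounded projective dimension --- but these are exactly the cases already covered by Example~\ref{Noetherian-finite-Krull-dim-example}, not the general conjecture. (A minor additional point: you assume $A$ flat over $R$, which is not a hypothesis of Conjecture~\ref{flaprojective-conjecture}; the facts you need in that step hold for arbitrary ring homomorphisms, so this only restricts the scope rather than creating an error.)
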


\begin{conj} \label{relatively-cotorsion-conjecture}
 Let $R\rarrow A$ be a homomorphism of associative rings such that
$A$ is a projective left $R$\+module.
 Let $C^\bu$ be an acyclic complex of $A/R$\+cotorsion left
$A$\+modules.
 Then the $A$\+modules of cocycles of the complex $C^\bu$ are
also $A/R$\+co\-tor\-sion.
\end{conj}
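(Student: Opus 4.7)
The plan is to model the argument on the cotorsion periodicity theorem of Bazzoni--Cort\'es-Izurdiaga--Estrada~\cite[Theorem~1.2(2)]{BCE}, adapted to the relative setting of the ring homomorphism $R\rarrow A$. First I would introduce $\sA$, the class of flat left $A$-modules that are projective as left $R$-modules, and $\sB$, the class of $A/R$-cotorsion left $A$-modules. Since $A$ is assumed projective as a left $R$-module, every projective $A$-module is projective over $R$, so $A\Modl_\proj\subseteq\sA\subseteq A\Modl_\flat$, and dually $A\Modl^\cot\subseteq\sB$. Thus $(\sA,\sB)$ is a cotorsion pair in $A\Modl$ sandwiched between the projective and flat cotorsion pairs, which is exactly the type of sandwich situation that this paper specializes to.

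Before addressing periodicity I would verify that $(\sA,\sB)$ is hereditary and complete. Hereditariness should follow by combining the long exact sequence for $\Tor^A$ (to see that $\sA$ is closed under kernels of epimorphisms between its objects in $A\Modl$) with the fact that an $R$-submodule of an $R$-projective module with $R$-projective cokernel is again $R$-projective. Completeness should follow by adapting the Bican--El~Bashir--Enochs cardinality argument~\cite{BBE}: every module in $\sA$ ought to be filtered by submodules in $\sA$ of cardinality bounded in terms of $|R|$ and $|A|$, so that the Eklof--Trlifaj theorem~\cite{ET} produces a generating set.

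The core of the argument would then run in parallel with~\cite{BCE}. Given an acyclic $C^\bu$ with $C^n\in\sB$, a cocycle $Z^n$, and $F\in\sA$, the goal is to show $\Ext^1_A(F,Z^n)=0$. One would build a resolution $P_\bu\rarrow F$ by objects of~$\sA$ (for example by free $A$-modules, since $A$ is $R$-projective), form the total complex of the bicomplex $\Hom_A(P_\bu,C^\bu)$, and extract the vanishing from its two spectral sequences together with the hereditariness of $(\sA,\sB)$ and the acyclicity of $C^\bu$. The iff theorem of the paper then reinterprets the conclusion as an equivalence of the $\bco$/$\bctr$ categories for $(\sA,\sB)$ with those for the projective and flat cotorsion pairs.

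The main obstacle is that $\sA$ is defined as an intersection of two distinct flatness-type conditions, whereas the original Bazzoni--Cort\'es-Izurdiaga--Estrada argument exploits the purity and filtration properties of the full class of flat modules in a quite essential way. Establishing that $(\sA,\sB)$ is a cotorsion pair of flat type in the sense developed in this paper---equivalently, that $\sA$ is deconstructible compatibly with the $R$-projectivity constraint---is where the genuine difficulty lies, and is the reason the statement is formulated only as a conjecture rather than proved outright.
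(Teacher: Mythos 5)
The statement you are trying to prove is stated in the paper as a \emph{conjecture} (Conjecture~\ref{relatively-cotorsion-conjecture}, restated as the Relatively Cotorsion Conjecture at the end of the last section): the paper offers no proof of it, only a list of equivalent reformulations (Corollary~\ref{relatively-cotorsion-conjecture-corollary}, obtained from Theorem~\ref{sandwiched-equivalent-conditions-theorem}). So a complete proof is not to be expected, and indeed your proposal does not give one; the useful thing to assess is whether your sketch isolates the right difficulty, and here it goes astray. The preliminary facts you propose to verify are not where the problem lies: that $(\sA,\sB)=(A\Modl_\flat^{R\dproj},\>A\Modl^{A/R\dcot})$ is a hereditary complete cotorsion pair generated by a set is proved in the paper (Proposition~\ref{relatively-cotorsion-cotorsion-pair-prop}), with deconstructibility of $\sA$ obtained from Kaplansky's theorem (for $R$\+projectivity), the Bican--El~Bashir--Enochs result (for flatness), and the fact that an intersection of deconstructible classes is deconstructible. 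Your closing diagnosis, that the difficulty is ``that $\sA$ is deconstructible compatibly with the $R$\+projectivity constraint,'' is therefore wrong: deconstructibility is known, and being of the flat type is strictly stronger --- by Corollary~\ref{flat-type-definition-cor} it amounts to the two periodicity properties, of which the flat/projective-type one (condition~(10a) of Theorem~\ref{sandwiched-equivalent-conditions-theorem}) does hold here by Benson--Goodearl/Neeman applied over $R$ (Lemma~\ref{R-projective-A-flat-periodicity-lemma}), while the other one (10b) \emph{is} the conjecture. So the plan ``establish that $(\sA,\sB)$ is of flat type, then apply the iff theorem'' is circular: carrying it out requires exactly the statement to be proved.

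The concrete step in your core argument also fails. For an unbounded acyclic complex $C^\bu$ and a projective (or $\sA$\+)resolution $P_\bu\rarrow F$, the bicomplex $\Hom_A(P_\bu,C^\bu)$ has no convergence of its two spectral sequences in general, and this is precisely the obstruction that makes periodicity theorems nontrivial --- the dual-numbers complex~\eqref{thematic-acyclic-complex-over-dual-numbers} shows that ``acyclic, terms in the right class, hence $\Hom$ into it is acyclic'' is false without further input. The Bazzoni--Cort\'es-Izurdiaga--Estrada proof of cotorsion periodicity does not run through such a spectral sequence; it uses special preenvelopes in the category of complexes (as in Theorem~\ref{cotorsion-periodity-type-equivalent-conditions}\,(4)\,$\Rightarrow$\,(3)) together with properties specific to the full class of flat modules (purity, filtrations by countably presented flat modules), and these inputs are exactly what is missing for the intersection class $\sA=A\Modl^{R\dproj}\cap A\Modl_\flat$. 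Identifying a substitute for that input is the open problem, and your sketch does not supply one.
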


 Conjecture~\ref{flaprojective-conjecture} is relevant in the context
of the discussion of the \emph{contraderived categories of
$X$\+cotorsion contraherent $\mathcal A$\+modules over quasi-compact
semi-separated schemes~$X$} in the paper~\cite{Pdomc}.
 That is where our original motivation for writing the present paper
comes from.
 We state Conjecture~\ref{relatively-cotorsion-conjecture} in
this paper mostly for the purposes of comparison with
Conjecture~\ref{flaprojective-conjecture}.

 In order to give some context and present equivalent reformulations
of Conjectures~\ref{flaprojective-conjecture}
and~\ref{relatively-cotorsion-conjecture}, we discuss the general
setting of a hereditary complete cotorsion pair $(\sF,\sC)$ in
$A\Modl$ that is sandwiched between the projective and flat cotorsion
pairs, i.~e., $A\Modl_\proj\subset\sF\subset A\Modl_\flat$.
 We also consider cotorsion pairs $(\sF,\sC)$ in the abelian category
of quasi-coherent sheaves $X\Qcoh$ on a quasi-compact semi-separated
scheme $X$ that are similarly sandwiched between the \emph{very flat}
and the flat cotorsion pair in $X\Qcoh$.

\subsection{{}}
 Now let us describe the main results of this paper.
 Let $\sK$ be a Grothendieck category and $(\sA,\sB)$ be a hereditary
complete cotorsion pair generated by a set of objects in~$\sK$.
 We start with discussing certain hereditary complete cotorsion pairs
related to $(\sA,\sB)$ in the Grothendieck abelian category of
complexes $\Com(\sA)$.
 Using these cotorsion pairs, we prove that the triangulated
equivalence~\eqref{introd-becker-coderived} holds for the exact
category~$\sA$,
$$
 \Hot(\sA^\inj)\simeq\Hot(\sA)/\Ac^\bco(\sA)=\sD^\bco(\sA),
$$
while the triangulated equivalence~\eqref{introd-becker-contraderived}
holds for the exact category~$\sB$,
$$
 \Hot(\sB_\proj)\simeq\Hot(\sB)/\Ac^\bctr(\sB)=\sD^\bctr(\sB).
$$
 In fact, one has $\sA^\inj=\sA\cap\sB=\sB_\proj$, so we obtain
natural triangulated equivalences
\begin{equation} \label{introd-co-contra-equiv}
 \sD^\bco(\sA)\simeq\Hot(\sA\cap\sB)\simeq\sD^\bctr(\sB).
\end{equation}

 We also show that every Becker-coacyclic complex in $\sA$ is acyclic
in~$\sA$ (i.~e., it is an acyclic complex in $\sK$ with the objects of
cocycles belonging to~$\sA$).
 Similarly, every Becker-contraacyclic complex in $\sB$ is acyclic
in~$\sB$ (once again, this means that the complex is acyclic in $\sK$
with the objects of cocycles belonging to~$\sB$).
 So we have
$$
 \Ac^\bco(\sA)\subset\Ac(\sA)
 \quad\textup{and}\quad
 \Ac^\bctr(\sB)\subset\Ac(\sB).
$$
 It is an open problem whether these properties hold for exact
categories with enough injective/projective objects in general;
see~\cite[Lemma~B.7.3 and Remark~B.7.4]{Pcosh}.

 Secondly, we consider a nested pair of cotorsion pairs $(\sA_1,\sB_1)$
and $(\sA_2,\sB_2)$ in~$\sK$.
 The word ``nested'' means that $\sA_1\subset\sA_2$, or equivalently,
$\sB_1\supset\sB_2$.
 In this context, we prove that the inclusions of exact categories
$\sA_1\rarrow\sA_2$ and $\sB_2\rarrow\sB_1$ induce well-defined
triangulated functors between the coderived/contraderived categories
$$
 \sD^\bco(\sA_1)\lrarrow\sD^\bco(\sA_2)
 \quad\text{and}\quad
 \sD^\bctr(\sB_2)\lrarrow\sD^\bctr(\sB_1).
$$
 In other words, this means that the inclusions
$\Ac^\bco(\sA_1)\subset\Ac^\bco(\sA_2)$ and $\Ac^\bctr(\sB_2)\subset
\Ac^\bctr(\sB_1)$ hold in $\Hot(\sK)$.
 Generally speaking, it may be difficult to prove the preservation
of Becker co/contraacyclicity by a functor, even if the functor
is exact and preserves infinite direct sums/direct products;
see~\cite[Lemma~B.7.5]{Pcosh} and~\cite[Lemmas~7.16\+-7.17,
Sections~7.9 and~8, and Remarks~11.18 and~11.29]{Pdomc}.

 Using the equivalences~\eqref{introd-co-contra-equiv}, we obtain
a pair of triangulated functors in the opposite directions
$$
 \xymatrix{
  \Hot(\sA_1\cap\sB_1) \ar@<2pt>[r] &
  \Hot(\sA_2\cap\sB_2) \ar@<2pt>[l]
 }
$$
 We prove that the functor $\sD^\bco(\sA_1)\rarrow\sD^\bco(\sA_2)$ is 
left adjoint to the functor $\sD^\bctr(\sB_2)\rarrow\sD^\bctr(\sB_1)$.

\subsection{{}}
 The more special classes of cotorsion pairs studied in this paper are
the cotorsion pairs \emph{of the very flat type} and \emph{of the flat
type} in Grothendieck categories~$\sK$.
 We say that a hereditary complete cotorsion pair $(\VF,\CA)$ generated
by a set of objects in $\sK$ is \emph{of the very flat type} if all
the objects of $\VF$ have finite projective dimensions in~$\sK$.
 Given a cotorsion pair $(\VF,\CA)$ and a hereditary complete cotorsion
pair $(\Flat,\Cot)$ generated by a set of objects in $\sK$ such that
$\VF\subset\Flat$, we say that the cotorsion pair $(\Flat,\Cot)$ is
\emph{of the flat type} if the exact category $\Flat\cap\CA\subset\sK$
is of the flat type.

 Let us explain what the latter condition means.
 Given an exact category $\sE$ with enough injective and projective
objects, we say that $\sE$ is \emph{weakly of the flat type} if
the classes of acyclic, Becker-coacyclic, and Becker-contraacyclic
complexes coincide in~$\sE$, that is
$$
 \Ac^\bco(\sE)=\Ac(\sE)=\Ac^\bctr(\sE).
$$
 An exact category $\sE$ is said to be \emph{of the flat type} if,
in addition to the previous conditions, the Becker-coderived and
the Becker-contraderived categories of $\sE$ are ``well-behaved''
in the sense that the triangulated
equivalences~(\ref{introd-becker-coderived}\+-%
\ref{introd-becker-contraderived}) hold for~$\sE$.

 All exact categories of finite homological dimension having enough
projective and injective objects are of the flat
type~\cite[Proposition~7.5]{Pphil}.
 Surprisingly, however, there are exact categories of much more
general and complicated nature that are still of the flat type.
 For example, the exact category $\sE=R\Modl_\flat$ of flat modules
over an arbitrary associative ring $R$ is of the flat
type~\cite[Theorems~7.14 and~7.18]{Pphil}.
 This result is closely related to the two periodicity theorems
discussed in Section~\ref{introd-periodicity-subsecn}.

 Specifically, the assertion that the classes of acyclic and
Becker-contraacyclic complexes in the exact category $R\Modl_\flat$
coincide is essentially a restatement of Neeman's flat and projective
module theorem~\cite[Theorem~8.6\,(i)\,$\Leftrightarrow$\,(3)]{Neem},
which is a strong version of the flat and projective periodicity
theorem (as explained in~\cite[Remark~2.15]{Neem}).
 The assertion that the classes of acyclic and Becker-coacyclic
complexes in $R\Modl_\flat$ coincide is equivalent to the cotorsion
periodicity theorem of Bazzoni, Cort\'es-Izurdiaga, and
Estrada~\cite{BCE} according to the quite general result which
we prove in the present paper, see
Theorem~\ref{cotorsion-periodity-type-equivalent-conditions}.
{\uchyph=0\par}

 Returning to cotorsion pairs of the very flat and flat type in
Grothendieck categories, there are two main intended examples
in the present paper.
 One example is the module category $\sK=A\Modl$ for an arbitrary
associative ring $\sA$, when we take $\VF=A\Modl_\proj$ and
$\CA=A\Modl$, and also $\Flat=A\Modl_\flat$ and $\Cot=A\Modl^\cot$.

 The other example is the category of quasi-coherent sheaves
$\sK=X\Qcoh$ for a quasi-compact semi-separated scheme~$X$.
 In this case, we take $\VF=X\Qcoh_\vfl$ to be the class of
\emph{very flat} quasi-coherent sheaves on~$X$
\,\cite[Section~1.10]{Pcosh}, \cite[Section~0.5]{PSl1} and
$\CA=X\Qcoh^\cta$ to be the class of \emph{contraadjusted}
quasi-coherent sheaves on~$X$
\,\cite[Sections~2.5 and~4.1]{Pcosh}, \cite[Section~3.5]{PRoos}.
 We also take $\Flat=X\Qcoh_\flat$ to be the class of flat
quasi-coherent sheaves on~$X$, and $\Cot=X\Qcoh^\cot$ to be the class
of cotorsion quasi-coherent sheaves on~$X$ \,\cite{EE}.

 The assertion that the exact category of flat contraadjusted
quasi-coherent sheaves $\sE=X\Qcoh_\flat^\cta$ on a quasi-compact
semi-separated scheme~$X$ is of the flat type can be found in
the book manuscript~\cite[Corollary~5.5.11]{Pcosh}.

 Given a pair of cotorsion pairs $(\VF,\CA)$ of the very flat type
and $(\Flat,\Cot)$ of the flat type such that $\VF\subset\Flat$,
we prove several triangulated equivalences of
the derived/coderived/contraderived categories that were previously
known to hold for flat modules and flat quasi-coherent sheaves.
 Specifically,
\begin{itemize}
\item the classes of acyclic and Becker-coacyclic complexes in
the exact category $\Flat$ coincide, so $\sD^\bco(\Flat)=\sD(\Flat)$;
\item the inclusion functors $\Cot\rarrow\CA\rarrow\sK$ induce
triangulated equivalences of the conventional derived categories
$$
 \sD(\Cot)\simeq\sD(\CA)\simeq\sD(\sK);
$$
\item the inclusion functor $\VF\rarrow\Flat$ induces a triangulated
equivalence of the conventional derived categories
$$
 \sD(\VF)\simeq\sD(\Flat),
$$
which can be also interpreted as a triangulated equivalence of
the Becker coderived categories
$$
 \sD^\bco(\VF)\simeq\sD^\bco(\Flat);
$$
\item the inclusion functor $\Cot\rarrow\CA$ induces a triangulated
equivalence of the Becker contraderived categories
$$
 \sD^\bctr(\Cot)\simeq\sD^\bctr(\CA).
$$
\end{itemize}
 These are the results of our
Theorems~\ref{flat-type-conventional-derived-equivalence-theorem}
and~\ref{flat-type-contraderived-equivalences-theorem}.

\subsection{{}}
 Specializing the discussion of cotorsion pairs $(\sA,\sB)$ above to
the case of cotorsion pairs $(\sF,\sC)$ sandwiched between
a very flat-type and a flat-type cotorsion pair, we consider
a Grothendieck category $\sK$ with three cotorsion pairs $(\VF,\CA)$,
\,$(\sF,\sC)$, and $(\Flat,\Cot)$.
 Here the cotorsion pairs $(\VF,\CA)$ and $(\Flat,\Cot)$ are presumed
to be in some sense well understood, while the intermediate cotorsion
pair $(\sF,\sC)$ is the intended object of study.

 To repeat, we make the following assumptions:
\begin{enumerate}
\renewcommand{\theenumi}{\roman{enumi}}
\item each of the three cotorsion pairs $(\VF,\CA)$, \,$(\sF,\sC)$,
and $(\Flat,\Cot)$ is hereditary, complete, and generated by a set
of objects;
\item one has $\VF\subset\sF\subset\Flat$, or equivalently,
$\CA\supset\sC\supset\Cot$;
\item all the objects of $\VF$ have finite projective dimensions
in~$\sK$ (bounded by a fixed constant);
\item for any given acyclic complex $F^\bu$ in\/ $\sK$ with the terms 
belonging to\/ $\Flat\cap\CA$, the following two conditions are
equivalent: the objects of cocycles of $F^\bu$ belong to\/ $\Flat$
if and only if, for every complex $P^\bu$ in\/ $\sK$ with the terms
belonging to\/ $\VF$, every morphism of complexes $P^\bu\rarrow F^\bu$
is homotopic to zero;
\item in any acyclic complex in\/ $\sK$ with the terms belonging to\/
$\Cot$, the objects of cocycles also belong to\/~$\Cot$.
\end{enumerate}
 These assumptions are known to hold for the cotorsion pairs
$(\VF,\CA)$ and $(\Flat,\Cot)$ in the two examples above.
 In particular, condition~(iv) is a ``strong version of the flat and
very flat periodicity property for $\Flat$ and $\VF$\,'' generalizing
the strong version of the flat and projective periodicity
theorem~\cite[Theorem~8.6\,(iii)\,$\Leftrightarrow$\,(i)]{Neem}.
 Condition~(v) is a ``cotorsion periodicity property for~$\Cot$''
similar to the cotorsion periodicity theorem~\cite[Theorem~5.1(2)]{BCE}.

 Condition~(iii) says that the cotorsion pair $(\VF,\CA)$ is of
the very flat type.
 Conditions~(iv) and~(v), taken together, are equivalent to saying
that the cotorsion pair $(\Flat,\Cot)$ is of the flat type.

 Under the assumptions listed above, we construct a \emph{recollement}
involving various derived, coderived, and contraderived categories
associated with the three cotorsion pairs $(\VF,\CA)$,
\,$(\sF,\sC)$, and $(\Flat,\Cot)$.
 See Theorem~\ref{sandwiched-recollement-theorem}.

 We also prove the following result (the full version of this
theorem, with two additional equivalent conditions, is
Theorem~\ref{sandwiched-equivalent-conditions-theorem}).

\begin{thm} \label{introd-main-theorem}
 The following eight conditions are equivalent:
\begin{enumerate}
\item the cotorsion pair $(\sF,\sC)$ is of the flat type;
\item the triangulated functor\/ $\sD^\bco(\VF)\rarrow\sD^\bco(\sF)$
induced by the inclusion\/ $\VF\rarrow\sF$ is a triangulated
equivalence;
\item the triangulated functor\/ $\sD^\bco(\sF)\rarrow\sD^\bco(\Flat)$
induced by the inclusion\/ $\sF\rarrow\Flat$ is a triangulated
equivalence;
\item any complex in\/ $\sF$ that is Becker-coacyclic in\/ $\Flat$ is
also Becker-coacyclic in\/~$\sF$;
\item the triangulated functor\/ $\sD^\bctr(\Cot)\rarrow\sD^\bctr(\sC)$
induced by the inclusion\/ $\Cot\rarrow\sC$ is a triangulated
equivalence;
\item the triangulated functor\/ $\sD^\bctr(\sC)\rarrow\sD^\bctr(\CA)$
induced by the inclusion\/ $\sC\rarrow\CA$ is a triangulated
equivalence;
\item any complex in\/ $\sC$ that is Becker-contraacyclic in\/ $\CA$ is
also Becker-con\-tra\-acyclic in\/~$\sC$;
\item two periodicity properties hold simultaneously:
\begin{enumerate}
\renewcommand{\theenumii}{\alph{enumii}}
\item in any acyclic complex in\/ $\sK$ with the terms belonging to\/
$\sF$ and the objects of cocycles belonging to\/ $\Flat$, the objects
of cocycles actually belong to\/~$\sF$;
\item in any acyclic complex in\/ $\sK$ with the terms belonging to\/
$\sC$, the objects of cocycles also belong to\/~$\sC$.
\end{enumerate}
\end{enumerate}
\end{thm}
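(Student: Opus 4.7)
My plan is to organize the eight conditions into three natural groups---the coderived block~(2)--(4), the contraderived block~(5)--(7), and the intrinsic block~(1),~(8)---and to establish the equivalences first within each block and finally across them, exploiting the three layers of cotorsion pairs.

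The first step is to exploit the flat-type hypothesis on $(\Flat,\Cot)$. By the equivalences already proved in Theorems~\ref{flat-type-conventional-derived-equivalence-theorem} and~\ref{flat-type-contraderived-equivalences-theorem}, the inclusions $\VF\subset\Flat$ and $\Cot\subset\CA$ induce triangulated equivalences $\sD^\bco(\VF)\simeq\sD^\bco(\Flat)$ and $\sD^\bctr(\Cot)\simeq\sD^\bctr(\CA)$. Each such equivalence factors through the intermediate category, namely as $\sD^\bco(\VF)\rarrow\sD^\bco(\sF)\rarrow\sD^\bco(\Flat)$ and $\sD^\bctr(\Cot)\rarrow\sD^\bctr(\sC)\rarrow\sD^\bctr(\CA)$. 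Since a factor of an equivalence of triangulated categories is itself an equivalence exactly when its complementary factor is, this yields (2) $\Leftrightarrow$ (3) and (5) $\Leftrightarrow$ (6). Next I would invoke the adjunctions for nested cotorsion pairs established earlier in the paper: for each of the nestings $(\VF,\CA)\le(\sF,\sC)$ and $(\sF,\sC)\le(\Flat,\Cot)$, the inclusion-induced coderived functor is left adjoint to the contraderived functor induced by the opposite inclusion of right classes. An adjoint pair consists of mutually quasi-inverse equivalences whenever either functor is an equivalence, so this delivers (2) $\Leftrightarrow$ (6) and (3) $\Leftrightarrow$ (5), completing the cross-block equivalences of (2), (3), (5), (6).

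For (3) $\Leftrightarrow$ (4) and symmetrically (6) $\Leftrightarrow$ (7): the direction (3) $\Rightarrow$ (4) is immediate, since an equivalence is faithful, so any complex in $\sF$ whose image in $\sD^\bco(\Flat)$ vanishes must already vanish in $\sD^\bco(\sF)$. For the converse, condition~(4) is precisely the equality $\Ac^\bco(\sF)=\Hot(\sF)\cap\Ac^\bco(\Flat)$, which I expect to force the canonical functor $\sD^\bco(\sF)\rarrow\sD^\bco(\Flat)$ to be fully faithful, once combined with the existence of the cotorsion-pair-driven resolutions in $\Com(\sK)$ already constructed in the paper; essential surjectivity is then free, inherited from the factorization through $\sD^\bco(\sF)$ of the known equivalence $\sD^\bco(\VF)\simeq\sD^\bco(\Flat)$.

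It remains to address (1) $\Leftrightarrow$ (8). Here I would specialize the characterization of flat-type cotorsion pairs---the arbitrary-pair analogue of the observation that conditions~(iv) and~(v) together characterize $(\Flat,\Cot)$ being of the flat type---to the intermediate pair $(\sF,\sC)$ nested above the very-flat-type pair $(\VF,\CA)$. The resulting periodicity conditions should then match precisely~(8)(a) and~(8)(b). The main obstacle I foresee lies in this matching: (8)(a) concerns acyclic complexes with all terms in $\sF$, whereas the natural periodicity statement attached to the exact category $\sF\cap\CA$ is phrased for terms in $\sF\cap\CA$. I expect to bridge the two formulations by using completeness of $(\VF,\CA)$ to produce, termwise, short exact sequences placing a given $\sF$-complex inside one with $\CA$-terms; the hereditary property of the sandwiched cotorsion pair keeps the extremal terms within $\sF$, and the long exact sequence of cocycles then transfers the periodicity statement back and forth between the two formulations.
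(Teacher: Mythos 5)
Your handling of the triangulated block is essentially sound: the two-out-of-three observation (a factor of an equivalence is an equivalence iff the complementary factor is) together with Theorem~\ref{nested-cotorsion-pairs-adjunction-theorem} applied to the two nestings $(\VF,\CA)\le(\sF,\sC)\le(\Flat,\Cot)$ does give the mutual equivalence of (2), (3), (5), (6), and is in fact a slightly slicker route than the paper's (which goes through the fully-faithfulness of $\sD^\bco(\VF)\rarrow\sD^\bco(\sF)$ and an isomorphism of the two ``outer'' functors). The decisive gap is structural: as written, your plan proves that (2)--(7) are mutually equivalent and that (1) is equivalent to (8), but it never connects the group $\{(1),(8)\}$ with the group $\{(2),\dots,(7)\}$; ``it remains to address (1)~$\Leftrightarrow$~(8)'' is not all that remains. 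The paper's bridge is precisely the equivalence (4)~$\Leftrightarrow$~(8), proved via the chain $\Ac^\bco(\sF)\subset\Ac(\sF)\subset\Com(\sF)\cap\Ac(\Flat)=\Com(\sF)\cap\Ac^\bco(\Flat)$ (using Corollary~\ref{coacyclic-in-A-are-acyclic-in-A} and the flat-typeness of $(\Flat,\Cot)$), so that (4) amounts to the two equalities $\Ac(\sF)=\Com(\sF)\cap\Ac(\Flat)$, which is (8a), and $\Ac^\bco(\sF)=\Ac(\sF)$, which is (8b) by Theorem~\ref{cotorsion-periodity-type-equivalent-conditions}; in the other direction one also needs (1)~$\Rightarrow$~(2), obtained by applying Theorem~\ref{flat-type-contraderived-equivalences-theorem}(b) to the nested pair $(\VF,\CA)\le(\sF,\sC)$ once $(\sF,\sC)$ is known to be of the flat type. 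Without some such implication in each direction between the two groups, the theorem is not proved.

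Two further steps are left at the ``I expect'' level and conceal the real work. For (4)~$\Rightarrow$~(3) (and dually (7)~$\Rightarrow$~(6)), the equality $\Ac^\bco(\sF)=\Hot(\sF)\cap\Ac^\bco(\Flat)$ alone does not make the induced functor between Verdier quotients fully faithful; what is needed is that every complex in $\Flat$ receives a morphism from a complex in $\sF$ with cone Becker-coacyclic in $\Flat$ (Lemma~\ref{coderived-F-to-Flat-Verdier-quotient}). This is obtained from a special precover sequence for the cotorsion pair $(\Com(\sF),\>\Ac^\bctr(\sC))$ of Proposition~\ref{all-contraacyclic-cotorsion-pair-prop}, and the crucial cross-over -- that the kernel complex, having terms in $\Flat$ and being Becker-contraacyclic in $\CA$, is Becker-coacyclic in $\Flat$ -- is exactly where the flat-type hypothesis on $(\Flat,\Cot)$ enters, via Lemma~\ref{flat-type-convenient-restatement-lemma}; your sketch does not identify this point. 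Similarly, your (1)~$\Leftrightarrow$~(8) matching is not literal: the flat-type condition on $(\sF,\sC)$ relative to $(\VF,\CA)$ is stated (Lemma~\ref{ac=ac-bctr-equivalent-conditions}(1), Lemma~\ref{ac=ac-bco-equivalent-conditions}(1)) as an orthogonality condition against complexes in $\VF$ for acyclic complexes with terms in $\sF\cap\CA$, and converting that orthogonality into ``cocycles in $\Flat$'' again uses the flat-typeness of $(\Flat,\Cot)$, on top of your $\sF$-versus-$\sF\cap\CA$ bridging (which can indeed be carried out with complex-level preenvelopes and resolvingness of $\sF$, but is not automatic). So the overall architecture is close to the paper's for (2)--(7), but the proposal as it stands is missing the periodicity bridge and the key flat-type identification underlying the Verdier-quotient steps.
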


 In particular, Theorem~\ref{introd-main-theorem} is applicable to
both the settings of Conjectures~\ref{flaprojective-conjecture}
and~\ref{relatively-cotorsion-conjecture}.
 In both cases, the theorem provides several equivalent reformulations
of the respective conjecture.

 In the context of Conjecture~\ref{flaprojective-conjecture}, one
takes $\sC$ to be the class of all left $A$\+modules that are
cotorsion over~$R$; then $\sF$ is the class of $A/R$\+flaprojective
left $A$\+modules.
 Then property~(8b) follows from the cotorsion periodicity theorem of
Bazzoni, Cort\'es-Izurdiaga, and Estrada~\cite{BCE} applied to
the ring~$R$.
 So condition~(8) reduces to~(8a), which is the assertion of
Conjecture~\ref{flaprojective-conjecture}.
 Therefore, every one of the conditions~(1\+-7) is an equivalent 
reformulation of the conjecture.

 In the context of Conjecture~\ref{relatively-cotorsion-conjecture},
one takes $\sF$ to be the class of all flat left $A$\+modules that are
projective over~$R$; then $\sC$ is the class of $A/R$\+cotorsion
left $A$\+modules.
 Then property~(8a) follows from the flat and projective periodicity
theorem of Benson--Goodearl and Neeman~\cite{BG,Neem} applied to
the ring~$R$.
 So condition~(8) reduces to~(8b), which is the assertion of
Conjecture~\ref{relatively-cotorsion-conjecture}.
 Thus, every one of the conditions~(1\+-7) is an equivalent 
reformulation of the conjecture.

\subsection*{Acknowledgement}
 I~am grateful to Jan \v St\!'ov\'\i\v cek and Jan Trlifaj for helpful
conversations.
 I~also wish to thank two anonymous referees for several helpful
suggestions.
 The author is supported by the GA\v CR project 23-05148S
and the Institute of Mathematics, Czech Academy of Sciences
(research plan RVO:~67985840).

\Section{Preliminaries}  \label{preliminaries-secn}

 We suggest the survey paper~\cite{Bueh} as the background reference
source on exact categories (in the sense of Quillen).
 In the context of the present paper, one can safely assume that all
the exact categories under consideration are idempotent-complete.

 Given an additive category $\sK$, we denote by $\Com(\sK)$ the additive
category of (unbounded) complexes in~$\sK$.
 The morphisms in $\Com(\sK)$ are the usual (closed) morphisms of
complexes, i.~e., homogeneous morphisms of degree~$0$ commuting with
the differentials.
 The homotopy category of complexes in $\sK$ is denoted by $\Hot(\sK)$;
so $\Hot(\sK)$ is the additive quotient category of $\Com(\sK)$ by
the ideal of morphisms cochain homotopic to zero.

 Given a complex $C^\bu$ in $\sK$ and an integer $n\in\boZ$, we denote
by $C^\bu[n]$ the cohomological degree shift of the complex~$C^\bu$;
so $C^\bu[n]^i=C^{n+i}$ for all $i\in\boZ$.
 For any two complexes $A^\bu$ and $B^\bu$ in $\sK$, we denote by
$\Hom_\sK(A^\bu,B^\bu)$ the total complex of the bicomplex of abelian
groups $\Hom_\sK(A^i,B^j)$, \ $i$, $j\in\boZ$, constructed by taking
infinite direct products along the diagonals.
 For any objects $A$ and $B$ in $\sK$, the notation $\Hom_\sK(A,B^\bu)$
and $\Hom_\sK(A^\bu,B)$ has the similar (and even more obvious) meaning.
 By the definition, we have $\Hom_{\Hot(\sK)}(A^\bu,B^\bu[n])=
H^n\Hom_\sK(A^\bu,B^\bu)$.

 Let $\sK$ be an exact category.
 The (conventional unbounded) derived category $\sD(\sK)$ is defined
as the triangulated Verdier quotient category
$$
 \sD(\sK)=\Hot(\sK)/\Ac(\sK)
$$
of the homotopy category of complexes $\Hot(\sK)$ by the thick
subcategory of acyclic complexes $\Ac(\sK)\subset\Hot(\sK)$
\,\cite{Neem0}, \cite[Section~10]{Bueh}.
 We will use the same notation $\Ac(\sK)\subset\Com(\sK)$ for the full
subcategory of acyclic complexes in $\Com(\sK)$.

 The Yoneda Ext functor in an exact category $\sK$ can be defined
as $\Ext_\sK^n(X,Y)=\Hom_{\sD(\sK)}(X,Y[n])$ for all $X$, $Y\in\sK$
and $n\in\boZ$.
 One can easily check that $\Ext_\sK^n(X,Y)=0$ for $n<0$
and $\Ext_\sK^0(X,Y)=\Hom_\sK(X,Y)$, while $\Ext_\sK^1(X,Y)$ is
the set of all equivalence classes of admissible short exact sequences
$0\rarrow Y\rarrow Z\rarrow X\rarrow0$ in~$\sK$.

 The higher Ext groups $\Ext_\sK^n(X,Y)$ are also produced by
the classical construction of the equivalence classes of exact
sequences $0\rarrow Y\rarrow Z^1\rarrow\dotsb\rarrow Z^n\rarrow X
\rarrow0$ \cite[Proposition~A.13 of the published version or
Proposition in Section~A.7 of the \texttt{arXiv} version]{Partin}.
 This assertion is easily provable using the description of the groups
of morphisms in the derived category $\sD(\sK)$ as fractions (``roofs'')
of morphisms in the homotopy category $\Hot(\sK)$.
 The fact that exact complexes in an exact category have canonical
truncations (which arbitrary complexes in exact categories do not have,
in general) plays an important role here.

 Let $\sA$, $\sB\subset\sK$ be two classes of objects.
 The notation $\sA^{\perp_1}\subset\sK$ stands for the class of all
objects $X\in\sK$ such that $\Ext^1_\sK(A,X)=0$ for all $A\in\sA$.
 Dually, ${}^{\perp_1}\sB\subset\sK$ is the class of all objects
$Y\in\sK$ such that $\Ext^1_\sK(Y,B)=0$ for all $B\in\sB$.

 A pair of classes of objects $(\sA,\sB)$ in $\sK$ is said to be
a \emph{cotorsion pair} if $\sB=\sA^{\perp_1}$ and
$\sA={}^{\perp_1}\sB$.
 The intersection of the two classes $\sA\cap\sB\subset\sK$ is called
the \emph{kernel} (or in a different terminology, the \emph{core})
of a cotorsion pair $(\sA,\sB)$.

 For any class of objects $\sS\subset\sK$, the pair of classes
$\sB=\sS^{\perp_1}$ and $\sA={}^{\perp_1}\sB$ is a cotorsion pair
in~$\sK$.
 The cotorsion pair $(\sA,\sB)$ is said to be \emph{generated by}~$\sS$.

 A cotorsion pair $(\sA,\sB)$ in an exact category $\sK$ is said to be
\emph{complete} if, for every object $K\in\sK$, there exist admissible
short exact sequences
\begin{align}
 & 0\rarrow B'\rarrow A\rarrow K\rarrow 0,
 \label{special-precover-sequence} \\
 & 0\rarrow K\rarrow B\rarrow A'\rarrow 0
 \label{special-preenvelope-sequence}
\end{align}
in $\sK$ with objects $A$, $A'\in\sA$ and $B$, $B'\in\sB$.
 The sequence~\eqref{special-precover-sequence} is called
a \emph{special precover sequence}.
 The sequence~\eqref{special-preenvelope-sequence} is called
a \emph{special preenvelope sequence}.
 Collectively, the sequences~(\ref{special-precover-sequence}\+-%
\ref{special-preenvelope-sequence}) are referred to as
the \emph{approximation sequences}.

 A class of objects $\sA$ in an exact category $\sK$ is said to be
\emph{generating} if, for every object $K\in\sK$, there exists
an object $A\in\sA$ together with an admissible epimorphism $A\rarrow K$
in~$\sK$.
 Dually, a class of objects $\sB\in\sK$ is said to be
\emph{cogenerating} if, for every object $K\in\sK$, there exists
an object $B\in\sB$ together with an admissible monomorphism
$K\rarrow B$ in~$\sK$.

 Clearly, in any complete cotorsion pair $(\sA,\sB)$, the class $\sA$
is generating and the class $\sB$ is cogenerating.
 For any cotorsion pair $(\sA,\sB)$ in $\sK$, all the projective
objects of $\sK$ belong to $\sA$, and all the injective objects of $\sK$
belong to~$\sB$.
 Hence the class $\sA$ is always generating if there are enough
projective objects in $\sK$, and the class $\sB$ is always cogenerating
if there are enough injective objects in~$\sK$.

 Let $(\sA,\sB)$ be a cotorsion pair in $\sK$ such that the class $\sA$
is generating and the class $\sB$ is cogenerating.
 Then one says that the cotorsion pair $(\sA,\sB)$ is \emph{hereditary}
if any one of the following equivalent conditions holds:
\begin{enumerate}
\item the class $\sA$ is closed under kernels of admissible epimorphisms
in~$\sK$;
\item the class $\sB$ is closed under cokernels of admissible
monomorphisms in~$\sK$;
\item one has $\Ext^2_\sK(A,B)=0$ for all $A\in\sA$ and $B\in\sB$;
\item one has $\Ext^n_\sK(A,B)=0$ for all $A\in\sA$, \,$B\in\sB$,
and $n\ge1$.
\end{enumerate}
 The observation that conditions~(1\+-4) are equivalent goes back
to~\cite[Theorem~1.2.10]{GR} (cf.~\cite[Lemma~5.24]{GT}
and~\cite[Lemma~2.3]{Gil2a}).
 An exposition in the stated generality can be found
in~\cite[Lemma~6.17]{Sto-ICRA}; for a further generalization,
see~\cite[Lemma~7.1]{PS6}.

 Let $\sK$ be an exact category and $\sE\subset\sK$ be a full
additive subcategory closed under extensions.
 Then the category $\sE$ is endowed with the \emph{inherited exact
structure} in which the admissible short exact sequences in $\sE$ are
the admissible short exact sequences in $\sK$ with the terms belonging
to~$\sE$.
 Throughout this paper, we always presume the inherited exact structure
when speaking about exact structures on full subcategories of our
exact/abelian categories.
 One says that a complete cotorsion pair $(\sA,\sB)$ in $\sK$
\emph{restricts to} (a complete cotorsion pair in) $\sE$ if the pair
of classes $(\sE\cap\sA,\>\sE\cap\sB)$ is a complete cotorsion pair
in~$\sE$.

\begin{lem} \label{restricting-cotorsion-pair-lemma}
 Let\/ $\sK$ be an exact category, $\sE\subset\sK$ be a full additive
subcategory, and $(\sA,\sB)$ be a complete cotorsion pair in\/~$\sK$.
 In this context: \par
\textup{(a)} if the full subcategory\/ $\sE$ is closed under extensions
and kernels of admissible epimorphisms in\/ $\sK$, and if\/
$\sA\subset\sE$, then the cotorsion pair $(\sA,\sB)$ restricts to
a complete cotorsion pair $(\sA,\>\sE\cap\sB)$ in\/~$\sE$; \par
\textup{(b)} if the full subcategory\/ $\sE$ is closed under extensions
and cokernels of admissible monomorphisms in\/ $\sK$, and if\/
$\sB\subset\sE$, then the cotorsion pair $(\sA,\sB)$ restricts to
a complete cotorsion pair $(\sE\cap\sA,\>\sB)$ in\/~$\sE$. \par
 In both the cases, if the cotorsion pair $(\sA,\sB)$ in\/ $\sK$ is
hereditary, then so is the restricted cotorsion pair in\/~$\sE$.
\end{lem}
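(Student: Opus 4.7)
The two parts of the lemma are dual, so I focus on part~(a); part~(b) follows by reversing arrows and interchanging the roles of kernels and cokernels, extensions and coextensions, and special precover and preenvelope sequences throughout.

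The key observation is that, since $\sE\subset\sK$ is closed under extensions with the inherited exact structure, the inclusion induces a canonical isomorphism $\Ext^1_\sE(X,Y)\cong\Ext^1_\sK(X,Y)$ for all $X$, $Y\in\sE$. In particular, $\Ext^1_\sE(A,B)=0$ whenever $A\in\sA\subset\sE$ and $B\in\sE\cap\sB$, and moreover $\sE\cap\sB=\sA^{\perp_1}$ computed in~$\sE$: if $X\in\sE$ is right-orthogonal to $\sA$ in the Ext-pairing of~$\sE$, then any admissible short exact sequence $0\to X\to Z\to A\to 0$ in~$\sK$ with $A\in\sA$ has $Z\in\sE$ by extension-closure, so it lies in~$\sE$ and splits by hypothesis; hence $X\in\sA^{\perp_1}$ computed in~$\sK$, i.e., $X\in\sE\cap\sB$.

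Completeness of $(\sA,\sE\cap\sB)$ in~$\sE$ is where the closure of $\sE$ under kernels of admissible epimorphisms enters. Given $E\in\sE$, invoke the completeness of $(\sA,\sB)$ in~$\sK$ to obtain approximation sequences $0\to B'\to A\to E\to 0$ and $0\to E\to B\to A'\to 0$ in~$\sK$ with $A$, $A'\in\sA$ and $B$, $B'\in\sB$. The hypothesis $\sA\subset\sE$ places $A$ and $A'$ in~$\sE$; closure of $\sE$ under kernels of admissible epimorphisms then forces $B'\in\sE$ in the first sequence, and closure under extensions forces $B\in\sE$ in the second. Both sequences therefore lie in~$\sE$ and provide special precover and preenvelope sequences for $E$ with respect to the pair $(\sA,\sE\cap\sB)$. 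The remaining orthogonality $\sA={}^{\perp_1}(\sE\cap\sB)$ (computed in~$\sE$) follows by the standard argument: if $Y\in\sE$ is left-orthogonal to $\sE\cap\sB$ in~$\sE$, apply the special precover just constructed to~$Y$; it splits, so $Y$ is a direct summand of an object of~$\sA$, hence lies in~$\sA$ by idempotent-completeness together with the Ext-stability of~$\sA$ under direct summands.

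Finally, preservation of hereditariness is formal. By the inherited exact structure, an admissible short exact sequence $0\to K\to A_1\to A_2\to 0$ in~$\sE$ is the same datum as such a sequence in~$\sK$ with all three terms in~$\sE$. Thus, if $(\sA,\sB)$ is hereditary in~$\sK$ (so $\sA$ is closed under kernels of admissible epimorphisms in~$\sK$), then for $A_1$, $A_2\in\sA$ the object $K$ lies in~$\sA$, giving the corresponding closure for the restricted cotorsion pair. I do not anticipate any genuine obstacle; the one step requiring care is verifying that the approximation sequences obtained in~$\sK$ remain inside~$\sE$, which the closure hypotheses on~$\sE$ are precisely designed to ensure.
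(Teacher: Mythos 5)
Your proof is correct; the paper itself gives no argument for this lemma, only a citation to \cite[Lemmas~1.5 and~1.6]{Pal}, and your reasoning is the standard one underlying that reference: the Ext$^1$ groups agree on the extension-closed subcategory, the closure hypotheses keep the approximation sequences inside $\sE$, and the orthogonality and hereditariness transfer formally. (The appeal to idempotent-completeness in the last orthogonality step is superfluous, since the split sequence already exhibits $A\cong B'\oplus Y$ inside the category, but this is harmless.)
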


\begin{proof}
 This is~\cite[Lemmas~1.5 and~1.6]{Pal}.
\end{proof}

 If $\sK$ is an exact category, then the category of complexes
$\Com(\sK)$ is endowed with the \emph{termwise exact structure} in
which a short sequence of complexes $0\rarrow K^\bu\rarrow L^\bu
\rarrow M^\bu\rarrow0$ is admissible exact in $\Com(\sK)$ if and only
if the short sequence $0\rarrow K^n\rarrow L^n\rarrow M^n\rarrow0$
is admissible exact in $\sK$ for every degree $n\in\boZ$.
 If $\sK$ is an abelian category endowed with the abelian exact
structure, then $\Com(\sK)$ is also an abelian category, and
the termwise exact structure on $\Com(\sK)$ coincides with the abelian
exact structure.
 If $\sK$ is a Grothendieck category, then so is $\Com(\sK)$.

 Given an object $K\in\sK$, we denote by $D_{n,n+1}(K)\in\Com(\sK)$
the contractible two-term complex $\dotsb\rarrow0\rarrow 0\rarrow K
\overset{\id_K}\rarrow K\rarrow0\rarrow0\rarrow\dotsb$ situated
in the cohomological degrees $n$ and~$n+1$.
 The following lemma is well known.

\begin{lem} \label{disk-complexes-Ext-lemma}
 Let\/ $\sK$ be an exact category, $K\in\sK$ be an object, and
$C^\bu$ be a complex in\/~$\sK$.
 Then for every pair of integers $n$ and $i\in\boZ$ there are
natural isomorphisms of abelian groups
\begin{align*}
 \Ext^i_{\Com(\sK)}(D_{n,n+1}(K),C^\bu) &\simeq \Ext^i_\sK(K,C^n), \\
 \Ext^i_{\Com(\sK)}(C^\bu,D_{n,n+1}(K)) &\simeq \Ext^i_\sK(C^{n+1},K).
\end{align*}
\end{lem}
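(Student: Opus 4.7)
The plan is to deduce both isomorphisms from the observation that the functor $D_{n,n+1}\colon \sK \to \Com(\sK)$ is simultaneously left adjoint to the degree-$n$ component functor $(-)^n\colon \Com(\sK) \to \sK$ and right adjoint to the degree-$(n+1)$ component functor $(-)^{n+1}\colon \Com(\sK) \to \sK$, and that all three of these functors are exact with respect to the termwise exact structure on $\Com(\sK)$.

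First I would verify the two adjunctions at the Hom level. A closed morphism $f\colon D_{n,n+1}(K) \to C^\bu$ is given by components $f^m\colon D_{n,n+1}(K)^m \to C^m$ commuting with the differentials. Since $D_{n,n+1}(K)^m = 0$ outside of degrees $n$ and $n+1$, and since commutation with the nonzero differential $\id\colon D_{n,n+1}(K)^n \to D_{n,n+1}(K)^{n+1}$ forces $f^{n+1} = d_C^n \circ f^n$ while the commutation conditions at $D_{n,n+1}(K)^{n-1} \to D_{n,n+1}(K)^n$ and $D_{n,n+1}(K)^{n+1} \to D_{n,n+1}(K)^{n+2}$ are automatic, the whole morphism is freely determined by $f^n\colon K \to C^n$. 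A dual calculation shows that a closed morphism $g\colon C^\bu \to D_{n,n+1}(K)$ is freely determined by $g^{n+1}\colon C^{n+1} \to K$ via $g^n = g^{n+1} \circ d_C^n$, giving the $i=0$ case of the second isomorphism. Exactness of $(-)^n$ and $(-)^{n+1}$ is immediate from the definition of the termwise exact structure, and $D_{n,n+1}$ is exact because it sends a short exact sequence in $\sK$ to a sequence in $\Com(\sK)$ that is nonzero only in degrees $n$ and $n+1$, where it reproduces the original sequence.

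For $i \ge 1$ I would extend the Hom-level isomorphisms using Yoneda's description of Ext. A class in $\Ext^i_{\Com(\sK)}(D_{n,n+1}(K), C^\bu)$ is represented by an admissible Yoneda extension of length $i$ in $\Com(\sK)$; applying the exact functor $(-)^n$ termwise produces an admissible Yoneda extension of $K$ by $C^n$ in $\sK$, defining a homomorphism from the left-hand side to the right. In the reverse direction, starting from a Yoneda extension of $K$ by $C^n$ in $\sK$, applying the exact functor $D_{n,n+1}$ yields an extension of $D_{n,n+1}(K)$ by $D_{n,n+1}(C^n)$ in $\Com(\sK)$, which is then pushed forward along the canonical closed morphism $D_{n,n+1}(C^n) \to C^\bu$ adjoint to $\id_{C^n}$. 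The second isomorphism is obtained in exactly the same way from the adjunction $(-)^{n+1} \dashv D_{n,n+1}$. The only technical obstacle, a rather mild one, is confirming that these two Yoneda constructions descend to well-defined mutually inverse homomorphisms on Yoneda equivalence classes, which amounts to a standard diagram chase valid in any exact category.
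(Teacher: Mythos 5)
Your argument is correct. The paper itself gives no written proof of this lemma---it simply cites \cite[Lemma~1.8]{Pal} (with \cite[Lemma~3.1]{Gil} as an earlier version)---and the mechanism you use, namely that $D_{n,n+1}\:\sK\rarrow\Com(\sK)$ is an exact functor which is left adjoint to the exact evaluation functor $({-})^n$ and right adjoint to the exact evaluation functor $({-})^{n+1}$, is precisely the standard argument underlying those references. Two remarks. First, the verification you defer is indeed routine: exactness guarantees that all three functors preserve the pushouts and pullbacks entering the Yoneda Ext constructions, naturality of the unit and counit produces morphisms of $i$\+extensions, and the triangle identities then show that your two assignments are mutually inverse on Yoneda classes. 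Second, since the paper defines $\Ext^i_\sK(X,Y)$ as $\Hom_{\sD(\sK)}(X,Y[i])$, you could bypass the (standard, but unstated here) comparison between Yoneda Ext and derived-category Ext: an adjoint pair of exact functors induces an adjoint pair of triangulated functors between $\sD(\sK)$ and $\sD(\Com(\sK))$, because both functors preserve acyclic complexes and the unit and counit descend with the triangle identities intact; applying these adjunctions to the shifted objects $C^\bu[i]$, respectively $D_{n,n+1}(K)[i]$, gives both isomorphisms in all degrees at once, directly in the form in which the paper uses them.
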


\begin{proof}
 This is~\cite[Lemma~1.8]{Pal}; see~\cite[Lemma~3.1]{Gil} for
an earlier version.
\end{proof}

\begin{lem} \label{Ext1-as-homotopy-Hom-lemma}
 Let\/ $\sK$ be an exact category, and let $A^\bu$ and $B^\bu$ be two
complexes in\/~$\sK$.
 Assume that\/ $\Ext^1_\sK(A^n,B^n)=0$ for all $n\in\boZ$.
 Then there is a natural isomorphism of abelian groups
$$
 \Ext^1_{\Com(\sK)}(A^\bu,B^\bu)\simeq\Hom_{\Hot(\sK)}(A^\bu,B^\bu[1]).
$$
\end{lem}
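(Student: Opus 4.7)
The plan is to use the Yoneda description of $\Ext^1$ in $\Com(\sK)$ and to extract the desired homotopy class from any admissible short exact sequence representing a given extension class. An element of $\Ext^1_{\Com(\sK)}(A^\bu,B^\bu)$ is represented by an admissible (i.e., termwise) short exact sequence
$$0\rarrow B^\bu\rarrow C^\bu\rarrow A^\bu\rarrow 0$$
in $\Com(\sK)$. By the hypothesis $\Ext^1_\sK(A^n,B^n)=0$ for all $n\in\boZ$, each component sequence $0\rarrow B^n\rarrow C^n\rarrow A^n\rarrow 0$ splits in $\sK$, so the extension splits termwise (though not, in general, as a sequence of complexes).

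First I would fix splittings $s^n\colon A^n\rarrow C^n$ for all $n$, producing graded isomorphisms $C^n\simeq B^n\oplus A^n$. With respect to these decompositions, the differential $d_C$ takes block-triangular form, with $d_B$ and $d_A$ on the diagonal and an off-diagonal collection $f=(f^n\colon A^n\rarrow B^{n+1})_{n\in\boZ}$. The identity $d_C^2=0$ then translates into $d_B f+f d_A=0$, which is precisely the condition that $f$ defines a closed morphism of complexes $A^\bu\rarrow B^\bu[1]$. Setting $\Phi([C^\bu])=[f]$ produces a candidate map
$$\Phi\colon\Ext^1_{\Com(\sK)}(A^\bu,B^\bu)\lrarrow\Hom_{\Hot(\sK)}(A^\bu,B^\bu[1]).$$

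To see that $\Phi$ is well defined, I would note that any two splittings of the same extension differ by morphisms $h^n\colon A^n\rarrow B^n$, and the corresponding off-diagonal components of $d_C$ differ by $d_B h+h d_A$, leaving $[f]$ unchanged; similarly, an isomorphism between two equivalent extensions, written in chosen splittings on both sides, differs from the identity by a degree-zero map $A^n\rarrow B^n$, which produces the analogous homotopy. The inverse map $\Psi$ is transparent: given a closed $f\colon A^\bu\rarrow B^\bu[1]$, define $C^\bu$ by $C^n=B^n\oplus A^n$ with the block-triangular differential just described, and note that the termwise split sequence $0\rarrow B^\bu\rarrow C^\bu\rarrow A^\bu\rarrow 0$ is admissible in $\Com(\sK)$; cohomologous $f$'s produce equivalent extensions via an isomorphism of the form $\id+h$ on $B^n\oplus A^n$. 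Naturality in $A^\bu$ and $B^\bu$ is immediate, because pullback and pushout of extensions correspond under $\Phi$ to pre- and postcomposition of chain maps. The main delicate point is sign bookkeeping in the shift convention for $B^\bu[1]$, so that the coboundary formula for a change of splittings matches the null-homotopy condition in $\Hom_\sK(A^\bu,B^\bu[1])$; this is routine rather than a genuine obstacle.
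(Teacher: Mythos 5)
Your argument is correct, and it is essentially the standard proof that the paper itself delegates to the reference \cite[Lemma~2.1]{Gil}: termwise splitting of the extension via the hypothesis $\Ext^1_\sK(A^n,B^n)=0$, reading off the off-diagonal entry of the block-triangular differential as a closed morphism $A^\bu\rarrow B^\bu[1]$, and interpreting changes of splitting and equivalences of extensions as homotopies, with the inverse given by the explicit mapping-cone-type construction. The only point left implicit is that your bijection is additive (Baer sum of extensions corresponds to the sum of homotopy classes), which is routine with the same block-matrix bookkeeping and is needed for the word ``isomorphism of abelian groups.''
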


\begin{proof}
 This is~\cite[Lemma~2.1]{Gil}; see also~\cite[Lemma~1.6]{BHP}.
\end{proof}

\begin{lem} \label{acycl-acycl-orthogonality-lemma}
 Let $(\sA,\sB)$ be a cotorsion pair in an exact category\/~$\sK$.
 Then, for every complex $A^\bu\in\Ac(\sA)$ and every complex $B^\bu\in
\Ac(\sB)$, any morphism of complexes $A^\bu\rarrow B^\bu$ is homotopic
to zero.
 In other words, one has\/ $\Ext^1_{\Com(\sK)}(A^\bu,B^\bu)=0$.
\end{lem}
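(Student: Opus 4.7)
The plan is first to reduce the two formulations of the conclusion to one, and then to construct a null-homotopy of any chain map $f\colon A^\bu\to B^\bu$ by an iterated lift-and-extend procedure along the cotorsion-pair orthogonality.

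As the two classes of a cotorsion pair, both $\sA$ and $\sB$ are closed under extensions; hence the admissible short exact sequences $0\to Z^n(A^\bu)\to A^n\to Z^{n+1}(A^\bu)\to 0$ with $Z^n(A^\bu),Z^{n+1}(A^\bu)\in\sA$ imply $A^n\in\sA$, and similarly $B^n\in\sB$. Therefore $\Ext^1_\sK(A^n,B^n)=0$ for every $n\in\boZ$, and Lemma~\ref{Ext1-as-homotopy-Hom-lemma} gives the isomorphism $\Ext^1_{\Com(\sK)}(A^\bu,B^\bu)\simeq\Hom_{\Hot(\sK)}(A^\bu,B^\bu[1])$. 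Since $B^\bu[1]$ again lies in $\Ac(\sB)$, it is enough to prove the null-homotopy statement.

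Write $Z^n=Z^n(A^\bu)$, $W^n=Z^n(B^\bu)$, and factor each differential as $d_A^n=j_A^{n+1}p_A^n$ with $j_A^n\colon Z^n\hookrightarrow A^n$ and $p_A^n\colon A^n\twoheadrightarrow Z^{n+1}$, and analogously for $B^\bu$; the chain map $f$ descends to maps $\bar f^n\colon Z^n\to W^n$ satisfying $j_B^n\bar f^n=f^nj_A^n$. The construction of the homotopy $h^n\colon A^n\to B^{n-1}$ proceeds in three stages, each invoking the vanishing $\Ext^1_\sK(\sA,\sB)=0$: first, lift $\bar f^n$ along the admissible epi $p_B^{n-1}\colon B^{n-1}\twoheadrightarrow W^n$ to a map $g^n\colon Z^n\to B^{n-1}$ (possible since $\Ext^1_\sK(Z^n,W^{n-1})=0$); next, extend $g^n$ along $j_A^n$ to $h^n\colon A^n\to B^{n-1}$ (possible since $\Ext^1_\sK(Z^{n+1},B^{n-1})=0$); finally, analyze the residual $r^n:=f^n-d_B^{n-1}h^n-h^{n+1}d_A^n$ and show by direct calculation that $r^nj_A^n=0$ and $d_B^nr^n=0$, so $r^n=j_B^n\tau^{n+1}p_A^n$ for some $\tau^{n+1}\colon Z^{n+1}\to W^n$; then lift each $\tau^{n+1}$ along $p_B^{n-1}$ to $\nu^{n+1}\colon Z^{n+1}\to B^{n-1}$ (again using $\Ext^1_\sK(Z^{n+1},W^{n-1})=0$) and replace $h^n$ by $h^n+\nu^{n+1}p_A^n$.

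The main obstacle is that the naive two-stage construction of $h^n$ does not \emph{a priori} yield a null-homotopy; the key technical observations that make the corrective third stage work are (a) the discrepancy $r^n$ automatically takes values in the subobject $W^n\subset B^n$, so it is ``small enough'' to be absorbed by one further application of the orthogonality, and (b) the identities $p_A^nj_A^n=0$ and $p_A^{n+1}d_A^n=0$ ensure that the correction at degree $n$ affects only $r^n$ and leaves $r^{n-1}$ and $r^{n+1}$ unchanged, so that the corrections at all degrees can be performed simultaneously, even when the complex is unbounded in both directions.
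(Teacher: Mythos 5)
Your proposal is correct: the reduction via Lemma~\ref{Ext1-as-homotopy-Hom-lemma} is legitimate (closure of $\sA$ and $\sB$ under extensions gives $A^n\in\sA$, $B^n\in\sB$, and $B^\bu[1]\in\Ac(\sB)$), and the three-stage construction works as claimed --- the identities $r^nj_A^n=0$ and $d_B^nr^n=0$ do hold by the computations you indicate, the factorization $r^n=j_B^n\tau^{n+1}p_A^n$ follows from the universal properties of $\coker(j_A^n)$ and $\ker(p_B^n)$, and the correction term $\nu^{n+1}p_A^n$ indeed kills $r^n$ without disturbing $r^{n\pm1}$ because $p_A^{n+1}d_A^n=0$. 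The paper itself only cites \cite[Lemma~3.9]{Gil} and \cite[Lemma~B.1.8]{Pcosh} for this statement, and your argument is essentially the standard lift-and-extend proof underlying those references, here written out in full.
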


\begin{proof}
 This observation goes back to~\cite[Lemma~3.9]{Gil}.
 See~\cite[Lemma~B.1.8]{Pcosh} for the details.
\end{proof}

\begin{lem} \label{Hom-criterion-of-acyclicity-in-cotorsion-pair}
 Let $(\sA,\sB)$ be a cotorsion pair in an exact category\/~$\sK$. \par
\textup{(a)} Assume that the kernels of all morphisms exist in\/ $\sK$
and the class\/ $\sA$ is generating in\/~$\sK$.
 Then a complex $B^\bu$ in\/ $\sB$ is acyclic in\/ $\sB$ if and only
if the complex of abelian groups\/ $\Hom_\sK(A,B^\bu)$ is acyclic
for all objects $A\in\sA$. \par
\textup{(b)} Assume that the cokernels of all morphisms exist in\/ $\sK$
and the class\/ $\sB$ is cogenerating in\/~$\sK$.
 Then a complex $A^\bu$ in\/ $\sA$ is acyclic in\/ $\sA$ if and only
if the complex of abelian groups\/ $\Hom_\sK(A^\bu,B)$ is acyclic
for all objects $B\in\sB$.
\end{lem}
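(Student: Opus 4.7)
Parts~(a) and~(b) are mutually dual: (b) follows from~(a) applied to $\sK^\rop$ with the cotorsion pair $(\sB^\rop,\sA^\rop)$, so I plan to focus on~(a). The ``only if'' direction is routine: if $B^\bu$ decomposes into admissible short exact sequences $0\to Z^n\to B^n\to Z^{n+1}\to 0$ with $Z^n\in\sB$, then for any $A\in\sA$ the vanishing $\Ext^1_\sK(A,Z^n)=0$ makes $\Hom_\sK(A,-)$ carry these sequences to short exact sequences of abelian groups, and splicing them exhibits $\Hom_\sK(A,B^\bu)$ as an acyclic complex.

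For the nontrivial ``if'' direction, the plan is to construct the cocycle objects of $B^\bu$ by hand and then verify both that they lie in $\sB$ and that the resulting short sequences are admissible in $\sK$. I would form the categorical kernels $K^n=\ker(d^n_{B^\bu})\in\sK$ (available by hypothesis) and factor $d^{n-1}$ as $i^n\circ p^{n-1}$, where $i^n\colon K^n\to B^n$ is the canonical monomorphism. The first and essential step is to prove that each $p^{n-1}\colon B^{n-1}\to K^n$ is an \emph{admissible} epimorphism. To this end I would invoke that $\sA$ generates $\sK$ to choose an admissible epi $e\colon A\to K^n$ with $A\in\sA$; the composite $i^n e$ is a cocycle in $\Hom_\sK(A,B^\bu)$, which by the Hom-acyclicity hypothesis lifts to $d^{n-1}h=i^n e$ for some $h\colon A\to B^{n-1}$, and monicity of $i^n$ gives $p^{n-1}h=e$. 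The cancellation property of admissible epimorphisms in an exact category (valid for the idempotent-complete exact categories of this paper) then forces $p^{n-1}$ to be admissible epic. Consequently $i^n$ is identified with the kernel of the admissible epimorphism $p^n$, hence is itself an admissible monomorphism, and the admissible short exact sequences $0\to K^n\to B^n\to K^{n+1}\to 0$ exhibit $B^\bu$ as acyclic in~$\sK$.

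It remains to verify $K^n\in\sB$, i.e.\ $\Ext^1_\sK(A,K^n)=0$ for every $A\in\sA$. Applying $\Hom_\sK(A,-)$ to the short exact sequence just produced and using $\Ext^1_\sK(A,B^n)=0$ reduces the claim to surjectivity of $\Hom_\sK(A,B^n)\to\Hom_\sK(A,K^{n+1})$, which is a second, formally identical application of the Hom-acyclicity hypothesis (now at degree~$n+1$). The main subtle point in the whole argument---essentially the only step beyond formal manipulations with cotorsion pair orthogonality---is the cancellation principle promoting $p^{n-1}$ from a morphism through which an admissible epimorphism factors to an admissible epimorphism in its own right; everything else is setup or a routine long-exact-sequence computation.
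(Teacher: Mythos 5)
Your proposal is correct and follows essentially the same route as the paper's proof: compute the kernels $Z^n=\ker(d^n)$ in $\sK$, use the generating class $\sA$ together with the Hom-acyclicity to factor an admissible epimorphism $A\to Z^{n+1}$ through $B^n\to Z^{n+1}$, invoke B\"uhler's cancellation property (Proposition~7.6) to conclude that $B^n\to Z^{n+1}$ is an admissible epimorphism, and then deduce $\Ext^1_\sK(A,Z^n)=0$ from the surjectivity of $\Hom_\sK(A,B^n)\to\Hom_\sK(A,Z^{n+1})$ and $\Ext^1_\sK(A,B^n)=0$. The only differences are cosmetic: you order the steps slightly differently (admissibility first, then the $\Ext$ vanishing) and make the duality reduction for part~(b) explicit, which the paper leaves implicit.
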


\begin{proof}
 Let us prove part~(a).
 The ``only if'' implication is obvious.
 To prove the ``if'', denote by $Z^n\in\sK$ the kernel of
the differential $B^n\rarrow B^{n+1}$ computed in the category~$\sK$.
 Then the complex $B^\bu$ can be obtained by splicing three-term
complexes $Z^n\rarrow B^n\rarrow Z^{n+1}$.
 Consequently, the complex $\Hom_\sK(A,B^\bu)$ can be obtained by
splicing three-term complexes of abelian groups $\Hom_\sK(A,Z^n)
\rarrow\Hom_\sK(A,B^n)\rarrow\Hom_\sK(A,Z^{n+1})$.
 Furthermore, the morphism $Z^n\rarrow B^n$ is a monomorphism in $\sK$,
hence the map $\Hom_\sK(A,Z^n)\rarrow\Hom_\sK(A,B^n)$ is injective.
 As the complex $\Hom_\sK(A,B^\bu)$ is acyclic, it follows that
$0\rarrow\Hom_\sK(A,Z^n)\rarrow\Hom_\sK(A,B^n)\rarrow\Hom_\sK(A,Z^{n+1})
\rarrow0$ is a short exact sequence of abelian groups.

 Besides, the morphism $Z^{n+1}\rarrow B^{n+1}$ is a monomorphism
in~$\sK$.
 Hence the morphism $Z^n\rarrow B^n$ is also the kernel of
the morphism $B^n\rarrow Z^{n+1}$.

 Let $G\in\sA$ be an object such that there is an admissible
epimorphism $G\rarrow Z^{n+1}$ in~$\sK$.
 Then the morphism $G\rarrow Z^{n+1}$ factorizes as $G\rarrow B^n
\rarrow Z^{n+1}$.
 By~\cite[Proposition~7.6]{Bueh} or the dual version
of~\cite[Proposition~2.16]{Bueh}, it follows that $B^n\rarrow Z^{n+1}$
is an admissible epimorphism in~$\sK$.
 Thus $0\rarrow Z^n\rarrow B^n\rarrow Z^{n+1}\rarrow0$ is an admissible
short exact sequence in~$\sK$.
 Finally, surjectivity of the map $\Hom_\sK(A,B^n)\rarrow
\Hom_\sK(A,Z^{n+1})$ implies $\Ext^1_\sK(A,Z^n)=0$.
 So we have $Z^n\in\sB$ for all $n\in\boZ$, and the complex $B^\bu$
is acyclic in~$\sB$.
\end{proof}

 Let $\sK$ be a category and $\alpha$~be an ordinal.
 Any ordinal is an ordered set, and any poset can be viewed as
a category; so one can consider functors $\alpha\rarrow\sK$.
 Such a functor is called a \emph{$\alpha$\+indexed chain} (of
objects and morphisms in~$\sK$) and denoted by
$(K_i\in\sK)_{0\le i<\alpha}$ or $(K_i\to K_j)_{0\le i<j<\alpha}$.
 A $\alpha$\+indexed chain $(K_i\in\sK)_{0\le i<\alpha}$ is said
to be \emph{smooth} if $K_j=\varinjlim_{i<j}K_i$ for all limit
ordinals $0<j<\alpha$.

 Let $\sK$ be an exact category.
 A $(\alpha+1)$\+indexed chain $(F_i\to F_j)_{0\le i<j\le\alpha}$ in
$\sK$ is said to be a \emph{$\alpha$\+indexed filtration} (of
the object $F=F_\alpha$) if the following conditions hold:
\begin{itemize}
\item $F_0=0$;
\item the transition morphism $F_i\rarrow F_{i+1}$ is an admissible
monomorphism for every ordinal $0\le i<\alpha$;
\item the chain $(F_i\to F_j)_{0\le i<j\le\alpha}$ is smooth.
\end{itemize}
 Denote by $S_i=\coker(F_i\to F_{i+1})$ the cokernels of the admissible
monomorphisms $F_i\rarrow F_{i+1}$ in~$\sK$.
 Then the object $F\in\sK$ is said to be \emph{filtered by}
the objects $S_i\in\sK$, \,$0\le i<\alpha$.
 In an alternative terminology, the object $F=F_\alpha$ is said to be
a \emph{transfinitely iterated extension} (\emph{in the sense of
the direct limit}) of the objects $S_i\in\sK$.

 Given a class of objects $\sS\subset\sK$, we denote by
$\Fil(\sS)\subset\sK$ the class of all objects filtered by
(objects isomorphic to) the objects from~$\sS$.
 A class of objects $\sF\subset\sK$ is said to be \emph{deconstructible}
if there exists a \emph{set} of objects $\sS\subset\sK$ such that
$\sF=\Fil(\sS)$.

 The following assertion is known as the \emph{Eklof
lemma}~\cite[Lemma~1]{ET}, \cite[Lemma~6.2]{GT}.

\begin{lem} \label{eklof-lemma}
 For any exact category\/ $\sK$ and a class of objects\/
$\sB\subset\sK$, the class of objects ${}^{\perp_1}\sB$ is closed
under transfinitely iterated extensions in\/~$\sK$; so\/
$\Fil({}^{\perp_1}\sB)={}^{\perp_1}\sB$.
 Equivalently, for any class of objects\/ $\sS\subset\sK$, one has\/
$\sS^{\perp_1}=\Fil(\sS)^{\perp_1}$.
\end{lem}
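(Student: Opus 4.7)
The plan is to first reduce the ``equivalently'' clause to the main closedness statement. Given a class $\sS \subset \sK$, the equality $\sS^{\perp_1} = \Fil(\sS)^{\perp_1}$ decomposes into two inclusions: $\sS^{\perp_1} \supset \Fil(\sS)^{\perp_1}$ is immediate from $\sS \subset \Fil(\sS)$, while the reverse inclusion $\sS^{\perp_1} \subset \Fil(\sS)^{\perp_1}$, after setting $\sB := \sS^{\perp_1}$, amounts to the assertion that every object of $\Fil({}^{\perp_1}\sB)$ belongs to ${}^{\perp_1}\sB$. So it suffices to prove the first statement.

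Fix $B \in \sB$ and a filtration $(F_i)_{0 \le i \le \alpha}$ of $F = F_\alpha$ with successive cokernels $S_i \in {}^{\perp_1}\sB$, and take an admissible short exact sequence $\xi \colon 0 \rarrow B \rarrow E \rarrow F \rarrow 0$. The goal is to split $\xi$. Pulling $\xi$ back along each admissible mono $F_i \hookrightarrow F$ yields a smooth chain of admissible short exact sequences $\xi_i \colon 0 \rarrow B \rarrow E_i \rarrow F_i \rarrow 0$ with $E_\alpha = E$, using at limit ordinals the identification $E_j = \varinjlim_{i<j} E_i$, which holds by exactness of directed colimits in the Grothendieck-category contexts in which this lemma is actually used. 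I would then construct compatible splittings $s_i \colon F_i \rarrow E_i$ of $\xi_i$ by transfinite recursion, with the compatibility condition that $s_{i+1}$ restricted along $F_i \hookrightarrow F_{i+1}$ equals $s_i$ followed by the admissible mono $E_i \rarrow E_{i+1}$. The base case $s_0 = 0$ and the limit case $s_j := \varinjlim_{i<j} s_i$ are forced by these requirements and present no difficulty.

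The heart of the argument is the successor step. Given $s_i$, the splitting realizes $E_i \cong B \oplus s_i(F_i)$, so $s_i(F_i)$ is an admissible subobject of $E_{i+1}$ with $B \cap s_i(F_i) = 0$ and $B + s_i(F_i) = E_i$. A $3 \times 3$-style diagram chase then produces an admissible short exact sequence $0 \rarrow B \rarrow E_{i+1}/s_i(F_i) \rarrow S_i \rarrow 0$ and identifies $E_{i+1}$ with the pullback of $E_{i+1}/s_i(F_i) \rarrow S_i$ along $F_{i+1} \rarrow S_i$ (applying the five lemma to the canonical map from $E_{i+1}$ into this pullback, both objects being extensions of $F_{i+1}$ by $B$ with identical outer maps). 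Since $\Ext^1_\sK(S_i, B) = 0$, the displayed extension of $S_i$ by $B$ splits; composing such a splitting with the admissible epi $F_{i+1} \rarrow S_i$ and pairing with $\id_{F_{i+1}}$ via the pullback property yields the desired $s_{i+1} \colon F_{i+1} \rarrow E_{i+1}$, while the equality $s_{i+1}|_{F_i} = s_i$ follows from uniqueness in the pullback together with the vanishing of the composition $F_i \rarrow F_{i+1} \rarrow S_i$.

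The main obstacle I anticipate is the exact-categorical bookkeeping rather than any of the individual steps: verifying admissibility of the various subobjects and quotients arising in the successor step, justifying the $3 \times 3$ and five-lemma style arguments in an arbitrary exact category (using the forms recorded in the Preliminaries), and establishing that directed colimits along smooth chains of admissible monos behave well enough for the limit step to go through. Once the compatible family $(s_i)_{0 \le i \le \alpha}$ is assembled, $s_\alpha \colon F \rarrow E$ splits $\xi$ and witnesses $F \in {}^{\perp_1}\sB$.
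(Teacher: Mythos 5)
Your reduction of the ``equivalently'' clause and your successor step are fine, and they follow the classical Eklof argument, which is essentially what the paper intends: the paper's own proof is just a pointer to \cite[Lemma~4.5]{PR} and \cite[Lemma~7.5]{PS6}, and those arguments run the same transfinite induction, using $\Ext^1_\sK(S_i,B)=0$ exactly as you do at successor ordinals. The genuine problem is your limit step. The lemma is stated for an \emph{arbitrary} exact category $\sK$, where directed colimits need not exist, let alone be exact; so the identification $E_j=\varinjlim_{i<j}E_i$ of the pulled-back extensions at a limit ordinal~$j$, which you justify by appealing to exactness of directed colimits ``in the Grothendieck-category contexts in which this lemma is actually used,'' is simply not available in the stated generality, and as written your argument proves a weaker statement than the one claimed. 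Note that the definition of a filtration only guarantees the existence of the colimits $F_j=\varinjlim_{i<j}F_i$ of the chain being filtered; it says nothing about colimits of other chains, and even granting their existence, exactness (AB5) is a further hypothesis.

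The fix is standard and stays entirely within your setup. Instead of carrying the splittings $s_i\:F_i\rarrow E_i$ themselves through the limit step, track the composites $t_i=(E_i\to E)\circ s_i\:F_i\rarrow E$; your compatibility condition makes $(t_i)_{i<j}$ a cocone on the chain $(F_i)_{i<j}$. At a limit ordinal~$j$, the universal property of $F_j=\varinjlim_{i<j}F_i$ (which exists by the smoothness requirement in the definition of a filtration) yields a morphism $t_j\:F_j\rarrow E$, and uniqueness of morphisms out of this colimit gives $p\circ t_j=f_j$, where $p\:E\rarrow F$ is the admissible epimorphism and $f_j\:F_j\rarrow F$ is the transition morphism of the chain; then $t_j$ factors through the pullback $E_j=E\times_F F_j$ (which exists in any exact category), producing $s_j$ and letting the induction continue with no existence or exactness assumptions on colimits. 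With this modification, together with the exact-categorical bookkeeping you already flag (for which the paper's standing idempotent-completeness assumption suffices), your proof becomes the same argument as the one the paper cites.
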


\begin{proof}
 The argument from~\cite[Lemma~4.5]{PR} is applicable; see
also~\cite[Lemma~7.5]{PS6}.
\end{proof}

 Given a class of objects $\sF\subset\sK$, we denote by $\sF^\oplus
\subset\sK$ the class of all direct summands of objects from~$\sF$.
 The following two theorems are two versions/generalizations of
the \emph{Eklof--Trlifaj theorem}~\cite[Theorems~2 and~10]{ET},
\cite[Theorem~6.11 and Corollary~6.13]{GT}.

\begin{thm} \label{loc-pres-eklof-trlifaj}
 Let\/ $\sK$ be a locally presentable abelian category (in the sense
of\/~\cite[Definition~1.17 or Theorem~1.20]{AR}), endowed with
the abelian exact structure.
 Let\/ $\sS\subset\sK$ be a \emph{set} of objects and $(\sA,\sB)$ be
the cotorsion pair generated by\/ $\sS$ in\/~$\sK$.
 In this context: \par
\textup{(a)} if the class\/ $\sA$ is generating and the class\/ $\sB$
is cogenerating in\/ $\sK$, then the cotorsion pair $(\sA,\sB)$ is
complete; \par
\textup{(b)} if the class\/ $\Fil(\sS)$ is generating in\/ $\sK$,
then\/ $\sA=\Fil(\sS)^\oplus$.
\end{thm}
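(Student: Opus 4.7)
My plan is to carry out the classical transfinite small-object argument of Eklof and Trlifaj in the locally presentable abelian setting. For part~(a), I~first choose a regular cardinal $\lambda$ such that every object $S\in\sS$ is $\lambda$\+presentable; this is possible because $\sS$ is a set and $\sK$ is locally presentable. Given $K\in\sK$, I~construct a smooth $\lambda$\+indexed chain of monomorphisms $K=K_0\rarrow K_1\rarrow K_2\rarrow\dotsb$ by transfinite recursion. At a successor step, I~enumerate the set $T_i=\{(S,\xi)\mid S\in\sS,\ \xi\in\Ext^1_\sK(S,K_i)\}$, represent each $\xi$ by a short exact sequence $0\rarrow K_i\rarrow E_{S,\xi}\rarrow S\rarrow 0$, form the direct sum extension $0\rarrow\bigoplus_{(S,\xi)}K_i\rarrow\bigoplus_{(S,\xi)}E_{S,\xi}\rarrow\bigoplus_{(S,\xi)}S\rarrow 0$, and push out along the fold map $\bigoplus_{(S,\xi)}K_i\rarrow K_i$ to obtain $0\rarrow K_i\rarrow K_{i+1}\rarrow\bigoplus_{(S,\xi)\in T_i}S\rarrow 0$. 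At a limit ordinal~$j$, I~set $K_j=\varinjlim_{i<j}K_i$. Putting $B=K_\lambda$ and $A'=B/K$, the quotient $A'$ is filtered by the direct sums $\bigoplus_{(S,\xi)\in T_i}S$, hence $A'\in\Fil(\sS)\subset\sA$ by the Eklof Lemma~\ref{eklof-lemma}. Using $\lambda$\+presentability of each $S\in\sS$, every class in $\Ext^1_\sK(S,B)$ descends to a class in some $\Ext^1_\sK(S,K_i)$, which is killed by construction at stage $i+1$; hence $B\in\sS^{\perp_1}=\sB$, producing the preenvelope sequence~\eqref{special-preenvelope-sequence}. To obtain the precover sequence~\eqref{special-precover-sequence}, I~apply Salce's classical trick: choose an admissible monomorphism $K\rarrow B_0$ with $B_0\in\sB$ (using cogenerativity of~$\sB$), apply the preenvelope construction to $B_0$ to get $0\rarrow B_0\rarrow\widetilde B\rarrow A\rarrow 0$ with $\widetilde B\in\sB$ and $A\in\sA$, and pull back along $K\rarrow B_0$; closure of $\sB$ under extensions yields the required kernel in~$\sB$.

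For part~(b), given $K\in\sA$, the key task is producing a precover sequence $0\rarrow B'\rarrow A\rarrow K\rarrow 0$ with $A\in\Fil(\sS)$ and $B'\in\sB$. Using the hypothesis that $\Fil(\sS)$ is generating, I~fix an admissible epimorphism $F_0\rarrow K$ with $F_0\in\Fil(\sS)$, and let $L$ denote its kernel. Applying part~(a) to $L$ yields a preenvelope sequence $0\rarrow L\rarrow B'\rarrow A''\rarrow 0$ with $B'\in\sB$ and $A''\in\Fil(\sS)$. Forming the pushout of $L\rarrow F_0$ along $L\rarrow B'$, I~obtain an object $A$ fitting simultaneously into two short exact sequences $0\rarrow F_0\rarrow A\rarrow A''\rarrow 0$ and $0\rarrow B'\rarrow A\rarrow K\rarrow 0$; the former shows $A\in\Fil(\sS)$ (as $\Fil(\sS)$ is closed under extensions), while the latter is the desired precover sequence. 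Since $K\in\sA$ and $B'\in\sB$, one has $\Ext^1_\sK(K,B')=0$, so this sequence splits and $K$ is a direct summand of $A\in\Fil(\sS)$. The reverse inclusion $\Fil(\sS)\subset\sA$ is immediate from $\sS\subset\sA$ and Lemma~\ref{eklof-lemma}, giving $\sA=\Fil(\sS)^\oplus$.

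The principal obstacle is the colimit step in part~(a): proving $\Ext^1_\sK(S,B)=0$ for $B=\varinjlim_{i<\lambda}K_i$ requires more than the $\lambda$\+presentability of~$S$ at the level of~$\Hom$; it requires that $\lambda$\+filtered colimits in~$\sK$ preserve enough exactness to allow one to represent every class in $\Ext^1_\sK(S,B)$ as arising from a class in $\Ext^1_\sK(S,K_i)$ for some~$i$. In a Grothendieck abelian category this is immediate from AB5, but in a general locally presentable abelian category one must appeal to a direct factoring device—representing the extension class by an explicit short exact sequence and using $\lambda$\+presentability of~$S$ to factor the relevant structure maps through a bounded stage, in the spirit of the arguments in~\cite[Lemma~4.5]{PR} and~\cite[Lemma~7.5]{PS6}. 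Once this exactness step is in place, everything else is formal: Salce's pullback for the precover sequence in~(a), the analogous Salce-type pushout for the precover of $K\in\sA$ in~(b), and a one-line splitting argument completing~(b).
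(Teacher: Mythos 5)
Your argument is the classical Eklof--Trlifaj chain construction, and in the locally presentable setting of this theorem it has a genuine gap: such a category need not satisfy AB5, and your construction uses exactness of well-ordered colimits in at least three places, only one of which you acknowledge. (i)~The transfinite composite $K=K_0\rarrow K_\lambda=B$ need not be a monomorphism: smooth chains of monomorphisms can collapse at the colimit (for instance, in the category of $\boZ_p$\+contramodules, i.~e.\ Ext\+$p$\+complete abelian groups, the chain $\boZ_p\overset{p}\rarrow\boZ_p\overset{p}\rarrow\dotsb$ of monomorphisms has zero colimit, since $\Hom(\boZ[1/p],D)=0$ for every such~$D$), and nothing in your construction rules out this kind of degeneration; so the claimed short exact sequence $0\rarrow K\rarrow B\rarrow A'\rarrow0$ need not exist at all. (ii)~Even granting that, the identification of $A'=B/K$ as an object of $\Fil(\sS)$, filtered by the coproducts $\bigoplus S$, again uses exactness of the chain colimit. (iii)~The descent of classes in $\Ext^1_\sK(S,B)$ to some stage $K_i$ is not a consequence of the $\lambda$\+presentability of~$S$, which controls $\Hom$ only; and the lemmas you invoke to repair this, \cite[Lemma~4.5]{PR} and \cite[Lemma~7.5]{PS6}, are versions of the Eklof lemma, i.~e.\ statements about the \emph{first} variable ($\Ext^1$ out of a transfinitely filtered object), not about $\Ext^1(S,{-})$ applied to a chain colimit, so they do not supply the missing step. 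This is exactly why the paper has two separate theorems: the chain argument you describe is the one that works for Grothendieck categories and, more generally, efficient exact categories (Theorem~\ref{efficient-eklof-trlifaj}, quoting~\cite{Sto-ICRA}), where good behaviour of transfinite compositions of admissible monomorphisms is part of the hypotheses, whereas the present theorem is proved in the paper by citing \cite{PR} and \cite{PS4}, whose arguments are organized specifically to avoid these exactness assumptions.

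Two further, smaller points. Your Salce step in~(a) is garbled: one cannot ``pull back along $K\rarrow B_0$'' a short exact sequence whose subobject term is~$B_0$; the standard Salce argument obtains the precover sequence from an epimorphism $A_0\rarrow K$ with $A_0\in\sA$ (this is where the generating hypothesis on $\sA$ enters) by pushing out the preenvelope of its kernel. In~(b) you invoke part~(a) for the kernel~$L$, but the hypotheses of~(a) (in particular that $\sB$ is cogenerating) are not assumed in~(b), and you also need the cokernel of the preenvelope of $L$ to lie in $\Fil(\sS)$ rather than merely in~$\sA$; both require appealing to the (corrected) preenvelope construction itself rather than to the statement of~(a). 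These are repairable, but the colimit-exactness issues in the main construction are not repairable within the approach as written.
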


\begin{proof}
 This is~\cite[Corollary~3.6 and Theorem~4.8(d)]{PR}
or~\cite[Theorems~3.3 and~3.4]{PS4}.
\end{proof}

\begin{thm} \label{efficient-eklof-trlifaj}
 Let\/ $\sK$ be an efficient exact category (in the sense
of\/~\cite[Definition~3.4]{Sto-ICRA}).
 Let\/ $\sS\subset\sK$ be a \emph{set} of objects and $(\sA,\sB)$ be
the cotorsion pair generated by\/ $\sS$ in\/~$\sK$.
 In this context: \par
\textup{(a)} if the class\/ $\sA$ is generating in\/ $\sK$, then
the cotorsion pair $(\sA,\sB)$ is complete; \par
\textup{(b)} if the class\/ $\Fil(\sS)$ is generating in\/ $\sK$,
then\/ $\sA=\Fil(\sS)^\oplus$.
\end{thm}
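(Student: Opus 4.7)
The plan is to run a transfinite small object argument in the exact-category framework afforded by efficiency, and to reduce the whole theorem to the construction of special preenvelope sequences for an arbitrary object $K\in\sK$. Once special preenvelopes with $A'\in\Fil(\sS)$ are in hand, Salce's trick yields special precovers (using that the class $\sA$ is generating), so part~(a) is covered. Part~(b) then follows from a refined version of the precover construction whose middle term already lies in $\Fil(\sS)$, together with a splitting argument built on $\Ext^1_\sK(A,B')=0$ for $A\in\sA$, \,$B'\in\sB$.

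Concretely, for the special preenvelope of $K$ I would build a smooth $(\alpha+1)$\+indexed chain of admissible monomorphisms $K=F_0\rarrow F_1\rarrow F_2\rarrow\dotsb$ with cokernels filtered by~$\sS$. At each successor stage $i\mapsto i+1$, take a set of representatives of all admissible short exact sequences $0\rarrow F_i\rarrow E\rarrow S\rarrow0$ with $S\in\sS$, form their direct sum, and collapse the resulting direct sum of copies of $F_i$ via the codiagonal; this produces an admissible extension $0\rarrow F_i\rarrow F_{i+1}\rarrow\bigoplus S\rarrow0$ in which every originally chosen class has been trivialized in $\Ext^1_\sK(S,F_{i+1})$. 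At limit stages one passes to the smooth colimit, which exists and is an admissible monomorphism by the axioms of an efficient exact category. Choosing $\alpha$ large enough that every $S\in\sS$ is $\alpha$\+presentable with respect to smooth chains of admissible monomorphisms forces every extension class in $\Ext^1_\sK(S,F_\alpha)$ to originate at some stage $i<\alpha$, where it was already killed. Hence $F_\alpha\in\sS^{\perp_1}=\sB$, while the cokernel $F_\alpha/K$ lies in $\Fil(\sS)\subset\sA$ by the Eklof lemma~\ref{eklof-lemma}.

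For part~(b), given $A\in\sA$, I would first use the hypothesis that $\Fil(\sS)$ is generating to pick an admissible epimorphism $F\rarrow A$ with $F\in\Fil(\sS)$, and then apply the small object argument to its kernel inside the ambient cotorsion pair to upgrade this presentation to a special precover sequence $0\rarrow B'\rarrow\widetilde F\rarrow A\rarrow0$ in which $\widetilde F$ is itself filtered by~$\sS$ — this uses that pushouts of an object of $\Fil(\sS)$ along an admissible monomorphism with cokernel in $\Fil(\sS)$ remain in $\Fil(\sS)$. Because $A\in{}^{\perp_1}\sB$ and $B'\in\sB$, this sequence splits, exhibiting $A$ as a direct summand of $\widetilde F\in\Fil(\sS)$. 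The reverse inclusion $\Fil(\sS)^\oplus\subset\sA$ is immediate from the Eklof lemma~\ref{eklof-lemma} together with the closure of $\sA={}^{\perp_1}\sB$ under direct summands.

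The principal obstacle is set-theoretic rather than homological: one must verify that the axioms packaged in the definition of an efficient exact category \cite[Definition~3.4]{Sto-ICRA} simultaneously supply (i)~a single cardinal $\alpha$ beyond which all $S\in\sS$ are presentable in the relevant sense, (ii)~stability of admissible monomorphisms under smooth transfinite composition, and (iii)~compatibility of cokernels with these smooth colimits so that the filtration structure is preserved at every limit stage. These are exactly the features that distinguish the efficient exact setting from the locally presentable abelian case of Theorem~\ref{loc-pres-eklof-trlifaj}, and the full verification is carried out in~\cite[Corollary~3.6 and Theorem~4.8(d)]{PR} and in~\cite[Theorems~3.3 and~3.4]{PS4}, which the authors simply invoke.
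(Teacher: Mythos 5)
Your overall strategy is the right one, and it is in substance the argument behind the result the paper invokes: the paper itself gives no proof here, it simply cites \v St\!'ov\'\i\v cek~\cite[Theorem~5.16]{Sto-ICRA}, whose proof is exactly a transfinite small object argument producing special preenvelopes with cokernel in $\Fil(\sS)$, followed by Salce's trick for part~(a) and the pushout-plus-splitting argument for part~(b). Your treatment of (a) and (b) given the preenvelope construction is correct.

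There are, however, two concrete problems with how you discharge the technical core. First, your closing deferral is to the wrong sources: \cite[Corollary~3.6 and Theorem~4.8(d)]{PR} and \cite[Theorems~3.3 and~3.4]{PS4} are the references for the \emph{locally presentable abelian} case (Theorem~\ref{loc-pres-eklof-trlifaj} of the paper) and do not verify your items (i)--(iii) for efficient exact categories; the verification you need is precisely \cite[Theorem~5.16]{Sto-ICRA} together with the surrounding material on smallness relative to inflations, so as written the hard part of your proof rests on citations that do not cover it. Second, the step ``every extension class in $\Ext^1_\sK(S,F_\alpha)$ originates at some stage $i<\alpha$, where it was already killed'' is not routine in a general exact category: the classical module-theoretic argument lifts generators of $S$ into the middle term and intersects with the filtration, and this uses a subobject lattice that is unavailable here. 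In the efficient setting this point is exactly where the axiom that every object is small relative to transfinite compositions of inflations enters, and \v St\!'ov\'\i\v cek handles it by running Quillen's small object argument on morphisms rather than by a naive ``the class comes from a stage'' claim; your sketch also quietly uses that set-indexed coproducts of conflations are conflations and that the smooth colimits preserve the quotient filtrations, which again must be extracted from the efficiency axioms rather than assumed. So the architecture of your proof is sound and matches the actual source, but as a self-contained argument it has a gap precisely at the colimit/smallness step, and the references you offer in its place do not fill it.
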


\begin{proof}
 This is~\cite[Theorem~5.16]{Sto-ICRA}.
\end{proof}

 Notice that Grothendieck categories~$\sK$ (endowed with the abelian
exact structure) satisfy the assumptions of both
Theorems~\ref{loc-pres-eklof-trlifaj} and~\ref{efficient-eklof-trlifaj}.
 A common special case of the two theorems applicable to Grothendieck
categories $\sK$ can be found in~\cite[Proposition~2.4]{Gil3}.
 A Grothendieck category $\sK$ has enough injective objects, so
for any cotorsion pair $(\sA,\sB)$ in $\sK$ the class $\sB$ is
automatically cogenerating.
 The condition that the class $\sA$ is generating is \emph{not}
automatic and needs to be checked, however.

 We refer to~\cite[Section~11]{Bueh} for a discussion of projective
and injective objects in an exact category~$\sK$.
 The full subcategory of projective objects in $\sK$ will be denoted
by $\sK_\proj\subset\sK$, and the full subcategory of injective
objects by $\sK^\inj\subset\sK$.

\begin{lem} \label{inj-proj-objects-in-left-right-classes}
 Let $(\sA,\sB)$ be a complete cotorsion pair in an exact
category~$\sK$.
 Then \par
\textup{(a)} there are enough injective objects in the exact category\/
$\sA$, and the kernel\/ $\sA\cap\sB$ of the cotorsion pair $(\sA,\sB)$
is the class of all injective objects in\/~$\sA$; \par
\textup{(b)} there are enough projective objects in the exact category\/
$\sB$, and the kernel\/ $\sA\cap\sB$ of the cotorsion pair $(\sA,\sB)$
is the class of all projective objects in\/~$\sB$.
\end{lem}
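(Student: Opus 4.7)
The plan for part (a) is to deduce both assertions from the special preenvelope sequences provided by completeness, using the crucial observation that the class $\sA={}^{\perp_1}\sB$ is closed under extensions in $\sK$. This extension-closedness is an immediate consequence of the Yoneda Ext long exact sequence: for any admissible short exact sequence $0\to A_1\to E\to A_2\to 0$ in $\sK$ with $A_1,A_2\in\sA$ and any $B\in\sB$, the vanishing of $\Ext^1_\sK(A_1,B)$ and $\Ext^1_\sK(A_2,B)$ forces $\Ext^1_\sK(E,B)=0$. Given $A\in\sA$, completeness supplies an admissible short exact sequence $0\to A\to B\to A'\to 0$ in $\sK$ with $B\in\sB$ and $A'\in\sA$. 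Extension-closedness of $\sA$ forces $B\in\sA$, so $B\in\sA\cap\sB$, and the map $A\to B$ is automatically an admissible monomorphism in the exact category $\sA$, as its cokernel $A'$ lies in $\sA$. This produces enough objects of $\sA\cap\sB$ to envelop every object of~$\sA$.

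To identify $\inj(\sA)$ with $\sA\cap\sB$, I would treat the two inclusions separately. For $\sA\cap\sB\subseteq\inj(\sA)$, given $I\in\sA\cap\sB$ and an admissible short exact sequence $0\to A'\to A\to A''\to 0$ in $\sA$, the Ext sequence
$$
\Hom_\sK(A,I) \rarrow \Hom_\sK(A',I) \rarrow \Ext^1_\sK(A'',I) = 0
$$
shows that $\Hom_\sK(-,I)$ sends admissible monomorphisms in $\sA$ to surjections, so $I$ is injective in $\sA$. Conversely, if $I\in\sA$ is injective in $\sA$, applying the special preenvelope sequence to $I$ produces $0\to I\to B\to A'\to 0$, which by the argument of the previous paragraph lies entirely in $\sA$; injectivity of $I$ inside $\sA$ splits this sequence, realizing $I$ as a direct summand of $B\in\sB$. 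Since $\sB=\sA^{\perp_1}$ is closed under direct summands (the functor $\Ext^1_\sK(A,-)$ preserves finite products), $I\in\sB$, so $I\in\sA\cap\sB$.

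Part (b) is entirely dual, with special precover sequences replacing preenvelopes and the extension-closedness of $\sB=\sA^{\perp_1}$ in place of that of~$\sA$. I do not foresee any substantive obstacle; the main conceptual point is that the extension-closedness of $\sA$ and $\sB$ is exactly what ensures that the special approximation sequences provided by the completeness of $(\sA,\sB)$ in $\sK$ remain inside the appropriate subcategory, so that completeness of the ambient cotorsion pair restricts meaningfully to a supply of injective objects in $\sA$ and projective objects in $\sB$.
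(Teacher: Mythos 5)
Your proposal is correct and follows essentially the same route as the paper: closure of $\sA$ under extensions puts the special preenvelope $0\to A\to B\to A'\to 0$ inside $\sA$ with $B\in\sA\cap\sB$, the vanishing of $\Ext^1_\sK({-},I)$ on $\sA$ for $I\in\sA\cap\sB$ gives injectivity in the inherited exact structure, and injectives of $\sA$ are recovered as direct summands of objects of $\sA\cap\sB$, which is closed under direct summands. Part (b) is dual in both treatments, so there is nothing to add.
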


\begin{proof}
 Let us prove part~(a).
 First of all, the class $\sA={}^{\perp_1}\sB$ is closed under
extensions in $\sK$, hence there is the inherited exact structure
on $\sA$ and the assertion of the lemma makes sense.
 Furthermore, we have $\Ext^1_\sA(A,B)=\Ext^1_\sK(A,B)=0$ for all
$A\in A$ and $B\in\sA\cap\sB$, so the objects of $\sA\cap\sB$ are
injective in~$\sA$.
 To show that there are enough such injective objects, suppose given
an object $A\in\sA$.
 Consider a special preenvelope
sequence~\eqref{special-preenvelope-sequence} in the exact
category~$\sK$,
$$
 0\lrarrow A\lrarrow B\lrarrow A'\lrarrow0
$$
with $B\in\sB$ and $A'\in\sA$.
 Then we have $B\in\sA\cap\sB$, since $\sA$ is closed under extensions
in~$\sK$.
 So every object of $\sA$ is an admissible subobject of an object from
$\sA\cap\sB$ in the exact category~$\sA$.
 It follows that all the injective objects of $\sA$ are direct summands
of objects from $\sA\cap\sB$; and it remains to point out that
the classes $\sA$ and $\sB$ are closed under direct summands in $\sK$,
hence the class $\sA\cap\sB$ is closed under direct summands in $\sA$
and~$\sB$.
\end{proof}

 Let $\sK$ be an exact category.
 A complex $A^\bu$ in $\sK$ is said to be \emph{Becker-coacyclic}
if, for every complex of injective objects $J^\bu$ in $\sK$, any
morphism of complexes $A^\bu\rarrow J^\bu$ is homotopic to zero.
 The full subcategory of Becker-coacyclic complexes is denoted by
$\Ac^\bco(\sK)\subset\Com(\sK)$ or $\Ac^\bco(\sK)\subset\Hot(\sK)$.
 It is clear from the definition that $\Ac^\bco(\sK)$ is a thick
subcategory in the triangulated category $\Hot(\sK)$.
 The triangulated Verdier quotient category
$$
 \sD^\bco(\sK)=\Hot(\sK)/\Ac^\bco(\sK)
$$
is called the \emph{coderived category} (\emph{in the sense of Becker})
of the exact category~$\sK$.

 Dually, a complex $B^\bu$ in $\sK$ is said to be
\emph{Becker-contraacyclic} if, for every complex of projective objects
$P^\bu$ in $\sK$, any morphism of complexes $P^\bu\rarrow B^\bu$ is
homotopic to zero.
 The full subcategory of Becker-contraacyclic complexes is denoted by
$\Ac^\bctr(\sK)\subset\Com(\sK)$ or $\Ac^\bctr(\sK)\subset\Hot(\sK)$.
 It is clear from the definition that $\Ac^\bctr(\sK)$ is a thick
subcategory in the triangulated category $\Hot(\sK)$.
 The triangulated Verdier quotient category
$$
 \sD^\bctr(\sK)=\Hot(\sK)/\Ac^\bctr(\sK)
$$
is called the \emph{contraderived category} (\emph{in the sense of
Becker}) of the exact category~$\sK$.

 Any short exact sequence $0\rarrow K^\bu\rarrow L^\bu\rarrow M^\bu
\rarrow0$ of complexes in $\sK$ can be viewed as a bicomplex with
three rows.
 As such, it has a total complex, which we will denote by
$\Tot(K^\bu\to L^\bu\to M^\bu)$.

\begin{lem} \label{totalizations-are-co-contra-acyclic}
 Let\/ $\sK$ be an exact category.
 Then \par
\textup{(a)} the total complex\/ $\Tot(K^\bu\to L^\bu\to M^\bu)$ of
any (termwise admissible) short exact sequence $0\rarrow K^\bu\rarrow
L^\bu\rarrow M^\bu\rarrow0$ is both a Becker-coacyclic and
a Becker-contraacyclic complex in\/~$\sK$; \par
\textup{(b)} the full subcategory of Becker-coacyclic complexes\/
$\Ac^\bco(\sK)$ is closed under infinite coproducts in\/ $\Hot(\sK)$
and\/ $\Com(\sK)$; \par
\textup{(c)} the full subcategory of Becker-contraacyclic complexes\/
$\Ac^\bctr(\sK)$ is closed under infinite products in\/ $\Hot(\sK)$
and\/ $\Com(\sK)$.
\end{lem}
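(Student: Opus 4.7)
The plan for part~(a) is to show that $T^\bu=\Tot(K^\bu\to L^\bu\to M^\bu)$ is Becker-coacyclic by verifying directly that $\Hom_\sK(T^\bu,J^\bu)$ is an acyclic complex of abelian groups for every complex $J^\bu$ of injective objects in~$\sK$; the vanishing of cohomology of this Hom complex is precisely the vanishing of all morphisms $T^\bu\rarrow J^\bu[n]$ in $\Hot(\sK)$. The first step is to identify $\Hom_\sK(T^\bu,J^\bu)$, using that finite direct sums commute with infinite products of abelian groups, with the total complex of the three-row bicomplex of complexes of abelian groups
\[
\Hom_\sK(M^\bu,J^\bu)\lrarrow\Hom_\sK(L^\bu,J^\bu)\lrarrow\Hom_\sK(K^\bu,J^\bu).
\]
The second step uses the injectivity of each $J^i$: the functor $\Hom_\sK(-,J^i)$ sends any admissible short exact sequence in $\sK$ to a short exact sequence of abelian groups, so the three-row complex displayed above is in fact a short exact sequence of complexes of abelian groups.

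Then I would invoke the classical fact that the total complex of a short exact sequence of complexes (viewed as a three-row bicomplex) is acyclic. This follows either from the spectral sequence of the bicomplex, where one $E_1$-page vanishes, or more elementarily by splicing into short pieces and chasing the long exact cohomology sequence. Thus $\Hom_\sK(T^\bu,J^\bu)$ is acyclic and $T^\bu\in\Ac^\bco(\sK)$. Becker-contraacyclicity of $T^\bu$ is strictly dual: for any complex of projective objects $P^\bu$, the complex $\Hom_\sK(P^\bu,T^\bu)$ is identified with the total complex of the three-row bicomplex $\Hom_\sK(P^\bu,K^\bu)\rarrow\Hom_\sK(P^\bu,L^\bu)\rarrow\Hom_\sK(P^\bu,M^\bu)$, which is a short exact sequence of complexes (by projectivity of each $P^i$), and hence has acyclic total complex.

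For part~(b), I would take a family $(A_\alpha^\bu)_\alpha$ of Becker-coacyclic complexes whose coproduct $A^\bu=\bigoplus_\alpha A_\alpha^\bu$ exists in $\Com(\sK)$; when coproducts exist in $\Hot(\sK)$ they are computed by the same termwise formula, so this case is covered as well. The universal property of the coproduct gives a natural isomorphism
\[
\Hom_\sK(A^\bu,J^\bu)\simeq\prod_\alpha\Hom_\sK(A_\alpha^\bu,J^\bu)
\]
of complexes of abelian groups for every complex of injectives $J^\bu$. Each factor is acyclic by hypothesis, and infinite products of acyclic complexes in the category of abelian groups are acyclic (since $\prod$ is exact there), so $\Hom_\sK(A^\bu,J^\bu)$ is acyclic and $A^\bu$ is Becker-coacyclic. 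Part~(c) is dual: the product $B^\bu=\prod_\alpha B_\alpha^\bu$ satisfies $\Hom_\sK(P^\bu,B^\bu)\simeq\prod_\alpha\Hom_\sK(P^\bu,B_\alpha^\bu)$ for any complex of projectives $P^\bu$, and the same argument applies.

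No serious obstacle is expected. The only care point is the identification of the ``big'' Hom complexes with totalizations of bicomplexes of Hom's, which must respect the products-along-diagonals convention fixed in the Preliminaries; because only finitely many rows are involved, the relevant commutations of $\prod$ with $\oplus$ are routine. With that in place, injectivity (resp.\ projectivity) of the terms of $J^\bu$ (resp.\ $P^\bu$) converts everything to the elementary statement that the total complex of a short exact sequence of complexes of abelian groups is acyclic.
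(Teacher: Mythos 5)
Your proof is correct and is in substance the argument the paper relies on: the paper treats (b)--(c) as obvious and delegates (a) to the cited references, where the proof is exactly your reduction --- injectivity (resp.\ projectivity) of the terms of $J^\bu$ (resp.\ $P^\bu$) turns the termwise admissible short exact sequence into a short exact sequence of complexes of abelian groups after applying $\Hom_\sK$, and the totalization of such a sequence is acyclic. One cosmetic remark: for closure under coproducts in $\Hot(\sK)$ you need not claim they are computed termwise; it already follows from the universal property that $\Hom_{\Hot(\sK)}\bigl(\coprod_\alpha A_\alpha^\bu,\,J^\bu\bigr)\simeq\prod_\alpha\Hom_{\Hot(\sK)}(A_\alpha^\bu,J^\bu)=0$, while coproducts in $\Com(\sK)$ are termwise and are therefore also coproducts in $\Hot(\sK)$, which is how the paper phrases it.
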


\begin{proof}
 Parts~(b) and~(c) are obvious.
 It may be helpful to notice that any (co)product in $\Com(\sK)$ is
a termwise (co)product, hence it is also a (co)product in $\Hot(\sK)$.
 Part~(a) goes back to~\cite[Theorem~3.5 and Remark~3.5]{Pkoszul};
the details can be found in~\cite[Lemmas~7.1 and~9.1]{PS4}
or~\cite[Lemmas~A.1.4 and B.7.1(a)]{Pcosh}.

 For the sake of completeness of the exposition, let us sketch
a proof of part~(a).
 Put $A^\bu=\Tot(K^\bu\to L^\bu\to M^\bu)$.
 Let $P^\bu$ be a complex of projective objects in~$\sK$.
 Then the short sequence of complexes of abelian groups
$0\rarrow\Hom_\sK(P^\bu,K^\bu)\rarrow\Hom_\sK(P^\bu,L^\bu)\rarrow
\Hom_\sK(P^\bu,M^\bu)\rarrow0$ is exact, and the complex of abelian
groups $\Hom_\sK(P^\bu,A^\bu)$ is the totalization of this short exact
sequence of complexes of abelian groups.
 It remains to observe that the totalization of any short exact
sequence of complexes of abelian groups is an acyclic complex.
 Thus we have $H^0\Hom_\sK(P^\bu,A^\bu)=0$, so every morphism of
complexes $P^\bu\rarrow A^\bu$ is homotopic to zero.
 The dual argument shows that, for any complex of injective objects
$J^\bu$ in $\sK$, every morphism of complexes $A^\bu\rarrow J^\bu$
is homotopic to zero.
\end{proof}

 A further discussion of the Becker coderived and contraderived
categories of exact categories can be found
in~\cite[Section~B.7]{Pcosh}.
 The exposition in~\cite[Appendix~B]{Pcosh} also offers some further
details on cotorsion pairs in exact categories.

\Section{Four Cotorsion Pairs in the Category of Complexes}

 Let $(\sA,\sB)$ be a hereditary complete cotorsion pair generated
by a set of objects $\sS$ in a Grothendieck category~$\sK$.
 The aim of this section is to construct four related hereditary
complete cotorsion pairs in the abelian category of complexes
$\Com(\sK)$.
 These constructions of cotorsion pairs in the category of complexes
are essentially due to Gillespie~\cite[Proposition~3.6]{Gil},
\cite[Proposition~3.2]{Gil2}, \cite[Lemma~4.9]{Gil3}.
 Our exposition provides some additional information.

\begin{prop} \label{coacyclic-all-cotorsion-pair-prop}
 Let\/ $\sK$ be either a locally presentable abelian category with
the abelian exact structure, or an efficient exact category in
the sense of\/~\cite{Sto-ICRA}.
 Let $(\sA,\sB)$ be a hereditary complete cotorsion pair generated
by a set of objects\/ $\sS$ in\/~$\sK$.
 Then the two classes\/ $\sA'=\Ac^\bco(\sA)\subset\Com(\sA)\subset
\Com(\sK)$ and\/ $\sB'=\Com(\sB)\subset\Com(\sK)$ form a hereditary
complete cotorsion pair $(\sA',\sB')$ generated by a set of objects in
the abelian/exact category\/ $\Com(\sK)$.
 The kernel\/ $\sA'\cap\sB'$ of the cotorsion pair $(\sA',\sB')$ in\/
$\Com(\sK)$ is the class of all contractible complexes with the terms
in\/ $\sA\cap\sB$.
\end{prop}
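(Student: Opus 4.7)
The plan is to produce a complete cotorsion pair in $\Com(\sK)$ via the Eklof--Trlifaj theorem and then identify it with the pair $(\sA',\sB')$ of the statement. The natural choice of generators is
$$
 \sS' = \{D_{n,n+1}(S) : S \in \sS,\ n \in \boZ\},
$$
the disk complexes on the original generators. By Lemma~\ref{disk-complexes-Ext-lemma}, $\Ext^1_{\Com(\sK)}(D_{n,n+1}(S),C^\bu)\simeq\Ext^1_\sK(S,C^n)$, so the cotorsion pair $(\sA',\sB')$ generated by $\sS'$ has right-hand class precisely $\sB'=\Com(\sB)$. The remaining tasks are to prove completeness, to identify $\sA'={}^{\perp_1}\Com(\sB)$ with $\Ac^\bco(\sA)$, and to handle hereditariness and the kernel.

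To invoke the Eklof--Trlifaj theorem, I check that $\sA'$ is generating in $\Com(\sK)$. Given $K^\bu$, choose for each $n$ an admissible epimorphism $A^n\rarrow K^n$ with $A^n\in\sA$ (using that $\sA$ is generating in $\sK$). The cochain map $\bigoplus_n D_{n,n+1}(A^n)\rarrow K^\bu$ whose restriction to the summand $D_{n,n+1}(A^n)$ is the chosen epimorphism in degree $n$ and its composite with $d_K$ in degree $n+1$ is a termwise admissible epimorphism whose source belongs to $\sA'$ (each $D_{n,n+1}(A^n)$ lies in $\sA'$ via the dual disk-complex calculation $\Ext^1_\sK(A^n,B^{n+1})=0$ for $B^\bu\in\Com(\sB)$, and $\sA'$ is closed under coproducts). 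Completeness of $(\sA',\sB')$ then follows from Theorem~\ref{loc-pres-eklof-trlifaj} or Theorem~\ref{efficient-eklof-trlifaj}.

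The heart of the argument, and the main obstacle, is the identification $\sA'=\Ac^\bco(\sA)$. The easy direction runs as follows: for $A^\bu\in\sA'$, testing against the disk complex $D_{n-1,n}(B)\in\Com(\sB)$ for each $B\in\sB$ gives $A^n\in\sA$ by Lemma~\ref{disk-complexes-Ext-lemma}; and for any complex $J^\bu$ of injectives in $\sA$, i.e., $J^\bu\in\Com(\sA\cap\sB)$ by Lemma~\ref{inj-proj-objects-in-left-right-classes}, Lemma~\ref{Ext1-as-homotopy-Hom-lemma} converts the vanishing $\Ext^1_{\Com(\sK)}(A^\bu,J^\bu)=0$ into null-homotopy of every morphism $A^\bu\rarrow J^\bu[1]$, proving $A^\bu\in\Ac^\bco(\sA)$. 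For the hard direction, I use the completeness just established to form a special precover sequence
$$
 0 \rarrow B^\bu \rarrow C^\bu \rarrow A^\bu \rarrow 0
$$
with $C^\bu\in\sA'\subset\Com(\sA)$ and $B^\bu\in\Com(\sB)$. Hereditariness of $(\sA,\sB)$ forces each $B^n$—the kernel in $\sA$ of the admissible epimorphism $C^n\rarrow A^n$—into $\sA\cap\sB$, so $B^\bu\in\Com(\sA\cap\sB)$ is a complex of injectives in $\sA$. Becker-coacyclicity of $A^\bu$ then yields $\Hom_{\Hot(\sA)}(A^\bu,B^\bu[1])=0$, which by Lemma~\ref{Ext1-as-homotopy-Hom-lemma} upgrades to $\Ext^1_{\Com(\sK)}(A^\bu,B^\bu)=0$; the sequence splits, and $A^\bu\in\sA'$ as a direct summand of $C^\bu$.

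Finally, hereditariness of $(\sA',\sB')$ reduces termwise to hereditariness of $(\sA,\sB)$ via closure of $\sB'=\Com(\sB)$ under cokernels of admissible monomorphisms. For the kernel, any $X^\bu\in\sA'\cap\sB'$ has terms in $\sA\cap\sB$ and is Becker-coacyclic in $\sA$; applying Becker-coacyclicity to the complex of injectives $X^\bu$ itself forces $\id_{X^\bu}$ to be null-homotopic, so $X^\bu$ is contractible. The converse is obvious.
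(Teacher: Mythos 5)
Your argument is correct and follows essentially the same route as the paper: the same set of disk-complex generators $\sS'$, the identification $\sB'=\Com(\sB)$ via Lemma~\ref{disk-complexes-Ext-lemma}, completeness via Theorem~\ref{loc-pres-eklof-trlifaj} or~\ref{efficient-eklof-trlifaj}, the same termwise hereditariness and kernel arguments, and your special-precover-plus-splitting proof of $\Ac^\bco(\sA)\subset\sA'$ is just an explicit unpacking of the paper's appeal to the restricted cotorsion pair $(\sA',\>\Com(\sA\cap\sB))$ in $\Com(\sA)$ obtained from Lemma~\ref{restricting-cotorsion-pair-lemma}(a). The one small omission is that in the locally presentable case Theorem~\ref{loc-pres-eklof-trlifaj}(a) also requires $\sB'=\Com(\sB)$ to be cogenerating in $\Com(\sK)$; this follows by the argument dual to your generating one (embed $K^\bu$ termwise admissibly into a contractible complex with terms in $\sB$), and only the efficient-exact case gets by on the generating hypothesis alone.
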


\begin{proof}
 This is one of the two cotorsion pairs
from~\cite[Proposition~3.2]{Gil2}.

 Notice first of all that if $\sK$ is a locally presentable abelian
category, then so is $\Com(\sK)$ \,\cite[Lemma~6.3]{PS4}.
 It is also easy to check that the exact category $\Com(\sK)$ is
efficient in the sense of~\cite[Definition~3.4]{Sto-ICRA} whenever
an exact category~$\sK$ is.

 Denote by $\sS'$ the set of all contractible two-term complexes
$D_{n,n+1}(S)\in\Com(\sK)$ with $S\in\sS$ and $n\in\boZ$.
 Let $(\sA',\sB')$ be the cotorsion pair generated by $\sS'$ in
$\Com(\sK)$.
 Then it is clear from Lemma~\ref{disk-complexes-Ext-lemma} (for $i=1$)
that $\sB'=\Com(\sB)$.
 In particular, all contractible two-term complexes $D_{n,n+1}(B)$
with $B\in\sB$ belong to $\sB'$, and it follows from the same lemma
that $\sA'\subset\Com(\sA)$.
 Furthermore, it is clear from Lemma~\ref{Ext1-as-homotopy-Hom-lemma}
that all contractible complexes with the terms in $\sA$ belong
to~$\sA'$.

 By assumptions, every object of $\sK$ is an admissible quotient of
an object from $\sA$, and it follows that every object of $\Com(\sK)$
is an admissible quotient of a contractible complex with the terms
in~$\sA$.
 Dually, every object of $\sK$ is an admissible subobject of an object
from $\sB$, and it follows that every object of $\Com(\sK)$ is
an admissible subobject of a contractible complex with the terms
in~$\sB$.
 Thus the class $\sA'$ is generating in $\Com(\sK)$, and the class
$\sB'$ is cogenerating in $\Com(\sK)$.
 By Theorem~\ref{loc-pres-eklof-trlifaj}(a)
or~\ref{efficient-eklof-trlifaj}(a), it follows that the cotorsion pair
$(\sA',\sB')$ is complete in $\Com(\sK)$.
 (Moreover, one can always adjoin to $\sS$ a generator of $\sK$
belonging to~$\sA$; then $\Fil(\sS')$ is a generating class in
$\Com(\sK)$, and part~(b) of the respective theorem tells us that
$\sA'=\Fil(\sS')^\oplus$.)

 The class $\sB'=\Com(\sB)$ is closed under cokernels of admissible
monomorphisms in $\Com(\sK)$, since the class $\sB$ is closed under
cokernels of admissible monomorphisms in~$\sK$.
 Thus the cotorsion pair $(\sA',\sB')$ in $\Com(\sK)$ is hereditary.

 Now we need to show that $\sA'=\Ac^\bco(\sA)$.
 By Lemma~\ref{inj-proj-objects-in-left-right-classes}(a), we have
$\sA^\inj=\sA\cap\sB$.
 The full subcategory $\sE=\Com(\sA)$ is closed under extensions and
kernels of admissible epimorphisms in $\Com(\sK)$, since so is the full
subcategory $\sA$ in~$\sK$.
 By Lemma~\ref{restricting-cotorsion-pair-lemma}(a), the hereditary
complete cotorsion pair $(\sA',\sB')$ in $\Com(\sK)$ restricts to
a hereditary complete cotorsion pair $(\sA',\>\sE\cap\sB')=
(\sA',\>\Com(\sA\cap\sB))$ in $\Com(\sA)$.
 Hence $\sA'={}^{\perp_1}\Com(\sA\cap\sB)$ in $\Com(\sA)$.
 By Lemma~\ref{Ext1-as-homotopy-Hom-lemma}, this means that
$\sA'=\Ac^\bco(\sA)$.

 It remains to prove that $\sA'\cap\sB'$ is the class of all
contractible complexes in $\sA\cap\sB$.
 Indeed, we have $\sA'\subset\Com(\sA)$ and $\sB'=\Com(\sB)$, hence
$\sA'\cap\sB'\subset\Com(\sA\cap\sB)$.
 All contractible complexes in $\sA$ belong to~$\sA'$; hence all
contractible complexes in $\sA\cap\sB$ belong to $\sA'\cap\sB'$.
 Conversely, any complex belonging to $\sA'\cap\sB'$ is
a Becker-coacyclic complex of injective objects in $\sA$, hence
a contractible complex.
\end{proof}

\begin{prop} \label{all-contraacyclic-cotorsion-pair-prop}
 Let\/ $\sK$ be a Grothendieck category and $(\sA,\sB)$ be
a hereditary complete cotorsion pair generated by a set of objects\/
$\sS$ in\/~$\sK$.
 Then the two classes\/ $\sA''=\Com(\sA)\subset\Com(\sK)$ and $\sB''=
\Ac^\bctr(\sB)\subset\Com(\sB)\subset\Com(\sK)$ form a hereditary
complete cotorsion pair $(\sA'',\sB'')$ generated by a set of objects
in the Grothendieck category\/ $\Com(\sK)$.
 The kernel\/ $\sA''\cap\sB''$ of the cotorsion pair $(\sA'',\sB'')$
in\/ $\Com(\sK)$ is the class of all contractible complexes with
the terms in\/ $\sA\cap\sB$.
\end{prop}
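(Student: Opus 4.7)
The plan is to mimic the proof of Proposition~\ref{coacyclic-all-cotorsion-pair-prop}, but with an enlarged generating set chosen so that the left-hand class of the cotorsion pair becomes all of $\Com(\sA)$ while the right-hand class shrinks from $\Com(\sB)$ to $\Ac^\bctr(\sB)$. I would take
\[
 \sS'' = \{D_{n,n+1}(S) : S \in \sS,\ n \in \boZ\} \cup \{S^{[n]} : S \in \sS,\ n \in \boZ\},
\]
where $S^{[n]}$ denotes the complex with $S$ in cohomological degree $n$ and zeros elsewhere (after enlarging $\sS$ to contain a generator of $\sK$ lying in $\sA$, as was done in the preceding proposition). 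Since $\Com(\sK)$ is again a Grothendieck category, Theorem~\ref{loc-pres-eklof-trlifaj} applied to the cotorsion pair $(\sA'',\sB'')$ generated by $\sS''$ in $\Com(\sK)$ yields its completeness together with $\sA'' = \Fil(\sS'')^\oplus$.

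The key and more demanding step is to identify $\sA''$ with $\Com(\sA)$. The inclusion $\sA''\subset\Com(\sA)$ is immediate from $\sS''\subset\Com(\sA)$ together with closure of $\Com(\sA)$ in $\Com(\sK)$ under extensions, direct summands, and transfinite extensions; all of these are inherited termwise from the corresponding closure properties of $\sA\subset\sK$. The reverse inclusion $\Com(\sA)\subset\Fil(\sS'')^\oplus$ requires, for each $A^\bu\in\Com(\sA)$, the construction of a transfinite filtration of $A^\bu$ whose successive quotients lie in $\sS''$. I would proceed in two stages: first, produce a b\^ete-style filtration of $A^\bu$ whose subquotients are objects of $\sA$ placed in a single cohomological degree, using a construction in the spirit of Gillespie~\cite[Lemma~4.9]{Gil3}; second, refine each such piece by the $\sS$-filtration of its underlying object afforded by $\sA=\Fil(\sS)^\oplus$ from Theorem~\ref{loc-pres-eklof-trlifaj}(b) applied to $(\sA,\sB)$ in $\sK$. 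This filtration argument is the principal obstacle: stupid truncations are not honest subcomplexes in the na\"\i ve sense, so one has to mix sphere pieces $S^{[n]}$ with disk pieces $D_{n,n+1}(S)$ to obtain a bona fide admissible filtration.

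Once $\sA''=\Com(\sA)$ is established, the identification $\sB''=\Ac^\bctr(\sB)$ follows by restriction. Lemma~\ref{disk-complexes-Ext-lemma} applied to the $D_{n,n+1}(S)$ portion of $\sS''$ gives $\sB''\subset\Com(\sB)$, and hereditariness of $(\sA,\sB)$ ensures $\Com(\sB)$ is closed in $\Com(\sK)$ under cokernels of admissible monomorphisms. Lemma~\ref{restricting-cotorsion-pair-lemma}(b) then restricts $(\sA'',\sB'')$ to a hereditary complete cotorsion pair in $\Com(\sB)$ with left class $\Com(\sA)\cap\Com(\sB)=\Com(\sA\cap\sB)$ and right class still $\sB''$. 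By Lemma~\ref{inj-proj-objects-in-left-right-classes}(b) we have $\sA\cap\sB=\sB_\proj$, and since the termwise groups $\Ext^1_\sK(P^n,B^n)$ vanish, Lemma~\ref{Ext1-as-homotopy-Hom-lemma} converts the orthogonality condition defining $\sB''$ within $\Com(\sB)$ into $\Hom_{\Hot(\sK)}(P^\bu,B^\bu[1])=0$ for all $P^\bu\in\Com(\sA\cap\sB)$, which up to shift is exactly the definition of $B^\bu$ being Becker-contraacyclic in $\sB$. Hereditariness of $(\sA'',\sB'')$ is then immediate from $\Com(\sA)$ being closed under kernels of admissible epimorphisms, and the kernel $\sA''\cap\sB''$, contained in $\Com(\sA\cap\sB)$, consists of those complexes of $\sB$-projectives which are Becker-contraacyclic in $\sB$, i.e., have identity morphism null-homotopic (apply the contraacyclicity condition with the complex itself as test object), i.e., are contractible; the reverse inclusion is clear.
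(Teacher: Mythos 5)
Your choice of generating set does not produce the cotorsion pair you need, and the ``principal obstacle'' you flag is in fact insurmountable. Since each disk $D_{n,n+1}(S)$ is an extension of two shifted spheres, the Eklof lemma gives $\{S^{[n]}\}^{\perp_1}\subset\{D_{n,n+1}(S)\}^{\perp_1}$, so your set $\sS''$ generates exactly the same cotorsion pair as the spheres alone --- which, by the paper's Proposition~\ref{dg-acyclic-cotorsion-pair-prop}, is $(\DG(\sA),\>\Ac(\sB))$, not $(\Com(\sA),\>\Ac^\bctr(\sB))$. Correspondingly, your claimed reverse inclusion $\Com(\sA)\subset\Fil(\sS'')^\oplus$ is false in general: by the Eklof lemma again, $\Fil(\sS'')^\oplus\subset{}^{\perp_1}(\sS''^{\perp_1})=\DG(\sA)$, and $\DG(\sA)\subsetneq\Com(\sA)$ in general. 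A concrete counterexample is the projective cotorsion pair in $\Lambda\Modl$, \,$\Lambda=k[\epsilon]/(\epsilon^2)$: the acyclic complex~\eqref{thematic-acyclic-complex-over-dual-numbers} of free modules lies in $\Com(\sA)$ but admits no filtration by spheres and disks with entries in $\sA$, since any complex so filtered is (a direct summand of) a DG\+projective complex, while \eqref{thematic-acyclic-complex-over-dual-numbers} is not homotopy equivalent to zero although it maps to acyclic complexes nontrivially up to homotopy. So no b\^ete-truncation refinement can exist for unbounded complexes; this is precisely the difference between Gillespie's two cotorsion pairs (the paper's Propositions~\ref{dg-acyclic-cotorsion-pair-prop} and~\ref{all-contraacyclic-cotorsion-pair-prop}), which your construction conflates.

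The paper's proof avoids this by a genuinely different input: from $\sA=\Fil(\sS)^\oplus$ one deduces that $\sA$ is deconstructible in $\sK$ \,\cite[Proposition~2.9(1)]{Sto-hill}, and then invokes \v St\!'ov\'\i\v cek's theorem \cite[Proposition~4.3]{Sto-hill} that $\Com(\sA)$ is deconstructible in $\Com(\sK)$; the generating set $\sS''$ is taken to be a set of \emph{complexes} with $\Com(\sA)=\Fil(\sS'')$, not complexes concentrated in single degrees. With that $\sS''$, the Eklof--Trlifaj theorem yields completeness and $\sA''=\Com(\sA)$, after which your second half --- restriction to $\Com(\sB)$ via Lemma~\ref{restricting-cotorsion-pair-lemma}(b), identification $\sB''=\Ac^\bctr(\sB)$ via Lemmas~\ref{inj-proj-objects-in-left-right-classes}(b) and~\ref{Ext1-as-homotopy-Hom-lemma}, heredity, and the kernel computation --- is correct and coincides with the paper's argument. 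But as written, your construction would prove the wrong statement (its right-hand class is $\Ac(\sB)$, and the equality $\Ac(\sB)=\Ac^\bctr(\sB)$ is exactly the nontrivial periodicity-type property the paper does not assume).
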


\begin{proof}
 This is the other one of the two cotorsion pairs
from~\cite[Proposition~3.2]{Gil2}.
 The assertion that this cotorsion pair is complete was proved
in~\cite[Lemma~4.9]{Gil3}.

 Adjoining to $\sS$ a generator of $\sK$ belonging to $\sA$, we can
assume without loss of generality that the class $\Fil(\sS)$ is
generating in~$\sK$.
 Then, by Theorem~\ref{loc-pres-eklof-trlifaj}(b)
or~\ref{efficient-eklof-trlifaj}(b), we have $\sA=\Fil(\sS)^\oplus$.
 By~\cite[Proposition~2.9(1)]{Sto-hill}, it follows that the class
$\sA$ is deconstructible in~$\sK$.
 Now~\cite[Proposition~4.3]{Sto-hill} tells us that the class
$\Com(\sA)$ is deconstructible in $\Com(\sK)$.

 Let $\sS''\subset\Com(\sA)$ be a set of complexes such that
$\Com(\sA)=\Fil(\sS'')$.
 Let $(\sA'',\sB'')$ be the cotorsion pair generated by $\sS''$ in
$\Com(\sK)$.
 The class $\sA$ is generating in $\sK$, hence the class of all
contractible complexes in $\sA$ is generating in $\Com(\sK)$.
 Thus the class $\Fil(\sS'')$ is generating in $\Com(\sK)$.
 By Lemma~\ref{eklof-lemma}, we have $\Fil(\sS'')\subset\sA''$.
 Applying Theorem~\ref{loc-pres-eklof-trlifaj}(a\+-b)
or~\ref{efficient-eklof-trlifaj}(a\+-b), we conclude that
the cotorsion pair $(\sA'',\sB'')$ is complete in $\Com(\sK)$
and $\sA''=\Fil(\sS'')=\Com(\sA)$.

 The class $\sA''=\Com(\sA)$ is closed under kernels of admissible
epimorphisms in $\Com(\sK)$, since the class $\sA$ is closed under
kernels of admissible epimorphisms in~$\sK$.
 Thus the cotorsion pair $(\sA'',\sB'')$ in $\Com(\sK)$ is hereditary.

 Now we need to show that $\sB''=\Ac^\bctr(\sB)$.
 By Lemma~\ref{disk-complexes-Ext-lemma} (for $i=1$), we know that
$\sB''\subset\Com(\sB)$.
 By Lemma~\ref{inj-proj-objects-in-left-right-classes}(b), we have
$\sB_\proj=\sA\cap\sB$.
 The full subcategory $\sE=\Com(\sB)$ is closed under extensions and
cokernels of admissible monomorphisms in $\Com(\sK)$, since so is
the full subcategory $\sB$ in~$\sK$.
 By Lemma~\ref{restricting-cotorsion-pair-lemma}(b), the hereditary
complete cotorsion pair $(\sA'',\sB'')$ in $\Com(\sK)$ restricts to
a hereditary complete cotorsion pair $(\sE\cap\sA'',\>\sB'')=
(\Com(\sA\cap\sB),\>\sB'')$ in $\Com(\sB)$.
 Hence $\sB''=\Com(\sA\cap\sB)^{\perp_1}$ in $\Com(\sB)$.
 By Lemma~\ref{Ext1-as-homotopy-Hom-lemma}, this means that
$\sB''=\Ac^\bctr(\sB)$.

 It remains to prove that $\sA''\cap\sB''$ is the class of all
contractible complexes in $\sA\cap\sB$.
 Indeed, we have $\sA''=\Com(\sA)$ and $\sB''\subset\Com(\sB)$, hence
$\sA''\cap\sB''\subset\Com(\sA\cap\sB)$.
 All contractible complexes in $\sB$ belong to~$\sB''$; hence all
contractible complexes in $\sA\cap\sB$ belong to $\sA''\cap\sB''$.
 Conversely, any complex belonging to $\sA''\cap\sB''$ is
a Becker-contraacyclic complex of projective objects in $\sB$, hence
a contractible complex.
\end{proof}

 Let $(\sA,\sB)$ be a cotorsion pair in an exact category~$\sK$.
 In the spirit of the notation of Gillespie in~\cite{Gil,Gil3},
we put $\DG(\sB)=\Ac(\sA)^{\perp_1}\subset\Com(\sK)$ and
$\DG(\sA)={}^{\perp_1}\Ac(\sB)\subset\Com(\sK)$.
 In particular, we have $D_{n,n+1}(A)\in\Ac(\sA)$ and $D_{n,n+1}(B)\in
\Ac(\sB)$ for all $A\in\sA$, \,$B\in\sB$, and $n\in\boZ$; hence
Lemma~\ref{disk-complexes-Ext-lemma} (for $i=1$) implies the inclusions
$\DG(\sB)\subset\Com(\sB)$ and $\DG(\sA)\subset\Com(\sA)$.

 By Lemma~\ref{Ext1-as-homotopy-Hom-lemma}, the class $\DG(\sB)$ can
be described as the class of all complexes $D^\bu$ in $\sB$ such that,
for every complex $A^\bu\in\Ac(\sA)$, all morphisms of complexes
$A^\bu\rarrow D^\bu$ are homotopic to zero.
 Dually, the class $\DG(\sA)$ can be described as the class of all
complexes $E^\bu$ in $\sA$ such that, for every complex
$B^\bu\in\Ac(\sB)$, all morphisms of complexes $E^\bu\rarrow B^\bu$
are homotopic to zero.

\begin{prop} \label{acyclic-dg-cotorsion-pair-prop}
 Let\/ $\sK$ be a Grothendieck category and $(\sA,\sB)$ be
a hereditary complete cotorsion pair generated by a set of objects\/
$\sS$ in\/~$\sK$.
 Then the two classes\/ $\sA'''=\Ac(\sA)\subset\Com(\sA)\subset
\Com(\sK)$ and\/ $\sB'''=\DG(\sB)\subset\Com(\sB)\subset\Com(\sK)$
form a hereditary complete cotorsion pair $(\sA''',\sB''')$ generated
by a set of objects in the Grothendieck category\/ $\Com(\sK)$.
 The kernel\/ $\sA'''\cap\sB'''$ of the cotorsion pair $(\sA''',\sB''')$
in\/ $\Com(\sK)$ is the class of all contractible complexes with
the terms in\/ $\sA\cap\sB$.
\end{prop}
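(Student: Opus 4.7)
The plan is to mimic the strategy of Propositions~\ref{coacyclic-all-cotorsion-pair-prop} and~\ref{all-contraacyclic-cotorsion-pair-prop}, producing a generating set for the cotorsion pair from deconstructibility of $\Ac(\sA)$ in $\Com(\sK)$. Without loss of generality I adjoin a generator of $\sK$ lying in $\sA$ to $\sS$, so that Theorem~\ref{loc-pres-eklof-trlifaj}(b) gives $\sA=\Fil(\sS)^\oplus$, and $\sA$ is deconstructible in~$\sK$ by~\cite[Proposition~2.9(1)]{Sto-hill}. The class $\Ac(\sA)\subset\Com(\sK)$ of acyclic complexes with terms and cocycles in the deconstructible class $\sA$ is itself deconstructible by the machinery of~\cite[Proposition~4.3]{Sto-hill} adapted to exact complexes; pick a set $\sS'''\subset\Ac(\sA)$ with $\Fil(\sS''')=\Ac(\sA)$, and let $(\sA''',\sB''')$ denote the cotorsion pair generated by $\sS'''$ in $\Com(\sK)$. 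By the Eklof Lemma~\ref{eklof-lemma}, one gets $\sB'''=\sS'''^{\perp_1}=\Fil(\sS''')^{\perp_1}=\Ac(\sA)^{\perp_1}=\DG(\sB)$.

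Next, I would verify that $\Fil(\sS''')=\Ac(\sA)$ is generating in $\Com(\sK)$: given any $C^\bu\in\Com(\sK)$, pick admissible epimorphisms $P^n\rarrow C^n$ with $P^n\in\sA$ for each $n\in\boZ$; the contractible complex $\bigoplus_n D_{n,n+1}(P^n)$ lies in $\Ac(\sA)$ and admits a termwise admissible epimorphism onto~$C^\bu$. Theorem~\ref{loc-pres-eklof-trlifaj}(a,b) then yields that $(\sA''',\sB''')$ is complete and $\sA'''=\Fil(\sS''')^\oplus=\Ac(\sA)^\oplus$. Since $\sA$ is closed under direct summands in $\sK$, so is $\Ac(\sA)$ in $\Com(\sK)$, whence $\sA'''=\Ac(\sA)$.

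For hereditariness, it suffices to check that $\Ac(\sA)$ is closed under kernels of admissible epimorphisms in $\Com(\sK)$: given a termwise short exact sequence $0\rarrow K^\bu\rarrow L^\bu\rarrow M^\bu\rarrow 0$ with $L^\bu,M^\bu\in\Ac(\sA)$, the hereditary assumption on $(\sA,\sB)$ forces $K^n\in\sA$ in every degree; the snake lemma produces a termwise short exact sequence of cocycles with outer terms in $\sA$, so each $Z^n(K^\bu)\in\sA$; and $K^\bu$ is acyclic by the long exact sequence of cohomology. For the kernel of the cotorsion pair, any contractible complex with terms in $\sA\cap\sB$ is acyclic with cocycles in $\sA\cap\sB$ (being direct summands of its terms), and any morphism into a contractible complex is null-homotopic, so such a complex belongs to $\Ac(\sA)\cap\DG(\sB)$. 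Conversely, for $D^\bu\in\Ac(\sA)\cap\DG(\sB)$, one has $D^n\in\sA\cap\sB$ termwise and so $\Ext^1_\sK(D^n,D^n)=0$; then Lemma~\ref{Ext1-as-homotopy-Hom-lemma}, together with the fact that $\Ac(\sA)$ and $\DG(\sB)$ are closed under cohomological shifts, translates the $\Ext^1$-vanishing into the null-homotopy of every morphism $A^\bu\rarrow D^\bu$ with $A^\bu\in\Ac(\sA)$; applying this to $\id_{D^\bu}$ gives contractibility of~$D^\bu$.

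The principal obstacle is establishing the deconstructibility of $\Ac(\sA)$ in $\Com(\sK)$, which relies on the combinatorial input of~\cite{Sto-hill}; once this is available, everything else is a formal adaptation of the scheme already executed for the two preceding propositions, with the pair $(\Ac(\sA),\DG(\sB))$ replacing $(\Ac^\bco(\sA),\Com(\sB))$ and $(\Com(\sA),\Ac^\bctr(\sB))$.
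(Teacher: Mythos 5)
Your proposal is correct and follows essentially the same route as the paper's proof: deconstructibility of $\sA$ in $\sK$, then of $\Ac(\sA)$ in $\Com(\sK)$, generation of the cotorsion pair by a set, the Eklof lemma and Theorem~\ref{loc-pres-eklof-trlifaj} to get completeness and identify the two classes, closure of $\Ac(\sA)$ under kernels of termwise epimorphisms for hereditariness, and the shift-plus-identity argument via Lemma~\ref{Ext1-as-homotopy-Hom-lemma} for the kernel. The one step you single out as the principal obstacle needs no ad hoc adaptation of \cite[Proposition~4.3]{Sto-hill}: the deconstructibility of the class of acyclic complexes with cocycles in a deconstructible class is exactly \cite[Proposition~4.4]{Sto-hill}, which is the reference the paper uses.
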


\begin{proof}
 This is one of the two cotorsion pairs
from~\cite[Proposition~3.6]{Gil}.
 The assertion that this cotorsion pair is complete was proved
in~\cite[Lemma~4.9]{Gil3}.

 The class $\sA$ is deconstructible in $\sK$, as we have seen in
the proof of Proposition~\ref{all-contraacyclic-cotorsion-pair-prop}.
 By~\cite[Proposition~4.4]{Sto-hill}, it follows that the class
$\Ac(\sA)$ is deconstructible in $\Com(\sK)$.
 Let $\sS'''\subset\Ac(\sA)$ be a set of complexes such that
$\Ac(\sA)=\Fil(\sS''')$.
 Let $(\sA''',\sB''')$ be the cotorsion pair generated by $\sS'''$ in
$\Com(\sK)$.
 The class $\sA$ is generating in $\sK$, hence the class of all
contractible complexes in $\sA$ is generating in $\Com(\sK)$.
 Thus the class $\Fil(\sS''')$ is generating in $\Com(\sK)$.
 By Lemma~\ref{eklof-lemma}, we have $\Fil(\sS''')\subset\sA'''$.
 Applying Theorem~\ref{loc-pres-eklof-trlifaj}(a\+-b)
or~\ref{efficient-eklof-trlifaj}(a\+-b), we conclude that
the cotorsion pair $(\sA''',\sB''')$ is complete in $\Com(\sK)$
and $\sA'''=\Fil(\sS''')=\Ac(\sA)$.
 It follows that $\sB'''=(\sA''')^{\perp_1}=\DG(\sB)$ by
the definition.

 The class $\sA'''=\Ac(\sA)$ is closed under kernels of admissible
epimorphisms in $\Com(\sK)$, since the class $\sA$ is closed under
kernels of admissible epimorphisms in~$\sK$.
 (See~\cite[Corollary~3.6 and Proposition~7.6]{Bueh} for the assertion
that the kernel of a termwise admissible epimorphism of acyclic
complexes is an acyclic complex in a weakly idempotent-complete
exact category~$\sA$.)
 Thus the cotorsion pair $(\sA''',\sB''')$ in $\Com(\sK)$ is hereditary.

 It remains to prove that $\sA'''\cap\sB'''$ is the class of all
contractible complexes in $\sA\cap\sB$.
 Indeed, we have $\sA'''\subset\Com(\sA)$ and $\sB'''\subset\Com(\sB)$,
hence $\sA'''\cap\sB'''\subset\Com(\sA\cap\sB)$.
 All contractible complexes in $\sB$ belong to~$\sB'''=\DG(\sB)$; hence
all contractible complexes in $\sA\cap\sB$ belong to $\sA'''\cap\sB'''$.
 Conversely, all complexes belonging to $\sA'''\cap\sB'''=
\Ac(\sA)\cap\DG(\sB)$ are contractible.
\end{proof}

\begin{prop} \label{dg-acyclic-cotorsion-pair-prop}
 Let\/ $\sK$ be either a locally presentable abelian category with
the abelian exact structure, or an efficient exact category in
the sense of\/~\cite{Sto-ICRA}.
 In the latter case, assume additionally that the kernels of all
morphisms exist in\/~$\sK$.
 Let $(\sA,\sB)$ be a hereditary complete cotorsion pair generated
by a set of objects\/ $\sS$ in\/~$\sK$.
 Then the two classes\/ $\sA''''=\DG(\sA)\subset\Com(\sA)\subset
\Com(\sK)$ and\/ $\sB''''=\Ac(\sB)\subset\Com(\sB)\subset\Com(\sK)$
form a hereditary complete cotorsion pair $(\sA'''',\sB'''')$ generated
by a set of objects in the abelian/exact category\/ $\Com(\sK)$.
 The kernel\/ $\sA''''\cap\sB''''$ of the cotorsion pair
$(\sA'''',\sB'''')$ in\/ $\Com(\sK)$ is the class of all contractible
complexes with the terms in\/ $\sA\cap\sB$.
\end{prop}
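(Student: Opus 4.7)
The strategy parallels Proposition~\ref{acyclic-dg-cotorsion-pair-prop}: I choose a small set of complexes in $\Com(\sK)$ whose right Ext-orthogonal is exactly $\Ac(\sB)$, apply the Eklof--Trlifaj theorem, and then read off the remaining structural statements. Unlike the previous three propositions, the generating set is not obtained by deconstructing a whole class of complexes; instead, the identification of the right orthogonal relies on the $\Hom$-criterion for acyclicity in $\sB$ (Lemma~\ref{Hom-criterion-of-acyclicity-in-cotorsion-pair}(a)), which explains the new assumption that kernels exist in~$\sK$.

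After enlarging $\sS$ so that it contains a generator $G\in\sA$ of~$\sK$, set
$$
 \sS''''=\{S_n(S):S\in\sS,\ n\in\boZ\}\cup\{D_{n,n+1}(S):S\in\sS,\ n\in\boZ\},
$$
where $S_n(K)$ denotes the stalk complex with $K$ placed in degree~$n$, and let $(\sA'''',\sB'''')$ be the cotorsion pair in $\Com(\sK)$ generated by~$\sS''''$. The crux is the identification $\sB''''=\Ac(\sB)$. For $\Ac(\sB)\subset\sB''''$, Lemma~\ref{disk-complexes-Ext-lemma} gives $\Ext^1_{\Com(\sK)}(D_{n,n+1}(S),B^\bu)\simeq\Ext^1_\sK(S,B^n)=0$ because $B^n\in\sB$; and since $B^n\in\sB$, Lemma~\ref{Ext1-as-homotopy-Hom-lemma} identifies $\Ext^1_{\Com(\sK)}(S_n(S),B^\bu)$ with $\Hom_{\Hot(\sK)}(S_n(S),B^\bu[1])=H^{n+1}\Hom_\sK(S,B^\bu)$, which vanishes by Lemma~\ref{Hom-criterion-of-acyclicity-in-cotorsion-pair}(a). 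Conversely, for $B^\bu\in\sB''''$, testing against the disk complexes first forces $B^n\in\sB$ termwise; then Lemma~\ref{Ext1-as-homotopy-Hom-lemma} converts vanishing against the stalk complexes into acyclicity of the complex $\Hom_\sK(S,B^\bu)$ for all $S\in\sS$ (including the generator~$G$), whence $B^\bu\in\Ac(\sB)$ by Lemma~\ref{Hom-criterion-of-acyclicity-in-cotorsion-pair}(a).

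For completeness, all contractible complexes with terms in~$\sA$ lie in $\sA''''$ and form a generating class in $\Com(\sK)$, while contractible complexes of injective objects of~$\sK$ lie in $\Ac(\sB)=\sB''''$ and form a cogenerating class; Theorem~\ref{loc-pres-eklof-trlifaj}(a) or Theorem~\ref{efficient-eklof-trlifaj}(a) thus applies, producing completeness. By construction $\sA''''={}^{\perp_1}\sB''''=\DG(\sA)$. Hereditariness reduces to closure of $\Ac(\sB)$ under cokernels of admissible monomorphisms in $\Com(\sK)$, which follows from the snake lemma on cycle objects together with hereditariness of $(\sA,\sB)$ in~$\sK$. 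For the kernel, recall from the discussion following Lemma~\ref{Ext1-as-homotopy-Hom-lemma} that $\DG(\sA)$ coincides with the class of complexes $E^\bu\in\Com(\sA)$ for which every morphism of complexes into an object of $\Ac(\sB)$ is homotopic to zero; hence any $C^\bu\in\sA''''\cap\sB''''=\DG(\sA)\cap\Ac(\sB)$ has terms in $\sA\cap\sB$ and, applied to $\id_{C^\bu}$, is contractible. The reverse inclusion, that every contractible complex in $\sA\cap\sB$ lies in $\sA''''\cap\sB''''$, is immediate.

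The main obstacle is the direction $\sB''''\subset\Ac(\sB)$ in the paragraph above: passing from Ext-vanishing against stalk complexes of objects of~$\sS$ to genuine acyclicity of $B^\bu$ in the exact category~$\sB$ is not formal, and is precisely where Lemma~\ref{Hom-criterion-of-acyclicity-in-cotorsion-pair}(a) (hence the hypothesis on existence of kernels in~$\sK$) is essential. Every other step is formal or reduces to arguments already carried out in the previous propositions.
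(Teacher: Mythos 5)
Your overall route is the same as the paper's: generate the pair in $\Com(\sK)$ by stalk complexes of objects of $\sS$ (adding the disk complexes is harmless, since they are extensions of shifted stalks and lie in $\sA''''$ anyway), apply Theorem~\ref{loc-pres-eklof-trlifaj}(a) or~\ref{efficient-eklof-trlifaj}(a) for completeness, and identify $\sB''''$ with $\Ac(\sB)$ via Lemma~\ref{Hom-criterion-of-acyclicity-in-cotorsion-pair}(a). But at the step you yourself single out as the crux there is a genuine gap: from $B^\bu\in\sB''''$ you only extract acyclicity of $\Hom_\sK(S,B^\bu)$ for $S\in\sS$ (plus the adjoined generator), whereas Lemma~\ref{Hom-criterion-of-acyclicity-in-cotorsion-pair}(a) requires acyclicity of $\Hom_\sK(A,B^\bu)$ for \emph{all} $A\in\sA$. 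This is not a formality: the proof of that lemma needs, for each cocycle object $Z^{n+1}$, an admissible epimorphism onto it from an object for which the $\Hom$ complex is already known to be acyclic (this is what makes $B^n\rarrow Z^{n+1}$ an admissible epimorphism), and objects of $\sS$, or a single generator, need not admit admissible epimorphisms onto arbitrary objects of $\sK$. The paper bridges exactly this point with the paragraph you omitted: after adjoining a generator so that $\Fil(\sS'''')$ is generating, part~(b) of Theorems~\ref{loc-pres-eklof-trlifaj}/\ref{efficient-eklof-trlifaj} gives $\sA''''=\Fil(\sS'''')^\oplus$ and $\sA=\Fil(\sS)^\oplus$, hence every one-term complex with term in $\sA$ lies in $\sA''''$; only then does Lemma~\ref{Ext1-as-homotopy-Hom-lemma} yield acyclicity of $\Hom_\sK(A,B^\bu)$ for all $A\in\sA$, making Lemma~\ref{Hom-criterion-of-acyclicity-in-cotorsion-pair}(a) applicable. (Alternatively one could pass from $\sS$ to all of $\sA$ by the Eklof lemma in $\Com(\sK)$ applied to filtrations of stalk complexes, but some such argument must be supplied; acyclicity of $\Hom_\sK({-},B^\bu)$ does not pass to transfinitely iterated extensions formally.)

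A secondary point: for cogeneration you invoke contractible complexes of injective objects of $\sK$, but in the stated generality (locally presentable abelian, or efficient exact with kernels) $\sK$ is not assumed to have enough injectives. Use instead contractible complexes with terms in $\sB$: they belong to $\sB''''$ by Lemma~\ref{Ext1-as-homotopy-Hom-lemma}, and they suffice because completeness of $(\sA,\sB)$ makes the class $\sB$ cogenerating; this is what the paper does. The remaining parts of your argument (hereditariness via closure of $\Ac(\sB)$ under cokernels of admissible monomorphisms, and the identification of the kernel by applying the description of $\DG(\sA)$ to $\id_{C^\bu}$) agree with the paper.
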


\begin{proof}
 This is the other one of the two cotorsion pairs
from~\cite[Proposition~3.6]{Gil}.
 In the case of a Grothendieck category $\sK$, the assertion that this
cotorsion pair is complete was proved in~\cite[Lemma~4.9]{Gil3}.
 Some additional information can be found
in~\cite[Proposition~B.2.6]{Pcosh}.

 As in the proof of Proposition~\ref{coacyclic-all-cotorsion-pair-prop},
we notice that if $\sK$ is a locally presentable abelian category, then
so is $\Com(\sK)$; and if $\sK$ is an efficient exact category in
the sense of~\cite[Definition~3.4]{Sto-ICRA}, then so is $\Com(\sK)$.

 Denote by $\sS''''$ the set of all one-term complexes
$\dotsb\rarrow0\rarrow S\rarrow 0\rarrow\dotsb$, with the object
$S\in\sS$ situated in an arbitrary cohomological degree $n\in\boZ$.
 Let $(\sA'''',\sB'''')$ be the cotorsion pair generated by $\sS''''$
in $\Com(\sK)$.
 The contractible two-term complex $D_{n,n+1}(S)$ is an extension of
two appropriately shifted copies of the one-term complex
$\dotsb\rarrow0\rarrow S\rarrow 0\rarrow\dotsb$, so we have
$D_{n,n+1}(S)\in\sA''''$.
 By Lemma~\ref{disk-complexes-Ext-lemma} (for $i=1$), it follows that
$\sB''''\subset\Com(\sB)$.

 One can see from Lemma~\ref{Ext1-as-homotopy-Hom-lemma} that all
contractible complexes with the terms in $\sA$ belong to $\sA''''$ and
all contractible complexes with the terms in $\sB$ belong to~$\sB''''$.
 Similarly to the proof of
Proposition~\ref{coacyclic-all-cotorsion-pair-prop}, every object of
$\Com(\sK)$ is an admissible quotient of a contractible complex with
the terms in $\sA$ and an admissible subobject of a contractible
complex with the terms in~$\sB$.
 Thus the class $\sA''''$ is generating in $\Com(\sK)$, and the class
$\sB''''$ is cogenerating in $\Com(\sK)$.
 By Theorem~\ref{loc-pres-eklof-trlifaj}(a)
or~\ref{efficient-eklof-trlifaj}(a), it follows that the cotorsion pair
$(\sA'''',\sB'''')$ is complete in $\Com(\sK)$.

 Moreover, without loss of generality we can assume that $\sS$ contains
a generator of $\sK$ belonging to~$\sA$.
 Then $\Fil(\sS'''')$ is a generating class in $\Com(\sK)$, and
part~(b) of the respective theorem tells us that
$\sA''''=\Fil(\sS'''')^\oplus$.
 Applying the same Theorem~\ref{loc-pres-eklof-trlifaj}(b)
or~\ref{efficient-eklof-trlifaj}(b) to the cotorsion pair $(\sA,\sB)$
in $\sK$, we see that $\sA=\Fil(\sS)^\oplus$.
 Thus all one-term complexes $\dotsb\rarrow0\rarrow A\rarrow 0\rarrow
\dotsb$ with $A\in\sA$ belong to $\sA''''$.

 Now we need to show that $\sB''''=\Ac(\sB)$.
 Indeed, all morphisms from a complex belonging to $\sS''''$ to
an acyclic complex in $\sB$ are homotopic to zero, so
Lemma~\ref{Ext1-as-homotopy-Hom-lemma} tells us that
$\Ac(\sB)\subset\sB''''$.
 Conversely, let $B^\bu$ be a complex belonging to~$\sB''''$.
 We know from the previous paragraph and
Lemma~\ref{Ext1-as-homotopy-Hom-lemma} that the complex of abelian
groups $\Hom_\sK(A,B^\bu)$ is acyclic for every object $A\in\sA$.
 By Lemma~\ref{Hom-criterion-of-acyclicity-in-cotorsion-pair}(a),
it follows that the complex $B^\bu$ is acyclic in~$\sB$.
 (This is where we use the assumption of existence of kernels in~$\sK$.)
 Thus $\sB''''=\Ac(\sB)$; so $\sA''''={}^{\perp_1}(\sB'''')=\DG(\sA)$
by the definition.

 The class $\sB''''=\Ac(\sB)$ is closed under cokernels of admissible
monomorphisms in $\Com(\sK)$, since the class $\sB$ is closed under
cokernels of admissible monomorphisms in~$\sK$.
 (See~\cite[Corollary~3.6 and the dual version of Proposition~7.6]{Bueh}
for the assertion that the cokernel of a termwise admissible
monomorphism of acyclic complexes is an acyclic complex in a weakly
idempotent-complete exact category~$\sB$.)
 Thus the cotorsion pair $(\sA'''',\sB'''')$ in $\Com(\sK)$ is
hereditary.

 It remains to prove that $\sA''''\cap\sB''''$ is the class of all
contractible complexes in $\sA\cap\sB$.
 Indeed, we have $\sA''''\subset\Com(\sA)$ and
$\sB''''\subset\Com(\sB)$, hence
$\sA''''\cap\sB''''\subset\Com(\sA\cap\sB)$.
 All contractible complexes in $\sA$ belong to~$\sA''''=\DG(\sA)$;
hence all contractible complexes in $\sA\cap\sB$ belong to
$\sA''''\cap\sB''''$.
 Conversely, all complexes belonging to $\sA''''\cap\sB''''=
\DG(\sA)\cap\Ac(\sB)$ are contractible.
\end{proof}

\Section{Cotorsion Periodicity-Type Property}

 In this section we discuss the case when the cotorsion pairs from
Propositions~\ref{coacyclic-all-cotorsion-pair-prop}
and~\ref{acyclic-dg-cotorsion-pair-prop} coincide, and the connection
with cotorsion periodicity.

\begin{lem} \label{DG-A-intersect-Ac-K-is-Ac-A}
 Let\/ $\sK$ be an idempotent-complete exact category with enough
injective objects and $(\sA,\sB)$ be a hereditary cotorsion pair
in\/~$\sK$.
 Then every acyclic complex in\/ $\sA$ belongs to\/ $\DG(\sA)$, and
conversely, every complex from\/ $\DG(\sA)$ that is acyclic as
a complex in\/ $\sK$ is acyclic in\/~$\sA$,
$$
 \Ac(\sK)\cap\DG(\sA)=\Ac(\sA).
$$
\end{lem}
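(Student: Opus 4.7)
We must prove both inclusions. The forward inclusion $\Ac(\sA)\subseteq\Ac(\sK)\cap\DG(\sA)$ is immediate: the containment $\Ac(\sA)\subseteq\Ac(\sK)$ is tautological, and the containment $\Ac(\sA)\subseteq\DG(\sA)={}^{\perp_1}\Ac(\sB)$ is exactly the content of Lemma~\ref{acycl-acycl-orthogonality-lemma} applied to the cotorsion pair $(\sA,\sB)$, which gives $\Ext^1_{\Com(\sK)}(A^\bu,B^\bu)=0$ whenever $A^\bu\in\Ac(\sA)$ and $B^\bu\in\Ac(\sB)$.

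For the reverse, let $A^\bu\in\DG(\sA)\cap\Ac(\sK)$; the inclusion $\DG(\sA)\subseteq\Com(\sA)$ already guarantees that the terms of $A^\bu$ lie in $\sA$. Write $Z^n\in\sK$ for the cocycle objects of $A^\bu$; the task is to show $Z^n\in\sA$, i.e., $\Ext^1_\sK(Z^n,B)=0$ for every $B\in\sB$ and every $n\in\boZ$. Fix such a $B$. Using that $\sK$ has enough injectives, together with heredity of $(\sA,\sB)$ (which forces $\sB$ to be closed under cokernels of admissible monomorphisms), one builds an injective coresolution $B\to J^0\to J^1\to\dotsb$ whose cocycles all lie in $\sB$. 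Placing $B$ in cohomological degree~$-1$ and $J^i$ in degree~$i\ge0$, one obtains an acyclic complex $B^\bu\in\Ac(\sB)$.

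Since $A^\bu\in\DG(\sA)$ and every shift of $B^\bu$ remains in $\Ac(\sB)$, Lemma~\ref{Ext1-as-homotopy-Hom-lemma} (whose hypothesis is automatic from the cotorsion pair relation) yields that the total Hom complex $\Hom_\sK(A^\bu,B^\bu)$ is acyclic. Next, consider the termwise-split short exact sequence of complexes
$$0\rarrow J^\bu\rarrow B^\bu\rarrow B[1]\rarrow 0,$$
where $J^\bu$ denotes the ``tail'' in degrees $\ge0$ and $B[1]$ is the stalk complex with $B$ in degree~$-1$. Applying $\Hom_\sK(A^\bu,-)$ gives a short exact sequence of Hom complexes, and hence a long exact cohomology sequence whose middle term vanishes by the previous step. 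A direct unpacking using the admissible exact sequences $0\to Z^i\to A^i\to Z^{i+1}\to 0$ and the cotorsion-pair vanishing $\Ext^1_\sK(A^i,B)=0$ identifies $H^n\Hom_\sK(A^\bu,B[1])$ with $\Ext^1_\sK(Z^{1-n},B)$. Thus the proof reduces to showing that $\Hom_\sK(A^\bu,J^\bu)$ is acyclic, for then the long exact sequence forces $\Ext^1_\sK(Z^m,B)=0$ for every $m\in\boZ$.

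This last step is the main technical obstacle. Since each shift $J^\bu[m]$ is again a bounded-below complex of injective objects of $\sK$, it suffices to show that any chain map $\phi\colon A^\bu\to J^\bu[m]$ is null-homotopic. Such a homotopy is constructed inductively, starting at the lowest nonzero degree of $J^\bu[m]$: invoking the admissible epi-mono factorization $A^{i-1}\twoheadrightarrow Z^i\hookrightarrow A^i$ of each differential (available because $A^\bu$ is acyclic in $\sK$), the cocycle condition on $\phi$ at each stage forces the outstanding piece of the map to factor through $Z^i$, and the injectivity of the relevant term of $J^\bu$ extends this piece along $Z^i\hookrightarrow A^i$ to yield the next component of the homotopy.
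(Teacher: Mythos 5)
Your proof is correct and follows essentially the same route as the paper's (which goes back to Gillespie): reduce to $\Ext^1_\sK(Z^n,B)=0$ via acyclicity of $\Hom_\sK(A^\bu,B)$, compare the augmented injective coresolution of $B$ (an object of $\Ac(\sB)$, orthogonal to $\DG(\sA)$ by Lemma~\ref{Ext1-as-homotopy-Hom-lemma}) with its tail $J^\bu$ through the termwise-split short exact sequence, and use that $\Hom_\sK(A^\bu,J^\bu)$ is acyclic because $A^\bu$ is acyclic in $\sK$ and $J^\bu$ is a bounded-below complex of injectives. The only difference is that you spell out the details the paper leaves implicit (the identification of $H^n\Hom_\sK(A^\bu,B)$ with $\Ext^1_\sK(Z^\bullet,B)$ and the inductive null-homotopy construction), which is fine.
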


\begin{proof}
 The inclusion $\Ac(\sA)\subset\DG(\sA)$ holds by the very general
Lemma~\ref{acycl-acycl-orthogonality-lemma}.
 The inclusion $\Ac(\sA)\subset\Ac(\sK)$ is obvious.
 The inclusion $\Ac(\sK)\cap\DG(\sA)\subset\Ac(\sA)$ is the nontrivial
assertion.
 The following argument goes back to~\cite[Theorem~3.12]{Gil}.

 Let $A^\bu$ be a complex from $\DG(\sA)$ that is acyclic in~$\sK$.
 In order to prove that the complex $A^\bu$ is acyclic in $\sA$, we
need to check that the complex of abelian groups $\Hom_\sK(A^\bu,B)$
is acyclic for every $B\in\sB$.
 Let $0\rarrow B\rarrow J^0\rarrow J^1\rarrow J^2\rarrow\dotsb$ be
an injective coresolution of the object $B$ in~$\sK$.
 Denote by $J^\bu$ the complex $0\rarrow J^0\rarrow J^1\rarrow J^2
\rarrow\dotsb$ and by $C^\bu$ the complex $0\rarrow B\rarrow J^0
\rarrow J^1\rarrow J^2\rarrow\dotsb$.
 Then the complex of abelian groups $\Hom_\sK(A^\bu,J^\bu)$ is acyclic,
since $A^\bu$ is an acyclic complex in $\sK$ and $J^\bu$ is a bounded
below complex of injective objects in~$\sK$.
 On the other hand, the complex $A^\bu$ belongs to $\DG(\sA)$ and
the complex $C^\bu$ belongs to $\Ac(\sB)$ (since $\sK^\inj\subset\sB$
and the class $\sB$ is closed under cokernels of monomorphisms
in~$\sK$).
 In view of Lemma~\ref{Ext1-as-homotopy-Hom-lemma}, the complex
of abelian groups $\Hom_\sK(A^\bu,C^\bu)$ is acyclic.
 Thus the complex $\Hom_\sK(A^\bu,B)$ is acyclic as well.
\end{proof}

\begin{lem} \label{DG-B-intersect-Ac-K-is-Ac-B}
 Let\/ $\sK$ be a Grothendieck category and $(\sA,\sB)$ be a hereditary
complete cotorsion pair generated by a set of objects in\/~$\sK$.
 Then every acyclic complex in\/ $\sB$ belongs to\/ $\DG(\sB)$, and
conversely, every complex from\/ $\DG(\sB)$ that is acyclic as
a complex in\/ $\sK$ is acyclic in\/~$\sB$,
$$
 \Ac(\sK)\cap\DG(\sB)=\Ac(\sB).
$$
\end{lem}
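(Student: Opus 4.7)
The easy inclusion $\Ac(\sB) \subseteq \Ac(\sK) \cap \DG(\sB)$ is free: every acyclic complex in $\sB$ is a fortiori acyclic in $\sK$, and Lemma~\ref{acycl-acycl-orthogonality-lemma} together with the definition $\DG(\sB) = \Ac(\sA)^{\perp_1}$ gives $\Ac(\sB) \subseteq \DG(\sB)$. My plan for the nontrivial inclusion is \emph{not} to try to dualize the proof of Lemma~\ref{DG-A-intersect-Ac-K-is-Ac-A}: that argument rests on injective coresolutions of objects of $\sB$ in $\sK$, and its literal dual would require a projective resolution of $A \in \sA$, which a general Grothendieck category need not admit. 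Instead, I would exploit the cotorsion pair $(\DG(\sA),\Ac(\sB))$ in $\Com(\sK)$ constructed in Proposition~\ref{dg-acyclic-cotorsion-pair-prop} together with the previous lemma itself.

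Given $B^\bu \in \DG(\sB) \cap \Ac(\sK)$, I would apply completeness of the cotorsion pair $(\DG(\sA),\Ac(\sB))$ in $\Com(\sK)$ to choose a special preenvelope sequence
\[
 0 \lrarrow B^\bu \lrarrow \tilde B^\bu \lrarrow \tilde A^\bu \lrarrow 0
\]
with $\tilde B^\bu \in \Ac(\sB)$ and $\tilde A^\bu \in \DG(\sA)$. Since the class of complexes acyclic in $\sK$ is closed under extensions and cokernels of admissible monomorphisms in $\Com(\sK)$, and both $B^\bu$ and $\tilde B^\bu$ lie in $\Ac(\sK)$, the quotient $\tilde A^\bu$ also lies in $\Ac(\sK)$. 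Invoking Lemma~\ref{DG-A-intersect-Ac-K-is-Ac-A} (which is applicable since a Grothendieck category is idempotent-complete exact with enough injectives, and the cotorsion pair is hereditary), I conclude $\tilde A^\bu \in \DG(\sA) \cap \Ac(\sK) = \Ac(\sA)$.

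Now the defining orthogonality $\DG(\sB) = \Ac(\sA)^{\perp_1}$ yields $\Ext^1_{\Com(\sK)}(\tilde A^\bu, B^\bu) = 0$, so the displayed short exact sequence splits. Thus $B^\bu$ is a direct summand of $\tilde B^\bu \in \Ac(\sB)$. Finally, $\Ac(\sB)$ is closed under direct summands: acyclicity in $\sK$ is preserved by retracts in the idempotent-complete ambient category $\Com(\sK)$, and membership in $\sB$ is preserved by retracts since $\sB$ is the right-hand class of a cotorsion pair. Hence $B^\bu \in \Ac(\sB)$, as required.

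The only genuine obstacle is recognizing that the natural ``dual'' construction fails for want of projectives, and replacing it with the splitting argument above; once the cotorsion pair from Proposition~\ref{dg-acyclic-cotorsion-pair-prop} is placed on the table, the remaining steps are formal, relying only on closure properties of $\Ac(\sK)$ and $\Ac(\sB)$ and on the already-established Lemma~\ref{DG-A-intersect-Ac-K-is-Ac-A}.
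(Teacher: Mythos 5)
Your proof is correct, and its skeleton is the same as the paper's: both arguments take a special preenvelope sequence $0\rarrow B^\bu\rarrow \tilde B^\bu\rarrow\tilde A^\bu\rarrow0$ with respect to the cotorsion pair $(\DG(\sA),\Ac(\sB))$ of Proposition~\ref{dg-acyclic-cotorsion-pair-prop}, observe that $\tilde A^\bu$ is acyclic in $\sK$, and feed it into Lemma~\ref{DG-A-intersect-Ac-K-is-Ac-A} to get $\tilde A^\bu\in\Ac(\sA)$; your remark that the naive dualization of Lemma~\ref{DG-A-intersect-Ac-K-is-Ac-A} fails for lack of projectives is exactly the paper's stated motivation for this indirect route. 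The two proofs part ways only at the last step. The paper first upgrades $\tilde A^\bu$ to an element of $\Ac(\sA)\cap\DG(\sB)$ (using the first half of the lemma for $\tilde B^\bu$ and closure of $\DG(\sB)$ under cokernels of monomorphisms), concludes that $\tilde A^\bu$ is a contractible complex with terms in $\sA\cap\sB$, and then invokes the fact that the kernel of a termwise admissible epimorphism of acyclic complexes is acyclic in the exact category $\sB$. You instead use the hypothesis $B^\bu\in\DG(\sB)=\Ac(\sA)^{\perp_1}$ directly: since $\tilde A^\bu\in\Ac(\sA)$, the extension class in $\Ext^1_{\Com(\sK)}(\tilde A^\bu,B^\bu)$ vanishes, the sequence splits, and $B^\bu$ lands in $\Ac(\sB)$ as a direct summand of $\tilde B^\bu$ (cocycles of a summand are summands of cocycles, and $\sB$ is closed under direct summands). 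Your ending is a small but genuine simplification: it bypasses the contractibility of $\tilde A^\bu$ and the B\"uhler-type kernel lemma, at the cost of nothing beyond retract-closure of $\Ac(\sB)$, which you justify correctly.
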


\begin{proof}
 The inclusion $\Ac(\sB)\subset\DG(\sB)$ holds by the very general
Lemma~\ref{acycl-acycl-orthogonality-lemma}.
 The inclusion $\Ac(\sB)\subset\Ac(\sK)$ is obvious.
 The inclusion $\Ac(\sK)\cap\DG(\sB)\subset\Ac(\sB)$ is the nontrivial
assertion.
 The argument from~\cite[Theorem~3.12]{Gil} requires enough projective
objects in~$\sK$.
 The following indirect proof based on
Lemma~\ref{DG-A-intersect-Ac-K-is-Ac-A} and
Proposition~\ref{dg-acyclic-cotorsion-pair-prop} allows us to avoid
this assumption.

 Let $D^\bu$ be a complex from $\DG(\sB)$ that is acyclic in~$\sK$.
 Consider a special preenvelope exact sequence $0\rarrow D^\bu\rarrow
B^\bu\rarrow A^\bu\rarrow0$ in $\Com(\sK)$ for the cotorsion pair
$(\DG(\sA),\Ac(\sB))$ from
Proposition~\ref{dg-acyclic-cotorsion-pair-prop}.
 So we have $B^\bu\in\Ac(\sB)$ and $A^\bu\in\DG(\sA)$.

 According to the first paragraph of this proof, the complex $B^\bu$
belongs to $\DG(\sB)$.
 Since the class $\DG(\sB)$ is closed under cokernels of monomorphisms
in $\Com(\sK)$ (e.~g., by
Proposition~\ref{acyclic-dg-cotorsion-pair-prop}), we have
$A^\bu\in\DG(\sA)\cap\DG(\sB)$.
 Furthermore, both the complexes $D^\bu$ and $B^\bu$ are acyclic
in~$\sK$, hence so is the complex~$A^\bu$.
 Hence we actually have $A^\bu\in\Ac(\sK)\cap\DG(\sA)\cap\DG(\sB)$.
 By Lemma~\ref{DG-A-intersect-Ac-K-is-Ac-A}, it follows that
$A^\bu\in\Ac(\sA)\cap\DG(\sB)$.
 Thus $A^\bu$ is a contractible complex with the terms in $\sA\cap\sB$.
 Finally, $D^\bu$ is the kernel of a termwise admissible epimorphism
$B^\bu\rarrow A^\bu$ of acyclic complexes in the exact category~$\sB$;
so $D^\bu$ is also an acyclic complex in~$\sB$.
\end{proof}

\begin{thm} \label{cotorsion-periodity-type-equivalent-conditions}
 Let\/ $\sK$ be a Grothendieck category and $(\sA,\sB)$ be
a hereditary complete cotorsion pair generated by a set of objects\/
in\/~$\sK$.
 Then the following conditions are equivalent:
\begin{enumerate}
\item all acyclic complexes in\/ $\sA$ are Becker-coacyclic in\/ $\sA$,
i.~e., $\Ac(\sA)\subset\Ac^\bco(\sA)$;
\item the classes of acyclic and Becker-coacyclic complexes in\/ $\sA$
coincide, i.~e., $\Ac^\bco(\sA)=\Ac(\sA)$;
\item the classes\/ $\DG(\sB)$ and\/ $\Com(\sB)$ coincide, i.~e.,
$\DG(\sB)=\Com(\sB)$;
\item in any acyclic complex in\/ $\sK$ with the terms belonging
to\/ $\sB$, the objects of cocycles also belong to\/~$\sB$.
\end{enumerate}
\end{thm}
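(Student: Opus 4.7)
The plan is to organize the equivalences around the two hereditary complete cotorsion pairs $(\Ac^\bco(\sA),\Com(\sB))$ and $(\Ac(\sA),\DG(\sB))$ in $\Com(\sK)$ furnished by Propositions~\ref{coacyclic-all-cotorsion-pair-prop} and~\ref{acyclic-dg-cotorsion-pair-prop}. As noted from Lemma~\ref{disk-complexes-Ext-lemma} just before Proposition~\ref{acyclic-dg-cotorsion-pair-prop}, one always has $\DG(\sB)\subset\Com(\sB)$. Passing to left orthogonals, this automatically gives $\Ac^\bco(\sA)\subset\Ac(\sA)$, so (1)$\Leftrightarrow$(2). Moreover, two cotorsion pairs in the same ambient exact category coincide if and only if their left classes coincide if and only if their right classes coincide; hence (1), which says $\Ac(\sA)=\Ac^\bco(\sA)$, is equivalent to the equality of the two cotorsion pairs, which is equivalent to (3), the equality $\DG(\sB)=\Com(\sB)$. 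Thus (1)$\Leftrightarrow$(2)$\Leftrightarrow$(3) is essentially a formal consequence of the structure of cotorsion pairs.

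For (3)$\Rightarrow$(4), assume $\DG(\sB)=\Com(\sB)$ and let $C^\bu$ be acyclic in $\sK$ with terms in $\sB$. Then $C^\bu\in\Com(\sB)=\DG(\sB)$, and Lemma~\ref{DG-B-intersect-Ac-K-is-Ac-B} yields $C^\bu\in\Ac(\sK)\cap\DG(\sB)=\Ac(\sB)$, so its cocycles lie in~$\sB$.

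The substantive step is (4)$\Rightarrow$(3). Given $B^\bu\in\Com(\sB)$, I would take a special preenvelope sequence $0\to B^\bu\to D^\bu\to A^\bu\to 0$ in $\Com(\sK)$ for the cotorsion pair $(\Ac(\sA),\DG(\sB))$. Since $B^\bu\in\Com(\sB)$ and $D^\bu\in\DG(\sB)\subset\Com(\sB)$, and $\sB$ is closed under cokernels of admissible monomorphisms by hereditariness, the terms of $A^\bu$ lie in~$\sB$; combined with $A^\bu\in\Ac(\sA)\subset\Ac(\sK)$, this presents $A^\bu$ as an acyclic complex in $\sK$ with terms in~$\sB$. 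Hypothesis~(4) now places the cocycles of $A^\bu$ in~$\sB$; they also lie in $\sA$ by acyclicity in~$\sA$, so in $\sA\cap\sB=\sA^\inj$ by Lemma~\ref{inj-proj-objects-in-left-right-classes}(a). Hence $A^\bu$ is an acyclic complex in $\sA$ with injective cocycles, so every admissible short exact sequence of cocycles splits and $A^\bu$ is contractible. Applying Lemma~\ref{Ext1-as-homotopy-Hom-lemma} (whose hypothesis $\Ext^1_\sK(A^n,B^n)=0$ holds since $A^n\in\sA$ and $B^n\in\sB$) gives $\Ext^1_{\Com(\sK)}(A^\bu,B^\bu)=\Hom_{\Hot(\sK)}(A^\bu,B^\bu[1])=0$, so the preenvelope splits. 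Therefore $B^\bu$ is a direct summand of $D^\bu\in\DG(\sB)$, and since $\DG(\sB)=\Ac(\sA)^{\perp_1}$ is closed under direct summands, $B^\bu\in\DG(\sB)$, completing (3).

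The main obstacle is the step (4)$\Rightarrow$(3): one must recognize that the cokernel $A^\bu$ of a special preenvelope sequence lands not merely in $\Ac(\sA)$ but in fact in the contractible complexes with terms in $\sA\cap\sB$, and then translate that contractibility into the vanishing of $\Ext^1_{\Com(\sK)}(A^\bu,B^\bu)$ via Lemma~\ref{Ext1-as-homotopy-Hom-lemma} so as to split the preenvelope. Everything else reduces to the dictionary between the two cotorsion pairs in $\Com(\sK)$ and to the already-established Lemma~\ref{DG-B-intersect-Ac-K-is-Ac-B}.
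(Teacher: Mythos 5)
Your proposal is correct and follows essentially the same route as the paper: the same two cotorsion pairs in $\Com(\sK)$ for (1)$\Leftrightarrow$(2)$\Leftrightarrow$(3), Lemma~\ref{DG-B-intersect-Ac-K-is-Ac-B} for (3)$\Rightarrow$(4), and the same special preenvelope argument showing $A^\bu$ is contractible with terms in $\sA\cap\sB$ for (4)$\Rightarrow$(3). The only (harmless) variation is at the very end: the paper observes that the termwise split sequence with contractible $A^\bu$ makes $B^\bu\rarrow D^\bu$ a homotopy equivalence, whereas you split the extension outright via $\Ext^1_{\Com(\sK)}(A^\bu,B^\bu)=0$ and invoke closure of $\DG(\sB)$ under direct summands --- both steps are valid.
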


\begin{proof}
 (2)~$\Longleftrightarrow$~(3)
 By Propositions~\ref{coacyclic-all-cotorsion-pair-prop}
and~\ref{acyclic-dg-cotorsion-pair-prop}, we have two (hereditary
complete) cotorsion pairs $(\Ac^\bco(\sA),\>\Com(\sB))$ and
$(\Ac(\sA),\DG(\sB))$ in the abelian category $\Com(\sK)$.
 In any two cotorsion pairs in one and the same abelian/exact category,
the two left-hand classes coincide if and only if the two right-hand
classes coincide.

 (1)~$\Longleftrightarrow$~(2)
 Under the assumptions of the theorem, the inclusion
$\Ac^\bco(\sA)\subset\Ac(\sA)$ always holds, as one can see from
Propositions~\ref{coacyclic-all-cotorsion-pair-prop}
and~\ref{acyclic-dg-cotorsion-pair-prop}.
 Indeed, one has $\DG(\sB)\subset\Com(\sB)$, hence
$\Ac^\bco(\sA)={}^{\perp_1}(\Com(\sB))\subset{}^{\perp_1}(\DG(\sB))=
\Ac(\sA)$.
 See Corollary~\ref{coacyclic-in-A-are-acyclic-in-A} below for
a slightly more general result.

 (3)~$\Longrightarrow$~(4)
 Let $B^\bu$ be an acyclic complex in $\sK$ with the terms belonging
to~$\sB$.
 According to~(3), the complex $B^\bu$ belongs to~$\DG(\sB)$.
 By Lemma~\ref{DG-B-intersect-Ac-K-is-Ac-B}, it follows that $B^\bu$
is an acyclic complex in~$\sB$.

 (4)~$\Longrightarrow$~(3)
 We follow the argument from~\cite[Theorem~5.3]{BCE}.
 Let $B^\bu$ be a complex in~$\sB$.
 Consider a special preenvelope exact sequence $0\rarrow B^\bu\rarrow
D^\bu\rarrow A^\bu\rarrow0$ in $\Com(\sK)$ for the cotorsion pair
$(\Ac(\sA),\DG(\sB))$ from
Proposition~\ref{acyclic-dg-cotorsion-pair-prop}.
 So we have $D^\bu\in\DG(\sB)$ and $A^\bu\in\Ac(\sA)$.

 Since the class $\sB$ is closed under cokernels of admissible
monomorphisms in $\sK$, the terms of the complex $A^\bu$ actually
belong to $\sA\cap\sB$.
 So $A^\bu$ is an acyclic complex in $\sK$ with the terms in~$\sB$.
 By~(4), it follows that the objects of cocycles of the complex $A^\bu$
belong to~$\sB$.
 As $A^\bu\in\Ac(\sA)$ by assumption, we can conclude that the objects
of cocycles of the acyclic complex $A^\bu$ belong to $\sA\cap\sB$.
 As all admissible short exact sequences in $\sA\cap\sB$ are split,
this means that $A^\bu$ is a contractible complex with the terms
in $\sA\cap\sB$.

 The short exact sequence of complexes $0\rarrow B^\bu\rarrow D^\bu
\rarrow A^\bu\rarrow0$ is termwise split, so it follows that
the morphism $B^\bu\rarrow D^\bu$ is a homotopy equivalence of
complexes in~$\sB$.
 Using Lemma~\ref{Ext1-as-homotopy-Hom-lemma}, we finally see that
the complex $B^\bu$ belongs to $\DG(\sB)$ since the complex $D^\bu$
does.
\end{proof}

\Section{Coderived and Contraderived Categories for a Cotorsion Pair}

 Let $(\sA,\sB)$ be a hereditary complete cotorsion pair generated
by a set of objects in a Grothendieck category~$\sK$.
 In this section we study the Becker coderived category
$\sD^\bco(\sA)$ and the Becker contraderived category $\sD^\bctr(\sB)$.
 In the case of $\sD^\bco(\sA)$, the assumptions on the category $\sK$
can be relaxed.

\begin{cor} \label{coderived-of-A-well-behaved}
 Let\/ $\sK$ be either a locally presentable abelian category with
the abelian exact structure, or an efficient exact category in
the sense of\/~\cite{Sto-ICRA}.
 Let $(\sA,\sB)$ be a hereditary complete cotorsion pair generated
by a set of objects in\/~$\sK$.
 Then the composition of the triangulated inclusion functor\/
$\Hot(\sA^\inj)\rarrow\Hot(\sA)$ with the triangulated Verdier
quotient functor\/ $\Hot(\sA)\rarrow\sD^\bco(\sA)$ is a triangulated
equivalence
$$
 \Hot(\sA^\inj)\simeq\sD^\bco(\sA).
$$
\end{cor}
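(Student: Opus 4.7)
My plan is to identify $\Hot(\sA^\inj)$ as the ``fibrant replacement'' subcategory for the Verdier quotient by $\Ac^\bco(\sA)$, using the cotorsion pair provided by Proposition~\ref{coacyclic-all-cotorsion-pair-prop}. First, by Lemma~\ref{inj-proj-objects-in-left-right-classes}(a) we have $\sA^\inj = \sA \cap \sB$, and the full subcategory $\Com(\sA) \subset \Com(\sK)$ is closed under extensions and kernels of admissible epimorphisms (since $\sA$ is so in $\sK$); hence Lemma~\ref{restricting-cotorsion-pair-lemma}(a) restricts the cotorsion pair $(\Ac^\bco(\sA), \Com(\sB))$ to a complete hereditary cotorsion pair $(\Ac^\bco(\sA), \Com(\sA^\inj))$ in $\Com(\sA)$. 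This will be the core tool.

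For essential surjectivity of the functor $\Hot(\sA^\inj) \to \sD^\bco(\sA)$, I take any $M \in \Com(\sA)$ and apply completeness of the restricted cotorsion pair to obtain a special preenvelope short exact sequence $0 \to M \to J \to X \to 0$ with $J \in \Com(\sA^\inj)$ and $X \in \Ac^\bco(\sA)$. I want to show that the chain map $M \to J$ has its cone (computed in $\Hot(\sA)$) lying in $\Ac^\bco(\sA)$, so that $M \simeq J$ in $\sD^\bco(\sA)$. It suffices to check that the induced map $\Hom_{\Hot(\sA)}(J, L[n]) \to \Hom_{\Hot(\sA)}(M, L[n])$ is bijective for every $L \in \Com(\sA^\inj)$ and every $n \in \boZ$. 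Surjectivity already holds at the level of chain maps, by the long exact $\Hom$-sequence of the short exact sequence in $\Com(\sA)$ together with $\Ext^1_{\Com(\sA)}(X, L[n]) = 0$ (from the cotorsion pair). Injectivity is the main technical hurdle: given $g \colon J \to L[n]$ whose restriction $g|_M$ is null-homotopic via some graded map $h \colon M \to L[n-1]$, I extend $h$ termwise to $\tilde h \colon J \to L[n-1]$ using the fact that each $L^i \in \sA^\inj$ is injective in the exact category $\sA$; then $g - d\tilde h - \tilde h d$ is a chain map $J \to L[n]$ vanishing on $M$, hence factors through $X$ as some $\bar g \colon X \to L[n]$, which is null-homotopic by the very definition of $X \in \Ac^\bco(\sA)$. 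Pulling the resulting homotopy back along the projection $J \to X$ and adding it to $\tilde h$ exhibits $g$ itself as null-homotopic.

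For fully faithfulness, I use that the orthogonality $\Hom_{\Hot(\sA)}(X, J'[n]) = 0$ for $X \in \Ac^\bco(\sA)$, \,$J' \in \Com(\sA^\inj)$, \,$n \in \boZ$ is immediate from the definition of Becker-coacyclicity in $\sA$ combined with closure of $\Com(\sA^\inj)$ under shifts. Given $J_1$, $J_2 \in \Hot(\sA^\inj)$, every morphism $J_1 \to J_2$ in $\sD^\bco(\sA)$ is represented by a roof $J_1 \xleftarrow{s} N \to J_2$ whose cone of $s$ lies in $\Ac^\bco(\sA)$, and the distinguished triangle for $s$ together with the above orthogonality forces $s^* \colon \Hom_{\Hot(\sA)}(J_1, J_2) \to \Hom_{\Hot(\sA)}(N, J_2)$ to be bijective; consequently every such roof collapses to a unique morphism already in $\Hot(\sA^\inj)$. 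The main obstacle throughout is the homotopy-lifting step of the essential-surjectivity argument: the short exact sequence $0 \to M \to J \to X \to 0$ need not be termwise split in $\sA$, so it does not directly yield a distinguished triangle in $\Hot(\sA)$, and the concrete homotopy-extension argument using termwise injectivity of $L$ together with the Becker-coacyclicity of $X$ is precisely what bridges this gap.
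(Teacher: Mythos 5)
Your proposal is correct and follows the paper's overall strategy: you use the same restricted hereditary complete cotorsion pair $(\Ac^\bco(\sA),\>\Com(\sA\cap\sB))$ in $\Com(\sA)$ (obtained in the proof of Proposition~\ref{coacyclic-all-cotorsion-pair-prop} via Lemma~\ref{restricting-cotorsion-pair-lemma}(a)), take a special preenvelope $0\to M\to J\to X\to0$ of a given complex, and establish full faithfulness of $\Hot(\sA^\inj)\to\sD^\bco(\sA)$ by the standard orthogonality argument, exactly as the paper does. The one place where you genuinely diverge is the step showing that the cone of $M\to J$ is Becker-coacyclic: the paper gets this at once from Lemma~\ref{totalizations-are-co-contra-acyclic}(a) (the totalization of the termwise admissible short exact sequence is Becker-coacyclic) combined with thickness of $\Ac^\bco(\sA)$ and coacyclicity of $X$, whereas you verify it by hand, proving that $\Hom_{\Hot(\sA)}(J,L[n])\to\Hom_{\Hot(\sA)}(M,L[n])$ is bijective for every $L\in\Com(\sA^\inj)$ and $n\in\boZ$ — surjectivity from the $\Ext$ long exact sequence in $\Com(\sA)$ together with $\Ext^1_{\Com(\sA)}(X,L[n])=0$, and injectivity by extending the homotopy termwise using injectivity of the terms of $L$ in $\sA$, factoring the corrected map through $X$, and using coacyclicity of $X$. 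This computation is a correct elementary unwinding of what the citation of Lemma~\ref{totalizations-are-co-contra-acyclic}(a) accomplishes; the paper's route is shorter and reusable (the same lemma serves the dual, contraderived statement), while yours makes the non-splitness issue explicit and resolves it without any totalization machinery.
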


\begin{proof}
 Notice first of all that, by
Lemma~\ref{inj-proj-objects-in-left-right-classes}(a), there are enough
injective objects in $\sA$, and $\sA^\inj=\sA\cap\sB$.
 Furthermore, it follows immediately from the definition of
the Becker coderived category that the triangulated functor
$\Hot(\sA^\inj)\rarrow\sD^\bco(\sA)$ is fully faithful.
 In order to prove the corollary, it suffices to construct, for every
complex $C^\bu$ in $\sA$, a complex of injective objects $B^\bu$ in
$\sA$ together with a morphism of complexes $C^\bu\rarrow B^\bu$
whose cone is Becker-coacyclic in~$\sA$.

 According to the proof of
Proposition~\ref{coacyclic-all-cotorsion-pair-prop}, we have
a hereditary complete cotorsion pair
$(\Ac^\bco(\sA),\>\Com(\sA\cap\sB))$ in the exact category $\Com(\sA)$.
 Let $0\rarrow C^\bu\rarrow B^\bu\rarrow A^\bu\rarrow0$ be a special
preenvelope exact sequence of the object $C^\bu\in\Com(\sA)$ with
respect to this cotorsion pair.
 So we have $B^\bu\in\Com(\sA\cap\sB)$ and $A^\bu\in\Ac^\bco(\sA)$.

 It remains to point out that the totalization of the short exact
sequence $0\rarrow C^\bu\rarrow B^\bu\rarrow A^\bu\rarrow0$ is
Becker-coacyclic in $\sA$ by
Lemma~\ref{totalizations-are-co-contra-acyclic}(a).
 Since the complex $A^\bu$ is also Becker-coacyclic in $\sA$,
it follows that the cone of the morphism $C^\bu\rarrow B^\bu$ is
Becker-coacyclic in~$\sA$.
\end{proof}

 The following corollary is to be compared with~\cite[Lemma~B.7.3(a)
and Remark~B.7.4]{Pcosh}.

\begin{cor} \label{coacyclic-in-A-are-acyclic-in-A}
 Let\/ $\sK$ be either a locally presentable abelian category with
the abelian exact structure, or an efficient exact category in
the sense of\/~\cite{Sto-ICRA}.
 In the latter case, assume additionally that the cokernels of all
morphisms exist in\/~$\sK$.
 Let $(\sA,\sB)$ be a hereditary complete cotorsion pair generated
by a set of objects in\/~$\sK$.
 Then every Becker-coacyclic complex in\/ $\sA$ is acyclic in\/ $\sA$,
that is\/ $\Ac^\bco(\sA)\subset\Ac(\sA)$.
\end{cor}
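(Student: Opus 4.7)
The plan is to reduce the claim to the $\Hom$-criterion for acyclicity (Lemma~\ref{Hom-criterion-of-acyclicity-in-cotorsion-pair}(b)) via the cotorsion pair already constructed in Proposition~\ref{coacyclic-all-cotorsion-pair-prop}. Note first that the class $\sB$ is automatically cogenerating in $\sK$ because the cotorsion pair $(\sA,\sB)$ is complete, and the cokernel-existence hypothesis in the corollary matches exactly the standing assumption of Lemma~\ref{Hom-criterion-of-acyclicity-in-cotorsion-pair}(b). So it will suffice to prove, for any $C^\bu \in \Ac^\bco(\sA)$ and any $B \in \sB$, that the complex of abelian groups $\Hom_\sK(C^\bu, B)$ is acyclic.

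By Proposition~\ref{coacyclic-all-cotorsion-pair-prop}, $(\Ac^\bco(\sA),\Com(\sB))$ is a hereditary complete cotorsion pair in $\Com(\sK)$. Given $B\in\sB$ and $n\in\boZ$, the one-term complex $B[n]$ (with $B$ concentrated in a single cohomological degree) lies in $\Com(\sB)$, and therefore $\Ext^1_{\Com(\sK)}(C^\bu, B[n]) = 0$. The hypothesis $\Ext^1_\sK(C^i, B[n]^i) = 0$ of Lemma~\ref{Ext1-as-homotopy-Hom-lemma} is satisfied in each degree $i$, since either $B[n]^i = 0$, or $B[n]^i = B\in\sB$ while $C^i\in\sA$. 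Hence that lemma identifies the vanishing $\Ext^1$ with $\Hom_{\Hot(\sK)}(C^\bu, B[n+1]) = H^{n+1}\Hom_\sK(C^\bu, B)$. Letting $n$ range over all integers shows that $\Hom_\sK(C^\bu, B)$ is acyclic.

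Now apply Lemma~\ref{Hom-criterion-of-acyclicity-in-cotorsion-pair}(b) to conclude that $C^\bu$ is acyclic in $\sA$. This gives $\Ac^\bco(\sA)\subset\Ac(\sA)$, as required. The whole argument proceeds by recasting the $\Ext$-vanishing that defines membership in the left class of Proposition~\ref{coacyclic-all-cotorsion-pair-prop} as acyclicity of an internal $\Hom$, and then invoking a single test-class acyclicity criterion; the only nontrivial point is matching hypotheses, which is precisely why the corollary imposes the existence of cokernels in the efficient exact case.
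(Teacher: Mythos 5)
Your proof is correct and follows essentially the same route as the paper: both arguments use the cotorsion pair $(\Ac^\bco(\sA),\Com(\sB))$ from Proposition~\ref{coacyclic-all-cotorsion-pair-prop} together with Lemma~\ref{Ext1-as-homotopy-Hom-lemma} to show that $\Hom_\sK(C^\bu,B)$ is acyclic for every $B\in\sB$, and then conclude via Lemma~\ref{Hom-criterion-of-acyclicity-in-cotorsion-pair}(b). The only cosmetic difference is that you specialize directly to the one-term complexes $B[n]$, whereas the paper states the acyclicity of $\Hom_\sK(C^\bu,B^\bu)$ for arbitrary complexes $B^\bu$ in $\sB$ and then restricts to objects.
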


\begin{proof}
 Let $A^\bu$ be a Becker-coacyclic complex in~$\sA$.
 Then, by Proposition~\ref{coacyclic-all-cotorsion-pair-prop} and
Lemma~\ref{Ext1-as-homotopy-Hom-lemma}, the complex of abelian groups
$\Hom_\sK(A^\bu,B^\bu)$ is acyclic for every complex $B^\bu$ in~$\sB$.
 In particular, the complex $\Hom_\sK(A^\bu,B)$ is acyclic for every
object $B\in\sB$.
 By Lemma~\ref{Hom-criterion-of-acyclicity-in-cotorsion-pair}(b), it
follows that the complex $A^\bu$ is acyclic in~$\sA$.
\end{proof}

\begin{cor} \label{contraderived-of-B-well-behaved}
 Let\/ $\sK$ be a Grothendieck category and $(\sA,\sB)$ be a hereditary
complete cotorsion pair generated by a set of objects in\/~$\sK$.
 Then the composition of the triangulated inclusion functor\/
$\Hot(\sB_\proj)\rarrow\Hot(\sB)$ with the triangulated Verdier
quotient functor\/ $\Hot(\sB)\rarrow\sD^\bctr(\sB)$ is a triangulated
equivalence
$$
 \Hot(\sB_\proj)\simeq\sD^\bctr(\sB).
$$
\end{cor}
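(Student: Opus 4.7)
My plan is to dualize the proof of Corollary~\ref{coderived-of-A-well-behaved}, substituting Proposition~\ref{all-contraacyclic-cotorsion-pair-prop} for Proposition~\ref{coacyclic-all-cotorsion-pair-prop}. By Lemma~\ref{inj-proj-objects-in-left-right-classes}(b), there are enough projective objects in the exact category $\sB$, and $\sB_\proj = \sA\cap\sB$; moreover, the triangulated functor $\Hot(\sB_\proj)\to\sD^\bctr(\sB)$ is fully faithful directly from the definition of Becker-contraacyclicity. Thus it suffices to show that every complex $C^\bu\in\Com(\sB)$ admits a morphism $B^\bu\to C^\bu$ from a complex $B^\bu\in\Com(\sB_\proj)$ whose cone is Becker-contraacyclic in $\sB$.

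To produce such a morphism, I would first apply Proposition~\ref{all-contraacyclic-cotorsion-pair-prop} to obtain the hereditary complete cotorsion pair $(\Com(\sA),\Ac^\bctr(\sB))$ in $\Com(\sK)$. I would then invoke Lemma~\ref{restricting-cotorsion-pair-lemma}(b) with $\sE=\Com(\sB)$, noting that $\Com(\sB)$ is closed under extensions and cokernels of admissible monomorphisms in $\Com(\sK)$ (since $\sB$ enjoys the analogous closure properties in $\sK$ as the right-hand class of a hereditary cotorsion pair), and obviously contains the right-hand class $\Ac^\bctr(\sB)$. This restricts the cotorsion pair to a hereditary complete cotorsion pair $(\Com(\sB_\proj),\Ac^\bctr(\sB))$ in $\Com(\sB)$.

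Given $C^\bu\in\Com(\sB)$, a special precover sequence for this restricted cotorsion pair yields a termwise admissible short exact sequence $0\to A^\bu\to B^\bu\to C^\bu\to 0$ in $\Com(\sB)$ with $B^\bu\in\Com(\sB_\proj)$ and $A^\bu\in\Ac^\bctr(\sB)$. By Lemma~\ref{totalizations-are-co-contra-acyclic}(a) applied to the exact category $\sB$, the total complex of this short exact sequence is Becker-contraacyclic in $\sB$. Since the complex $A^\bu$ is also Becker-contraacyclic in $\sB$, it follows that the cone of the morphism $B^\bu\to C^\bu$ is Becker-contraacyclic in $\sB$, as required.

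Since this argument is a verbatim dualization of the proof of the preceding corollary, no genuine obstacle is expected. The only points needing independent verification are the hypotheses of Lemma~\ref{restricting-cotorsion-pair-lemma}(b) for $\sE=\Com(\sB)$, and the applicability of Lemma~\ref{totalizations-are-co-contra-acyclic}(a) with $\sK$ replaced by the inherited exact structure on $\sB$—both of which are routine.
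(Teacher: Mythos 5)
Your proposal is correct and follows essentially the same route as the paper: the paper's proof also reduces to finding, for each $C^\bu\in\Com(\sB)$, a special precover sequence with respect to the hereditary complete cotorsion pair $(\Com(\sA\cap\sB),\>\Ac^\bctr(\sB))$ in $\Com(\sB)$ (obtained, exactly as you do, by restricting the cotorsion pair of Proposition~\ref{all-contraacyclic-cotorsion-pair-prop} via Lemma~\ref{restricting-cotorsion-pair-lemma}(b)), and then concludes with the totalization argument of Lemma~\ref{totalizations-are-co-contra-acyclic}(a). The only cosmetic difference is that you re-derive the restricted cotorsion pair, while the paper cites it from the proof of Proposition~\ref{all-contraacyclic-cotorsion-pair-prop}.
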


\begin{proof}
 Notice that, by
Lemma~\ref{inj-proj-objects-in-left-right-classes}(b), there are enough
projective objects in $\sB$, and $\sB_\proj=\sA\cap\sB$.
 Furthermore, it follows immediately from the definition of
the Becker contraderived category that the triangulated functor
$\Hot(\sB_\proj)\rarrow\sD^\bctr(\sB)$ is fully faithful.
 In order to prove the corollary, it suffices to construct, for every
complex $C^\bu$ in $\sB$, a complex of projective objects $A^\bu$ in
$\sB$ together with a morphism of complexes $A^\bu\rarrow C^\bu$
whose cone is Becker-contraacyclic in~$\sB$.

 According to the proof of
Proposition~\ref{all-contraacyclic-cotorsion-pair-prop}, we have
a hereditary complete cotorsion pair
$(\Com(\sA\cap\sB),\>\Ac^\bctr(\sB))$ in the exact category $\Com(\sB)$.
 Let $0\rarrow B^\bu\rarrow A^\bu\rarrow C^\bu\rarrow0$ be a special
precover exact sequence of the object $C^\bu\in\Com(\sB)$ with
respect to this cotorsion pair.
 So we have $A^\bu\in\Com(\sA\cap\sB)$ and $B^\bu\in\Ac^\bctr(\sB)$.

 It remains to point out that the totalization of the short exact
sequence $0\rarrow B^\bu\rarrow A^\bu\rarrow C^\bu\rarrow0$ is
Becker-contraacyclic in $\sB$ by
Lemma~\ref{totalizations-are-co-contra-acyclic}(a).
 Since the complex $B^\bu$ is also Becker-contraacyclic in $\sB$,
it follows that the cone of the morphism $A^\bu\rarrow C^\bu$ is
Becker-contraacyclic in~$\sB$.
\end{proof}

 Once again, the following corollary should be compared
with~\cite[Lemma~B.7.3(b) and Remark~B.7.4]{Pcosh}.

\begin{cor} \label{contraacyclic-in-B-are-acyclic-in-B}
 Let\/ $\sK$ be a Grothendieck category and $(\sA,\sB)$ be a hereditary
complete cotorsion pair generated by a set of objects in\/~$\sK$.
 Then every Becker-contraacyclic complex in\/ $\sB$ is acyclic in\/
$\sB$, that is\/ $\Ac^\bctr(\sB)\subset\Ac(\sB)$.
\end{cor}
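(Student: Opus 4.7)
The plan is to run the exact dual of the argument used for Corollary~\ref{coacyclic-in-A-are-acyclic-in-A}, replacing Proposition~\ref{coacyclic-all-cotorsion-pair-prop} with Proposition~\ref{all-contraacyclic-cotorsion-pair-prop} and part~(b) of Lemma~\ref{Hom-criterion-of-acyclicity-in-cotorsion-pair} with part~(a). The two key tools are therefore on the shelf: Proposition~\ref{all-contraacyclic-cotorsion-pair-prop} gives the cotorsion pair $(\Com(\sA),\Ac^\bctr(\sB))$ in $\Com(\sK)$, and Lemma~\ref{Hom-criterion-of-acyclicity-in-cotorsion-pair}(a) is applicable because a Grothendieck category has kernels of arbitrary morphisms and the class $\sA$, being the left-hand part of a complete cotorsion pair, is generating in~$\sK$.

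The execution would go as follows. Fix a complex $B^\bu\in\Ac^\bctr(\sB)$. By Proposition~\ref{all-contraacyclic-cotorsion-pair-prop}, we have $\Ext^1_{\Com(\sK)}(A^\bu,B^\bu)=0$ for every $A^\bu\in\Com(\sA)$. Since $A^n\in\sA$ and $B^n\in\sB$, the orthogonality hypothesis of Lemma~\ref{Ext1-as-homotopy-Hom-lemma} is satisfied, so this Ext-vanishing translates into $\Hom_{\Hot(\sK)}(A^\bu,B^\bu[1])=0$. Applying this to all cohomological shifts of a one-term complex concentrated on an object $A\in\sA$, I obtain $\Hom_{\Hot(\sK)}(A,B^\bu[n])=0$ for every $A\in\sA$ and every $n\in\boZ$, which is exactly the statement that the complex of abelian groups $\Hom_\sK(A,B^\bu)$ is acyclic for every $A\in\sA$.

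To close, I would invoke Lemma~\ref{Hom-criterion-of-acyclicity-in-cotorsion-pair}(a): its hypotheses (existence of kernels, $\sA$ generating) hold, and its conclusion is that $B^\bu$ is acyclic in $\sB$, that is, an acyclic complex in $\sK$ whose cocycles all lie in~$\sB$. This shows $\Ac^\bctr(\sB)\subset\Ac(\sB)$.

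There is no genuine obstacle here — the proof is entirely formal once the cotorsion-pair description of $\Ac^\bctr(\sB)$ from Proposition~\ref{all-contraacyclic-cotorsion-pair-prop} is in hand. The one minor point that deserves care is simply to check that all the hypotheses carry through on the ``projective'' side rather than the ``injective'' side as in Corollary~\ref{coacyclic-in-A-are-acyclic-in-A}; in particular, no extra assumption on cokernels is needed because Grothendieck categories automatically provide the kernels required by Lemma~\ref{Hom-criterion-of-acyclicity-in-cotorsion-pair}(a).
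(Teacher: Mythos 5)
Your argument is correct and coincides with the paper's own (primary) proof: both derive the acyclicity of $\Hom_\sK(A,B^\bu)$ for all $A\in\sA$ from Proposition~\ref{all-contraacyclic-cotorsion-pair-prop} together with Lemma~\ref{Ext1-as-homotopy-Hom-lemma}, and then conclude via Lemma~\ref{Hom-criterion-of-acyclicity-in-cotorsion-pair}(a), whose hypotheses hold exactly as you note. (The paper also records a second, even shorter route via Proposition~\ref{dg-acyclic-cotorsion-pair-prop}, namely $\Ac^\bctr(\sB)\subset(\Com(\sA))^{\perp_1}\subset(\DG(\sA))^{\perp_1}=\Ac(\sB)$, but that is only an alternative.)
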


\begin{proof}
 Let $B^\bu$ be a Becker-contraacyclic complex in~$\sB$.
 Then, by Proposition~\ref{all-contraacyclic-cotorsion-pair-prop} and
Lemma~\ref{Ext1-as-homotopy-Hom-lemma}, the complex of abelian groups
$\Hom_\sK(A^\bu,B^\bu)$ is acyclic for every complex $A^\bu$ in~$\sA$.
 In particular, the complex $\Hom_\sK(A,B^\bu)$ is acyclic for every
object $A\in\sA$.
 By Lemma~\ref{Hom-criterion-of-acyclicity-in-cotorsion-pair}(a), it
follows that the complex $B^\bu$ is acyclic in~$\sB$.

 Alternatively, one can refer to
Propositions~\ref{all-contraacyclic-cotorsion-pair-prop}
and~\ref{dg-acyclic-cotorsion-pair-prop}.
 One has $\DG(\sA)\subset\Com(\sA)$, hence $\Ac^\bctr(\sB)\subset
(\Com(\sA))^{\perp_1}\subset(\DG(\sA))^{\perp_1}=\Ac(\sB)$.
\end{proof}

 The following lemma generalizes
Lemma~\ref{inj-proj-objects-in-left-right-classes}.

\begin{lem} \label{inj-proj-objects-in-intersected-classes}
 Let\/ $\sK$ be an exact category, $\sE\subset\sK$ be a full additive
subcategory, and $(\sA,\sB)$ be a complete cotorsion pair in~$\sK$. \par
\textup{(a)} Assume the full subcategory\/ $\sE$ is closed under
extensions and cokernels of admissible monomorphisms in\/ $\sK$, and
that\/ $\sB\subset\sE$.
 Then there are enough injective objects in the exact category\/
$\sE\cap\sA$, and the kernel\/ $\sA\cap\sB$ of the cotorsion pair
$(\sA,\sB)$ is the class of all injective objects in\/ $\sE\cap\sA$.
\par
\textup{(b)} Assume the full subcategory\/ $\sE$ is closed under
extensions and kernels of admissible epimorphisms in\/ $\sK$, and
that\/ $\sA\subset\sE$.
 Then there are enough projective objects in the exact category\/
$\sE\cap\sB$, and the kernel\/ $\sA\cap\sB$ of the cotorsion pair
$(\sA,\sB)$ is the class of all projective objects in\/ $\sE\cap\sB$.
\end{lem}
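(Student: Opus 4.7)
The plan is to reduce both parts directly to Lemma~\ref{inj-proj-objects-in-left-right-classes} by first restricting the cotorsion pair $(\sA,\sB)$ from $\sK$ to the full subcategory $\sE$ equipped with its inherited exact structure, and then invoking the previous lemma inside $\sE$. For part~(a), the hypotheses that $\sE$ is closed under extensions and cokernels of admissible monomorphisms in $\sK$, together with $\sB\subset\sE$, are precisely the conditions of Lemma~\ref{restricting-cotorsion-pair-lemma}(b). That lemma tells us that the complete cotorsion pair $(\sA,\sB)$ restricts to a complete cotorsion pair $(\sE\cap\sA,\,\sB)$ in the exact category $\sE$, and that if $(\sA,\sB)$ is hereditary then so is the restriction; however, heredity is not needed here.

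Once this restriction is in place, I would apply Lemma~\ref{inj-proj-objects-in-left-right-classes}(a) to the complete cotorsion pair $(\sE\cap\sA,\,\sB)$ inside $\sE$. That yields enough injective objects in the exact category $\sE\cap\sA$, and identifies the class of all injectives in $\sE\cap\sA$ with the kernel of the restricted cotorsion pair, namely $(\sE\cap\sA)\cap\sB$. Since $\sB\subset\sE$, this intersection equals $\sA\cap\sB$, which gives the desired description of the injectives in $\sE\cap\sA$. One only needs to remark that the inherited exact structure on $\sE\cap\sA$ from $\sE$ coincides with that inherited directly from $\sK$, which is immediate by transitivity.

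Part~(b) follows by the dual argument: the hypotheses match those of Lemma~\ref{restricting-cotorsion-pair-lemma}(a), producing a complete cotorsion pair $(\sA,\,\sE\cap\sB)$ in $\sE$. Then Lemma~\ref{inj-proj-objects-in-left-right-classes}(b) applied inside $\sE$ supplies enough projectives in $\sE\cap\sB$, whose class coincides with $\sA\cap(\sE\cap\sB)=\sA\cap\sB$ thanks to $\sA\subset\sE$. There is no real obstacle here, since the construction of special preenvelope and precover sequences with summands in the correct classes has already been carried out inside $\sE$ by Lemma~\ref{restricting-cotorsion-pair-lemma}; the only bookkeeping is the trivial identification of the kernel of the restricted cotorsion pair with $\sA\cap\sB$, which is automatic from the respective inclusion $\sB\subset\sE$ in~(a) and $\sA\subset\sE$ in~(b).
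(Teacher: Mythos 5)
Your proof is correct and follows essentially the same route as the paper: in part~(a) restrict the cotorsion pair via Lemma~\ref{restricting-cotorsion-pair-lemma}(b) to get $(\sE\cap\sA,\sB)$ in $\sE$, apply Lemma~\ref{inj-proj-objects-in-left-right-classes}(a) there, and note $(\sE\cap\sA)\cap\sB=\sA\cap\sB$; part~(b) is the dual combination of Lemmas~\ref{restricting-cotorsion-pair-lemma}(a) and~\ref{inj-proj-objects-in-left-right-classes}(b). The extra remarks (heredity not needed, transitivity of inherited exact structures) are accurate but inessential.
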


\begin{proof}
 To prove part~(a), it suffices to recall that $(\sE\cap\sA,\>\sB)$
is a complete cotorsion pair in $\sE$ by
Lemma~\ref{restricting-cotorsion-pair-lemma}(b).
 Applying Lemma~\ref{inj-proj-objects-in-left-right-classes}(a)
and noticing that $\sE\cap\sA\cap\sB=\sA\cap\sB$, we arrive to
the desired assertion.
 The proof of part~(b) is similar and based on
Lemmas~\ref{restricting-cotorsion-pair-lemma}(a)
and~\ref{inj-proj-objects-in-left-right-classes}(b).
\end{proof}

 The following corollary is a generalization
of Corollary~\ref{coderived-of-A-well-behaved}.

\begin{cor} \label{coderived-of-E-intersect-A-well-behaved}
 Let\/ $\sK$ be either a locally presentable abelian category with
the abelian exact structure, or an efficient exact category in
the sense of\/~\cite{Sto-ICRA}.
 Let $(\sA,\sB)$ be a hereditary complete cotorsion pair generated
by a set of objects in\/~$\sK$.
 Let\/ $\sE\subset\sK$ be a full additive subcategory closed under
extensions and cokernels of admissible monomorphisms in\/ $\sK$,
and such that\/ $\sB\subset\sE$.
 Consider the exact category\/ $\sF=\sE\cap\sA$.
 Then the composition of the triangulated inclusion functor\/
$\Hot(\sF^\inj)\rarrow\Hot(\sF)$ with the triangulated Verdier
quotient functor\/ $\Hot(\sF)\rarrow\sD^\bco(\sF)$ is a triangulated
equivalence
$$
 \Hot(\sF^\inj)\simeq\sD^\bco(\sF).
$$
\end{cor}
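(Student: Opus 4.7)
The plan is to imitate the proof of Corollary~\ref{coderived-of-A-well-behaved}, with $\sA$ replaced by $\sF=\sE\cap\sA$. The main task is to exhibit a hereditary complete cotorsion pair of the form $(\Ac^\bco(\sF),\>\Com(\sA\cap\sB))$ in the exact category $\Com(\sF)$. Once this is available, everything else becomes routine: Lemma~\ref{inj-proj-objects-in-intersected-classes}(a) gives $\sF^\inj=\sA\cap\sB$ and shows that $\sF$ has enough injectives; full faithfulness of the composite $\Hot(\sF^\inj)\to\sD^\bco(\sF)$ is immediate from the definition of the Becker coderived category; and for any $C^\bu\in\Com(\sF)$, a special preenvelope $0\to C^\bu\to B^\bu\to A^\bu\to 0$ with $B^\bu\in\Com(\sA\cap\sB)$ and $A^\bu\in\Ac^\bco(\sF)$ provides a morphism into a complex of injectives in $\sF$ whose cone is Becker-coacyclic in $\sF$ by Lemma~\ref{totalizations-are-co-contra-acyclic}(a).

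To construct the cotorsion pair in $\Com(\sF)$, I would apply Lemma~\ref{restricting-cotorsion-pair-lemma} twice. Starting from the cotorsion pair $(\Ac^\bco(\sA),\Com(\sB))$ in $\Com(\sK)$ provided by Proposition~\ref{coacyclic-all-cotorsion-pair-prop}, first invoke part~(a) of the restriction lemma with the subcategory $\Com(\sA)\subset\Com(\sK)$: this is closed under extensions and kernels of admissible epimorphisms because $\sA$ is (by heredity), and $\Ac^\bco(\sA)\subset\Com(\sA)$ is automatic. The output is the hereditary complete cotorsion pair $(\Ac^\bco(\sA),\Com(\sA\cap\sB))$ in $\Com(\sA)$. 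Next, apply part~(b) of the restriction lemma to restrict further to $\Com(\sF)\subset\Com(\sA)$: the subcategory $\sF=\sE\cap\sA$ inherits from $\sE$ the closure under extensions and cokernels of admissible monomorphisms in $\sA$, while the hypothesis $\sB\subset\sE$ gives $\sA\cap\sB\subset\sE\cap\sA=\sF$, hence $\Com(\sA\cap\sB)\subset\Com(\sF)$. The resulting hereditary complete cotorsion pair in $\Com(\sF)$ has the form $(\Com(\sF)\cap\Ac^\bco(\sA),\>\Com(\sA\cap\sB))$.

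The one nontrivial point, which I view as the main (though minor) obstacle, is the identification $\Com(\sF)\cap\Ac^\bco(\sA)=\Ac^\bco(\sF)$. This rests on the equality $\sF^\inj=\sA\cap\sB=\sA^\inj$ (combining Lemmas~\ref{inj-proj-objects-in-left-right-classes}(a) and~\ref{inj-proj-objects-in-intersected-classes}(a)) together with the observation that, since $\sF\subset\sA$ is a full subcategory, a morphism from a complex in $\sF$ to a complex in $\sA\cap\sB$ is null-homotopic in $\Com(\sF)$ if and only if it is null-homotopic in $\Com(\sA)$. It is worth noting that one cannot simply restrict $(\Ac^\bco(\sA),\Com(\sB))$ directly from $\Com(\sK)$ to $\Com(\sF)$ in a single step, since $\Com(\sB)$ need not be contained in $\Com(\sF)$; the two-step passage through $\Com(\sA)$ is essential, and once it is carried out the argument for Corollary~\ref{coderived-of-A-well-behaved} transfers verbatim.
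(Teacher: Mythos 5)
Your argument is correct and follows essentially the same route as the paper: both rest on the identification $\sF^\inj=\sA\cap\sB=\sA^\inj$ (so that Becker-coacyclicity in $\sF$ and in $\sA$ agree for complexes in $\sF$) and on a special preenvelope with right-hand class $\Com(\sA\cap\sB)$ combined with Lemma~\ref{totalizations-are-co-contra-acyclic}(a). The only difference is packaging: you restrict the complex-level cotorsion pair further to $\Com(\sF)$ via Lemma~\ref{restricting-cotorsion-pair-lemma}(b) and take the preenvelope there, whereas the paper takes the preenvelope in $\Com(\sA)$ (as constructed in the proof of Corollary~\ref{coderived-of-A-well-behaved}) and simply observes that both $C^\bu$ and $B^\bu$, hence the cone, are complexes in $\sF$, so the cone is Becker-coacyclic in $\sF$ as well.
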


\begin{proof}
 Lemma~\ref{inj-proj-objects-in-intersected-classes}(a) tells us that
there are enough injective objects in $\sF$, and $\sF^\inj=\sA\cap\sB$.
 In other words, the classes of injective objects in the exact
categories $\sF$ and $\sA$ coincide.
 It follows that a complex in $\sF$ is Becker-coacyclic in $\sF$ if
and only if it is Becker-coacyclic as a complex in~$\sA$.

 As mentioned in the proof of
Corollary~\ref{coderived-of-A-well-behaved}, it is clear from
the definition of the Becker coderived category that the triangulated
functor $\Hot(\sF^\inj)\rarrow\sD^\bco(\sF)$ is fully faithful.
 In order to prove the corollary, we need to construct, for every
complex $C^\bu$ in $\sF$, a complex of injective objects $B^\bu$
in $\sF$ together with a morphism of complexes $C^\bu\rarrow B^\bu$
whose cone is Becker-coacyclic in~$\sF$.

 A complex $B^\bu$ in $\sA\cap\sB$ together with a morphism of
complexes $C^\bu\rarrow B^\bu$ whose cone is Becker-coacyclic
in $\sA$ was constructed in the proof of
Corollary~\ref{coderived-of-A-well-behaved}.
 It remains to point out that both $C^\bu$ and $B^\bu$ are complexes
in $\sF$, hence so is the cone.
 Since the cone is Becker-coacyclic in $\sA$, it is also
Becker-coacyclic in~$\sF$.
\end{proof}

 The next corollary is a generalization
of Corollary~\ref{contraderived-of-B-well-behaved}.

\begin{cor} \label{contraderived-of-E-intersect-B-well-behaved}
 Let\/ $\sK$ be a Grothendieck category and $(\sA,\sB)$ be a hereditary
complete cotorsion pair generated by a set of objects in\/~$\sK$.
 Let\/ $\sE\subset\sK$ be a full additive subcategory closed under
extensions and kernels of epimorphisms in\/ $\sK$, and such that\/
$\sA\subset\sE$.
 Consider the exact category\/ $\sC=\sE\cap\sB$.
 Then the composition of the triangulated inclusion functor\/
$\Hot(\sC_\proj)\rarrow\Hot(\sC)$ with the triangulated Verdier
quotient functor\/ $\Hot(\sC)\rarrow\sD^\bctr(\sC)$ is a triangulated
equivalence
$$
 \Hot(\sC_\proj)\simeq\sD^\bctr(\sC).
$$
\end{cor}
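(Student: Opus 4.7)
The plan is to dualize the proof of Corollary~\ref{coderived-of-E-intersect-A-well-behaved}, replacing the roles of injective/projective and co/contraderived everywhere. The first move is to apply Lemma~\ref{inj-proj-objects-in-intersected-classes}(b) to conclude that there are enough projective objects in the exact category $\sC=\sE\cap\sB$, and that the class of projective objects of $\sC$ is exactly $\sA\cap\sB$. By Lemma~\ref{inj-proj-objects-in-left-right-classes}(b), the class $\sA\cap\sB$ is also the class of projective objects of $\sB$, so $\sC_\proj=\sB_\proj=\sA\cap\sB$.

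From this identification I would deduce the key compatibility: a complex with terms in $\sC$ is Becker-contraacyclic in $\sC$ if and only if it is Becker-contraacyclic when viewed as a complex in $\sB$. Indeed, the defining orthogonality (that $\Hom_\sK(P^\bu,-)$ is null-homotopic for all $P^\bu\in\Hot(\sC_\proj)$, respectively $\Hot(\sB_\proj)$) is the same condition, since the two categories of projective complexes coincide. In particular, the standard argument shows that the composition $\Hot(\sC_\proj)\rarrow\Hot(\sC)\rarrow\sD^\bctr(\sC)$ is fully faithful, because any morphism from a complex of projectives to a Becker-contraacyclic complex is homotopic to zero by definition.

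For essential surjectivity, let $C^\bu$ be a complex in $\sC$. Since $\sC\subset\sB$, the proof of Corollary~\ref{contraderived-of-B-well-behaved} applies to produce a short exact sequence $0\rarrow B^\bu\rarrow A^\bu\rarrow C^\bu\rarrow 0$ in $\Com(\sB)$ with $A^\bu\in\Com(\sA\cap\sB)$ and $B^\bu\in\Ac^\bctr(\sB)$, coming from a special precover sequence for the cotorsion pair $(\Com(\sA\cap\sB),\Ac^\bctr(\sB))$ of Proposition~\ref{all-contraacyclic-cotorsion-pair-prop} in $\Com(\sB)$. Now $A^\bu$ has terms in $\sA\cap\sB\subset\sC$ and $C^\bu$ has terms in $\sC$, and $\sC$ is closed under finite direct sums, so the cone of the morphism $A^\bu\rarrow C^\bu$ lies in $\Com(\sC)$. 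By Lemma~\ref{totalizations-are-co-contra-acyclic}(a), the totalization of the three-row bicomplex $0\rarrow B^\bu\rarrow A^\bu\rarrow C^\bu\rarrow 0$ is Becker-contraacyclic in $\sB$, and together with $B^\bu\in\Ac^\bctr(\sB)$ this shows that the cone of $A^\bu\rarrow C^\bu$ is Becker-contraacyclic in $\sB$. By the compatibility established above, the cone is then Becker-contraacyclic in $\sC$, which exhibits $C^\bu\simeq A^\bu$ in $\sD^\bctr(\sC)$.

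The only substantive point requiring any care is the identification $\sC_\proj=\sB_\proj$, since once that is in place, the construction of Corollary~\ref{contraderived-of-B-well-behaved} transfers verbatim; the remaining verifications that the cone stays in $\Com(\sC)$ and that Becker-contraacyclicity does not depend on whether one works inside $\sC$ or $\sB$ are then immediate consequences.
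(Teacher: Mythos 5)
Your proposal is correct and follows essentially the same route as the paper, which simply dualizes the proof of Corollary~\ref{coderived-of-E-intersect-A-well-behaved}: identify $\sC_\proj=\sB_\proj=\sA\cap\sB$ via Lemma~\ref{inj-proj-objects-in-intersected-classes}(b), observe that Becker-contraacyclicity in $\sC$ and in $\sB$ then coincide for complexes in $\sC$, and reuse the special precover sequence from the proof of Corollary~\ref{contraderived-of-B-well-behaved}. All the details you supply (the cone staying in $\Com(\sC)$, the use of Lemma~\ref{totalizations-are-co-contra-acyclic}(a)) match the paper's intended argument.
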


\begin{proof}
 The argument is dual to the proof of
Corollary~\ref{coderived-of-E-intersect-A-well-behaved}, and based on
Lemma~\ref{inj-proj-objects-in-intersected-classes}(b) and
(the proof of) Corollary~\ref{contraderived-of-B-well-behaved}.
\end{proof}

 The following corollary is the main result of this section.

\begin{cor} \label{cotorsion-pair-co-contra-equivalence}
 Let\/ $\sK$ be a Grothendieck category and $(\sA,\sB)$ be a hereditary
complete cotorsion pair generated by a set of objects in\/~$\sK$.
 Then there are natural equivalences of triangulated categories
$$
 \sD^\bco(\sA)\simeq\Hot(\sA\cap\sB)\simeq\sD^\bctr(\sB).
$$
\end{cor}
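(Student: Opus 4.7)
The plan is to chain together three previously-established results and to observe that their intermediate terms coincide. Since $\sK$ is a Grothendieck category, it satisfies the hypotheses of both Corollary~\ref{coderived-of-A-well-behaved} (Grothendieck categories are locally presentable abelian categories with their abelian exact structure) and Corollary~\ref{contraderived-of-B-well-behaved}. Applying the former gives a natural triangulated equivalence $\Hot(\sA^\inj) \simeq \sD^\bco(\sA)$ induced by the composition of the inclusion $\Hot(\sA^\inj)\to\Hot(\sA)$ with the Verdier quotient $\Hot(\sA)\to\sD^\bco(\sA)$. Applying the latter gives a natural triangulated equivalence $\Hot(\sB_\proj)\simeq\sD^\bctr(\sB)$ induced analogously.

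Next, I would invoke Lemma~\ref{inj-proj-objects-in-left-right-classes}: part~(a) identifies $\sA^\inj$ with the kernel $\sA\cap\sB$ of the cotorsion pair $(\sA,\sB)$, and part~(b) identifies $\sB_\proj$ with the same kernel $\sA\cap\sB$. Both parts apply because $(\sA,\sB)$ is complete. As a consequence, $\Hot(\sA^\inj) = \Hot(\sA\cap\sB) = \Hot(\sB_\proj)$ as triangulated subcategories of $\Hot(\sK)$.

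Splicing these identifications into the two equivalences above yields
\[
 \sD^\bco(\sA)\simeq\Hot(\sA^\inj)=\Hot(\sA\cap\sB)=\Hot(\sB_\proj)\simeq\sD^\bctr(\sB),
\]
which is the asserted chain of natural triangulated equivalences. There is effectively no obstacle to overcome here: all the substantive work has already been carried out in Propositions~\ref{coacyclic-all-cotorsion-pair-prop} and~\ref{all-contraacyclic-cotorsion-pair-prop} (producing the two cotorsion pairs in $\Com(\sK)$ whose special approximation sequences build resolutions by complexes in $\sA\cap\sB$) and in Lemma~\ref{totalizations-are-co-contra-acyclic} (ensuring that totalizations of the resulting short exact sequences are both Becker-coacyclic and Becker-contraacyclic). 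The only point deserving brief comment is the naturality claim, which is automatic since both equivalences are induced by canonical inclusion and Verdier quotient functors, and the identification $\sA^\inj=\sA\cap\sB=\sB_\proj$ is an equality of full subcategories rather than a choice of equivalence.
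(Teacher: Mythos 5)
Your proposal is correct and follows exactly the paper's route: the paper's proof simply combines Corollaries~\ref{coderived-of-A-well-behaved} and~\ref{contraderived-of-B-well-behaved}, with the identification $\sA^\inj=\sA\cap\sB=\sB_\proj$ from Lemma~\ref{inj-proj-objects-in-left-right-classes} already built into those statements. Nothing is missing.
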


\begin{proof}
 Combine the results of
Corollaries~\ref{coderived-of-A-well-behaved}
and~\ref{contraderived-of-B-well-behaved}.
\end{proof}

\Section{Adjoint Functors for a Nested Pair of Cotorsion Pairs}

 Let $\sK$ be an exact category, and let $(\sA_1,\sB_1)$ and
$(\sA_2,\sB_2)$ be two cotorsion pairs in~$\sK$.
 We will say that the two cotorsion pairs $(\sA_1,\sB_1)$ and
$(\sA_2,\sB_2)$ form a \emph{nested pair of cotorsion pairs} if
$\sA_1\subset\sA_2$, or equivalently, $\sB_1\supset\sB_2$.
 If this is the case, we will also write
$(\sA_1,\sB_1)\le(\sA_2,\sB_2)$.

 The following two corollaries are to be compared
with~\cite[Lemma~B.7.5]{Pcosh}.

\begin{cor} \label{coderived-A1-to-A2-well-defined}
 Let\/ $\sK$ be either a locally presentable abelian category with
the abelian exact structure, or an efficient exact category in
the sense of\/~\cite{Sto-ICRA}.
 Let $(\sA_1,\sB_1)$ and $(\sA_2,\sB_2)$ be two hereditary complete
cotorsion pairs in\/ $\sK$, each of them generated by a set of objects.
 Assume that $(\sA_1,\sB_1)$ and $(\sA_2,\sB_2)$ are a nested pair of
cotorsion pairs, i.~e., $(\sA_1,\sB_1)\le(\sA_2,\sB_2)$.
 Then the inclusion of exact categories\/ $\sA_1\rarrow\sA_2$ induces
a well-defined triangulated functor between the Becker coderived
categories
$$
 \sD^\bco(\sA_1)\lrarrow\sD^\bco(\sA_2).
$$
 In other words, every Becker-coacyclic complex in\/ $\sA_1$ is also
Becker-coacyclic in\/~$\sA_2$.
\end{cor}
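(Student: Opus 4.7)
The plan is to reduce the claim to an almost tautological observation about two cotorsion pairs in the abelian/exact category $\Com(\sK)$, using Proposition~\ref{coacyclic-all-cotorsion-pair-prop} as the key tool.

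First I would apply Proposition~\ref{coacyclic-all-cotorsion-pair-prop} to each of the two cotorsion pairs $(\sA_1,\sB_1)$ and $(\sA_2,\sB_2)$ separately. This yields two hereditary complete cotorsion pairs in $\Com(\sK)$, namely
$$
 (\Ac^\bco(\sA_1),\>\Com(\sB_1))
 \qquad\text{and}\qquad
 (\Ac^\bco(\sA_2),\>\Com(\sB_2)).
$$
In particular, $\Ac^\bco(\sA_i)={}^{\perp_1}\Com(\sB_i)$ in $\Com(\sK)$ for $i=1$,~$2$.

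Next I would exploit the nesting. The assumption $(\sA_1,\sB_1)\le(\sA_2,\sB_2)$ gives $\sB_2\subset\sB_1$, and hence termwise the inclusion $\Com(\sB_2)\subset\Com(\sB_1)$ of full subcategories in $\Com(\sK)$. Taking left Ext-orthogonals reverses this inclusion, so
$$
 \Ac^\bco(\sA_1)={}^{\perp_1}\Com(\sB_1)\subset{}^{\perp_1}\Com(\sB_2)
 =\Ac^\bco(\sA_2)
$$
in $\Com(\sK)$. Since any complex in $\Ac^\bco(\sA_1)$ lies in $\Com(\sA_1)\subset\Com(\sA_2)$ by the assumption $\sA_1\subset\sA_2$, this inclusion takes place inside $\Com(\sA_2)$, yielding the desired containment of Becker-coacyclic complexes. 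From this, the well-definedness of the induced functor $\sD^\bco(\sA_1)\rarrow\sD^\bco(\sA_2)$ follows by the universal property of the Verdier quotient applied to the composition $\Hot(\sA_1)\rarrow\Hot(\sA_2)\rarrow\sD^\bco(\sA_2)$.

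There is no real obstacle in the argument: the heavy lifting has already been done in Proposition~\ref{coacyclic-all-cotorsion-pair-prop}, which reidentifies the class $\Ac^\bco(\sA_i)\subset\Com(\sA_i)$ with the left-hand class of a cotorsion pair in $\Com(\sK)$ whose right-hand class $\Com(\sB_i)$ depends monotonically on $\sB_i$. The only minor subtlety to flag is that Becker-coacyclicity is defined relative to the exact category $\sA_i$ (as the left orthogonal to $\Com(\sA_i^\inj)=\Com(\sA_i\cap\sB_i)$ in $\Hot(\sA_i)$), but Proposition~\ref{coacyclic-all-cotorsion-pair-prop} translates this into the purely external orthogonality condition $\Ext^1_{\Com(\sK)}(-,\Com(\sB_i))=0$, which is manifestly monotone in~$\sB_i$.
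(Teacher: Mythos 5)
Your proposal is correct and coincides with the paper's own argument: both deduce from Proposition~\ref{coacyclic-all-cotorsion-pair-prop} the identification $\Ac^\bco(\sA_i)={}^{\perp_1}\Com(\sB_i)$ in $\Com(\sK)$ and then use $\Com(\sB_2)\subset\Com(\sB_1)$ together with the order-reversing property of left $\Ext^1$-orthogonals to obtain $\Ac^\bco(\sA_1)\subset\Ac^\bco(\sA_2)$. The remarks on membership in $\Com(\sA_2)$ and on the universal property of the Verdier quotient are fine and simply make explicit what the paper leaves implicit.
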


\begin{proof}
 This is a corollary of
Proposition~\ref{coacyclic-all-cotorsion-pair-prop}.
 We have $\sB_2\subset\sB_1$, hence $\Com(\sB_2)\subset\Com(\sB_1)$,
and it follows that $\Ac^\bco(\sA_1)={}^{\perp_1}(\Com(\sB_1))
\subset{}^{\perp_1}(\Com(\sB_2))=\Ac^\bco(\sA_2)$.
\end{proof}

\begin{cor} \label{contraderived-B2-to-B1-well-defined}
 Let\/ $\sK$ be a Grothendieck category, and let $(\sA_1,\sB_1)$ and
$(\sA_2,\sB_2)$ be two hereditary complete cotorsion pairs in\/ $\sK$,
each of them generated by a set of objects.
 Assume that $(\sA_1,\sB_1)$ and $(\sA_2,\sB_2)$ are a nested pair of
cotorsion pairs, i.~e., $(\sA_1,\sB_1)\le(\sA_2,\sB_2)$.
 Then the inclusion of exact categories\/ $\sB_2\rarrow\sB_1$ induces
a well-defined triangulated functor between the Becker contraderived
categories
$$
 \sD^\bctr(\sB_2)\lrarrow\sD^\bctr(\sB_1).
$$
 In other words, every Becker-contraacyclic complex in\/ $\sB_2$ is
also Becker-contraacyclic in\/~$\sB_1$.
\end{cor}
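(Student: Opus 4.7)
The plan is to proceed in complete duality with the proof of Corollary~\ref{coderived-A1-to-A2-well-defined}, replacing the role of Proposition~\ref{coacyclic-all-cotorsion-pair-prop} by that of Proposition~\ref{all-contraacyclic-cotorsion-pair-prop}. Concretely, since $\sK$ is a Grothendieck category and each $(\sA_i,\sB_i)$ is a hereditary complete cotorsion pair in $\sK$ generated by a set of objects, Proposition~\ref{all-contraacyclic-cotorsion-pair-prop} furnishes, for $i=1,2$, a hereditary complete cotorsion pair $(\Com(\sA_i),\>\Ac^\bctr(\sB_i))$ in the Grothendieck category $\Com(\sK)$.

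Next I invoke the nestedness hypothesis in the form $\sA_1\subset\sA_2$, which immediately gives the termwise inclusion $\Com(\sA_1)\subset\Com(\sA_2)$ inside $\Com(\sK)$. Passing to right Ext\+orthogonals in $\Com(\sK)$ reverses this inclusion, so
$$
 \Ac^\bctr(\sB_2)=\Com(\sA_2)^{\perp_1}\subset\Com(\sA_1)^{\perp_1}=\Ac^\bctr(\sB_1),
$$
which is precisely the second assertion of the corollary: any complex in $\sB_2$ that is Becker\+contraacyclic in $\sB_2$ remains Becker\+contraacyclic when regarded as a complex in the larger exact category $\sB_1$ (this makes sense since $\sB_2\subset\sB_1$).

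From this inclusion the first assertion follows formally. The inclusion of exact categories $\sB_2\rarrow\sB_1$ induces a triangulated functor $\Hot(\sB_2)\rarrow\Hot(\sB_1)$, and the inclusion $\Ac^\bctr(\sB_2)\subset\Ac^\bctr(\sB_1)$ established above means that this functor sends the thick subcategory $\Ac^\bctr(\sB_2)\subset\Hot(\sB_2)$ into the thick subcategory $\Ac^\bctr(\sB_1)\subset\Hot(\sB_1)$. By the universal property of Verdier quotients, it therefore descends to a well-defined triangulated functor $\sD^\bctr(\sB_2)\rarrow\sD^\bctr(\sB_1)$.

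There is essentially no obstacle here: the only nontrivial input is the characterization of Becker\+contraacyclicity as an Ext\+orthogonality condition in $\Com(\sK)$ supplied by Proposition~\ref{all-contraacyclic-cotorsion-pair-prop}, and once this is available the conclusion is the purely formal observation that orthogonals reverse inclusions. In particular, no extra hypothesis on $\sK$ beyond being a Grothendieck category is required, matching the statement.
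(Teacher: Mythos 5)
Your proposal is correct and is essentially identical to the paper's own proof: it identifies $\Ac^\bctr(\sB_i)$ with the right $\Ext^1$-orthogonal of $\Com(\sA_i)$ in $\Com(\sK)$ via Proposition~\ref{all-contraacyclic-cotorsion-pair-prop}, and then deduces the inclusion $\Ac^\bctr(\sB_2)\subset\Ac^\bctr(\sB_1)$ from $\Com(\sA_1)\subset\Com(\sA_2)$ by reversal of orthogonals. The only (harmless) addition is your explicit spelling-out of the Verdier quotient universal property, which the paper leaves implicit.
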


\begin{proof}
 This is a corollary of
Proposition~\ref{all-contraacyclic-cotorsion-pair-prop}.
 We have $\sA_1\subset\sA_2$, hence $\Com(\sA_1)\subset\Com(\sA_2)$,
and it follows that $\Ac^\bctr(\sB_2)=(\Com(\sA_2))^{\perp_1}
\subset(\Com(\sA_1))^{\perp_1}=\Ac^\bctr(\sB_1)$.
\end{proof}

 The following theorem is the main result of this section.
 
\begin{thm} \label{nested-cotorsion-pairs-adjunction-theorem}
 Let\/ $\sK$ be a Grothendieck category, and let $(\sA_1,\sB_1)$ and
$(\sA_2,\sB_2)$ be two hereditary complete cotorsion pairs in\/ $\sK$,
each of them generated by a set of objects.
 Assume that $(\sA_1,\sB_1)$ and $(\sA_2,\sB_2)$ are a nested pair of
cotorsion pairs, i.~e., $(\sA_1,\sB_1)\le(\sA_2,\sB_2)$.
 Then the identifications of triangulated categories from
Corollary~\ref{cotorsion-pair-co-contra-equivalence} make the functor\/
$\sD^\bco(\sA_1)\rarrow\sD^\bco(\sA_2)$ from
Corollary~\ref{coderived-A1-to-A2-well-defined} left adjoint to
the functor\/ $\sD^\bctr(\sB_2)\rarrow\sD^\bctr(\sB_1)$ from
Corollary~\ref{contraderived-B2-to-B1-well-defined},
$$
 \xymatrix{
  \sD^\bco(\sA_1) \ar@{=}[r] \ar[d]
  & \Hot(\sA_1\cap\sB_1) \ar@{=}[r] \ar@<-3pt>[d]_F
  & \sD^\bctr(\sB_1) \\
  \sD^\bco(\sA_2) \ar@{=}[r]
  & \Hot(\sA_2\cap\sB_2) \ar@<-3pt>[u]_G \ar@{=}[r]
  & \sD^\bctr(\sB_2) \ar[u]
 }
$$
\end{thm}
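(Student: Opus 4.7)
The plan is to identify both sides of the would-be adjunction with $\Hom_{\Hot(\sK)}(X,Y)$ for $X$ in $\Com(\sA_1\cap\sB_1)$ and $Y$ in $\Com(\sA_2\cap\sB_2)$, making the adjunction isomorphism transparent.

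First I would describe the functors $F$ and $G$ concretely. Given $X\in\Com(\sA_1\cap\sB_1)$, view $X$ as an object of $\Com(\sA_2)$ via $\sA_1\subset\sA_2$, and apply Proposition~\ref{coacyclic-all-cotorsion-pair-prop} to the cotorsion pair $(\sA_2,\sB_2)$: take a special preenvelope sequence $0\to X\to F(X)\to A\to 0$ in $\Com(\sA_2)$ with $F(X)\in\Com(\sA_2\cap\sB_2)$ and $A\in\Ac^\bco(\sA_2)$. Then $F(X)$ is precisely the image of $X$ under the composite of the natural functor $\Hot(\sA_1)\to\sD^\bco(\sA_2)$ with the inverse of the equivalence of Corollary~\ref{coderived-of-A-well-behaved}, so $F$ represents the vertical functor on the left of the diagram. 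Dually, given $Y\in\Com(\sA_2\cap\sB_2)\subset\Com(\sB_2)\subset\Com(\sB_1)$, apply Proposition~\ref{all-contraacyclic-cotorsion-pair-prop} to $(\sA_1,\sB_1)$: a special precover sequence $0\to B\to G(Y)\to Y\to 0$ in $\Com(\sB_1)$ with $G(Y)\in\Com(\sA_1\cap\sB_1)$ and $B\in\Ac^\bctr(\sB_1)$ defines $G(Y)$, representing the vertical functor on the right via Corollary~\ref{contraderived-of-B-well-behaved}. Well-definedness of $F$ and $G$ on the respective homotopy categories follows from the equivalences of Corollary~\ref{cotorsion-pair-co-contra-equivalence}.

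Next I would establish the key identification. The triangle $X\to F(X)\to A\to X[1]$ induced by the special preenvelope sequence becomes a distinguished triangle in $\sD^\bco(\sA_2)$ by Lemma~\ref{totalizations-are-co-contra-acyclic}(a), hence $X\simeq F(X)$ there. Since $Y$ is a complex of injective objects of $\sA_2$, applying $\Hom_{\Hot(\sA_2)}(-,Y)$ to the triangle and using the defining property of $\Ac^\bco(\sA_2)$ gives the first chain of natural isomorphisms
\[
\Hom_{\Hot(\sA_2\cap\sB_2)}(F(X),Y)\cong\Hom_{\sD^\bco(\sA_2)}(X,Y)\cong\Hom_{\Hot(\sA_2)}(X,Y)\cong\Hom_{\Hot(\sK)}(X,Y),
\]
where the final equality holds because cochain homotopy is computed in the ambient additive structure. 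Symmetrically, using that $X$ is a complex of projective objects of $\sB_1$ and that $B\in\Ac^\bctr(\sB_1)$ is by definition right-orthogonal to complexes in $(\sB_1)_\proj=\sA_1\cap\sB_1$,
\[
\Hom_{\Hot(\sA_1\cap\sB_1)}(X,G(Y))\cong\Hom_{\sD^\bctr(\sB_1)}(X,Y)\cong\Hom_{\Hot(\sB_1)}(X,Y)\cong\Hom_{\Hot(\sK)}(X,Y).
\]
Composing these two chains yields the desired bijection.

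The part that requires care is the naturality of these isomorphisms in both variables, which is what actually upgrades the bijection into an adjunction. Both composite isomorphisms factor through the canonical map ``precompose with $X\to F(X)$'' and ``postcompose with $G(Y)\to Y$'' respectively, and these two structural morphisms are the components of natural transformations $\id\to GF$ and $FG\to\id$ encoded in the equivalences of Corollaries~\ref{coderived-of-A-well-behaved} and~\ref{contraderived-of-B-well-behaved}. A routine diagram chase on morphisms in $\Hot(\sK)$, together with the fact that any map $X\to Y$ in $\Hot(\sK)$ lifts uniquely up to homotopy to a map $F(X)\to Y$ (and dually factors uniquely through $G(Y)\to Y$), shows that the bijection commutes with pre- and post-composition. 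This naturality check is the main technical point; once it is in place, the adjunction $F\dashv G$ follows immediately, and transporting it back along the equivalences of Corollary~\ref{cotorsion-pair-co-contra-equivalence} produces the asserted adjunction between $\sD^\bco(\sA_1)\to\sD^\bco(\sA_2)$ and $\sD^\bctr(\sB_2)\to\sD^\bctr(\sB_1)$.
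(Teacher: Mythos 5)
Your proposal is correct and follows essentially the same route as the paper: you construct $F$ and $G$ by special preenvelope/precover sequences in $\Com(\sK)$ taken from Propositions~\ref{coacyclic-all-cotorsion-pair-prop} and~\ref{all-contraacyclic-cotorsion-pair-prop}, and you establish the adjunction isomorphism by identifying both Hom groups with $\Hom_{\Hot(\sK)}(X,Y)$ via precomposition with $X\rarrow F(X)$ and postcomposition with $G(Y)\rarrow Y$. The only (cosmetic) difference is that the paper verifies these comparison maps are quasi-isomorphisms of the total $\Hom$ complexes directly, using the termwise short exact sequences of $\Hom$ complexes and the acyclicity of $\Hom_\sK(A^\bu,L^\bu)$ and $\Hom_\sK(K^\bu,B^\bu)$, whereas you package the same computation through orthogonality in the Verdier quotients $\sD^\bco(\sA_2)$ and $\sD^\bctr(\sB_1)$.
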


\begin{proof}
 Let us interpret both the functors as acting between the homotopy
categories $\Hot(\sA_1\cap\sB_1)$ and $\Hot(\sA_2\cap\sB_2)$, then
check that they are adjoint to each other.

 Given a complex $K^\bu\in\Hot(\sA_1\cap\sB_1)$, in order to
construct the complex $F(K^\bu)\in\Hot(\sA_2\cap\sB_2)$, we need
to perform the following steps.
 The complex $K^\bu$ has to be viewed as an object of the homotopy
category $\Hot(\sA_1)$, then as an object of the homotopy category
$\Hot(\sA_2)$.
 Finally, following the proof of
Corollary~\ref{coderived-of-A-well-behaved} applied to the cotorsion
pair $(\sA_2,\sB_2)$, we need to pick a short exact sequence
$0\rarrow K^\bu\rarrow U^\bu\rarrow A^\bu\rarrow0$ of complexes
in $\sK$ such that $U^\bu$ is a complex in $\sA_2\cap\sB_2$ and $A^\bu$
is a Becker-coacyclic complex in~$\sA_2$.
 Then we have $F(K^\bu)=U^\bu$.

 Given a complex $L^\bu\in\Hot(\sA_2\cap\sB_2)$, in order to
construct the complex $G(L^\bu)\in\Hot(\sA_1\cap\sB_1)$, we need
to proceed as follows.
 The complex $L^\bu$ has to be viewed as an object of the homotopy
category $\Hot(\sB_2)$, then as an object of the homotopy category
$\Hot(\sB_1)$.
 Finally, following the proof of
Corollary~\ref{contraderived-of-B-well-behaved} applied to the cotorsion
pair $(\sA_1,\sB_1)$, we need to pick a short exact sequence
$0\rarrow B^\bu\rarrow V^\bu\rarrow L^\bu\rarrow0$ of complexes
in $\sK$ such that $V^\bu$ is a complex in $\sA_1\cap\sB_1$ and $B^\bu$
is a Becker-contraacyclic complex in~$\sB_1$.
 Then we have $G(L^\bu)=V^\bu$.
 
 We need to construct a natural isomorphism of abelian groups
\begin{multline*}
 \Hom_{\Hot(\sA_2\cap\sB_2)}(F(K^\bu),L^\bu)=
 H^0\Hom_\sK(U^\bu,L^\bu) \\
 \,\simeq\, H^0\Hom_\sK(K^\bu,V^\bu)=
 \Hom_{\Hot(\sA_1\cap\sB_1)}(K^\bu,G(L^\bu)).
\end{multline*}
for all complexes $K^\bu\in\Hot(\sA_1\cap\sB_1)$ and
$L^\bu\in\Hot(\sA_2\cap\sB_2)$.
 For this purpose, we observe that the morphisms of complexes
$K^\bu\rarrow U^\bu$ and $V^\bu\rarrow L^\bu$ induce quasi-isomorphisms
of complexes of abelian groups
$$
 \Hom_\sK(U^\bu,L^\bu)\lrarrow\Hom_\sK(K^\bu,L^\bu)\llarrow
 \Hom_\sK(K^\bu,V^\bu).
$$

 Indeed, $A^\bu$ is a complex in $\sA_2$ and $L^\bu$ is a complex
in $\sB_2$, hence the short exact sequence of complexes
$0\rarrow K^\bu\rarrow U^\bu\rarrow A^\bu\rarrow0$ induces a short
exact sequence of complexes of abelian groups
$$
 0\lrarrow\Hom_\sK(A^\bu,L^\bu)\lrarrow\Hom_\sK(U^\bu,L^\bu)
 \lrarrow\Hom_\sK(K^\bu,L^\bu)\lrarrow0.
$$
 Since $A^\bu$ is a Becker-coacyclic complex in $\sA_2$ and $L^\bu$ is
a complex in $\sA_2\cap\sB_2=\sA_2^\inj$, the complex
$\Hom_\sK(A^\bu,L^\bu)$ is acyclic.
 Thus the map $\Hom_\sK(U^\bu,L^\bu)\rarrow\Hom_\sK(K^\bu,L^\bu)$
is a quasi-isomorphism.

 Similarly, $B^\bu$ is a complex in $\sB_1$ and $K^\bu$ is a complex
in $\sA_1$, hence the short exact sequence of complexes
$0\rarrow B^\bu\rarrow V^\bu\rarrow L^\bu\rarrow0$ induces a short
exact sequence of complexes of abelian groups
$$
 0\lrarrow\Hom_\sK(K^\bu,B^\bu)\lrarrow\Hom_\sK(K^\bu,V^\bu)
 \lrarrow\Hom_\sK(K^\bu,L^\bu)\lrarrow0.
$$
 Since $B^\bu$ is a Becker-contraacyclic complex in $\sB_1$ and
$K^\bu$ is a complex in $\sA_1\cap\sB_1=(\sB_1)_\proj$, the complex
$\Hom_\sK(K^\bu,B^\bu)$ is acyclic.
 Thus the map $\Hom_\sK(K^\bu,V^\bu)\rarrow\Hom_\sK(K^\bu,L^\bu)$
is a quasi-isomorphism.
\end{proof}

\begin{rem}
 Let $(\sA,\sB)$ be a hereditary complete cotorsion pair generated by
a set of objects in a Grothendieck category~$\sK$.
 Then Propositions~\ref{coacyclic-all-cotorsion-pair-prop}
and~\ref{all-contraacyclic-cotorsion-pair-prop} provide a nested pair
of hereditary complete cotorsion pairs $(\sA',\sB')\le(\sA'',\sB'')$
in the abelian category of complexes $\Com(\sK$).
 The kernels of the two cotorsion pairs in $\Com(\sK)$ agree: both
the classes $\sA'\cap\sB'$ and $\sA''\cap\sB''$ consist precisely of
all contractible complexes with the terms in $\sA\cap\sB$.

 By a theorem of Gillespie~\cite[Main Theorem~1.2]{Gil2b}, there
exists a unique abelian model structure on $\Com(\sK)$ in which
$\sA''=\Com(\sA)$ is the class of cofibrant objects,
$\sB'=\Com(\sB)$ is the class of fibrant objects,
$\sA'=\Ac^\bco(\sA)$ is the class of trivial cofibrant objects,
and $\sB''=\Ac^\bctr(\sB)$ is the class of trivial fibrant objects.
 The triangulated category
$\sD^\bco(\sA)\simeq\Hot(\sA\cap\sB)\simeq\sD^\bctr(\sB)$ from
Corollary~\ref{cotorsion-pair-co-contra-equivalence} is the homotopy
category of this stable model structure on the category of complexes
$\Com(\sK)$.

 Now let $(\sA_1,\sB_1)\le(\sA_2,\sB_2)$ be a nested pair of
cotorsion pairs in $\sK$, each of them hereditary, complete, and
generated by a set of objects.
 So $\sA_1\subset\sA_2$, or equivalently, $\sB_1\supset\sB_2$.
 Then we obviously have $\Com(\sA_1)\subset\Com(\sA_2)$ and
$\Com(\sB_1)\supset\Com(\sB_2)$.
 Corollaries~\ref{coderived-A1-to-A2-well-defined}
and~\ref{contraderived-B2-to-B1-well-defined} also say that
$\Ac^\bco(\sA_1)\subset\Ac^\bco(\sA_2)$
and $\Ac^\bctr(\sB_1)\supset\Ac^\bctr(\sB_2)$.
 Thus the adjunction of two identity functors
$\,\id\:\Com(\sK)\rightleftarrows\Com(\sK):\!\id\,$ is a Quillen
adjunction between the category $\Com(\sK)$ with the abelian model
structure arising from $(\sA_1,\sB_1)$ and the same category $\Com(\sK)$
with the abelian model structure arising from $(\sA_2,\sB_2)$ as
described in the previous paragraphs.

 The adjunction of triangulated functors between the homotopy
categories $\Hot(\sA_1\cap\sB_1)\rightleftarrows\Hot(\sA_2\cap\sB_2)$
from Theorem~\ref{nested-cotorsion-pairs-adjunction-theorem} is
induced by the Quillen adjunction of functors between abelian model
categories $\,\id\:\Com(\sK)\rightleftarrows\Com(\sK):\!\id$.
\end{rem}

\Section{Very Flat-Type Cotorsion Pairs}  \label{very-flat-type-secn}

 In this section we discuss cotorsion pairs $(\VF,\CA)$ in exact
categories $\sK$ such that all the objects of $\VF$ have finite
projective dimensions in~$\sK$.

 The abbreviation ``$\VF$'' stands for ``very flat''.
 The abbreviation ``$\CA$'' stands for ``contraadjusted''.
 See Examples~\ref{very-flat-and-very-flaprojective-in-modules-exs}
and~\ref{very-flat-and-very-flaprojective-in-qcoh-exs} below.

 Let us start with the standard definition.
 Given an object $P$ in an exact category~$\sK$ and an integer $d\ge-1$,
one says that the object $P$ has \emph{projective
dimension\/~$\le\nobreak d$} if $\Ext^{d+1}_\sK(P,X)=0$ for all
$X\in\sK$.

\begin{lem} \label{finite-projective-dimensions-are-bounded}
 Let\/ $\sK$ be an exact category with countable coproducts and
$(\VF,\CA)$ be a cotorsion pair in\/~$\sK$.
 Assume that all the objects of\/ $\VF$ have finite projective
dimensions in\/~$\sK$.
 Then the projective dimensions of the objects of\/ $\VF$ are
uniformly bounded: There exists a finite integer~$d$ such that, for
every object $P\in\VF$, the projective dimension of $P$ in\/ $\sK$
does not exceed~$d$.
\end{lem}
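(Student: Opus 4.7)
The plan is to argue by contradiction using an Eilenberg-swindle-style aggregation along a countable coproduct. Assume the projective dimensions of objects of $\VF$ are not uniformly bounded; then one can pick a sequence $P_n \in \VF$, \,$n \ge 1$, with $\mathrm{pd}_\sK(P_n) \ge n$, and form the countable coproduct $P = \bigoplus_{n \ge 1} P_n \in \sK$. The aim is to show that $P$ again lies in $\VF$ while simultaneously having infinite projective dimension, contradicting the hypothesis that every object of $\VF$ has finite projective dimension.

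First I would verify that $P \in \VF$. Since $\VF={}^{\perp_1}\CA$, I need $\Ext^1_\sK(P, C) = 0$ for every $C \in \CA$. Given an admissible short exact sequence $0 \to C \to Z \overset{e}\to P \to 0$, pulling back along each coprojection $\iota_n \: P_n \to P$ yields an admissible short exact sequence $0 \to C \to Z_n \to P_n \to 0$ (admissible epimorphisms being stable under pullback in an exact category). Each such sequence splits because $P_n \in \VF$ and $C \in \CA$; choose sections $P_n \to Z_n$ and compose with the canonical morphisms $Z_n \to Z$ to obtain morphisms $\tilde s_n \: P_n \to Z$ lifting~$\iota_n$. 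The universal property of the coproduct then assembles the $\tilde s_n$ into a single morphism $s \: P \to Z$ with $es = \id_P$. A short diagram chase, setting $\pi = \id_Z - se$ and using that the admissible mono $C \to Z$ is the kernel of~$e$, shows that any such section of an admissible epimorphism induces a splitting of the whole admissible short exact sequence (this works in an arbitrary exact category, with no idempotent-completeness required). Hence $\Ext^1_\sK(P, C) = 0$ and $P \in \VF$.

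Next I would show that $\mathrm{pd}_\sK(P) \ge n$ for every~$n$, so that $\mathrm{pd}_\sK(P) = \infty$. Invoking the universal property of $P = \bigoplus P_m$ once more, the morphisms $P_m \to P_n$ equal to $\id_{P_n}$ when $m=n$ and zero otherwise combine to a morphism $\pi_n \: P \to P_n$ satisfying $\pi_n \iota_n = \id_{P_n}$. Thus $P_n$ is a direct summand of $P$, and consequently $\mathrm{pd}_\sK(P_n) \le \mathrm{pd}_\sK(P)$ for every~$n$. If $\mathrm{pd}_\sK(P)$ were a finite integer~$D$, this would force $\mathrm{pd}_\sK(P_n) \le D$ for all $n$, contradicting $\mathrm{pd}_\sK(P_n) \ge n$.

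The main thing to be careful about is the splitting step in the middle paragraph: one must check, using only the axioms of an exact category, that a section of the admissible epimorphism $Z \to P$ really produces a splitting of the entire admissible short exact sequence. This is the only nontrivial verification, and it is handled by the kernel property of the admissible mono $C \to Z$ as indicated. Everything else is formal manipulation with coproducts and direct summands.
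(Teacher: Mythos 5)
Your proof is correct and follows essentially the same route as the paper: assemble a countable coproduct of objects of $\VF$ with unbounded projective dimensions, observe it again lies in $\VF$, and conclude via the direct summand argument that it would have infinite projective dimension. The only difference is that where the paper cites Colpi--Fuller (or the dual of \v Coupek--\v St\!'ov\'\i\v cek) for the closure of $\VF={}^{\perp_1}\CA$ under coproducts, you verify this closure by hand for the coproduct at issue, by pulling back a given admissible short exact sequence along each coprojection, splitting termwise, and assembling a section of the admissible epimorphism via the universal property of the coproduct --- a valid, self-contained replacement for that citation.
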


\begin{proof}
 By~\cite[Corollary 8.3]{CoFu} or the dual version
of~\cite[Corollary A.2]{CoSt} (extended straightforwardly from
abelian to exact categories) the class $\VF$ is closed under
coproducts in~$\sK$.
 Now if $P_i\in\VF$ is a sequence of objects such that the projective
dimension of $P_i$ in $\sK$ is not smaller than~$i$, then
the coproduct $\coprod_i P_i\in\VF$ is an object of infinite projective
dimension in~$\sK$.
\end{proof}

 Let $\sK$ be an exact category.
 A class of objects $\sB\subset\sK$ is said to be \emph{coresolving}
if $\sB$ is cogenerating (in the sense of the definition in
Section~\ref{preliminaries-secn}), closed under extensions, and closed
under cokernels of admissible monomorphisms in~$\sK$.
 Dually, a class of objects $\sA\subset\sK$ is said to be
\emph{resolving} if $\sA$ is generating, closed under extensions, and
closed under kernels of admissible epimorphisms in~$\sK$.

 Notice that the left-hand class $\sA$ of any hereditary cotorsion pair
in $\sK$ is resolving in~$\sK$.
 Dually, the right-hand class $\sB$ of any hereditary cotorsion pair in
$\sK$ is coresolving in~$\sK$.

 Assume that the exact category $\sK$ is weakly idempotent-complete
(in the sense of~\cite[Section~7]{Bueh}).
 Let $\sB$ be a coresolving class in $\sK$ and $d\ge-1$ be an integer.
 One says that the \emph{coresolution dimension} of an object
$K\in\sK$ with respect to the coresolving class $\sB$ \emph{does not
exceed~$d$} if there exists an (admissible) exact sequence $0\rarrow K
\rarrow B^0\rarrow B^1\rarrow\dotsb\rarrow B^d\rarrow0$ in $\sK$ with
$B^i\in\sB$ for all $0\le i\le d$.

 The coresolution dimension does not depend on the choice of
a coresolution, in the following sense.
 Suppose that the coresolution dimension of $K$ with respect to $\sB$
does not exceed~$d$, and let $0\rarrow K\rarrow B^0\rarrow B^1\rarrow
\dotsb\rarrow B^{d-1}\rarrow C\rarrow0$ be an exact sequence in $\sK$
with $B^i\in\sB$ for all $0\le i\le d-1$.
 Then $C\in\sB$.
 This result, proved on various generality levels, can be found
in~\cite[Lemma~2.1]{Zhu}, \cite[Proposition~2.3]{Sto0},
or~\cite[Corollary~A.5.2]{Pcosh}.

\begin{lem} \label{finite-coresolution-dimensions-are-bounded}
 Let\/ $\sK$ be a weakly idempotent-complete exact category such that
either countable coproducts or countable products exist in\/~$\sK$.
 Let\/ $\CA\subset\sK$ be a coresolving class of objects closed under
direct summands in\/~$\sK$.
 Assume that all the objects of\/ $\sK$ have finite coresolution
dimensions with respect to\/~$\CA$.
 Then there exists a finite integer~$d$ such that the coresolution
dimensions of all the objects of\/ $\sK$ with respect to\/ $\CA$ do
not exceed~$d$.
\end{lem}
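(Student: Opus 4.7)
The plan is to argue by contradiction: assume the coresolution dimensions with respect to\/ $\CA$ are not uniformly bounded, and manufacture a single object of infinite coresolution dimension in\/ $\sK$, contradicting the hypothesis.

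The key subsidiary step I would prove first is that \emph{the coresolution dimension with respect to\/ $\CA$ is monotone under direct summands}. I would argue by induction on the coresolution dimension $d$ of $K = K_1\oplus K_2$. The base case $d = 0$ is immediate since $K\in\CA$ and $\CA$ is closed under direct summands by hypothesis. For the inductive step, I would pick admissible monomorphisms $K_i\to B_i$ with $B_i\in\CA$ (available because $\CA$ is cogenerating) and let $C_i$ be the respective cokernels; their direct sum is an admissible short exact sequence $0\to K\to B_1\oplus B_2\to C_1\oplus C_2\to 0$ with middle term in $\CA$. The ``does not depend on choice'' principle cited above then forces the coresolution dimension of $C_1\oplus C_2$ to be at most $d-1$, and the induction hypothesis bounds the coresolution dimension of each summand $C_i$ by $d-1$, whence each $K_i$ has coresolution dimension at most~$d$.

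Given this subsidiary step, for each $n\ge 0$ choose $K_n\in\sK$ of coresolution dimension at least~$n$ (possible if the dimensions are unbounded), and form a single object $K=\bigoplus_n K_n$ if $\sK$ has countable coproducts, or $K=\prod_n K_n$ if $\sK$ has countable products. In either case each $K_n$ is a direct summand of $K$ as an additive-categorical object: the retraction in the coproduct case is built from the universal property by assembling the identity $K_n\to K_n$ together with the zero maps $K_m\to K_n$ for $m\ne n$, and dually the section in the product case is built from the identity together with zero maps. By hypothesis $K$ itself has some finite coresolution dimension~$d$; the summand monotonicity step then forces every $K_n$ to have coresolution dimension at most~$d$, contradicting the choice of $K_n$ for $n > d$.

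The main obstacle I anticipate is a foundational exact-category point, rather than a deep argument: in order to apply the monotonicity sub-lemma to $K_n$ as a summand of $K$, I need the split inclusion $K_n\to K$ in the coproduct case (respectively, the split projection $K\to K_n$ in the product case) to be an admissible monomorphism (respectively, admissible epimorphism) in the exact structure of\/ $\sK$, so that the decomposition $K\cong K_n\oplus K'$ is a genuine admissible one. This is precisely what the weak idempotent-completeness of\/ $\sK$ provides: in a weakly idempotent-complete exact category every split monomorphism is admissible and every split epimorphism is admissible (see~\cite[Section~7]{Bueh}). This is the reason for that hypothesis in the statement of the lemma, and once it is in place the argument closes up cleanly.
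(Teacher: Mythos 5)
Your proof is correct and takes essentially the same route as the paper: form the countable coproduct or product of a sequence of objects of unbounded coresolution dimension and reduce to monotonicity of the coresolution dimension under direct summands, which both you and the paper deduce from the independence-of-coresolution result together with closure of $\CA$ under direct summands (the paper proves the monotonicity in one stroke, by summing two length-$d$ partial coresolutions of $K'$ and $K''$, rather than by induction on~$d$). One minor remark: the split inclusion $K_n\to K$ is admissible in any exact category here, since the complement $\coprod_{m\ne n}K_m$ (resp.\ $\prod_{m\ne n}K_m$) is explicit and direct-sum sequences are always admissible, so weak idempotent-completeness is not actually what rescues that step---it is rather part of the standing hypotheses under which the coresolution-dimension comparison result you invoke is proved.
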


\begin{proof}
 Let $K_i\in\sK$ be a sequence of objects such that the coresolution
dimension of $K_i$ with respect to $\CA$ is not smaller than~$i$.
 By assumption, one of the two objects $K=\coprod_i K_i$ or
$K=\prod_i K_i$ exists in~$\sK$.
 Now every object $K_i$ is a direct summand of~$K$.
 It remains to show that the $\sB$\+coresolution dimension of
a direct summand $K'$ cannot exceed the $\sB$\+coresolution dimension
of the direct sum $K'\oplus K''$ for any two objects $K'$, $K''\in\sK$
and any coresolving subcategory $\sB\subset\sK$ closed under direct
summands in~$\sK$.

 Indeed, let $d$~be a finite $\sB$\+coresolution dimension of
$K'\oplus K''$.
 Pick two exact sequences $0\rarrow K'\rarrow B'{}^0\rarrow B'{}^1
\rarrow\dotsb\rarrow B'{}^{d-1}\rarrow C'\rarrow0$ and
$0\rarrow K''\rarrow B''{}^0\rarrow B''{}^1\rarrow\dotsb\rarrow
B''{}^{d-1}\rarrow C''\rarrow0$ in $\sK$ with $B'{}^i$, $B''{}^i
\in\sB$ for all $0\le i\le d-1$.
 Then we have an exact sequence $0\rarrow K'\oplus K''\rarrow
B'{}^0\oplus B''{}^0\rarrow B'{}^1\oplus B''{}^1\rarrow\dotsb\rarrow
B'{}^{d-1}\oplus B''{}^{d-1}\rarrow C'\oplus C''\rarrow 0$ in $\sK$
with $B'{}^i\oplus B''{}^i\in\sB$ for all $0\le i\le d-1$.
 Since the $\sB$\+coresolution dimension of $K'\oplus K''$ does not
exceed~$d$ by assumption, it follows that $C'\oplus C''\in\sB$.
 As the class $\sB$ is closed under direct summands in $\sK$, we can
conclude that $C'\in\sB$.
 Thus the $\sB$\+coresolution dimension of $K'$ does not exceed~$d$,
either.
\end{proof}

\begin{lem} \label{co-resolving-Ext-agrees}
 Let\/ $\sK$ be an exact category and\/ $\sE\subset\sK$ be a resolving
or coresolving full subcategory.
 Then the inclusion of exact categories\/ $\sE\rarrow\sK$ induces
isomorphisms of the\/ $\Ext$ groups
$$
 \Ext^n_\sE(X,Y)\simeq\Ext^n_\sK(X,Y)
$$
for all objects $X$, $Y\in\sE$ and all integers $n\ge0$.
\end{lem}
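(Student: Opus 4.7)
The plan is to induct on~$n$, treating the coresolving case explicitly; the resolving case is entirely dual. For $n=0$, the equality $\Hom_\sE(X,Y)=\Hom_\sK(X,Y)$ is immediate from fullness of the inclusion $\sE\subset\sK$. For $n=1$, every admissible short exact sequence $0\rarrow Y\rarrow Z\rarrow X\rarrow0$ in~$\sK$ with $X,Y\in\sE$ has $Z\in\sE$ by closure of~$\sE$ under extensions; moreover, any morphism of such sequences fixing~$X$ and~$Y$ has middle component in~$\sE$ by fullness, so Yoneda equivalence in~$\sK$ descends to Yoneda equivalence in~$\sE$, and hence $\Ext^1_\sE(X,Y)=\Ext^1_\sK(X,Y)$.

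For the inductive step with $n\ge2$, the key tool is the Yoneda long exact sequence of $\Ext$, which exists in any exact category and is natural with respect to exact functors. Since $\sE$ is cogenerating in~$\sK$, pick an admissible monomorphism $Y\hookrightarrow B$ with $B\in\sE$; closure of~$\sE$ under cokernels of admissible monomorphisms yields $C:=B/Y\in\sE$. Iterating this produces an $\sE$-coresolution $0\rarrow Y\rarrow B^0\rarrow B^1\rarrow\dotsb$ with every $B^i\in\sE$ and every cosyzygy $Y^{i+1}=\coker(B^{i-1}\rarrow B^i)$ again in~$\sE$. Splicing the short exact sequences $0\rarrow Y^i\rarrow B^i\rarrow Y^{i+1}\rarrow 0$ and applying the long exact sequences of $\Ext^\bu_\sE(X,-)$ and $\Ext^\bu_\sK(X,-)$, linked by the natural comparison maps, allows one to identify both $\Ext^n_\sE(X,Y)$ and $\Ext^n_\sK(X,Y)$ via iterated dimension shifting with subquotients of $\Ext$-groups in strictly lower cohomological degree, applied to pairs of objects of~$\sE$; to each of these the induction hypothesis applies, and a five-lemma argument then yields the desired isomorphism.

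The main obstacle in this elementary approach is the bookkeeping needed to verify that the iterated dimension shifts are compatible with the $\sE$-to-$\sK$ comparison maps on every relevant piece. A cleaner route, which bypasses this entirely, is to invoke the theorem of Keller (with a convenient exposition in~\cite[Section~12]{Bueh}) that, for a coresolving (respectively, resolving) subcategory $\sE\subset\sK$ of an exact category, the natural triangulated functor $\sD^+(\sE)\rarrow\sD^+(\sK)$ (respectively, $\sD^-(\sE)\rarrow\sD^-(\sK)$) is fully faithful. The lemma then follows by specializing to the stalk complexes $X$ and $Y[n]$ concentrated in single degrees, together with the identity $\Ext^n_\sK(X,Y)=\Hom_{\sD(\sK)}(X,Y[n])$ recalled in Section~\ref{preliminaries-secn}.
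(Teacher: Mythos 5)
Your ``cleaner route'' is exactly what the paper does: its proof of this lemma is a bare citation of Keller's resolution theorem (\cite[Theorem~12.1(b)]{Kel}), alternatively \cite[Proposition~13.2.2(i)]{KS} or \cite[Propositions~A.2.1, A.3.1(a)]{Pcosh}, i.~e., full faithfulness of $\sD^+(\sE)\rarrow\sD^+(\sK)$ for a (co)resolving subcategory, specialized to stalk complexes. Two small repairs to that route: the theorem is not in B\"uhler's survey, so cite \cite{Kel} or \cite{KS} directly rather than ``\cite[Section~12]{Bueh}''; and since the paper defines $\Ext^n_\sK$ via the unbounded category $\sD(\sK)$, you should also invoke the (standard) full faithfulness of $\sD^+(\sK)\rarrow\sD(\sK)$, or work with the $\sD^+$-description of $\Ext$ throughout.

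Your elementary first route, however, has a genuine gap, not just ``bookkeeping.'' From the coresolution sequence $0\rarrow Y\rarrow B^0\rarrow Y^1\rarrow0$ the long exact sequences give you the window
$\Ext^{n-1}(X,B^0)\rarrow\Ext^{n-1}(X,Y^1)\rarrow\Ext^n(X,Y)\rarrow\Ext^n(X,B^0)\rarrow\Ext^n(X,Y^1)$,
so $\Ext^n(X,Y)$ is an extension of a subgroup of $\Ext^n(X,B^0)$ by a quotient of $\Ext^{n-1}(X,Y^1)$; the right-hand part of the window involves degree-$n$ groups at \emph{other} pairs of objects of $\sE$, not groups in strictly lower degree. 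Since $B^0$ is merely an object of $\sE$ and not injective, there is no vanishing that would reduce everything to degree $n-1$, and the five-lemma (or four-lemma) hypotheses require precisely the degree-$n$ comparison maps you are trying to establish, so the induction does not close as stated. An honest elementary proof has to do more--e.g., show that every Yoneda $n$-extension class in $\sK$ between objects of $\sE$, and every equivalence between such, can be pushed (via pushouts along special preenvelopes into $\sE$) to representatives lying entirely in $\sE$--which is essentially the resolution argument underlying Keller's theorem redone at the level of Yoneda extensions. Given that, falling back on the citation, as you and the paper both do, is the right call.
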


\begin{proof}
 The assertion follows from~\cite[Theorem~12.1(b)]{Kel},
\cite[Proposition~13.2.2(i)]{KS},
or~\cite[Proposition~A.2.1 or~A.3.1(a)]{Pcosh}.
\end{proof}

 An exact category $\VF$ is said to have \emph{homological
dimension\/~$\le\nobreak d$} (where $d\ge-1$ is an integer) if
$\Ext_\VF^{d+1}(X,Y)=0$ for all objects $X$, $Y\in\VF$.
 The following lemma is a version of~\cite[Lemma~B.1.9]{Pcosh}.

\begin{lem} \label{very-flat-type-cotorsion-pair-lemma}
 Let\/ $\sK$ be a weakly idempotent-complete exact category and
$(\VF,\CA)$ be a hereditary cotorsion pair in\/~$\sK$.
 Then, for any fixed integer $d\ge-1$, the following three conditions
are equivalent:
\begin{enumerate}
\item the projective dimensions of all the objects of\/ $\VF$ do not
exceed~$d$ in\/~$\sK$;
\item the homological dimension of the exact category\/ $\VF$ does
not exceed~$d$;
\item the coresolution dimensions of all the objects of\/ $\sK$ with
respect to\/ $\CA$ do not exceed~$d$.
\end{enumerate}
\end{lem}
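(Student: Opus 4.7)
The plan is to establish $(1)\Leftrightarrow(3)$ by a single dimension-shifting argument along a $\CA$-coresolution, and then to establish $(1)\Leftrightarrow(2)$ by combining Lemma~\ref{co-resolving-Ext-agrees} with a special-precover argument. I~will use throughout that the hereditary cotorsion pair $(\VF,\CA)$ is also complete, as is the standing hypothesis in this paper's setting, so that special precover and preenvelope sequences are available; I~will also use that $\VF$ is resolving and $\CA$ is coresolving (from heredity), and that $\Ext^n_\sK(P,B)=0$ for all $P\in\VF$, $B\in\CA$, and $n\ge1$.

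For $(1)\Leftrightarrow(3)$, given $K\in\sK$ I~iteratively choose admissible monomorphisms $K_i\hookrightarrow B^i$ with $B^i\in\CA$ (possible because $\CA$ is cogenerating), setting $K_0=K$ and $K_{i+1}=B^i/K_i$. Splicing produces an admissible exact sequence
\[
 0\rarrow K\rarrow B^0\rarrow B^1\rarrow\dotsb\rarrow B^{d-1}\rarrow K_d\rarrow 0.
\]
Feeding the short exact pieces $0\rarrow K_i\rarrow B^i\rarrow K_{i+1}\rarrow0$ into $\Hom_\sK(P,-)$ for $P\in\VF$ and using the vanishing of $\Ext^{\ge1}_\sK(P,B^i)$, I~get dimension-shift isomorphisms $\Ext^1_\sK(P,K_d)\simeq\Ext^2_\sK(P,K_{d-1})\simeq\dotsb\simeq\Ext^{d+1}_\sK(P,K)$. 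Hence $K_d\in\CA$ if and only if $\Ext^{d+1}_\sK(P,K)=0$ for every $P\in\VF$, which delivers both $(1)\Rightarrow(3)$ and $(3)\Rightarrow(1)$ simultaneously.

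The implication $(1)\Rightarrow(2)$ is immediate: for $X,Y\in\VF$, Lemma~\ref{co-resolving-Ext-agrees} (applied to the resolving subcategory $\VF$) gives $\Ext^{d+1}_\VF(X,Y)=\Ext^{d+1}_\sK(X,Y)=0$ by~(1). The substantive direction is $(2)\Rightarrow(1)$, which I~handle via a precover. Given $K\in\sK$ and $P\in\VF$, pick a special precover sequence $0\rarrow B'\rarrow A'\rarrow K\rarrow 0$ with $A'\in\VF$ and $B'\in\CA$, and read off the segment
\[
 \Ext^{d+1}_\sK(P,A')\lrarrow\Ext^{d+1}_\sK(P,K)\lrarrow\Ext^{d+2}_\sK(P,B')
\]
from the long exact sequence of $\Hom_\sK(P,-)$. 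The left-hand group equals $\Ext^{d+1}_\VF(P,A')$ by Lemma~\ref{co-resolving-Ext-agrees} and vanishes by~(2); the right-hand group vanishes because heredity forces all positive $\Ext^n_\sK(P,B')=0$. Thus $\Ext^{d+1}_\sK(P,K)=0$, which is~(1).

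The main obstacle is the asymmetry that makes $(2)\Rightarrow(1)$ the delicate step. A naive \emph{preenvelope} attempt on~$K$ would only yield $\Ext^{d+1}_\sK(P,K)\simeq\Ext^d_\VF(P,A)$ for some $A\in\VF$, and~(2) supplies vanishing in degree $d+1$ rather than~$d$, so the shift runs the wrong way. The precover construction succeeds precisely because it pushes the obstruction \emph{up} one degree onto the $\CA$-side, where the full force of heredity—strictly stronger than $\Ext^1$-orthogonality—kills $\Ext^{d+2}_\sK(P,B')$. This interplay between completeness (to produce the precover) and heredity (to supply the extra vanishing) is the essential point.
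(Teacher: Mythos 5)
Your handling of (1)\,$\Leftrightarrow$\,(3) and of (1)\,$\Rightarrow$\,(2) is essentially the paper's own argument: the same splice-and-shift computation along a $\CA$\+coresolution, and the same appeal to Lemma~\ref{co-resolving-Ext-agrees}. One small imprecision there: for (3)\,$\Rightarrow$\,(1) you run the shift on the coresolution you construct yourself and then need its last cosyzygy $K_d$ to lie in $\CA$, whereas condition~(3) only guarantees that \emph{some} length\+$d$ coresolution exists. So this step tacitly uses the independence of the coresolution dimension of the choice of coresolution (recorded with references in the paper just before the lemma); alternatively, simply apply the shift to the coresolution furnished by~(3), which is what the paper does. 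This is a one-line repair.

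The genuine discrepancy is in (2)\,$\Rightarrow$\,(1). Your argument runs through a special precover sequence $0\rarrow B'\rarrow A'\rarrow K\rarrow 0$ with $A'\in\VF$ and $B'\in\CA$, i.e., it uses \emph{completeness} of $(\VF,\CA)$, which is not among the hypotheses of the lemma: the statement assumes only a hereditary cotorsion pair (so, in the paper's conventions, $\VF$ generating and $\CA$ cogenerating), not a complete one. Without completeness such a sequence need not exist; the generating class only provides admissible epimorphisms $A'\rarrow K$ with uncontrolled kernel $Y$, and then the obstruction term $\Ext^{d+2}_\sK(P,Y)$ in your long exact sequence has no reason to vanish, so the argument does not go through at the stated generality. (The paper itself outsources exactly this implication to the proof of~\cite[Lemma~B.1.9]{Pcosh} rather than arguing via approximations in the text.) Under the added hypothesis of completeness your precover mechanism is correct --- $\Ext^{d+1}_\sK(P,A')$ vanishes by~(2) together with Lemma~\ref{co-resolving-Ext-agrees}, and $\Ext^{d+2}_\sK(P,B')$ vanishes by heredity --- and this suffices for every application in the paper, where the cotorsion pairs in question are complete; but as a proof of the lemma as formulated, (2)\,$\Rightarrow$\,(1) is the one step with a real gap, and you should either restrict the statement to complete cotorsion pairs or replace this step by the cited argument.
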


\begin{proof}
 (1)~$\Longrightarrow$~(2) holds by Lemma~\ref{co-resolving-Ext-agrees}.
 
 (2)~$\Longrightarrow$~(1)
 The proof of~\cite[Lemma~B.1.9\,(4)\,$\Rightarrow$\,(3)]{Pcosh}
applies.

 (1)~$\Longrightarrow$~(3)
 Let $0\rarrow K\rarrow C^0\rarrow C^1\rarrow\dotsb\rarrow C^{d-1}
\rarrow B\rarrow0$ be an exact sequence in $\sK$ with $C^i\in\CA$
for all $0\le i\le d-1$.
 The exact sequence $0\rarrow K\rarrow C^0\rarrow C^1\rarrow\dotsb
\rarrow C^{d-1}\rarrow B\rarrow0$ is obtained by splicing (admissible)
short exact sequences $0\rarrow Z^i\rarrow C^i\rarrow Z^{i+1}\rarrow0$
in~$\sK$; so $Z^0=K$ and $Z^d=B$.
 Then, for every object $P\in\VF$, we have $\Ext^1_\sK(P,B)\simeq
\Ext^2_\sK(P,Z^{d-1})\simeq\dotsb\simeq\Ext^d_\sK(P,Z^1)\simeq
\Ext^{d+1}_\sK(P,K)=0$.
 Hence $B\in\CA$.

 (3)~$\Longrightarrow$~(1)
 Given an object $K\in\sK$, pick an exact sequence $0\rarrow K\rarrow
C^0\rarrow C^1\rarrow\dotsb\rarrow C^{d-1}\rarrow C^d\rarrow0$ in $\sK$
with $C^i\in\CA$ for all $0\le i\le d$.
 Then, following the computation in the previous paragraph, for every
object $P\in\VF$ we have $\Ext^{d+1}_\sK(P,K)\simeq\Ext^1_\sK(P,C^d)=0$.
 Hence the projective dimension of $P$ does not exceed~$d$.
\end{proof}

\begin{cor} \label{very-flat-type-cotorsion-pair-cor}
 Let\/ $\sK$ be a weakly idempotent-complete exact category with
countable coproducts and $(\VF,\CA)$ be a hereditary cotorsion pair
in\/~$\sK$.
 Then the following conditions are equivalent:
\begin{enumerate}
\item all the objects of\/ $\VF$ have finite projective dimensions
in\/~$\sK$;
\item the exact category\/ $\VF$ has finite homological dimension;
\item all the objects of\/ $\sK$ have finite coresolution dimensions
with respect to\/~$\CA$.
\end{enumerate}
\end{cor}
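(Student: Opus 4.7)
The plan is to deduce this corollary directly from Lemma~\ref{very-flat-type-cotorsion-pair-lemma} by combining it with the two uniform-boundedness results, Lemmas~\ref{finite-projective-dimensions-are-bounded} and~\ref{finite-coresolution-dimensions-are-bounded}. Lemma~\ref{very-flat-type-cotorsion-pair-lemma} already gives the equivalence of the three conditions \emph{with a uniform bound~$d$}; all that is needed is to show that ``finite'' in each of (1), (2), (3) can be upgraded to ``uniformly bounded.''

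First, condition~(2) already concerns a single number by definition: the exact category $\VF$ has finite homological dimension if and only if there exists an integer $d\ge-1$ with $\Ext^{d+1}_\VF(X,Y)=0$ for all $X,Y\in\VF$. Thus~(2) coincides tautologically with condition~(2) of Lemma~\ref{very-flat-type-cotorsion-pair-lemma} for some~$d$. Next, Lemma~\ref{finite-projective-dimensions-are-bounded} applies to the cotorsion pair $(\VF,\CA)$ (since $\sK$ has countable coproducts by hypothesis) and says that if every object of $\VF$ has finite projective dimension, then these projective dimensions are uniformly bounded by some integer~$d$. This shows that~(1) is equivalent to condition~(1) of Lemma~\ref{very-flat-type-cotorsion-pair-lemma} for some~$d$.

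For~(3), I would invoke Lemma~\ref{finite-coresolution-dimensions-are-bounded} applied to the coresolving class $\CA\subset\sK$. Two side conditions must be checked: $\CA$ must be closed under direct summands in~$\sK$, which is automatic because $\CA=\VF^{\perp_1}$ is the right-hand class of a cotorsion pair; and either countable coproducts or countable products must exist in~$\sK$, which is given. Hence if all objects of $\sK$ have finite coresolution dimension with respect to $\CA$, these dimensions are uniformly bounded, and so~(3) is equivalent to condition~(3) of Lemma~\ref{very-flat-type-cotorsion-pair-lemma} for some~$d$.

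Putting these three reductions together, the equivalence (1)$\Leftrightarrow$(2)$\Leftrightarrow$(3) follows at once from the bounded equivalence proved in Lemma~\ref{very-flat-type-cotorsion-pair-lemma}. There is essentially no obstacle here: the three preparatory lemmas have been arranged precisely so that this corollary is a formal consequence, and the only bookkeeping point is the verification that $\CA$ is summand-closed so that Lemma~\ref{finite-coresolution-dimensions-are-bounded} can be applied.
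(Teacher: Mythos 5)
Your proposal is correct and matches the paper's own proof, which simply cites Lemmas~\ref{finite-projective-dimensions-are-bounded}, \ref{finite-coresolution-dimensions-are-bounded}, and~\ref{very-flat-type-cotorsion-pair-lemma}; your write-up just makes the reduction from ``finite'' to ``uniformly bounded'' explicit. The side checks you note (countable coproducts, $\CA$ closed under direct summands and coresolving as the right-hand class of a hereditary cotorsion pair) are exactly the points needed to apply those lemmas.
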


\begin{proof}
 Follows from
Lemmas~	\ref{finite-projective-dimensions-are-bounded},
\ref{finite-coresolution-dimensions-are-bounded},
and~\ref{very-flat-type-cotorsion-pair-lemma}.
\end{proof}

\begin{lem} \label{exact-category-of-finite-homol-dim}
 Let\/ $\VF$ be an exact category of finite homological dimension.
 In this context: \par
\textup{(a)} Assume that there are enough injective objects in\/~$\VF$.  
 Then the classes of acyclic and Becker-coacyclic complexes in\/~$\VF$
coincide, that is\/ $\Ac^\bco(\VF)=\Ac(\VF)$.
 Furthermore, the composition of triangulated functors\/
$\Hot(\VF^\inj)\rarrow\Hot(\VF)\rarrow\sD^\bco(\VF)$ is a triangulated
equivalence
$$
 \Hot(\VF^\inj)\simeq\sD^\bco(\VF)=\sD(\VF).
$$ \par
\textup{(b)} Assume that there are enough projective objects in\/~$\VF$.
 Then the classes of acyclic and Becker-contraacyclic complexes
in\/~$\VF$ coincide, that is\/ $\Ac^\bctr(\VF)=\Ac(\VF)$.
 Furthermore, the composition of triangulated functors\/
$\Hot(\VF_\proj)\rarrow\Hot(\VF)\rarrow\sD^\bctr(\VF)$ is a triangulated
equivalence
$$
 \Hot(\VF_\proj)\simeq\sD^\bctr(\VF)=\sD(\VF).
$$
\end{lem}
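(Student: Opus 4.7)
The plan is to deduce both parts from the classical fact that in an exact category of finite homological dimension with enough injectives (resp.\ projectives), the homotopy category of injective (resp.\ projective) objects is equivalent to the ordinary unbounded derived category, and to then identify this with the Becker co/contraderived category. The uniform bound on resolutions coming from Lemma~\ref{very-flat-type-cotorsion-pair-lemma} will make all the relevant constructions converge without any subtlety of unbounded limits.

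For part~(a), I would proceed in three substeps. First, iterated use of ``enough injectives'' produces, for any $X\in\VF$, an exact sequence $0\to X\to J^0\to\dotsb\to J^d\to0$ with $J^i\in\VF^\inj$; that the final cokernel is injective uses the finite homological dimension via the equivalent condition of Lemma~\ref{very-flat-type-cotorsion-pair-lemma}. Second, a Cartan--Eilenberg-style resolution gives, for every $X^\bu\in\Com(\VF)$, a quasi-isomorphism $X^\bu\to J^\bu$ with $J^\bu\in\Com(\VF^\inj)$; the bound~$d$ makes each $J^n$ a \emph{finite} direct sum of injectives, hence itself injective. Third, any acyclic complex $J^\bu$ in $\VF^\inj$ is contractible: writing $W^n=\ker d^n\in\VF$ and iterating the short exact sequences $0\to W^n\to J^n\to W^{n+1}\to0$, dimension shifting yields $\Ext^1_\VF(-,W^{n+1})\simeq\Ext^{d+1}_\VF(-,W^{n+1-d})=0$, so each $W^n$ is injective, every short exact sequence splits, and $J^\bu$ is contractible.

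Combining the three substeps, the composite $\Hot(\VF^\inj)\hookrightarrow\Hot(\VF)\to\sD(\VF)$ is a triangulated equivalence by the standard Bousfield/semi-orthogonal argument: essential surjectivity is the second substep, and full faithfulness follows because quasi-isomorphisms between complexes of injectives have cones in $\Com(\VF^\inj)$ that are acyclic in $\VF$, hence contractible by the third substep. To conclude $\Ac^\bco(\VF)=\Ac(\VF)$, I would observe that for $J^\bu\in\Hot(\VF^\inj)$ the equivalence identifies $\Hom_{\Hot(\VF)}(-,J^\bu)$ with $\Hom_{\sD(\VF)}(-,J^\bu)$; thus a complex $A^\bu$ is Becker-coacyclic if and only if it is zero in $\sD(\VF)$, that is, acyclic in $\VF$. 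This in turn gives $\sD^\bco(\VF)=\sD(\VF)$ and the displayed equivalence $\Hot(\VF^\inj)\simeq\sD^\bco(\VF)=\sD(\VF)$.

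Part~(b) follows by the formally dual argument, interchanging injectives with projectives, coresolutions with resolutions, and Becker-coacyclic with Becker-contraacyclic complexes; note that Lemma~\ref{very-flat-type-cotorsion-pair-lemma} has an obvious dual version for projective resolutions and resolution dimensions, which ensures the analogous uniform bound. The main obstacle I anticipate is not conceptual but technical: carefully setting up the Cartan--Eilenberg bicomplex and its totalization in an arbitrary (weakly idempotent-complete) exact category, and verifying that the semi-orthogonal decomposition / Bousfield localization argument applies at this level of generality rather than just in abelian categories. Both points can be handled by inspection of the standard arguments, which never use more than the exact structure and the uniform finiteness of the resolutions.
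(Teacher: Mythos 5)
Your argument is correct, and it fills in what the paper itself does not: the paper's ``proof'' of this lemma is a bare citation to \cite[Proposition~7.5]{Pphil} and \cite[Theorem~B.7.6]{Pcosh}, and your three substeps are essentially the standard argument given in those references. Two remarks on the technical worries you flag at the end, both of which are resolved by devices already present in the paper. First, no Cartan--Eilenberg bicomplex is needed: one embeds $X^\bu$ termwise admissibly into a contractible complex of injectives, passes to the cokernel, and iterates $d$~times; the last cokernel complex has injective terms by your dimension-shifting, and Lemma~\ref{totalizations-are-co-contra-acyclic}(a) shows the cone of $X^\bu\rarrow\Tot$ is simultaneously acyclic and Becker-coacyclic. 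This is exactly the construction used in the proof of Proposition~\ref{flat-type-equivalence-for-cotorsion-pair-prop}. Second, the ``Bousfield/semi-orthogonal'' step is precisely Lemma~\ref{triangulated-quotient-lemma}(b) applied to $\sH=\Hot(\VF)$, $\sX=\Ac(\VF)$, $\sG=\Hot(\VF^\inj)$, combined with your substep~3 giving $\sG\cap\sX=0$; no abelian-category input is used. A cosmetic point: Lemma~\ref{very-flat-type-cotorsion-pair-lemma} as stated concerns a cotorsion pair in an ambient category, so to quote it here you should apply it to the injective cotorsion pair $(\VF,\VF^\inj)$ inside $\VF$ itself (which is hereditary and complete when there are enough injectives) --- or simply note that the truncation of the coresolution is the same dimension-shifting computation you already perform in substep~3. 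Finally, your derivation of $\Ac^\bco(\VF)=\Ac(\VF)$ from the equivalence is sound, and it is worth noting that the inclusion $\Ac^\bco(\VF)\subset\Ac(\VF)$ is an open problem for general exact categories with enough injectives (cf.\ the discussion around \cite[Remark~B.7.4]{Pcosh}), so the finite homological dimension is genuinely being used there, via the contractibility of the injective resolution $J_A^\bu$ of a coacyclic complex $A^\bu$.
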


\begin{proof}
 This is~\cite[Proposition~7.5]{Pphil}
or~\cite[Theorem~B.7.6]{Pcosh}.
\end{proof}

\begin{lem} \label{finite-coresol-dim-periodicity}
 Let\/ $\sK$ be an idempotent-complete exact category and\/
$\CA\subset\sK$ be a coresolving full subcategory.
 Assume that there is a finite integer~$d$ such that the coresolution
dimensions of all the objects of\/ $\sK$ with respect to\/ $\CA$ do
not exceed~$d$.
 Then every complex in\/ $\CA$ that is acyclic in\/ $\sK$ is also
acyclic in\/~$\CA$.
\end{lem}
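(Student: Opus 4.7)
The plan is to reduce directly to the uniqueness-of-coresolution-dimension statement quoted in the text (the result attributed to Zhu, \v St\!'ov\'\i\v cek, and~\cite[Corollary~A.5.2]{Pcosh}), using the given uniform bound~$d$.

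Let $C^\bu$ be a complex in $\CA$ which is acyclic when viewed in $\sK$. Since the acyclicity is taken in $\sK$, the cocycle objects $Z^n \in \sK$ exist, and for every $n\in\boZ$ there is an admissible short exact sequence $0\rarrow Z^n\rarrow C^n\rarrow Z^{n+1}\rarrow0$ in $\sK$. I would then fix an integer~$n$ and splice the $d$ consecutive such sequences going from $Z^{n-d}$ up to $Z^n$, producing an (admissible) exact sequence
$$
 0\lrarrow Z^{n-d}\lrarrow C^{n-d}\lrarrow C^{n-d+1}\lrarrow\dotsb
 \lrarrow C^{n-1}\lrarrow Z^n\lrarrow 0
$$
in $\sK$ whose $d$ middle terms $C^{n-d},\dotsc,C^{n-1}$ all lie in $\CA$.

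Now by hypothesis the $\CA$\+coresolution dimension of the object $Z^{n-d}\in\sK$ does not exceed~$d$. Applying the cited independence-of-coresolution result (with $K=Z^{n-d}$, $B^i=C^{n-d+i}$ for $0\le i\le d-1$, and $C=Z^n$) then forces $Z^n\in\CA$. Since $n$ was arbitrary, every cocycle of $C^\bu$ belongs to~$\CA$, so the termwise-admissible short exact sequences $0\rarrow Z^n\rarrow C^n\rarrow Z^{n+1}\rarrow0$ have all three terms in $\CA$ and are therefore admissible in the inherited exact structure on~$\CA$. Splicing them back, $C^\bu$ is acyclic in~$\CA$.

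I do not anticipate a serious obstacle: the only nontrivial input is the coresolution-dimension lemma already stated before the proposition (valid under the hypothesis that $\sK$ is weakly idempotent-complete, which is implied here by idempotent-completeness), and the splicing argument is formal once one has admissibility of the cocycle short exact sequences. The role of idempotent-completeness is precisely to guarantee that acyclicity in $\sK$ produces genuine admissible short exact sequences at each cocycle, and that $\CA$, being coresolving and closed under extensions, inherits a sensible exact structure in which the spliced sequences are admissible.
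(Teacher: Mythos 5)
Your argument is correct and coincides with the paper's own proof: splice the acyclic complex into admissible short exact sequences in $\sK$, form for each~$n$ the exact sequence $0\rarrow Z^{n-d}\rarrow C^{n-d}\rarrow\dotsb\rarrow C^{n-1}\rarrow Z^n\rarrow0$, and invoke the independence-of-coresolution result to conclude $Z^n\in\CA$, hence acyclicity in~$\CA$. No further comment is needed.
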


\begin{proof}
 Let $C^\bu$ be a complex in $\CA$ obtained by splicing admissible
short exact sequences $0\rarrow Z^n\rarrow C^n\rarrow Z^{n+1}\rarrow0$
in~$\sK$.
 Then, for every $n\in\boZ$, we have an exact sequence
$0\rarrow Z^{n-d}\rarrow C^{n-d}\rarrow C^{n-d+1}\rarrow\dotsb
\rarrow C^{n-1}\rarrow Z^n\rarrow0$ in~$\sK$.
 As the object $Z^{n-d}\in\sK$ has coresolution dimension not
exceeding~$d$ with respect to $\CA$, it follows that $Z^n\in\CA$.
 For a generalization to the case of a weakly idempotent-complete
exact category\/ $\sK$, see~\cite[Corollary~A.5.4]{Pcosh}.
\end{proof}

 Let $\sK$ be a Grothendieck category.
 We will say that a hereditary complete cotorsion pair $(\VF,\CA)$
generated by a set of objects in\/ $\sK$ has \emph{very flat type} if
the equivalent conditions of
Corollary~\ref{very-flat-type-cotorsion-pair-cor} are satisfied.

\begin{cor}
 Let\/ $\sK$ be a Grothendieck category and $(\VF,\CA)$ be a cotorsion
pair of the very flat type.
 Then the equivalent conditions of
Theorem~\ref{cotorsion-periodity-type-equivalent-conditions} are
satisfied for the cotorsion pair $(\sA,\sB)=(\VF,\CA)$.
\end{cor}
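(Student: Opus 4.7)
The plan is to verify one of the four equivalent conditions of Theorem~\ref{cotorsion-periodity-type-equivalent-conditions}; the most directly accessible is condition~(4), namely that in any acyclic complex in $\sK$ with terms in $\CA$, all objects of cocycles also lie in $\CA$. This is precisely the kind of conclusion delivered by Lemma~\ref{finite-coresol-dim-periodicity}, so the strategy is to match its hypotheses with what the ``very flat type'' assumption gives us.

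First I would recall that $(\VF,\CA)$ is a hereditary complete cotorsion pair in the Grothendieck category~$\sK$, so the class $\CA$ is automatically coresolving: it is closed under extensions, closed under cokernels of admissible monomorphisms (by heredity), and cogenerating (since $\sK$ has enough injectives and $\sK^\inj\subset\CA$). Next, from the very flat type assumption and Corollary~\ref{very-flat-type-cotorsion-pair-cor}, the coresolution dimensions of all objects of $\sK$ with respect to $\CA$ are finite, and Lemma~\ref{finite-coresolution-dimensions-are-bounded} upgrades this to a uniform bound~$d$ (noting that any Grothendieck category is idempotent-complete and has arbitrary coproducts, and that $\CA$ is closed under direct summands because it is the right-hand class of a cotorsion pair). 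With these hypotheses in hand, Lemma~\ref{finite-coresol-dim-periodicity} applies verbatim to deliver condition~(4), and Theorem~\ref{cotorsion-periodity-type-equivalent-conditions} then yields conditions (1)--(3) as well.

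A parallel route via condition~(2) would invoke Lemma~\ref{exact-category-of-finite-homol-dim}(a): by Corollary~\ref{very-flat-type-cotorsion-pair-cor} the exact category $\VF$ has finite homological dimension, and by Lemma~\ref{inj-proj-objects-in-left-right-classes}(a) there are enough injective objects in $\VF$ with $\VF^\inj=\VF\cap\CA$. The lemma then gives $\Ac^\bco(\VF)=\Ac(\VF)$ directly. There is no serious obstacle to either argument; the corollary is a straightforward synthesis of the finiteness statements already assembled in Section~\ref{very-flat-type-secn}, and the only care required is to confirm the mild side hypotheses (idempotent-completeness, countable (co)products, $\CA$ coresolving and closed under summands) that are implicit in the Grothendieck setting.
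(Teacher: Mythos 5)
Your proposal is correct and takes essentially the same route as the paper: the paper's proof verifies condition~(4) of Theorem~\ref{cotorsion-periodity-type-equivalent-conditions} via Lemmas~\ref{finite-coresolution-dimensions-are-bounded} and~\ref{finite-coresol-dim-periodicity}, and condition~(2) via Lemmas~\ref{inj-proj-objects-in-left-right-classes}(a) and~\ref{exact-category-of-finite-homol-dim}(a), which are precisely your two arguments. Your additional checks (that $\CA$ is coresolving and closed under direct summands, and that the finite coresolution dimensions are uniformly bounded before applying Lemma~\ref{finite-coresol-dim-periodicity}) are exactly the mild hypotheses the paper's citations implicitly rely on.
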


\begin{proof}
 The condition of
Theorem~\ref{cotorsion-periodity-type-equivalent-conditions}(2) is
satisfied by Lemmas~\ref{inj-proj-objects-in-left-right-classes}(a)
and~\ref{exact-category-of-finite-homol-dim}(a).
 The condition of
Theorem~\ref{cotorsion-periodity-type-equivalent-conditions}(4) is
satisfied by Lemmas~\ref{finite-coresolution-dimensions-are-bounded}
and~\ref{finite-coresol-dim-periodicity}.
\end{proof}

\begin{lem} \label{very-flat-type-conventional-derived-equivalence}
 Let\/ $\sK$ be a Grothendieck category and $(\VF,\CA)$ be a cotorsion
pair of the very flat type.
 Then the inclusion of exact/abelian categories\/ $\CA\rarrow\sK$
induces an equivalence of the conventional derived categories\/
$$
 \sD(\CA)\simeq\sD(\sK).
$$
\end{lem}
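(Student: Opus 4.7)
The plan is to show that the inclusion-induced functor $\bar\iota\colon \sD(\CA) \to \sD(\sK)$ is a triangulated equivalence; I would establish essential surjectivity and full faithfulness separately.

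By Corollary~\ref{very-flat-type-cotorsion-pair-cor}(3), every object of $\sK$ has $\CA$\+coresolution dimension bounded by some fixed integer~$d$. Since $\CA$ is coresolving in $\sK$ (as the right-hand class of a hereditary cotorsion pair), Lemma~\ref{finite-coresol-dim-periodicity} yields the identity $\Ac(\CA) = \Com(\CA) \cap \Ac(\sK)$; this makes $\bar\iota$ well-defined and reflect isomorphisms. For essential surjectivity I would apply Proposition~\ref{acyclic-dg-cotorsion-pair-prop} to the cotorsion pair $(\VF,\CA)$ in $\sK$, which furnishes the hereditary complete cotorsion pair $(\Ac(\VF), \DG(\CA))$ in $\Com(\sK)$. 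For any $K^\bu \in \Com(\sK)$, the associated special preenvelope sequence $0 \to K^\bu \to C^\bu \to V^\bu \to 0$ has $C^\bu \in \DG(\CA) \subset \Com(\CA)$ and $V^\bu \in \Ac(\VF) \subset \Ac(\sK)$, so $K^\bu \to C^\bu$ is a quasi-isomorphism and hence $K^\bu \cong \bar\iota(C^\bu)$ in $\sD(\sK)$.

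For full faithfulness, I would compute $\Hom_{\sD(\sK)}(C_1^\bu, C_2^\bu)$ for $C_1^\bu, C_2^\bu \in \Com(\CA)$ by means of K-injective resolutions lying inside $\Com(\CA)$. Since $\sK$ is a Grothendieck category, K-injective resolutions with injective terms exist in $\Com(\sK)$ (classical results of Spaltenstein, Franke, Serp\'e). Moreover $\sK^\inj \subset \CA$, since all injectives of $\sK$ belong to the right-hand class of the cotorsion pair $(\VF,\CA)$. Hence for any $C_2^\bu \in \Com(\CA)$ there exists a quasi-isomorphism $C_2^\bu \to I^\bu$ with $I^\bu \in \Com(\sK^\inj) \subset \Com(\CA)$ and $I^\bu$ K-injective in $\sK$. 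By the identity $\Ac(\CA) = \Com(\CA) \cap \Ac(\sK)$, this quasi-isomorphism is also a quasi-isomorphism in $\CA$; moreover, K-injectivity of $I^\bu$ transfers from $\sK$ to $\CA$, because $\Ac(\CA) \subset \Ac(\sK)$ and homotopy classes of chain maps between complexes in $\Com(\CA)$ coincide in $\Hot(\CA)$ and in $\Hot(\sK)$. Consequently,
\[
\Hom_{\sD(\sK)}(C_1^\bu, C_2^\bu) \simeq \Hom_{\Hot(\sK)}(C_1^\bu, I^\bu) = \Hom_{\Hot(\CA)}(C_1^\bu, I^\bu) \simeq \Hom_{\sD(\CA)}(C_1^\bu, C_2^\bu).
\]
The principal obstacle would be the existence of K-injective resolutions with injective terms in a Grothendieck category; given this classical input plus the coresolution-dimension bound, the rest reduces to straightforward bookkeeping with the identity $\Ac(\CA) = \Com(\CA) \cap \Ac(\sK)$.
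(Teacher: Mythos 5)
Your argument is correct, but it follows a different route from the paper, which simply invokes the dual form of a known result on coresolving subcategories of bounded coresolution dimension (\cite[Proposition~5.14]{Sto0}, \cite[Proposition~A.5.8]{Pcosh}). Your essential-surjectivity step — taking a special preenvelope sequence $0\rarrow K^\bu\rarrow C^\bu\rarrow V^\bu\rarrow0$ for the cotorsion pair $(\Ac(\VF),\DG(\CA))$ of Proposition~\ref{acyclic-dg-cotorsion-pair-prop}, so that $C^\bu\in\Com(\CA)$ and the cone of $K^\bu\rarrow C^\bu$ is acyclic — is exactly the approximation hypothesis of Lemma~\ref{triangulated-quotient-lemma}(b) for $\sH=\Hot(\sK)$, $\sX=\Ac(\sK)$, $\sG=\Hot(\CA)$; combined with the identity $\Hot(\CA)\cap\Ac(\sK)=\Ac(\CA)$ from Lemma~\ref{finite-coresol-dim-periodicity}, that lemma already yields the full equivalence $\sD(\CA)\simeq\sD(\sK)$, with no separate full-faithfulness argument needed. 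Your second half, via K\+injective resolutions with injective terms in a Grothendieck category, is also valid ($\sK^\inj\subset\CA$, the cone of $C_2^\bu\rarrow I^\bu$ is acyclic in $\CA$ by the same periodicity lemma, and $\Hom_{\Hot(\CA)}(A^\bu,I^\bu)=0$ for $A^\bu\in\Ac(\CA)\subset\Ac(\sK)$, so $I^\bu$ computes morphisms in $\sD(\CA)$ as well as in $\sD(\sK)$), but it imports the existence of K\+injective resolutions as an external input that the paper's toolkit renders unnecessary. Two small points of bookkeeping: the uniform bound~$d$ on $\CA$\+coresolution dimensions needed for Lemma~\ref{finite-coresol-dim-periodicity} comes from Lemma~\ref{finite-coresolution-dimensions-are-bounded} (or Lemma~\ref{very-flat-type-cotorsion-pair-lemma}), not from Corollary~\ref{very-flat-type-cotorsion-pair-cor}(3) alone, which only asserts finiteness objectwise; and one should check (routinely) that your chain of isomorphisms is indeed the map induced by the inclusion functor, which holds by naturality since both sides are computed through the same morphism $C_2^\bu\rarrow I^\bu$.
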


\begin{proof}
 This is a special case of the dual assertion
to~\cite[Proposition~5.14]{Sto0} or of the dual assertion
to~\cite[Proposition~A.5.8]{Pcosh}.
\end{proof}

\begin{ex} \label{projective-cotorsion-pair-is-very-flat-type-ex}
 Let $\sK$ be a Grothendieck category with enough projective objects.
 Then the two classes $\VF=\sK_\proj$ and $\CA=\sK$ form a cotorsion
pair $(\VF,\CA)$ of the very flat type in~$\sK$.
 In other words, the projective cotorsion pair in a Grothendieck
category with enough projective objects is of the very flat type.
 In this case, one can take $d=0$ in the notation of
Lemma~\ref{very-flat-type-cotorsion-pair-lemma}.

 In particular, for any associative ring $A$, the pair of classes
of projective left $A$\+modules $\VF=A\Modl_\proj$ and all left
$A$\+modules $\CA=A\Modl$ is a cotorsion pair of the very flat type in
the module category $\sK=A\Modl$.
\end{ex}

\begin{exs} \label{very-flat-and-very-flaprojective-in-modules-exs}
 (1)~Let $R$ be a commutative ring.
 For any element $s\in R$, we denote by $R[s^{-1}]$ the ring $R$ with
the element~$s$ formally inverted, i.~e., the localization of
the ring $R$ at the multiplicative subset $\{1,s,s^2,s^3,\dotsc\}
\subset R$.

 The pair of classes of $R$\+modules $(R\Modl_\vfl,\>R\Modl^\cta)$ is
defined as the cotorsion pair generated by the set of all $R$\+modules
$R[s^{-1}]$, \,$s\in R$.
 The $R$\+modules belonging to $R\Modl_\vfl$ are called \emph{very
flat}, while the $R$\+modules belonging to $R\Modl^\cta$ are called
\emph{contraadjusted}.
 The pair of classes $\VF=R\Modl_\vfl$ and $\CA=R\Modl^\cta$ is
a hereditary complete cotorsion pair in $R\Modl$, and the projective
dimensions of very flat $R$\+modules do not exceed~$1$
\,\cite[Section~1.1]{Pcosh}, \cite[Section~4.3]{Pphil}.
 Thus $(R\Modl_\vfl,\>R\Modl^\cta)$ is a cotorsion pair of the very flat
type in $R\Modl$, and in this case one can take $d=1$ in the notation
of Lemma~\ref{very-flat-type-cotorsion-pair-lemma}.
 
\smallskip
 (2)~Let $R$ be a commutative ring, $A$ be an associative ring, and
$R\rarrow A$ be a ring homomorphism.
 A left $A$\+module $F$ is said to be \emph{$A/R$\+very flaprojective}
if $\Ext^1_A(F,C)=0$ for all left $A$\+modules $C$ that are
\emph{contraadjusted as $R$\+modules}.
 Let us call the latter $A$\+modules \emph{$R$\+contraadjusted}, and
denote the class of $R$\+contraadjusted left $A$\+modules by
$A\Modl^{R\dcta}$.
 In particular, all injective $A$\+modules are $R$\+contraadjusted
by~\cite[Lemma~1.3.4(a)]{Pcosh}, \cite[Lemma~1.6(a)]{Pdomc},
or~\cite[Lemma~2.3]{PBas} (see
Lemma~\ref{restriction-of-scalars-cotorsion} below).
 The class of $A/R$\+very flaprojective left $A$\+modules is denoted
by $A\Modl_{A/R\dvflp}$.

 The pair of classes of $A/R$\+very flaprojective left $A$\+modules
$\VF=A\Modl_{A/R\dvflp}$ and $R$\+contraadjusted left $A$\+modules
$\CA=A\Modl^{R\dcta}$ is a hereditary complete cotorsion pair in
$A\Modl$ generated by the set of all left $A$\+modules of the form
$A\ot_RR[s^{-1}]$, \,$s\in R$ \,\cite[Section~1.6]{Pdomc}.
 All $R$\+modules have coresolution dimension at most~$1$ with respect
to the coresolving subcategory of contraadjusted $R$\+modules, and it
follows that all $A$\+modules have coresolution dimension at most~$1$
with respect to the coresolving subcategory of $R$\+contraadjusted
$A$\+modules.
 Hence by Lemma~\ref{very-flat-type-cotorsion-pair-lemma} the projective
dimensions of $A/R$\+very flaprojective $A$\+modules do not exceed~$1$.
 Thus $(A\Modl_{A/R\dvflp},\>A\Modl^{R\dcta})$ is a cotorsion pair of
the very flat type in $A\Modl$, and in this case one can take $d=1$ in
the notation of Lemma~\ref{very-flat-type-cotorsion-pair-lemma}.
\end{exs}

\begin{exs} \label{very-flat-and-very-flaprojective-in-qcoh-exs}
 (1)~Let $X$ be a quasi-compact semi-separated scheme with a finite
affine open covering $X=\bigcup_{\alpha=1}^N U_\alpha$.
 Consider the Grothendieck category $\sK=X\Qcoh$ of quasi-coherent
sheaves on~$X$.
 Let us denote by $\Ext_X^*({-},{-})$ the $\Ext$ groups in the abelian
category $X\Qcoh$.

 A quasi-coherent sheaf $\cF$ on $X$ is said to be \emph{very flat}
if, for every affine open subscheme $U\subset X$,
the $\cO_X(U)$\+module $\cF(U)$ is very flat.
 It suffices to check this condition for the affine open subschemes
$U=U_\alpha$, \,$1\le\alpha\le N$ \,\cite[Section~1.10]{Pcosh}.
 A quasi-coherent sheaf $\cC$ on $X$ is said to be \emph{contraadjusted}
if $\Ext^1_X(\cF,\cC)=0$ for all very flat quasi-coherent sheaves $\cF$
on~$X$ \,\cite[Section~2.5]{Pcosh}.
 Denote the class of very flat quasi-coherent sheaves by
$X\Qcoh_\vfl\subset X\Qcoh$ and the class of contraadjusted
quasi-coherent sheaves by $X\Qcoh^\cta\subset X\Qcoh$.

 The pair of classes $\VF=X\Qcoh_\vfl$ and $\CA=X\Qcoh^\cta$ is
a hereditary complete cotorsion pair in $X\Qcoh$
\,\cite[Corollary~4.1.4 and Lemma~B.1.2]{Pcosh}.
 The class of very flat quasi-coherent sheaves on~$X$ is deconstructible
in $X\Qcoh$ \,\cite[Lemma~B.3.5 and the beginning of the proof of
Lemma~5.1.1(a)]{Pcosh}; so this cotorsion pair is generated by a set
of objects.
 The projective dimensions of very flat quasi-coherent sheaves on $X$
do not exceed the number $N$ of affine open subschemes in any given
affine open covering of~$X$ \,\cite[Lemmas~4.7.1(b), 5.5.12(a,c),
and~B.1.9]{Pcosh}.
 Thus $(X\Qcoh_\vfl,\>X\Qcoh^\cta)$ is a cotorsion pair of the very flat
type in $X\Qcoh$, and in this case one can take $d=N$ in the notation
of Lemma~\ref{very-flat-type-cotorsion-pair-lemma}.

\smallskip
 (2)~Let $\cA$ be a quasi-coherent sheaf of associative algebras
over~$\cO_X$.
 More generally, the following discussion also applies to the case
when $\cA$ is a \emph{quasi-coherent quasi-algebra} over~$X$ in
the sense of~\cite[Section~3.1]{Pdomc}.
 This means that $\cA$ is a sheaf of associative rings over $X$
endowed with a morphism of sheaves of rings $\cO_X\rarrow\cA$ such
that the image of $\cO_X$ is not central in $\cA$ but is ``close
to being central'' in some sense spelled out
in~\cite[Sections~1.4\+-1.5]{Pdomc}.
 Under the latter assumption, one can prove that $\cA$ is
quasi-coherent as an $\cO_X$\+module with respect to its left
$\cO_X$\+module structure if and only if it is quasi-coherent as
an $\cO_X$\+module with respect to its right $\cO_X$\+module
structure~\cite[Lemma~3.2]{Pdomc}; this quasi-coherence condition
is also assumed.

 A \emph{quasi-coherent left $\cA$\+module} on $X$ is a sheaf of
left $\cA$\+modules whose underlying sheaf of $\cO_X$\+modules is
quasi-coherent~\cite[Section~3.3]{Pdomc}.
 Consider the Grothendieck category $\sK=\cA\Qcoh$ of quasi-coherent
left $\cA$\+modules on~$X$.

 A quasi-coherent left $\cA$\+module $\cC$ is said to be
\emph{$X$\+contraadjusted} if the underlying quasi-coherent sheaf
(of $\cO_X$\+modules) of $\cC$ is contraadjusted.
 A quasi-coherent left $\cA$\+module $\cF$ is said to be
\emph{very flaprojective} if, for every affine open subscheme
$U\subset X$, the left $\cA(U)$\+module $\cF(U)$ is
$\cA(U)/\cO_X(U)$\+very flaprojective.
 It suffices to check this condition for the affine open subschemes
$U=U_\alpha$, \,$1\le\alpha\le N$ \,\cite[Section~4.1]{Pdomc}.
 Denote the class of very flaprojective quasi-coherent left
$\cA$\+modules by $\cA\Qcoh_\vflp\subset\cA\Qcoh$ and the class of
$X$\+contraadjusted quasi-coherent left $\cA$\+modules by
$\cA\Qcoh^{X\dcta}\subset\cA\Qcoh$.

 The pair of classes $\VF=\cA\Qcoh_\vflp$ and $\CA=\cA\Qcoh^{X\dcta}$ is
a hereditary complete cotorsion pair in $\cA\Qcoh$
\,\cite[Theorem~4.5 and Remark~4.7]{Pdomc}.
 Similarly to~\cite[Lemma~B.3.5]{Pcosh}, one can prove that the class
of very flaprojective quasi-coherent left $\cA$\+modules is
deconstructible in $\cA\Qcoh$ (since the class of $A/R$\+very
flaprojective left $A$\+modules is deconstructible in $A\Modl$ in
the context of
Example~\ref{very-flat-and-very-flaprojective-in-modules-exs}(2));
so this cotorsion pair is generated by a set of objects.

 All quasi-coherent sheaves on $X$ have coresolution dimension at most
$N$ with respect to the coresolving subcategory of contraadjusted
quasi-coherent sheaves, and it follows that all quasi-coherent
$\cA$\+modules have coresolution dimension at most $N$ with respect to
the coresolving subcategory of $X$\+contraadjusted quasi-coherent
$\cA$\+modules~\cite[Lemma~5.7]{Pdomc}.
 Hence by Lemma~\ref{very-flat-type-cotorsion-pair-lemma} the projective
dimensions of very flaprojective quasi-coherent $\cA$\+modules do not
exceed~$N$.
 Thus $(\cA\Qcoh_\vflp,\allowbreak\>\cA\Qcoh^{X\dcta})$ is a cotorsion
pair of the very flat type in $\cA\Qcoh$, and in this case one can take
$d=N$ in the notation of
Lemma~\ref{very-flat-type-cotorsion-pair-lemma}.
\end{exs}

\begin{exs} \label{flat-projdim1-modules-and-comodules-exs}
 (1)~Let $A$ be an associative ring and $n\ge0$ be an integer.
 Denote by $A\Modl^\pdn\subset A\Modl$ the full subcategory of left
$A$\+modules of projective dimension at most~$n$.
 Let $A\Modl^\pdn_\flat=A\Modl^\pdn\cap A\Modl_\flat$ be the full
subcategory of flat $A$\+modules of projective dimension~$\le n$
in $A\Modl$.

 For example, all countably presented flat $A$\+modules have projective
dimensions at most~$1$ by~\cite[Corollary~2.23]{GT}.
 Moreover, all $<\nobreak\aleph_n$\+presented flat $A$\+modules have
projective dimensions at most~$n$ by~\cite[Proposition~5.3]{Jen}.

 By~\cite[Lemmas~6.4 and~8.9]{GT}, the class of modules $A\Modl^\pdn$
is deconstructible in $A\Modl$.
 According to~\cite[Section~2]{BBE}, the class of flat modules
$A\Modl_\flat$ is also deconstructible.
 According to~\cite[Proposition~2.9(2)]{Sto-hill}, it follows that
the class of modules $A\Modl^\pdn_\flat=A\Modl^\pdn\cap A\Modl_\flat$
is deconstructible as well.

 Both the classes $A\Modl^\pdn$ and $A\Modl^\pdn_\flat$ are also
clearly resolving (in particular, generating) and closed under
direct summands in $A\Modl$.
 By the Eklof--Trlifaj theorem, it follows that both $(A\Modl^\pdn,
\allowbreak\>(A\Modl^\pdn)^{\perp_1})$ and $(A\Modl^\pdn_\flat,
\allowbreak\>(A\Modl^\pdn_\flat)^{\perp_1})$ are hereditary complete
cotorsion pairs in $A\Modl$, each of them generated by a set of objects.

 By construction, all the objects of $A\Modl^\pdn$ and
$A\Modl^\pdn_\flat$ have projective dimensions at most~$n$ in $A\Modl$.
 Thus both $(\VF,\CA)=(A\Modl^\pdn,\allowbreak\>
(A\Modl^\pdn)^{\perp_1})$ and $(\VF,\CA)=(A\Modl^\pdn_\flat,
\allowbreak\>(A\Modl^\pdn_\flat)^{\perp_1})$ are
cotorsion pairs of the very flat type in $A\Modl$.
 In both cases, one can take $d=n$ in the notation of
Lemma~\ref{very-flat-type-cotorsion-pair-lemma}.

\smallskip
 (2)~Let $\cE$ be a \emph{coring} over the ring~$A$.
 So $\cE$ is an $A$\+$A$\+bimodule endowed with $A$\+$A$\+bimodule
maps of \emph{comultiplication} $\cE\rarrow\cE\ot_A\cE$ and
\emph{counit} $\cE\rarrow A$ satisfying the usual coassociativity and
counitality axioms~\cite[Section~17.1]{BW}, \cite[Section~1.1]{Psemi},
\cite[Section~2]{Pflcc}.
 A \emph{left\/ $\cE$\+comodule} $\cM$ is a left $A$\+module endowed
with a left $A$\+module map of \emph{left\/ $\cE$\+coaction}
$\cM\rarrow\cE\ot_A\cM$ satisfying the coassociativity and counitality
axioms~\cite[Sections~18.1\+-3]{BW}, \cite[Section~1.1]{Psemi},
\cite[Section~2]{Pflcc}.

 Assume that $\cE$ is flat as a right $A$\+module.
 Then the category of left $\cE$\+comodules $\cE\Comodl$ is 
a Grothendieck abelian category~\cite[Sections~18.6, 18.14, 18.16,
and~18.23]{BW}, \cite[Proposition~2.12(a)]{Prev},
\cite[Lemma~2.1]{Pflcc}.
 The forgetful functor $\cE\Comodl\rarrow A\Modl$ is exact and
preserves coproducts.

 Furthermore, an $\cE$\+comodule $\cF$ is said to be \emph{$A$\+flat}
if its underlying $A$\+module $\cF$ is flat.
 The category $\cE\Comodl$ can be viewed as a kind of quasi-coherent
sheaf category over a noncommutative quasi-compact semi-separated
stack~\cite[Section~2]{KR}, \cite{KR2}, \cite[Example~2.5]{Pflcc}.
 The $A$\+flat $\cE$\+comodules correspond precisely to flat
quasi-coherent sheaves~\cite[Remark~2.6]{Pflcc}.

 Assume that the coring $\cE$ satisfies the following two additional
conditions discussed in the paper~\cite[Section~5]{Pflcc}:
\begin{itemize}
\item[($*$)] every left $\cE$\+comodule is a quotient comodule of
an $A$\+flat $\cE$\+comodule;
\item[(${*}{*}$)] a left $\cE$\+comodule $\cM$ has finite projective
dimension as an object of $\cE\Comodl$ whenever its underlying
$A$\+module $\cM$ has finite projective dimension as an object of
$A\Modl$.
\end{itemize}

 Clearly, it follows from~(${*}{*}$) that there exists an integer
$d\ge1$ such that a left $\cE$\+comodule $\cM$ has projective
dimension~$\le d$ whenever its underlying $A$\+module $\cM$ has
projective dimension~$\le1$.
 According to~\cite[Theorem~3.1]{Pflcc}, it follows from~(${*}$)
that every left $\cE$\+comodule is a quotient comodule of a coproduct
of $\cE$\+comodules whose underlying $A$\+modules are flat and
countably presented.
 All such $A$\+modules have projective dimensions at most~$1$, as
mentioned in~(1) above.

 Now let $\VF=\cE\Comodl^{A\dpdo}_{A\dflat}\subset\cE\Comodl$ denote
the class of all left $\cE$\+comodules whose underlying left
$A$\+modules are flat of projective dimension~$\le 1$.
 According to the previous paragraph and the discussion in~(1),
the full subcategory $\cE\Comodl^{A\dpdo}_{A\dflat}$ is resolving
in $\cE\Comodl$; it is also clearly closed under direct summands.
 Using the Hill lemma~\cite[Theorem~7.10]{GT},
\cite[Theorem~2.1]{Sto-hill} in the module category $A\Modl$ and
the fact that the class of modules $A\Modl^\pdo_\flat$ is
deconstructible (as explained in~(1)), one can show that the class of
comodules $\cE\Comodl^{A\dpdo}_{A\dflat}$ is deconstructible
in $\cE\Comodl$.

 By the Eklof--Trlifaj theorem (one can use either
Theorem~\ref{loc-pres-eklof-trlifaj} or
Theorem~\ref{efficient-eklof-trlifaj}), it follows that
$(\VF,\CA)=(\cE\Comodl^{A\dpdo}_{A\dflat},\allowbreak\>
(\cE\Comodl^{A\dpdo}_{A\dflat})^{\perp_1})$ is a hereditary complete
cotorsion pair generated by a set of objects in $\cE\Comodl$.
 As we have seen, all the objects of $\cE\Comodl^{A\dpdo}_{A\dflat}$
have finite projective dimensions~$\le\nobreak d$ in $\cE\Comodl$
under our assumptions.
 Thus $(\VF,\CA)$ is a cotorsion pair of the very flat type in
$\cE\Comodl$.
\end{exs}

\Section{Flat-Type Cotorsion Pairs}  \label{flat-type-secn}

 The definition of a cotorsion pair of the flat type $(\Flat,\Cot)$ in
a Grothendieck category $\sK$ uses an auxiliary cotorsion pair of
the very flat type $(\VF,\CA)$ such that $(\VF,\CA)\le(\Flat,\Cot)$.
 We prove that the property of $(\Flat,\Cot)$ to be of the flat type
does not depend on the choice of $(\VF,\CA)$.

 Let us start with a more abstract definition.
 We will say that an exact category $\sF$ is \emph{of the flat type} if
the following three conditions are satisfied:
\begin{enumerate}
\renewcommand{\theenumi}{\roman{enumi}}
\item there are enough injective and projective objects in~$\sF$;
\item the classes of acyclic, Becker-coacyclic, and
Becker-contraacyclic complexes in $\sF$ coincide, i.~e.,
$\Ac^\bco(\sF)=\Ac(\sF)=\Ac^\bctr(\sF)$;
\item the compositions of triangulated functors
$\Hot(\sF^\inj)\rarrow\Hot(\sF)\rarrow\sD^\bco(\sF)=\sD(\sF)$
and $\Hot(\sF_\proj)\rarrow\Hot(\sF)\rarrow\sD^\bctr(\sF)=\sD(\sF)$
are triangulated equivalences
$$
 \Hot(\sF^\inj)\simeq\sD^\bco(\sF)=\sD(\sF)=\sD^\bctr(\sF)\simeq
 \Hot(\sF_\proj).
$$
\end{enumerate}
 We will say that an exact category $\sF$ is \emph{weakly of the flat
type} if it satisfies conditions~(i\+-ii).
 By Lemma~\ref{exact-category-of-finite-homol-dim}, any exact
category of finite homological dimension with enough injective and
projective objects is of the flat type.

\begin{lem} \label{proj-resolving-inj-coresolving}
\textup{(a)} Let\/ $\sE$ be an exact category and\/ $\sA\subset\sE$
be a resolving full subcategory closed under direct summands in\/~$\sE$.
 Then the classes of projective objects in the exact categories\/ $\sE$
and\/ $\sA$ coincide, $\sA_\proj=\sE_\proj$.
 There are enough projective objects in\/ $\sA$ if and only if
there are enough projective objects in\/~$\sE$. \par
\textup{(b)} Let\/ $\sE$ be an exact category and\/ $\sB\subset\sE$ be
a coresolving full subcategory closed under direct summands in\/~$\sE$.
 Then the classes of injective objects in the exact categories\/ $\sE$
and\/ $\sB$ coincide, $\sB^\inj=\sE^\inj$.
 There are enough injective objects in\/ $\sB$ if and only if
there are enough injective objects in\/~$\sE$.
\end{lem}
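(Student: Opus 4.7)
The plan is to prove part~(a) by establishing the two inclusions $\sE_\proj\subseteq\sA_\proj$ and $\sA_\proj\subseteq\sE_\proj$ separately, then deduce the equivalence of the ``enough projectives'' conditions; part~(b) will follow by dualization (working in $\sE^\rop$, where coresolving in $\sE$ becomes resolving in $\sE^\rop$ and injective becomes projective).

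For $\sE_\proj\subseteq\sA_\proj$ the argument is short.  If $P\in\sE_\proj$, then since\/ $\sA$ is generating, there exists an admissible epimorphism $A\rarrow P$ with $A\in\sA$; this splits by projectivity of $P$ in\/ $\sE$, so $P$ is a direct summand of $A$ and therefore lies in\/ $\sA$ by the assumption that\/ $\sA$ is closed under direct summands.  Projectivity of $P$ in the exact category\/ $\sA$ for the inherited exact structure then follows from Lemma~\ref{co-resolving-Ext-agrees}, which gives the identification $\Ext^1_\sA(P,X)=\Ext^1_\sE(P,X)=0$ for every $X\in\sA$.

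The reverse inclusion $\sA_\proj\subseteq\sE_\proj$ is the main point.  Let $P\in\sA_\proj$, and consider an arbitrary admissible short exact sequence $0\rarrow X\rarrow Y\overset{\pi}\rarrow P\rarrow0$ in\/ $\sE$; I want to split it.  Using the generating property of\/ $\sA$, choose an admissible epimorphism $g\:A\rarrow Y$ with $A\in\sA$.  Then the composition $\pi g\:A\rarrow P$ is an admissible epimorphism in\/ $\sE$ between two objects of\/ $\sA$, and its kernel $K$ fits into an admissible short exact sequence $0\rarrow K\rarrow A\rarrow P\rarrow0$.  The resolving property of\/ $\sA$ (closure under kernels of admissible epimorphisms) forces $K\in\sA$, so this short exact sequence lies entirely in\/ $\sA$ and splits by projectivity of $P$ in\/ $\sA$.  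A section $s\:P\rarrow A$ then yields $gs\:P\rarrow Y$, which satisfies $\pi(gs)=(\pi g)s=\id_P$ and therefore splits the original sequence.  This ``cover $Y$ and descend to $P$'' trick is the only genuinely non-formal step; the rest is unpacking of definitions, and I do not anticipate serious obstacles.

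For the equivalence of ``enough projectives'' conditions, once $\sA_\proj=\sE_\proj$ has been established, one direction is immediate: if\/ $\sE$ has enough projectives, every object of\/ $\sA\subseteq\sE$ admits an admissible epimorphism from some $P\in\sE_\proj=\sA_\proj$, which already lies in\/ $\sA$.  Conversely, if\/ $\sA$ has enough projectives and $Y\in\sE$, I would cover $Y$ first by an admissible epimorphism $A\rarrow Y$ with $A\in\sA$ (generation), then by an admissible epimorphism $P\rarrow A$ in\/ $\sA$ with $P\in\sA_\proj$; the composition is an admissible epimorphism in\/ $\sE$ from $P\in\sA_\proj=\sE_\proj$.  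Part~(b) is then obtained by running the same argument in the opposite exact category, with ``admissible epimorphism'' replaced by ``admissible monomorphism'' and ``kernel'' by ``cokernel'' throughout.
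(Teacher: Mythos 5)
Your proof is correct and takes essentially the same route as the paper: the paper handles $\sE_\proj\subseteq\sA_\proj$ by the same split-the-cover, direct-summand observation, and for $\sA_\proj\subseteq\sE_\proj$ it simply cites the first paragraph of the proof of \cite[Proposition~B.7.9]{Pcosh}, which is exactly your cover-$Y$-and-descend argument, so your write-up is just a self-contained version of the paper's proof. The only point worth making explicit is that, in the ``enough projectives in $\sA$'' direction, an admissible epimorphism $P\rarrow A$ in $\sE$ with $P,A\in\sA$ is automatically an admissible epimorphism in the inherited exact structure of $\sA$ because its kernel lies in $\sA$ by the resolving hypothesis---a one-line remark your argument leaves implicit.
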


\begin{proof}
 It is explained in~\cite[first paragraph of the proof of
Proposition~B.7.9]{Pcosh} how to show that all the projective objects
of $\sA$ are projective in~$\sE$.
 Conversely, if $P\in\sE$ is a projective object, pick an object
$A\in\sA$ together with an admissible epimorphism $A\rarrow P$ in~$\sE$.
 Then $P$ is a direct summand of $A$ in $\sE$, hence $P\in\sA$.
 The second assertions of both parts~(a) and~(b) are even easier.
\end{proof}

 The following proposition can be viewed as a generalization of
Lemma~\ref{exact-category-of-finite-homol-dim}.

\begin{prop} \label{flat-type-equivalence-for-cotorsion-pair-prop}
 Let\/ $\sE$ be a weakly idempotent-complete exact category with
enough injective and projective objects, and let $(\sA,\sB)$ be
a hereditary complete cotorsion pair in\/~$\sE$.
 In this setting: \par
\textup{(a)} Assume that the exact category\/ $\sA$ has finite
homological dimension.
 Then the exact category\/ $\sB$ is (weakly) of the flat type if and
only if the exact category\/ $\sE$ is (weakly) of the flat type. \par
\textup{(b)} Assume that the exact category\/ $\sB$ has finite
homological dimension.
 Then the exact category\/ $\sA$ is (weakly) of the flat type if and
only if the exact category\/ $\sE$ is (weakly) of the flat type.
\end{prop}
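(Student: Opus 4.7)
The strategy is to prove part~(a); part~(b) follows by the evident dual argument, swapping the roles of $\sA$ and $\sB$, and of projectives and injectives. First, I would assemble the basic dictionary: by Lemmas~\ref{proj-resolving-inj-coresolving} and~\ref{inj-proj-objects-in-left-right-classes} one has $\sE_\proj=\sA_\proj$, $\sE^\inj=\sB^\inj$, and $\sA^\inj=\sA\cap\sB=\sB_\proj$, so the exact category $\sB$ inherits enough projective and injective objects. The finite homological dimension of $\sA$, via Lemma~\ref{very-flat-type-cotorsion-pair-lemma}, yields a common bound $d$ both on the projective dimension in $\sE$ of objects of $\sA$ and on the $\sB$-coresolution dimension of objects of $\sE$; moreover, Lemma~\ref{exact-category-of-finite-homol-dim} immediately gives that $\sA$ is itself of flat type. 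Combined with Lemma~\ref{finite-coresol-dim-periodicity} applied to the coresolving subcategory $\sB\subseteq\sE$, this already produces the acyclicity dictionary $\Ac^\bco(\sB)=\Com(\sB)\cap\Ac^\bco(\sE)$ and $\Ac(\sB)=\Com(\sB)\cap\Ac(\sE)$.

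For the direction ``$\sE$ (weakly) of flat type $\Rightarrow$ $\sB$ (weakly) of flat type'', these identities combined with $\Ac^\bco(\sE)=\Ac(\sE)$ give $\Ac^\bco(\sB)=\Ac(\sB)$, and $\Ac^\bctr(\sB)\subseteq\Ac(\sB)$ follows from $\sE_\proj\subseteq\sB_\proj$ together with $\Ac^\bctr(\sE)=\Ac(\sE)$. The nontrivial inclusion $\Ac(\sB)\subseteq\Ac^\bctr(\sB)$ is where I would use the flat-type of $\sA$: given $C^\bu\in\Ac(\sB)\subseteq\Ac^\bctr(\sE)$ and $P^\bu\in\Com(\sA\cap\sB)\subseteq\Com(\sA)$, produce $\pi\colon Q^\bu\to P^\bu$ in $\Com(\sA)$ with $Q^\bu\in\Com(\sE_\proj)$ and $\mathrm{cone}(\pi)\in\Ac(\sA)$, then apply $\Hom_{\Hot(\sE)}(-,C^\bu)$ to the exact triangle $Q^\bu\to P^\bu\to\mathrm{cone}(\pi)\to Q^\bu[1]$: the composite $f\circ\pi$ is null-homotopic because $C^\bu\in\Ac^\bctr(\sE)$ and $Q^\bu\in\Com(\sE_\proj)$, while morphisms $\mathrm{cone}(\pi)\to C^\bu$ are null-homotopic by Lemma~\ref{acycl-acycl-orthogonality-lemma}. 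The equivalences $\sD^\bco(\sB)\simeq\Hot(\sB^\inj)$ and $\sD^\bctr(\sB)\simeq\Hot(\sB_\proj)$ needed for the unadorned ``of flat type'' conclusion then follow from the acyclicity dictionary together with $\sB^\inj=\sE^\inj$ and $\sB_\proj=\sA\cap\sB$.

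For the converse direction, given $X^\bu\in\Ac(\sE)$, I would construct a $\sB$-coresolution of $X^\bu$ as a long exact sequence of complexes $0\to X^\bu\to B^{\bu,0}\to\cdots\to B^{\bu,d}\to 0$ in $\Com(\sE)$ with each $B^{\bu,i}\in\Com(\sB)$, obtained by iterating the special preenvelope termwise and lifting the differentials of $X^\bu$ (possible because all successive syzygies lie in $\sA$ and $\Ext^1_\sE(\sA,\sB)=0$). Iterating Lemma~\ref{totalizations-are-co-contra-acyclic} shows that the totalization of this long exact sequence is Becker-coacyclic in $\sE$, so filtering it by the horizontal coordinate produces, in $\sD^\bco(\sE)$, an isomorphism between $X^\bu$ (up to shift) and the totalization $\tilde T^\bu\in\Com(\sB)$ of $[B^{\bu,0}\to\cdots\to B^{\bu,d}]$. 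Since $X^\bu$ is acyclic, so is $\tilde T^\bu$, whence $\tilde T^\bu\in\Ac(\sB)=\Ac^\bco(\sB)$ by the flat-type hypothesis on $\sB$; the dictionary then gives $\tilde T^\bu\in\Ac^\bco(\sE)$, so $X^\bu\in\Ac^\bco(\sE)$. The parallel statement $\Ac(\sE)\subseteq\Ac^\bctr(\sE)$ is obtained analogously, replacing the $\sB$-coresolution with a special precover sequence $0\to B^\bu\to A^\bu\to X^\bu\to 0$ with $A^\bu\in\Com(\sA)$ and $B^\bu\in\Com(\sB)$, then resolving $A^\bu$ by a complex in $\Com(\sE_\proj)$ using the flat-type of $\sA$ and $B^\bu$ by a complex in $\Com(\sA\cap\sB)$ using the flat-type of $\sB$, and appealing to the Becker-contraacyclicity half of Lemma~\ref{totalizations-are-co-contra-acyclic}. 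The main technical obstacle will be the functorial construction of the $\sB$-coresolution of a complex (not merely of an object) and the verification that the resulting totalization filtration really yields the claimed iterated-extension isomorphism in the Becker-(co)derived category; once these are in hand, the transfer of the flat-type condition between $\sE$ and $\sB$ is a formal consequence of the acyclicity dictionary and the flat-type of $\sA$.
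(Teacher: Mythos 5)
Your overall skeleton (the dictionary $\sE_\proj=\sA_\proj$, $\sB^\inj=\sE^\inj$, $\sB_\proj=\sA\cap\sB$, the bound~$d$ on $\sB$\+coresolution dimensions, transfer of acyclicity via Lemma~\ref{finite-coresol-dim-periodicity}, and finite $\sB$\+coresolutions of complexes plus Lemma~\ref{totalizations-are-co-contra-acyclic} for the converse) agrees with the paper's proof, and your argument for the inclusion $\Ac(\sB)\subseteq\Ac^\bctr(\sB)$ (resolving a complex in $\sA\cap\sB$ by a complex of $\sE$\+projectives with cone in $\Ac(\sA)$ and using Lemma~\ref{acycl-acycl-orthogonality-lemma}) is correct. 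However, there is a genuine error at exactly the delicate point: you justify $\Ac^\bctr(\sB)\subseteq\Ac(\sB)$ by the inclusion $\sE_\proj\subseteq\sB_\proj$, which is false. Projective objects of $\sE$ belong to $\sA$, not to $\sB$ (e.g., for the flat cotorsion pair in $A\Modl$ projective modules are rarely cotorsion), while $\sB_\proj=\sA\cap\sB$; so a complex right-orthogonal to $\Com(\sA\cap\sB)$ gives you no direct information about morphisms from complexes of $\sE$\+projectives. What you actually need is the identification $\Ac^\bctr(\sB)=\Com(\sB)\cap\Ac^\bctr(\sE)$, and this is precisely the nontrivial ingredient that the paper imports from \cite[Proposition~B.7.10]{Pcosh} (together with its coderived counterpart B.7.9 for condition~(iii)). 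It can be proved, e.g., by coresolving a complex of $\sE$\+projectives inside $\sA$ by contractible complexes in $\sA\cap\sB$ (possible in $\le d$ steps since $\sA$ has homological dimension $\le d$) and using the termwise vanishing of $\Ext^1_\sE(\sA,\sB)$ to get long exact sequences of $\Hom$ complexes, but some such argument must be supplied; as written your proof of the direction ``$\sE$ of flat type $\Rightarrow$ $\sB$ of flat type'' is incomplete, and the same confusion between $\sE_\proj$ and $\sB_\proj$ undermines your one-sentence treatment of the equivalence $\Hot(\sB_\proj)\simeq\sD^\bctr(\sB)$ in condition~(iii).

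Two further gaps in the converse direction. First, you only verify the inclusions $\Ac(\sE)\subseteq\Ac^\bco(\sE)$ and $\Ac(\sE)\subseteq\Ac^\bctr(\sE)$, but ``weakly of the flat type'' requires equalities; the reverse inclusions $\Ac^\bco(\sE)\subseteq\Ac(\sE)$ and $\Ac^\bctr(\sE)\subseteq\Ac(\sE)$ (which are open problems for general exact categories) must also be deduced here. The paper gets all of this at once by coresolving an \emph{arbitrary} complex $E^\bu$ in $\sE$ by contractible complexes in $\sB$, so that $E^\bu$ is acyclic/coacyclic/contraacyclic in $\sE$ if and only if the cosyzygy complex $F^\bu\in\Com(\sB)$ is, and then applying the dictionary and the hypothesis on~$\sB$; restricting attention to acyclic $X^\bu$ from the start loses half of the statement. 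Second, your construction of the coresolution by ``iterating the special preenvelope termwise and lifting the differentials'' has a real problem: the lifted maps $B^n\to B^{n+1}$ exist by $\Ext^1_\sE(\sA,\sB)=0$, but nothing forces their composite to vanish, so you do not obviously get a complex. This obstacle is avoidable: use embeddings into contractible complexes built from objects of $\sB$ (only cogeneration by $\sB$ is needed), which is what the paper does, and which also makes your totalization-filtration worries unnecessary.
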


\begin{proof}
 Let us prove part~(a).
 By Lemma~\ref{proj-resolving-inj-coresolving}(b), the classes of
injective objects in $\sB$ and $\sE$ coincide, $\sB^\inj=\sE^\inj$, and
there are enough such objects in $\sB$ since there are enough of them
in~$\sE$.
 By Lemma~\ref{inj-proj-objects-in-left-right-classes}(b), there are
also enough projective objects in $\sB$, and one has
$\sB_\proj=\sA\cap\sB$.
 So condition~(i) is satisfied for both $\sB$ and~$\sE$.

 Furthermore, by Lemma~\ref{very-flat-type-cotorsion-pair-lemma},
the coresolution dimensions of all the objects of $\sE$ with
respect to $\sB$ do not exceed the homological dimension~$d$ of
the exact category~$\sA$.
 By Lemma~\ref{finite-coresol-dim-periodicity}, it follows that
a complex in $\sB$ is acyclic if and only if it is acyclic in~$\sE$,
that is $\Ac(\sB)=\Com(\sB)\cap\Ac(\sE)$.
 Since $\sB^\inj=\sE^\inj$, it follows immediately from the definitions
that a complex in $\sB$ is Becker-coacyclic if and only if it is
Becker-coacyclic in $\sE$, that is $\Ac^\bco(\sB)=\Com(\sB)\cap
\Ac^\bco(\sE)$.
 By~\cite[Proposition~B.7.10]{Pcosh}, a complex in $\sB$ is
Becker-contraacyclic if and only if it is Becker-contraacyclic
in $\sE$, that is $\Ac^\bctr(\sB)=\Com(\sB)\cap\Ac^\bctr(\sE)$.
 Thus condition~(ii) for the exact category $\sE$ implies condition~(ii)
for the exact category~$\sB$.

 Conversely, let $E^\bu$ be a complex in~$\sE$.
 Since the category $\sB$ is coresolving in~$\sE$, there exists
a contractible complex $B^{0,\bu}$ in $\sB$ together with a termwise
admissible monomorphism of complexes $E^\bu\rarrow B^{0,\bu}$.
 Similarly, there exists a contractible complex $B^{1,\bu}$ in $\sB$
together with a termwise admissible monomorphism of complexes
$B^{0,\bu}/E^\bu\rarrow B^{1,\bu}$.
 Proceeding in this way, we construct a finite exact sequence of
complexes $0\rarrow E^\bu\rarrow B^{0,\bu}\rarrow B^{1,\bu}\rarrow
\dotsb\rarrow B^{d-1,\bu}\rarrow F^\bu\rarrow0$ in $\sE$ such that
all the complexes $B^{n,\bu}$, \,$0\le n\le d-1$, are contractible
complexes in~$\sB$.
 Since the coresolution dimensions of all the objects of $\sE$ with
respect to $\sB$ do not exceed~$d$, the terms of the complex $F^\bu$
also belong to~$\sB$.
 Now the complex $E^\bu$ is acyclic in $\sE$ if and only if the complex
$F^\bu$ is acyclic in~$\sE$ (by~\cite[Corollary~3.6]{Bueh}),
the complex $E^\bu$ is Becker-coacyclic in $\sE$ if and only if
the complex $F^\bu$ is Becker-coacyclic in~$\sE$
(by Lemma~\ref{totalizations-are-co-contra-acyclic}(a)), and
the complex $E^\bu$ is Becker-contraacyclic in $\sE$ if and only if
the complex $F^\bu$ is Becker-contraacyclic in~$\sE$
(by Lemma~\ref{totalizations-are-co-contra-acyclic}(a)).
 Taking the previous paragraph into account, one can see that
condition~(ii) for the exact category $\sB$ implies condition~(ii)
for the exact category~$\sE$.

 Finally, the inclusion of exact categories $\sB\rarrow\sE$ induces
a triangulated equivalence of the conventional derived categories
$\sD(\sB)\simeq\sD(\sE)$ by the dual version
of~\cite[Proposition~5.14]{Sto0} or the dual version
of~\cite[Proposition~A.5.8]{Pcosh}.
 The same inclusion of exact categories induces a triangulated
equivalence of the Becker coderived categories $\sD^\bco(\sB)
\simeq\sD^\bco(\sE)$ by the dual version
of~\cite[Proposition~B.7.9]{Pcosh}, and a triangulated equivalence of
the Becker contraderived categories $\sD^\bctr(\sB)\simeq
\sD^\bctr(\sE)$ by~\cite[Proposition~B.7.10]{Pcosh}.

 Besides, by Lemma~\ref{proj-resolving-inj-coresolving}(a), the classes
of projective objects in $\sA$ and $\sE$ coincide,
$\sA_\proj=\sE_\proj$, and there are enough such objects in $\sA$ since
there are enough of them in~$\sE$.
 One has $\sD^\bco(\sA)=\sD(\sA)=\sD^\bctr(\sA)$, and
the functors $\Hot(\sB_\proj)=\Hot(\sA^\inj)\rarrow\sD(\sA)$ and
$\Hot(\sE_\proj)=\Hot(\sA_\proj)\rarrow\sD(\sA)$ are triangulated
equivalences by Lemma~\ref{exact-category-of-finite-homol-dim}.
 In view of these observations, condition~(iii) for the exact category
$\sB$ is equivalent to condition~(iii) for the exact category~$\sE$.
\end{proof}

\begin{rem} \label{flat-type-equivalence-for-cotorsion-pair-remark}
 Looking into the proof of
Proposition~\ref{flat-type-equivalence-for-cotorsion-pair-prop}, one
can see that it actually proves the equivalence of the respective
subconditions of the condition of (weakly) flat type.
 Specifically, let $\sE$ be a weakly idempotent-complete exact category
and $(\sA,\sB)$ be a hereditary complete cotorsion pair in~$\sE$.
 In this setting: \par
\textup{(a)} if $\sA$ has finite homological dimension, then
$\Ac^\bco(\sB)=\Ac(\sB)$ if and only if $\Ac^\bco(\sE)=\Ac(\sE)$; \par
\textup{(b)} if $\sA$ has finite homological dimension, then the functor
$\Hot(\sB^\inj)\rarrow\sD^\bco(\sB)$ is an equivalence if and only if
the functor $\Hot(\sE^\inj)\rarrow\sD^\bco(\sE)$ is an equivalence; \par
\textup{(c)} if $\sA$ has finite homological dimension and there are
enough projective objects in $\sE$, then $\Ac^\bctr(\sB)=\Ac(\sB)$
if and only if $\Ac^\bctr(\sE)=\Ac(\sE)$; \par
\textup{(d)} if $\sA$ has finite homological dimension and there are
enough projective objects in $\sE$, then the functor $\Hot(\sB_\proj)
\rarrow\sD^\bctr(\sB)$ is an equivalence if and only if the functor
$\Hot(\sE_\proj)\rarrow\sD^\bctr(\sE)$ is an equivalence;  \par
\textup{(e)} if $\sB$ has finite homological dimension and there are
enough injective objects in $\sE$, then $\Ac^\bco(\sA)=\Ac(\sA)$
if and only if $\Ac^\bco(\sE)=\Ac(\sE)$; \par
\textup{(f)} if $\sB$ has finite homological dimension and there are
enough injective objects in $\sE$, then the functor $\Hot(\sA^\inj)
\rarrow\sD^\bco(\sA)$ is an equivalence if and only if the functor
$\Hot(\sE^\inj)\rarrow\sD^\bco(\sE)$ is an equivalence; \par
\textup{(g)} if $\sB$ has finite homological dimension, then
$\Ac^\bctr(\sA)=\Ac(\sA)$ if and only if $\Ac^\bctr(\sE)=\Ac(\sE)$; \par
\textup{(h)} if $\sB$ has finite homological dimension, then the functor
$\Hot(\sA_\proj)\rarrow\sD^\bctr(\sA)$ is an equivalence if and only if
the functor $\Hot(\sE_\proj)\rarrow\sD^\bctr(\sE)$ is an equivalence.
\end{rem}

\begin{lem} \label{ac=ac-bctr-equivalent-conditions}
 Let\/ $\sK$ be a Grothendieck category, $(\VF,\CA)$ be a cotorsion pair
of the very flat type in\/ $\sK$, and\/ $\Flat\subset\sK$ be a resolving
full subcategory closed under direct summands in\/ $\sK$ such that\/
$\VF\subset\Flat$.
 Then the following two conditions are equivalent:
\begin{enumerate}
\item for any given complex $F^\bu$ in\/ $\Flat\cap\CA$ such that
$F^\bu$ is acyclic in\/ $\sK$, the objects of cocycles of $F^\bu$ belong
to\/ $\Flat$ if and only if, for every complex $P^\bu$ in\/ $\VF$,
every morphism of complexes $P^\bu\rarrow F^\bu$ is homotopic to zero;
\item the classes of acyclic and Becker-contraacyclic complexes in
the exact category\/ $\Flat\cap\CA$ coincide, i.~e.,
$\Ac^\bctr(\Flat\cap\CA)=\Ac(\Flat\cap\CA)$.
\end{enumerate}
\end{lem}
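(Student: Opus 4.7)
The plan is to translate both (1) and (2) into statements about null-homotopy of morphisms from complexes in $\VF$ (or $\VF\cap\CA$) into $F^\bu$, using two ingredients: an identification of the projectives of $\sE:=\Flat\cap\CA$, and the fact that $\CA$-coresolution dimensions in $\sK$ are bounded (Corollary~\ref{very-flat-type-cotorsion-pair-cor}). By Lemma~\ref{inj-proj-objects-in-intersected-classes}(b) applied to $(\VF,\CA)$ and the resolving full subcategory $\Flat$, the projectives of $\sE$ coincide with $\VF\cap\CA$; hence $F^\bu\in\Ac^\bctr(\sE)$ means precisely that every morphism $P^\bu\rarrow F^\bu$ from a complex $P^\bu$ in $\VF\cap\CA$ is null-homotopic. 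Moreover, Lemma~\ref{finite-coresol-dim-periodicity} applied to $\CA$ says that any complex in $\CA$ acyclic in $\sK$ is already acyclic in $\CA$, so for $F^\bu$ acyclic in $\sK$ with terms in $\sE$ the condition ``cocycles in $\Flat$'' from~(1)(a) is equivalent to $F^\bu\in\Ac(\sE)$.

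The crux is the following \emph{extension lemma}: for every $F^\bu\in\Ac(\CA)$, every morphism $P^\bu\rarrow F^\bu$ with $P^\bu$ in $\VF\cap\CA$ is null-homotopic if and only if the same holds for every $P^\bu$ in $\VF$. One direction is trivial. For the other, given $P^\bu$ in $\VF$, I would invoke the construction from the proof of Corollary~\ref{coderived-of-A-well-behaved} for $(\VF,\CA)$ to obtain a short exact sequence $0\rarrow P^\bu\rarrow B^\bu\rarrow A^\bu\rarrow0$ with $B^\bu$ a complex in $\VF\cap\CA$ and $A^\bu$ a Becker-coacyclic complex in $\VF$; since $\VF$ has finite homological dimension, Lemma~\ref{exact-category-of-finite-homol-dim}(a) upgrades this to $A^\bu\in\Ac(\VF)$. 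The vanishing $\Ext^1_\sK(A^n,F^m)=0$ (with $A^n\in\VF$, $F^m\in\CA$) makes the induced sequence $0\rarrow\Hom_\sK(A^\bu,F^\bu)\rarrow\Hom_\sK(B^\bu,F^\bu)\rarrow\Hom_\sK(P^\bu,F^\bu)\rarrow0$ termwise exact, and Lemma~\ref{acycl-acycl-orthogonality-lemma} for $(\VF,\CA)$ gives acyclicity of the leftmost term together with all its shifts; the long exact cohomology sequence then transfers null-homotopy from $B^\bu$ to $P^\bu$.

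The hardest step is extracting $\sK$-acyclicity of $F^\bu$ from Becker-contraacyclicity in $\sE$. For $F^\bu\in\Ac^\bctr(\sE)$, testing against one-term complexes yields acyclicity of $\Hom_\sK(P,F^\bu)$ for every $P\in\VF\cap\CA$. Since $\VF$ has enough injectives, namely $\VF\cap\CA$, and finite homological dimension, every $A\in\VF$ admits a finite right resolution $0\rarrow A\rarrow I^0\rarrow\dotsb\rarrow I^d\rarrow0$ in $\VF$ with $I^j\in\VF\cap\CA$ and all intermediate cocycles still in $\VF$. Splicing and applying $\Hom_\sK(-,F^\bu)$ termwise remains exact (as $\Ext^1_\sK(K,F^m)=0$ for $K\in\VF$ and $F^m\in\CA$), and a downward induction over the splice yields acyclicity of $\Hom_\sK(A,F^\bu)$ for every $A\in\VF$. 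Since $(\VF,\CA)$ is complete, $\VF$ is generating in $\sK$, so Lemma~\ref{Hom-criterion-of-acyclicity-in-cotorsion-pair}(a) concludes that $F^\bu\in\Ac(\CA)$, whence $F^\bu$ is acyclic in $\sK$ with cocycles in $\CA$.

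Assembling, both (1) and (2) reduce to the same assertion: for every $F^\bu$ in $\sE$ acyclic in $\sK$, one has $F^\bu\in\Ac(\sE)$ if and only if $F^\bu\in\Ac^\bctr(\sE)$. Together with the trivial $\Ac(\sE)\subset\Ac(\sK)$ and with $\Ac^\bctr(\sE)\subset\Ac(\sK)$ from the previous paragraph, this restricted biconditional is equivalent to the unrestricted equality of classes in (2). The main obstacle is therefore the extraction of $\sK$-acyclicity from Becker-contraacyclicity in $\sE$, which relies on the interplay of finite injective dimension in $\VF$ with Lemma~\ref{Hom-criterion-of-acyclicity-in-cotorsion-pair}(a).
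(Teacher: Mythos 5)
Your proof is correct, and its skeleton matches the paper's: identify the projective objects of $\sE=\Flat\cap\CA$ with $\VF\cap\CA$, use the bounded $\CA$-coresolution dimension (Lemma~\ref{finite-coresol-dim-periodicity}) to equate ``acyclic in $\sE$'' with ``acyclic in $\sK$ with cocycles in $\Flat$'', show that Becker-contraacyclicity in $\sE$ forces $\sK$-acyclicity, and compare testing against complexes in $\VF$ versus complexes in $\VF\cap\CA$. Where you diverge is in how the last two points are established. The paper gets them in one stroke from the degreewise cotorsion pair $(\Com(\VF),\>\Ac^\bctr(\CA))$ of Proposition~\ref{all-contraacyclic-cotorsion-pair-prop} together with Lemma~\ref{Ext1-as-homotopy-Hom-lemma}: a complex in $\CA$ is Becker-contraacyclic in $\CA$ if and only if it is right orthogonal (up to homotopy) to \emph{all} complexes in $\VF$, with no acyclicity hypothesis on the target, and then Corollary~\ref{contraacyclic-in-B-are-acyclic-in-B} plus Lemma~\ref{proj-resolving-inj-coresolving}(a) finish the job. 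You instead prove a restricted ``extension lemma'' (only for targets in $\Ac(\CA)$) by resolving an arbitrary complex in $\VF$ by a complex in $\VF\cap\CA$ with cokernel in $\Ac(\VF)$ -- using finiteness of the homological dimension of $\VF$ via Lemma~\ref{exact-category-of-finite-homol-dim}(a) -- and Lemma~\ref{acycl-acycl-orthogonality-lemma}; and you re-derive $\Ac^\bctr(\sE)\subset\Ac(\sK)$ through finite $\VF\cap\CA$-coresolutions inside $\VF$ and Lemma~\ref{Hom-criterion-of-acyclicity-in-cotorsion-pair}(a), essentially reproving Corollary~\ref{contraacyclic-in-B-are-acyclic-in-B} in this special case. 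Your route is more hands-on and makes explicit where the very-flat-type hypothesis enters, at the cost of being longer and yielding only the restricted form of the orthogonality comparison (which, as you correctly observe, suffices once contraacyclic complexes are known to be $\sK$-acyclic); the paper's route is shorter and gives the unrestricted characterization of $\Ac^\bctr(\CA)$ as $\Com(\VF)^{\perp_1}$, which it reuses elsewhere.
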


\begin{proof}
 There are three observations underlying the assertion of the lemma:
\begin{enumerate}
\renewcommand{\theenumi}{\Alph{enumi}}
\item A complex $F^\bu$ in $\Flat\cap\CA$ is acyclic in
$\Flat\cap\CA$ if and only if $F^\bu$ is acyclic in $\sK$ with
the objects of cocycles belonging to~$\Flat$.
 The point is that any complex in $\CA$ that is acyclic in $\sK$ is
also acyclic in $\CA$ (i.~e., has the objects of cocycles belonging
to~$\CA$), by Lemma~\ref{finite-coresol-dim-periodicity}.
\item A complex $F^\bu$ in $\Flat\cap\CA$ is Becker-contraacyclic
in $\Flat\cap\CA$ if and only if, for every complex $P^\bu$ in $\VF$,
every morphism of complexes $P^\bu\rarrow F^\bu$ is homotopic to zero.
 Indeed, by Proposition~\ref{all-contraacyclic-cotorsion-pair-prop} and
Lemma~\ref{Ext1-as-homotopy-Hom-lemma}, a complex $K^\bu$ in $\CA$ is
Becker-contraacyclic in $\CA$ if and only if, for every complex $P^\bu$
in $\VF$, every morphism of complexes $P^\bu\rarrow K^\bu$ is homotopic
to zero.
 It remains to point out that $\Flat\cap\CA$ is a resolving full
subcategory closed under direct summands in $\CA$, hence the classes of
projective objects in $\CA$ and $\Flat\cap\CA$ coincide by
Lemma~\ref{proj-resolving-inj-coresolving}(a), and a complex in
$\Flat\cap\CA$ is Becker-contraacyclic in $\Flat\cap\CA$ if and only
if it is Becker-contraacyclic in~$\CA$.
\item Every Becker-contraacyclic complex in $\Flat\cap\CA$ is
acyclic in~$\sK$.
 Indeed, every Becker-contraacyclic complex in $\CA$ is acyclic in
$\CA$ by Corollary~\ref{contraacyclic-in-B-are-acyclic-in-B}.
\end{enumerate}

 Having collected the observations above, let us spell out the proof
of the promised equivalence (1)~$\Longleftrightarrow$~(2).

 (1)~$\Longrightarrow$~(2) Let $F^\bu$ be an acyclic complex in
the exact category $\Flat\cap\CA$.
 Then, in particular, $F^\bu$ is an acyclic complex in $\sK$ with
the objects of cocycles belonging to $\Flat$.
 By~(1), it follows that, for every complex $P^\bu$ in $\VF$,
every morphism of complexes $P^\bu\rarrow F^\bu$ is homotopic to zero.
 Using~(B), we see that the complex $F^\bu$ is Becker-contraacyclic
in $\Flat\cap\CA$.

 Conversely, let $F^\bu$ be a Becker-contraacyclic complex in
the exact category $\Flat\cap\CA$.
 By~(B), it follows that, for any complex $P^\bu$ in $\VF$, every
morphism of complexes $P^\bu\rarrow F^\bu$ is homotopic to zero.
 By~(C), the complex $F^\bu$ is acyclic in~$\sK$.
 By~(1), it follows that the objects of cocycles of $F^\bu$ belong
to $\Flat$.
 Using~(A), we can conclude that the complex $F^\bu$ is acyclic
in $\Flat\cap\CA$.

 (2)~$\Longrightarrow$~(1) Let $F^\bu$ be a complex in $\Flat\cap\CA$
such that $F^\bu$ is acyclic in $\sK$ with the objects of cocycles
belonging to $\Flat$.
 By~(A), it follows that the complex $F^\bu$ is acyclic in
$\Flat\cap\CA$.
 By~(2), the complex $F^\bu$ is Becker-contraacyclic in $\Flat\cap\CA$.
 Using~(B), we see that, for every complex $P^\bu$ in $\VF$, every
morphism of complexes $P^\bu\rarrow F^\bu$ is homotopic to zero.

 Conversely, let $F^\bu$ be a complex in $\Flat\cap\CA$ such that
$F^\bu$ is acyclic in $\sK$ and, for every complex $P^\bu$ in $\VF$,
every morphism of complexes $P^\bu\rarrow F^\bu$ is homotopic to zero.
 By~(B), it follows that the complex $F^\bu$ is Becker-contraacyclic
in $\Flat\cap\CA$.
 By~(2), the complex $F^\bu$ is acyclic in $\Flat\cap\CA$.
 In particular, it follows that the objects of cocycles of $F^\bu$
belong to $\Flat$.
\end{proof}

\begin{lem} \label{ac=ac-bco-equivalent-conditions}
 Let\/ $\sK$ be a Grothendieck category, $(\VF,\CA)$ be a cotorsion pair
of the very flat type in\/ $\sK$, and $(\Flat,\Cot)$ be a hereditary
complete cotorsion pair generated by a set of objects in\/ $\sK$
such that $(\VF,\CA)\le(\Flat,\Cot)$.
 Then the following six conditions are equivalent:
\begin{enumerate}
\item any complex in\/ $\Cot$ that is acyclic in\/ $\sK$ is also
acyclic in\/~$\Cot$;
\item any complex in\/ $\Cot$ that is acyclic in\/ $\CA$ is also
acyclic in\/~$\Cot$;
\item every acyclic complex in the exact category\/ $\Flat$ is
Becker-coacyclic in\/ $\Flat$, i.~e.,
$\Ac(\Flat)\subset\Ac^\bco(\Flat)$;
\item the classes of acyclic and Becker-coacyclic complexes in
the exact category\/ $\Flat$ coincide, i.~e.,
$\Ac^\bco(\Flat)=\Ac(\Flat)$;
\item every acyclic complex in the exact category\/ $\Flat\cap\CA$ is
Becker-coacyclic in\/ $\Flat\cap\CA$, i.~e.,
$\Ac(\Flat\cap\CA)\subset\Ac^\bco(\Flat\cap\CA)$;
\item the classes of acyclic and Becker-coacyclic complexes in
the exact category\/ $\Flat\cap\CA$ coincide, i.~e.,
$\Ac^\bco(\Flat\cap\CA)=\Ac(\Flat\cap\CA)$.
\end{enumerate}
\end{lem}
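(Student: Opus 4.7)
The plan is to chain several equivalences, first using Theorem~\ref{cotorsion-periodity-type-equivalent-conditions} applied to $(\sA,\sB)=(\Flat,\Cot)$ and then bridging to $\Flat\cap\CA$ by restricting the cotorsion pair $(\VF,\CA)$ to~$\Flat$. Under the dictionary making condition~(1) of the current lemma correspond to condition~(4) of that theorem, condition~(3) to condition~(1), and condition~(4) to condition~(2), the theorem immediately yields $(1)\Leftrightarrow(3)\Leftrightarrow(4)$.

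For $(1)\Leftrightarrow(2)$ I~use that $(\VF,\CA)$ is of the very flat type: Corollary~\ref{very-flat-type-cotorsion-pair-cor} gives a uniform bound on the $\CA$\+coresolution dimensions of the objects of~$\sK$, so Lemma~\ref{finite-coresol-dim-periodicity} ensures that any complex in~$\CA$ acyclic in~$\sK$ is already acyclic in~$\CA$. Since $\Cot\subset\CA$, this makes acyclicity in~$\sK$ and in~$\CA$ interchangeable for complexes of cotorsion objects, which is precisely $(1)\Leftrightarrow(2)$.

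The central step is $(4)\Leftrightarrow(6)$. The plan here is to restrict the cotorsion pair $(\VF,\CA)$ along the inclusion $\Flat\rarrow\sK$: Lemma~\ref{restricting-cotorsion-pair-lemma}(a), applicable since $\Flat$ is the left-hand class of the hereditary cotorsion pair $(\Flat,\Cot)$ (hence closed under extensions and kernels of admissible epimorphisms) and contains $\VF$, yields a hereditary complete cotorsion pair $(\VF,\>\Flat\cap\CA)$ in~$\Flat$. Two observations about the coresolving subcategory $\Flat\cap\CA\subset\Flat$ then do the work. First, Lemma~\ref{proj-resolving-inj-coresolving}(b) identifies the injective objects of $\Flat\cap\CA$ with those of~$\Flat$, yielding the equality $\Ac^\bco(\Flat\cap\CA)=\Com(\Flat\cap\CA)\cap\Ac^\bco(\Flat)$. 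Second, $\VF$ has finite homological dimension in~$\Flat$ by Lemma~\ref{co-resolving-Ext-agrees} (matching $\Ext$ in $\Flat$ with $\Ext$ in~$\sK$), so Lemma~\ref{finite-coresol-dim-periodicity} applied inside $\Flat$ gives $\Ac(\Flat\cap\CA)=\Com(\Flat\cap\CA)\cap\Ac(\Flat)$. Combining the two equalities produces $(4)\Leftrightarrow(6)$.

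Finally, $(5)\Leftrightarrow(6)$ is one-sided: $(6)\Rightarrow(5)$ is trivial, while $(5)\Rightarrow(6)$ reduces to the inclusion $\Ac^\bco(\Flat\cap\CA)\subset\Ac(\Flat\cap\CA)$, which I~obtain by feeding Corollary~\ref{coacyclic-in-A-are-acyclic-in-A} applied to $(\Flat,\Cot)$ in~$\sK$ (giving $\Ac^\bco(\Flat)\subset\Ac(\Flat)$) through the two identifications from the previous paragraph. The main obstacle I~anticipate is the bookkeeping around the restriction of $(\VF,\CA)$ to~$\Flat$: specifically, checking that Lemma~\ref{finite-coresol-dim-periodicity} can be invoked inside the exact category $\Flat$ (rather than the ambient Grothendieck category~$\sK$), and that the uniform coresolution dimension bound survives the restriction to give a corresponding bound for the $\Flat\cap\CA$\+coresolution dimensions of objects of~$\Flat$.
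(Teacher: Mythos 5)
Your overall architecture matches the paper's: $(1)\Leftrightarrow(2)$ via Lemma~\ref{finite-coresol-dim-periodicity}, \,$(1)\Leftrightarrow(3)\Leftrightarrow(4)$ via Theorem~\ref{cotorsion-periodity-type-equivalent-conditions} applied to $(\Flat,\Cot)$, and the bridge to $\Flat\cap\CA$ via the restricted cotorsion pair $(\VF,\>\Flat\cap\CA)$ in~$\Flat$. The two identities you isolate,
$$
 \Ac^\bco(\Flat\cap\CA)=\Com(\Flat\cap\CA)\cap\Ac^\bco(\Flat)
 \quad\text{and}\quad
 \Ac(\Flat\cap\CA)=\Com(\Flat\cap\CA)\cap\Ac(\Flat),
$$
are both correct and correctly justified; the bookkeeping you worry about at the end does go through (the restricted pair is hereditary complete by Lemma~\ref{restricting-cotorsion-pair-lemma}(a), the homological dimension of $\VF$ is an intrinsic finite quantity, so Lemma~\ref{very-flat-type-cotorsion-pair-lemma} bounds the $\Flat\cap\CA$\+coresolution dimensions inside~$\Flat$). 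Your treatments of $(1)\Leftrightarrow(2)$ and of $(5)\Leftrightarrow(6)$ are also fine.

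There is, however, a genuine gap at $(6)\Rightarrow(4)$. Intersecting the two identities only lets you pass \emph{from} a statement about $\Flat$ \emph{to} one about $\Flat\cap\CA$: from $\Ac^\bco(\Flat)=\Ac(\Flat)$ you do get $\Ac^\bco(\Flat\cap\CA)=\Ac(\Flat\cap\CA)$ by intersecting with $\Com(\Flat\cap\CA)$, but the converse does not follow --- knowing that two classes of complexes in $\Flat$ agree after intersection with $\Com(\Flat\cap\CA)$ says nothing about complexes whose terms do not lie in~$\CA$. To close the loop one must reduce an arbitrary complex $G^\bu$ in $\Flat$ to a complex in $\Flat\cap\CA$: coresolve $G^\bu$ by $d$~contractible complexes with terms in $\Flat\cap\CA$, note that the resulting quotient complex $F^\bu$ has terms in $\Flat\cap\CA$ by the coresolution-dimension bound, and check that $G^\bu$ is acyclic (resp.\ Becker-coacyclic) in $\Flat$ if and only if $F^\bu$ is, using \cite[Corollary~3.6]{Bueh} and Lemma~\ref{totalizations-are-co-contra-acyclic}(a). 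This is precisely the content of Remark~\ref{flat-type-equivalence-for-cotorsion-pair-remark}(a), which the paper invokes for this step and which is established in the second half of the proof of Proposition~\ref{flat-type-equivalence-for-cotorsion-pair-prop}. As written, your argument proves $(1)\Leftrightarrow(2)\Leftrightarrow(3)\Leftrightarrow(4)$, \,$(5)\Leftrightarrow(6)$, and $(4)\Rightarrow(6)$, but not the remaining implication needed to make all six conditions equivalent.
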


\begin{proof}
 (1)~$\Longleftrightarrow$~(2)
 Every complex in $\CA$ that is acyclic in $\sK$ is also acyclic
in $\CA$ by Lemma~\ref{finite-coresol-dim-periodicity}.

 (1)~$\Longleftrightarrow$~(3)~$\Longleftrightarrow$~(4)
 This is a part of
Theorem~\ref{cotorsion-periodity-type-equivalent-conditions}.

 (4)~$\Longleftrightarrow$~(6)
 By Lemma~\ref{restricting-cotorsion-pair-lemma}(a), the cotorsion pair
$(\VF,\CA)$ restricts to the full subcategory $\sE=\Flat\subset\sK$,
so we have a hereditary complete cotorsion pair $(\VF,\>\Flat\cap\CA)$
in the exact category~$\Flat$.
 Now it remains to refer to
Remark~\ref{flat-type-equivalence-for-cotorsion-pair-remark}(a).

 (3)~$\Longleftrightarrow$~(5)
 This is similar to (4)~$\Leftrightarrow$~(6).
 The point is that, in the context of
Remark~\ref{flat-type-equivalence-for-cotorsion-pair-remark}, if
$\sA$ has finite homological dimension, then $\Ac(\sB)\subset
\Ac^\bco(\sB)$ if and only if $\Ac(\sE)\subset\Ac^\bco(\sE)$.
 The argument from the proof of
Proposition~\ref{flat-type-equivalence-for-cotorsion-pair-prop} applies.
\end{proof}

\begin{lem} \label{majorating-very-flat-type-pair}
 Let\/ $\sK$ be a Grothendieck category, and let $(\VF_1,\CA_1)$ and
$(\VF_2,\CA_2)$ be two cotorsion pairs of the very flat type in\/~$\sK$.
 Let $(\Flat,\Cot)$ be a cotorsion pair in\/ $\sK$ such that
$(\VF_1,\CA_1)\le(\Flat,\Cot)$ and $(\VF_2,\CA_2)\le(\Flat,\Cot)$.
 Then there exists a cotorsion pair $(\VF_3,\CA_3)$ of the very flat
type in\/ $\sK$ such that $(\VF_1,\CA_1)\le(\VF_3,\CA_3)$ and
$(\VF_2,\CA_2)\le(\VF_3,\CA_3)\le(\Flat,\Cot)$.
\end{lem}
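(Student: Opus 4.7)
The plan is to take $(\VF_3,\CA_3)$ to be the cotorsion pair generated in $\sK$ by the set $\sS_3 = \sS_1 \cup \sS_2$, where $\sS_i$ is a chosen set of objects generating $(\VF_i,\CA_i)$. After possibly adjoining a generator of $\sK$ lying in $\VF_1$ to $\sS_3$, the class $\Fil(\sS_3)$ is generating in $\sK$, so Theorem~\ref{loc-pres-eklof-trlifaj} guarantees that $(\VF_3,\CA_3)$ is a complete cotorsion pair generated by a set, with $\VF_3 = \Fil(\sS_3)^\oplus$. The identification $\CA_3 = \sS_3^{\perp_1} = \sS_1^{\perp_1} \cap \sS_2^{\perp_1} = \CA_1 \cap \CA_2$ is immediate, and taking left ${}^{\perp_1}$ yields $\VF_1, \VF_2 \subset \VF_3$. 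The inclusion $\VF_3 \subset \Flat$ follows because the hypothesis $(\VF_i,\CA_i) \le (\Flat,\Cot)$ says $\Cot \subset \CA_i$, hence $\Cot \subset \CA_3$ and $\VF_3 = {}^{\perp_1}\CA_3 \subset {}^{\perp_1}\Cot = \Flat$. The pair $(\VF_3,\CA_3)$ is hereditary because $\CA_3 = \CA_1 \cap \CA_2$ is an intersection of two classes closed under cokernels of admissible monomorphisms in $\sK$.

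The main step is to check that the projective dimensions of objects of $\VF_3$ in $\sK$ are uniformly bounded. By Corollary~\ref{very-flat-type-cotorsion-pair-cor} together with Lemma~\ref{finite-projective-dimensions-are-bounded}, there are finite integers $d_1, d_2$ bounding the projective dimensions of objects of $\VF_1$ and $\VF_2$; set $d = \max(d_1, d_2)$. Every object of $\sS_3$ then has projective dimension at most $d$ in $\sK$. Since $\VF_3 = \Fil(\sS_3)^\oplus$ and direct summands preserve projective dimension, it suffices to show that the class $\sD_d \subset \sK$ of objects of projective dimension at most $d$ is closed under transfinite extensions. For this, fix for every $X \in \sK$ an injective coresolution $0 \to X \to J^0 \to J^1 \to \dotsb$ in the Grothendieck category $\sK$ and let $Z^d_X = \ker(J^d \to J^{d+1})$ denote the $d$-th cosyzygy. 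Dimension shifting gives a natural isomorphism $\Ext^{d+1}_\sK(Y,X) \simeq \Ext^1_\sK(Y, Z^d_X)$ for every $Y \in \sK$, so $\sD_d = {}^{\perp_1}\{Z^d_X : X \in \sK\}$; by the Eklof Lemma~\ref{eklof-lemma} this class is closed under transfinite extensions, which completes the verification of the very flat type.

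The one potentially delicate point is the propagation of the projective-dimension bound through the transfinite filtrations producing $\Fil(\sS_3)$, but the dimension-shifting trick converts the issue about $\Ext^{d+1}$ vanishing into an $\Ext^1$-vanishing condition against a fixed class of syzygies, at which point the Eklof Lemma applies directly. All the other verifications amount to formal manipulations with generating sets and the ${}^{\perp_1}$-operation.
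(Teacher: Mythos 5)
Your construction is the same as the paper's: take $\sS_3=\sS_1\cup\sS_2$, let $(\VF_3,\CA_3)$ be the cotorsion pair it generates, observe $\CA_3=\CA_1\cap\CA_2$, deduce the nesting relations and $\VF_3\subset\Flat$ exactly as you do, get completeness from Theorem~\ref{loc-pres-eklof-trlifaj} (the paper uses part~(a) directly, with $\VF_3\supset\VF_1$ generating and $\CA_3$ cogenerating since it contains the injectives, so it does not need the generator-adjoining trick or the identification $\VF_3=\Fil(\sS_3)^\oplus$ at all), and get heredity from $\CA_1\cap\CA_2$ being closed under cokernels of monomorphisms. Where you diverge is the verification that $(\VF_3,\CA_3)$ is of the very flat type. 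You bound the projective dimensions of the objects of $\VF_3$ directly: every object of $\sS_3$ has projective dimension at most $d=\max(d_1,d_2)$, and the class of objects of projective dimension $\le d$ is of the form ${}^{\perp_1}\{Z^d_X\}$ by dimension shifting along injective coresolutions, hence is closed under transfinite extensions by the Eklof Lemma~\ref{eklof-lemma} and under direct summands, so it contains $\Fil(\sS_3)^\oplus=\VF_3$. This is correct (note that the Eklof lemma applies to an arbitrary class, so it is harmless that $\{Z^d_X: X\in\sK\}$ is a proper class, and adjoining a generator from $\VF_1$ neither changes the cotorsion pair nor spoils the bound~$d$). The paper instead verifies the equivalent condition~(3) of Corollary~\ref{very-flat-type-cotorsion-pair-cor}: given $K\in\sK$, truncate an injective coresolution after $d$~steps; since the coresolution dimensions with respect to $\CA_1$ and $\CA_2$ are both at most~$d$ and injectives lie in both classes, the $d$\+th cosyzygy lies in $\CA_1\cap\CA_2=\CA_3$, so every object has $\CA_3$\+coresolution dimension at most~$d$. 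The two verifications buy the same conclusion; the paper's is a bit lighter, as it works entirely on the right-hand side and avoids both the deconstructibility-style description of $\VF_3$ and the need to arrange $\Fil(\sS_3)$ to be generating, whereas yours has the mild advantage of exhibiting the projective-dimension bound for $\VF_3$ explicitly rather than through the equivalence of Lemma~\ref{very-flat-type-cotorsion-pair-lemma}.
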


\begin{proof}
 Let $\sS_1\subset\sK$ be a set of objects generating the cotorsion pair
$(\VF_1,\CA_1)$ and $\sS_2\subset\sK$ be a set of objects generating
the cotorsion pair $(\VF_2,\CA_2)$.
 Put $\sS_3=\sS_1\cup\sS_2$ and let $(\VF_3,\CA_3)$ be the cotorsion
pair generated by the set $\sS_3$ in\/~$\sK$.
 Then we have $\VF_1\subset\VF_3$ and $\VF_2\subset\VF_3\subset\Flat$;
so $(\VF_1,\CA_1)\le(\VF_3,\CA_3)$ and $(\VF_2,\CA_2)\le(\VF_3,\CA_3)
\le(\Flat,\Cot)$.
 Moreover, by the definition, $\CA_3=\CA_1\cap\CA_2$.

 In particular, it follows that the class $\VF_3$ is generating in~$\sK$
(since the classes $\VF_1$ and/or $\VF_2$ are).
 The class $\CA_3$ contains all the injective objects of $\sK$, so it
is cogenerating in~$\sK$.
 By Theorem~\ref{loc-pres-eklof-trlifaj}(a)
or~\ref{efficient-eklof-trlifaj}(a), we can conclude that the cotorsion
pair $(\VF_3,\CA_3)$ is complete.
 The class $\CA_3=\CA_1\cap\CA_2$ is closed under cokernels of
monomorphisms in $\sK$, since both the classes $\CA_1$ and $\CA_2$ are;
hence the cotorsion pair $(\VF_3,\CA_3)$ is hereditary.

 Finally, let $d_1$ and~$d_2$ be two integers such that every object
of $\sK$ has coresolution dimension at most~$d_1$ with respect to
$\CA_1$ and at most~$d_2$ with respect to~$\CA_2$.
 Let $d$~be an integer such that $d_1\le d$ and $d_2\le d$.
 Given an object $K\in\sK$, pick an exact sequence $0\rarrow K\rarrow
J^0\rarrow J^1\rarrow\dotsb\rarrow J^{d-1}\rarrow C\rarrow0$ in $\sK$
with injective objects $J^0$,~\dots,~$J^{d-1}$.
 Then we have $C\in\CA_1$ and $C\in\CA_2$.
 Thus $C\in\CA_3$, and the coresolution dimension of $K$ with respect
to $\CA_3$ does not exceed~$d$.
\end{proof}

\begin{prop} \label{independent-of-very-flat-prop}
 Let\/ $\sK$ be a Grothendieck category, and let $(\VF_1,\CA_1)$ and
$(\VF_2,\CA_2)$ be two cotorsion pairs of the very flat type in\/~$\sK$.
 Let $(\Flat,\Cot)$ be a hereditary cotorsion pair in\/ $\sK$ such that
$(\VF_1,\CA_1)\le(\Flat,\Cot)$ and $(\VF_2,\CA_2)\le(\Flat,\Cot)$.
 Then the equivalent conditions of
Lemma~\ref{ac=ac-bctr-equivalent-conditions} are satisfied for
$(\VF_1,\CA_1)$ and\/ $\Flat$ if and only if they are satisfied for
$(\VF_2,\CA_2)$ and\/ $\Flat$.  \emergencystretch=1em
\end{prop}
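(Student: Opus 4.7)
The plan is to reduce via Lemma~\ref{majorating-very-flat-type-pair} to the nested case and then apply Remark~\ref{flat-type-equivalence-for-cotorsion-pair-remark}(c) inside the exact category $\Flat\cap\CA$. Choosing a cotorsion pair $(\VF_3,\CA_3)$ of the very flat type majorating both $(\VF_1,\CA_1)$ and $(\VF_2,\CA_2)$ and satisfying $(\VF_3,\CA_3)\le(\Flat,\Cot)$, it will suffice to prove the equivalence for each of the two nested pairs $(\VF_i,\CA_i)\le(\VF_3,\CA_3)$. So assume from now on that $(\VF,\CA)\le(\VF',\CA')\le(\Flat,\Cot)$, both of the very flat type.

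The central construction is a hereditary complete cotorsion pair $(\VF'\cap\CA,\Flat\cap\CA')$ inside the exact category $\Flat\cap\CA$, obtained from $(\VF',\CA')$ by two successive restrictions. First, by Lemma~\ref{restricting-cotorsion-pair-lemma}(b) applied to the coresolving subcategory $\CA\subset\sK$, one has the hereditary complete cotorsion pair $(\VF'\cap\CA,\CA')$ in $\CA$. The subcategory $\Flat\cap\CA\subset\CA$ is then resolving (closed under extensions and under kernels of admissible epimorphisms, since $\Flat$ is resolving in $\sK$; and generating, since the projectives $\VF\cap\CA$ of $\CA$ already lie in $\Flat\cap\CA$) and contains $\VF'\cap\CA$ (because $\VF'\subset\Flat$); so Lemma~\ref{restricting-cotorsion-pair-lemma}(a) further restricts to the hereditary complete cotorsion pair $(\VF'\cap\CA,\Flat\cap\CA')$ in $\Flat\cap\CA$.

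The main step is an application of Remark~\ref{flat-type-equivalence-for-cotorsion-pair-remark}(c) to $\sE=\Flat\cap\CA$ with this cotorsion pair. The hypotheses to verify are: $\Flat\cap\CA$ is idempotent-complete (as a full subcategory of the Grothendieck category $\sK$ closed under direct summands); it has enough projective objects $\VF\cap\CA$, which follows from Lemma~\ref{inj-proj-objects-in-intersected-classes}(b) applied to $(\VF,\CA)$ with $\sE=\Flat$; and the left-hand class $\VF'\cap\CA$ has finite homological dimension in $\Flat\cap\CA$. For the last hypothesis, two uses of Lemma~\ref{co-resolving-Ext-agrees} (first for $\CA$ coresolving in $\sK$, then for $\Flat\cap\CA$ resolving in $\CA$) identify $\Ext$ groups computed in $\Flat\cap\CA$ with $\Ext$ groups computed in $\sK$, and the very flat-type property gives a uniform bound on projective dimensions of objects of $\VF'$ in $\sK$. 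Remark~(c) then yields
\[
 \Ac^\bctr(\Flat\cap\CA')=\Ac(\Flat\cap\CA')
 \,\Longleftrightarrow\,
 \Ac^\bctr(\Flat\cap\CA)=\Ac(\Flat\cap\CA),
\]
which is precisely condition~(2) of Lemma~\ref{ac=ac-bctr-equivalent-conditions} for $(\VF',\CA')$ versus $(\VF,\CA)$.

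The main obstacle, and the reason one cannot simply apply Remark~(c) to $\sE=\Flat$ with the cotorsion pair $(\VF_i,\Flat\cap\CA_i)$, is that $\Flat$ need not have enough projective objects in a general Grothendieck category~$\sK$. Passing to the smaller exact category $\Flat\cap\CA$, where the auxiliary cotorsion pair $(\VF,\CA)$ supplies the projectives $\VF\cap\CA$ via Lemma~\ref{inj-proj-objects-in-intersected-classes}(b), is what makes the finite-homological-dimension reduction go through.
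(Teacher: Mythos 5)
Your proposal is correct and follows essentially the same route as the paper's proof: reduce to a nested pair via Lemma~\ref{majorating-very-flat-type-pair}, restrict the larger very flat-type cotorsion pair twice to obtain the hereditary complete cotorsion pair $(\VF'\cap\CA,\>\Flat\cap\CA')$ in the exact category $\sE=\Flat\cap\CA$ with enough projective objects $\VF\cap\CA$, and conclude by Remark~\ref{flat-type-equivalence-for-cotorsion-pair-remark}(c). The only (harmless) divergence is how the finite homological dimension of the left-hand class $\VF'\cap\CA$ is verified: the paper uses that $\VF'\cap\CA$ is coresolving in $\VF'$ together with Lemma~\ref{co-resolving-Ext-agrees}, whereas you identify $\Ext$ groups computed in $\Flat\cap\CA$ with those in $\sK$ and invoke the uniform bound on projective dimensions of objects of $\VF'$ in $\sK$, which yields the required hypothesis via Lemma~\ref{very-flat-type-cotorsion-pair-lemma}\,(1)\,$\Rightarrow$\,(2).
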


\begin{proof}
 In view of Lemma~\ref{majorating-very-flat-type-pair}, without loss of
generality we can assume that $(\VF_1,\CA_1)\le(\VF_2,\CA_2)$.
 So we have $\VF_1\subset\VF_2\subset\Flat$ and $\CA_1\supset\CA_2$.
 By Lemma~\ref{restricting-cotorsion-pair-lemma}(b), the cotorsion pair
$(\VF_2,\CA_2)$ restricts to a hereditary complete cotorsion pair
$(\VF_2\cap\CA_1,\>\CA_2)$ in the exact category~$\CA_1$.
 Furthermore, the full subcategory $\Flat\cap\CA_1$ is closed under
extensions and kernels of admissible epimorphisms in~$\CA_1$.
 By Lemma~\ref{restricting-cotorsion-pair-lemma}(a), the cotorsion pair
$(\VF_2\cap\CA_1,\>\CA_2)$ in $\CA_1$ restricts to a hereditary
complete cotorsion pair $(\VF_2\cap\CA_1,\>\Flat\cap\CA_2)$ in
the exact category $\sE=\Flat\cap\CA_1$.
 Put $\sA=\VF_2\cap\CA_1$ and $\sB=\Flat\cap\CA_2$.

 The full subcategory $\VF_2\cap\CA_1$ is coresolving in~$\VF_2$.
 By Lemma~\ref{co-resolving-Ext-agrees}, it follows that the $\Ext$
groups computed in $\VF_2\cap\CA_1$ agree with those in~$\VF_2$.
 Since the exact category $\VF_2$ has finite homological dimension,
so does the exact category $\sA=\VF_2\cap\CA_1$.

 By Lemma~\ref{restricting-cotorsion-pair-lemma}(a), the cotorsion
pair $(\VF_1,\CA_1)$ restricts to a hereditary complete cotorsion pair
$(\VF_1,\>\Flat\cap\CA_1)$ in the exact category~$\Flat$.
 By Lemma~\ref{inj-proj-objects-in-left-right-classes}(b), it follows
that there are enough projective objects in the exact category
$\Flat\cap\CA_1$ (cf.\ the first paragraph of the proof of the next
Corollary~\ref{flat-type-definition-cor}).
 Thus Remark~\ref{flat-type-equivalence-for-cotorsion-pair-remark}(c)
is applicable, and we can conclude that $\Ac^\bctr(\sB)=\Ac(\sB)$
if and only if $\Ac^\bctr(\sE)=\Ac(\sE)$.
 This means that $\Ac^\bctr(\Flat\cap\CA_1)=\Ac(\Flat\cap\CA_1)$ if and
only if $\Ac^\bctr(\Flat\cap\CA_2)=\Ac(\Flat\cap\CA_2)$, as desired.
\end{proof}

\begin{cor} \label{flat-type-definition-cor}
 Let\/ $\sK$ be a Grothendieck category, $(\VF,\CA)$ be a cotorsion pair
of the very flat type in\/ $\sK$, and $(\Flat,\Cot)$ be a hereditary
complete cotorsion pair generated by a set of objects in\/ $\sK$
such that $(\VF,\CA)\le(\Flat,\Cot)$.
 Then the following three conditions are equivalent:
\begin{enumerate}
\item the exact category\/ $\Flat\cap\CA$ is of the flat type;
\item the exact category\/ $\Flat\cap\CA$ is weakly of the flat type;
\item both of the following two conditions hold:
\begin{enumerate}
\renewcommand{\theenumii}{\Roman{enumii}}
\item for any given complex $F^\bu$ in\/ $\Flat\cap\CA$ such that
$F^\bu$ is acyclic in\/ $\sK$, the objects of cocycles of $F^\bu$ belong
to\/ $\Flat$ if and only if, for every complex $P^\bu$ in\/ $\VF$,
every morphism of complexes $P^\bu\rarrow F^\bu$ is homotopic to zero;
\item any complex in\/ $\Cot$ that is acyclic in\/ $\sK$ is also
acyclic in\/~$\Cot$.
\end{enumerate}
\end{enumerate}
\end{cor}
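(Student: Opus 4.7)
The plan is to prove the chain $(1)\Rightarrow(2)\Rightarrow(3)\Rightarrow(1)$, using the preceding lemmas and corollaries as building blocks. The implication $(1)\Rightarrow(2)$ is tautological, so the real work is to show $(2)\Leftrightarrow(3)$ and $(2)\Rightarrow(1)$.

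First I would establish that condition~(i) of the definition of ``(weakly) of the flat type'' holds automatically for $\sF=\Flat\cap\CA$, regardless of (2) or (3). The idea is to realize $\Flat\cap\CA$ as the left class of one restricted cotorsion pair and the right class of another. Applying Lemma~\ref{restricting-cotorsion-pair-lemma}(a) to $(\VF,\CA)$ with $\sE=\Flat$ yields a hereditary complete cotorsion pair $(\VF,\>\Flat\cap\CA)$ in $\Flat$; by Lemma~\ref{inj-proj-objects-in-left-right-classes}(b) this gives enough projective objects in $\Flat\cap\CA$ whose class is $\VF\cap\CA$. Dually, applying Lemma~\ref{restricting-cotorsion-pair-lemma}(b) to $(\Flat,\Cot)$ with $\sE=\CA$ yields a hereditary complete cotorsion pair $(\Flat\cap\CA,\>\Cot)$ in $\CA$; by Lemma~\ref{inj-proj-objects-in-left-right-classes}(a) this gives enough injective objects in $\Flat\cap\CA$ whose class is $\Flat\cap\Cot$.

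Once (i) is in hand, the equivalence $(2)\Leftrightarrow(3)$ reduces to comparing the single clause $\Ac^\bco(\Flat\cap\CA)=\Ac(\Flat\cap\CA)=\Ac^\bctr(\Flat\cap\CA)$ with the conjunction of~(I) and~(II). I would split this clause into its two halves. The equality $\Ac^\bctr(\Flat\cap\CA)=\Ac(\Flat\cap\CA)$ is condition~(2) of Lemma~\ref{ac=ac-bctr-equivalent-conditions}, which is exactly~(I). The equality $\Ac^\bco(\Flat\cap\CA)=\Ac(\Flat\cap\CA)$ is condition~(6) of Lemma~\ref{ac=ac-bco-equivalent-conditions}, which by that lemma is equivalent to its condition~(1), namely~(II). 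Thus (ii) holds if and only if both (I) and (II) hold, giving $(2)\Leftrightarrow(3)$.

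For $(2)\Rightarrow(1)$ it remains to upgrade to condition~(iii) of ``of the flat type''. Here I apply Corollary~\ref{coderived-of-E-intersect-A-well-behaved} with $(\sA,\sB)=(\Flat,\Cot)$ and $\sE=\CA$ (checking the hypotheses: $\CA$ is closed under extensions and cokernels of admissible monomorphisms in $\sK$, and $\Cot\subset\CA$ since $(\VF,\CA)\le(\Flat,\Cot)$) to obtain the equivalence $\Hot((\Flat\cap\CA)^\inj)\simeq\sD^\bco(\Flat\cap\CA)$. Symmetrically, Corollary~\ref{contraderived-of-E-intersect-B-well-behaved} applied with $(\sA,\sB)=(\VF,\CA)$ and $\sE=\Flat$ (noting $\Flat$ is closed under extensions and kernels of admissible epimorphisms in $\sK$, and $\VF\subset\Flat$) yields $\Hot((\Flat\cap\CA)_\proj)\simeq\sD^\bctr(\Flat\cap\CA)$. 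Under~(ii) the three triangulated categories $\sD^\bco(\Flat\cap\CA)$, \,$\sD(\Flat\cap\CA)$, and $\sD^\bctr(\Flat\cap\CA)$ coincide, so condition~(iii) is verified and~(1) follows.

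The main obstacle is not any single computation but rather identifying the correct restricted cotorsion pairs so that the preceding machinery (Lemmas~\ref{ac=ac-bctr-equivalent-conditions}, \ref{ac=ac-bco-equivalent-conditions} and Corollaries~\ref{coderived-of-E-intersect-A-well-behaved}, \ref{contraderived-of-E-intersect-B-well-behaved}) applies cleanly; once the bookkeeping of left/right classes and inclusions between $\VF$, $\Flat$, $\CA$, $\Cot$ is set up correctly, each implication is a one-step invocation of an earlier result.
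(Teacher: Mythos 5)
Your proposal is correct and follows essentially the same route as the paper: the enough-injectives/projectives statement via the restricted cotorsion pairs $(\VF,\>\Flat\cap\CA)$ in $\Flat$ and $(\Flat\cap\CA,\>\Cot)$ in $\CA$ (which is exactly Lemma~\ref{inj-proj-objects-in-intersected-classes} unwound), the equivalence $(2)\Leftrightarrow(3)$ by combining Lemma~\ref{ac=ac-bctr-equivalent-conditions} with Lemma~\ref{ac=ac-bco-equivalent-conditions}\,(1)\,$\Leftrightarrow$\,(6), and the passage between (1) and (2) via Corollaries~\ref{coderived-of-E-intersect-A-well-behaved} and~\ref{contraderived-of-E-intersect-B-well-behaved} applied with the same choices of $(\sA,\sB)$ and $\sE$. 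The only (cosmetic) difference is that the paper observes that the two triangulated equivalences of condition~(iii) hold unconditionally, so $(1)\Leftrightarrow(2)$, whereas you package the same observation as the implication $(2)\Rightarrow(1)$.
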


\begin{proof}
 First of all, we need to point out that, under the assumptions of
the corollary, there are always enough injective and projective objects
in $\Flat\cap\CA$.
 Concerning the injectives, the assertion is provided by
Lemma~\ref{inj-proj-objects-in-intersected-classes}(a) applied
to the cotorsion pair $(\sA,\sB)=(\Flat,\Cot)$ and the full subcategory
$\sE=\CA$ in~$\sK$.
 Concerning the projectives, this was already mentioned in the last
paragraph of the proof of
Proposition~\ref{independent-of-very-flat-prop}.
 Alternatively, one can refer to
Lemma~\ref{inj-proj-objects-in-intersected-classes}(b) applied
to the cotorsion pair $(\sA,\sB)=(\VF,\CA)$ and the full subcategory
$\sE=\Flat$ in~$\sK$.

 (1)~$\Longleftrightarrow$~(2)
 Put $\sF=\Flat\cap\CA$.
 The point is that, under the assumptions of the corollary,
the functors $\Hot(\sF^\inj)\rarrow\sD^\bco(\sF)$ and
$\Hot(\sF_\proj)\rarrow\sD^\bctr(\sF)$ are always triangulated
equivalences.
 Indeed, the first assertion is provided by
Corollary~\ref{coderived-of-E-intersect-A-well-behaved}
for the cotorsion pair $(\sA,\sB)=(\Flat,\Cot)$ and the full
subcategory $\sE=\CA$ in~$\sK$.
 The second assertion is provided by
Corollary~\ref{contraderived-of-E-intersect-B-well-behaved}
for the cotorsion pair $(\sA,\sB)=(\VF,\CA)$ and the full
subcategory $\sE=\Flat$ in~$\sK$.

 (2)~$\Longleftrightarrow$~(3)
 Combine Lemma~\ref{ac=ac-bctr-equivalent-conditions} with
Lemma~\ref{ac=ac-bco-equivalent-conditions}\,%
(1)\,$\Leftrightarrow$\,(6).
\end{proof}

 Now we can present the main definition of this section.
 Let $\sK$ be a Grothendieck category and $(\Flat,\Cot)$ be a hereditary
complete cotorsion pair generated by a set of objects in~$\sK$.
 Then we say that the cotorsion pair $(\Flat,\Cot)$ in $\sK$ is
\emph{of the flat type} if there exists a cotorsion pair $(\VF,\CA)$
of the very flat type in $\sK$ such that $(\VF,\CA)\le(\Flat,\Cot)$ and
any one of the equivalent conditions of
Corollary~\ref{flat-type-definition-cor} is satisfied for
$(\VF,\CA)$ and $(\Flat,\Cot)$.
 In other words, the cotorsion pair $(\Flat,\Cot)$ is of the flat type
if and only if it satisfies any one of the equivalent conditions of
Lemma~\ref{ac=ac-bctr-equivalent-conditions} \emph{and also}
satisfies any one of the equivalent conditions of
Lemma~\ref{ac=ac-bco-equivalent-conditions}.

 By Proposition~\ref{independent-of-very-flat-prop}, if there exist
two cotorsion pairs $(\VF_1,\CA_1)$ and $(\VF_2,\CA_2)$ of the very
flat type in $\sK$ such that $(\VF_1,\CA_1)\le(\Flat,\Cot)$ and
$(\VF_2,\CA_2)\le(\Flat,\Cot)$, then the equivalent conditions of
Corollary~\ref{flat-type-definition-cor} are satisfied for
$(\VF_1,\CA_1)$ and $(\Flat,\Cot)$ if and only if they are satisfied
for $(\VF_2,\CA_2)$ and $(\Flat,\Cot)$.
 In this sense, the property of a cotorsion pair $(\Flat,\Cot)$ in $\sK$
to be of the flat type \emph{does not depend on the choice of}
an auxiliary cotorsion pair $(\VF,\CA)$ of the very flat type, though
it \emph{requires existence} of an auxiliary cotorsion pair of
the very flat type.

 The following lemma is convenient to use.

\begin{lem} \label{flat-type-convenient-restatement-lemma}
 Let\/ $\sK$ be a Grothendieck category, $(\VF,\CA)$ be a cotorsion pair
of the very flat type in\/ $\sK$, and $(\Flat,\Cot)$ be a hereditary
complete cotorsion pair generated by a set of objects in\/ $\sK$
such that $(\VF,\CA)\le(\Flat,\Cot)$.
 Then the cotorsion pair $(\Flat,\Cot)$ is of the flat type if and only
if one has
$$
 \Com(\CA)\cap\Ac^\bco(\Flat)=\Ac(\Flat\cap\CA)=
 \Com(\Flat)\cap\Ac^\bctr(\CA).
$$
\end{lem}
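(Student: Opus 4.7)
The plan is to reduce the asserted double equality to the flat-type characterization already provided by Corollary \ref{flat-type-definition-cor}. By that corollary, together with the observation (spelled out in its proof) that $\Flat\cap\CA$ always has enough injectives and projectives under the present assumptions, the cotorsion pair $(\Flat,\Cot)$ is of the flat type if and only if
$$\Ac^\bco(\Flat\cap\CA)=\Ac(\Flat\cap\CA)=\Ac^\bctr(\Flat\cap\CA).$$
So the whole lemma reduces to identifying $\Com(\CA)\cap\Ac^\bco(\Flat)$ with $\Ac^\bco(\Flat\cap\CA)$ and $\Com(\Flat)\cap\Ac^\bctr(\CA)$ with $\Ac^\bctr(\Flat\cap\CA)$.

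First I would treat the coacyclic side. By Lemma \ref{restricting-cotorsion-pair-lemma}(a), the very flat-type cotorsion pair $(\VF,\CA)$ restricts to a hereditary complete cotorsion pair $(\VF,\Flat\cap\CA)$ in the exact category $\Flat$, exhibiting $\Flat\cap\CA$ as a coresolving full subcategory of $\Flat$ that is closed under direct summands. Lemma \ref{proj-resolving-inj-coresolving}(b) then yields $(\Flat\cap\CA)^\inj=\Flat^\inj$, and since Becker-coacyclicity of a complex is tested only against the injectives of the ambient exact category, it follows at once from the definitions that a complex in $\Flat\cap\CA$ is Becker-coacyclic in $\Flat\cap\CA$ if and only if it is Becker-coacyclic in $\Flat$. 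Combining this with the trivial inclusion $\Ac^\bco(\Flat)\subset\Com(\Flat)$ gives
$$\Com(\CA)\cap\Ac^\bco(\Flat)=\Com(\Flat\cap\CA)\cap\Ac^\bco(\Flat)=\Ac^\bco(\Flat\cap\CA).$$

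The contraacyclic side is handled by a dual bookkeeping argument. Now Lemma \ref{restricting-cotorsion-pair-lemma}(b) restricts $(\Flat,\Cot)$ to a hereditary complete cotorsion pair $(\Flat\cap\CA,\Cot)$ in $\CA$ (using $\Cot\subset\CA$, which follows from $(\VF,\CA)\le(\Flat,\Cot)$), so $\Flat\cap\CA$ is a resolving subcategory of $\CA$ closed under summands, and Lemma \ref{proj-resolving-inj-coresolving}(a) yields $(\Flat\cap\CA)_\proj=\CA_\proj$. As before, Becker-contraacyclicity in $\Flat\cap\CA$ and in $\CA$ thus coincide on complexes from $\Flat\cap\CA$, and the trivial inclusion $\Ac^\bctr(\CA)\subset\Com(\CA)$ lets me conclude
$$\Com(\Flat)\cap\Ac^\bctr(\CA)=\Com(\Flat\cap\CA)\cap\Ac^\bctr(\CA)=\Ac^\bctr(\Flat\cap\CA).$$
Substituting the two identifications into the displayed equality in the statement turns it into the weakly flat-type double equality for $\Flat\cap\CA$, and the equivalence with $(\Flat,\Cot)$ being of the flat type then follows from Corollary \ref{flat-type-definition-cor}. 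I do not foresee any real obstacle: everything has been prepared in Sections \ref{very-flat-type-secn} and \ref{flat-type-secn}, and the argument amounts to transporting the injective and projective classes across the two inclusions $\Flat\cap\CA\subset\Flat$ and $\Flat\cap\CA\subset\CA$, which are respectively coresolving and resolving.
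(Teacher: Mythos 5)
Your argument is correct and follows essentially the same route as the paper's proof: both reduce the displayed equality to the observation that Becker-coacyclicity (resp.\ Becker-contraacyclicity) for complexes in $\Flat\cap\CA$ can be tested equally well in $\Flat$ (resp.\ in $\CA$) because the injective (resp.\ projective) classes of the two exact categories coincide, and then read off the conclusion from Corollary~\ref{flat-type-definition-cor}. The only cosmetic difference is that the paper identifies both classes explicitly as $\Flat\cap\Cot$ resp.\ $\VF\cap\CA$ via Lemmas~\ref{inj-proj-objects-in-left-right-classes} and~\ref{inj-proj-objects-in-intersected-classes}, whereas you equate them directly via Lemma~\ref{proj-resolving-inj-coresolving} applied to the coresolving/resolving inclusions $\Flat\cap\CA\subset\Flat$ and $\Flat\cap\CA\subset\CA$ obtained from Lemma~\ref{restricting-cotorsion-pair-lemma}.
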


\begin{proof}
 The point is that, as it was mentioned already in the proof of
Lemma~\ref{ac=ac-bctr-equivalent-conditions}, a complex in
$\Flat\cap\CA$ is Becker-contraacyclic in $\CA$ if and only if it is
Becker-contraacyclic in $\Flat\cap\CA$.
 To repeat, this holds because the full subcategory of projective
objects in $\CA$ is $\VF\cap\CA$ by
Lemma~\ref{inj-proj-objects-in-left-right-classes}(b), and the full
subcategory of projective objects in $\Flat\cap\CA$ is also
$\VF\cap\CA$ by Lemma~\ref{inj-proj-objects-in-intersected-classes}(b)
(applied to the cotorsion pair $(\sA,\sB)=(\VF,\CA)$ and the full
subcategory $\sE=\Flat\subset\sK$).

 Similarly, a complex in $\Flat\cap\CA$ is Becker-coacyclic in $\Flat$
if and only if it is Becker-coacyclic in $\Flat\cap\CA$.
 This holds because the full subcategory of injective objects in $\Flat$
is $\Flat\cap\Cot$ by
Lemma~\ref{inj-proj-objects-in-left-right-classes}(a), and the full
subcategory of injective objects in $\Flat\cap\CA$ is also
$\Flat\cap\Cot$
by Lemma~\ref{inj-proj-objects-in-intersected-classes}(a)
(applied to the cotorsion pair $(\sA,\sB)=(\Flat,\Cot)$ and the full
subcategory $\sE=\CA\subset\sK$).

 In view of these observations, the assertion of the lemma is
a restatement of the conditions of
Lemmas~\ref{ac=ac-bctr-equivalent-conditions}(2)
and~\ref{ac=ac-bco-equivalent-conditions}(6), or of
Corollary~\ref{flat-type-definition-cor}(1) or~(2).
\end{proof}

 For examples of flat-type cotorsion pairs, see the end of the next
Section~\ref{triang-equiv-secn}.

\Section{Triangulated Equivalences for a Flat-Type Cotorsion Pair}
\label{triang-equiv-secn}

 In this section we consider a Grothendieck category $\sK$ with
a cotorsion pair $(\VF,\CA)$ of the very flat type and a cotorsion pair
$(\Flat,\Cot)$ of the flat type such that $(\VF,\CA)\le(\Flat,\Cot)$.
 The main results of the section claim that various kinds of
derived categories associated with $(\Flat,\Cot)$ are equivalent to
the respective derived categories for $(\VF,\CA)$.

 The following lemma about triangulated quotient categories is very
basic, but it will play an important role in
Sections~\ref{triang-equiv-secn}\+-\ref{sandwiched-secn}.

\begin{lem} \label{triangulated-quotient-lemma}
 Let\/ $\sH$ be a triangulated category and\/ $\sX\subset\sH$ be
a strictly full triangulated subcategory. \par
\textup{(a)} Let\/ $\sE\subset\sH$ be another full triangulated
subcategory.
 Assume that, for every object $H\in\sH$, there exists an object
$E\in\sE$ together with a morphism $E\rarrow H$ in\/ $\sH$ whose
cone belongs to\/~$\sX$.
 Then the inclusion of triangulated categories\/ $\sE\rarrow\sH$
induces an equivalence of the triangulated Verdier quotient categories
$$
 \sE/(\sE\cap\sX)\simeq\sH/\sX.
$$ \par
\textup{(b)} Let\/ $\sG\subset\sH$ be another full triangulated
subcategory.
 Assume that, for every object $H\in\sH$, there exists an object
$G\in\sG$ together with a morphism $H\rarrow G$ in\/ $\sH$ whose
cone belongs to\/~$\sX$.
 Then the inclusion of triangulated categories\/ $\sG\rarrow\sH$
induces an equivalence of the triangulated Verdier quotient categories
$$
 \sG/(\sG\cap\sX)\simeq\sH/\sX.
$$
\end{lem}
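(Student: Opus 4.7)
This is a classical Verdier-style result characterizing when a subcategory computes a localization, and the two parts are exact duals, so I plan the proof of part~(a) in detail and then indicate the dualization for part~(b). The only conceptual ingredient is the calculus of fractions in the Verdier quotient $\sH/\sX$; the hypothesis on $\sE$ will be fed in repeatedly via the octahedral axiom.

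First I would note that $\sE\cap\sX$ is a triangulated subcategory of $\sE$ and the composition $\sE\rarrow\sH\rarrow\sH/\sX$ kills it, hence factorizes through a canonical triangulated functor
$$
 \Phi\:\sE/(\sE\cap\sX)\lrarrow\sH/\sX.
$$
Essential surjectivity of $\Phi$ is immediate from the hypothesis: given $H\in\sH$, there is some $E\rarrow H$ with $E\in\sE$ and cone in $\sX$, which becomes an isomorphism in $\sH/\sX$.

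The main step is full faithfulness of $\Phi$. For fullness, I would represent a morphism $E_1\rarrow E_2$ in $\sH/\sX$ by a roof $E_1\overset{s}{\larrow}H\overset{f}\rarrow E_2$ with $\mathrm{cone}(s)\in\sX$, then use the hypothesis to choose $t\:E'\rarrow H$ with $E'\in\sE$ and $\mathrm{cone}(t)\in\sX$. The octahedral axiom applied to $st\:E'\rarrow E_1$ fits $\mathrm{cone}(st)$ into a distinguished triangle with $\mathrm{cone}(t)$ and $\mathrm{cone}(s)$; since $\sX$ is a triangulated subcategory, $\mathrm{cone}(st)\in\sX$. Moreover, $E',E_1\in\sE$ and $\sE$ is triangulated in $\sH$, so $\mathrm{cone}(st)\in\sE$, and altogether $\mathrm{cone}(st)\in\sE\cap\sX$. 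The roof $E_1\overset{st}{\larrow}E'\overset{ft}\rarrow E_2$ thus represents a morphism in $\sE/(\sE\cap\sX)$ that $\Phi$ maps back to the original one. Faithfulness is the same trick: if $f\:E_1\rarrow E_2$ in $\sE$ vanishes in $\sH/\sX$, the calculus of fractions gives $s\:H\rarrow E_1$ with $\mathrm{cone}(s)\in\sX$ and $fs=0$ in $\sH$, and a further application of the hypothesis to $H$ produces $st\:E'\rarrow E_1$ with $\mathrm{cone}(st)\in\sE\cap\sX$ and $fst=0$, witnessing that $f=0$ in $\sE/(\sE\cap\sX)$.

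Part~(b) follows by reversing all arrows, or equivalently by applying part~(a) in $\sH^\rop$ with the opposite subcategories $\sG^\rop$ and $\sX^\rop$. I do not foresee any genuine obstacle. The only place requiring care is the octahedral step, where one must simultaneously verify that the new cone lies in $\sX$ (because its two triangle neighbors do) and in $\sE$ (because its two triangle endpoints do and $\sE$ is triangulated), so that the entire argument takes place inside $\sE$ modulo $\sE\cap\sX$.
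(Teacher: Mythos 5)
Your argument is correct. Note that the paper does not prove this lemma at all: it simply cites \cite[Corollary~10.2.7]{KS} and \cite[Lemma~1.6]{Pkoszul}, and your roof-and-octahedron argument is exactly the standard proof underlying those references, so in substance you have reproduced the intended proof rather than found a new route. Two small points of hygiene: in the octahedral step the cone of $st$ is an object of $\sH$ that is only \emph{isomorphic} to an object of $\sE$ (since $\sE$ is full but not assumed strictly full), which is harmless because $\sX$ is strictly full, so one still gets a cone in (the isomorphism closure of) $\sE\cap\sX$ and the fraction $st$ is invertible in $\sE/(\sE\cap\sX)$; and in the faithfulness step you only treat morphisms $f\:E_1\rarrow E_2$ lying in $\sE$ rather than arbitrary fractions of $\sE/(\sE\cap\sX)$, which is a legitimate reduction, but it deserves the one-line remark that every morphism of the quotient is of the form $f\circ s^{-1}$ with $s$ inverted by $\Phi$, so vanishing of $\Phi(f s^{-1})$ is equivalent to vanishing of $\Phi(f)$. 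With these remarks added, the proof is complete, and part~(b) is indeed the exact dual obtained by passing to opposite categories.
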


\begin{proof}
 This is~\cite[Corollary~10.2.7]{KS} or~\cite[Lemma~1.6]{Pkoszul}.
\end{proof}

\begin{thm} \label{flat-type-conventional-derived-equivalence-theorem}
 Let\/ $\sK$ be a Grothendieck category, $(\VF,\CA)$ be a cotorsion pair
of the very flat type in\/ $\sK$, and $(\Flat,\Cot)$ be a cotorsion pair
of the flat type such that $(\VF,\CA)\le(\Flat,\Cot)$.
 Then the inclusions of exact/abelian categories\/ $\Cot\rarrow\CA
\rarrow\sK$ induce equivalences of the conventional derived categories
$$
 \sD(\Cot)\simeq\sD(\CA)\simeq\sD(\sK).
$$
\end{thm}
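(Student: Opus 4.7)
The plan is to split the chain of equivalences in two and handle each link separately. The equivalence $\sD(\CA)\simeq\sD(\sK)$ is immediate from Lemma~\ref{very-flat-type-conventional-derived-equivalence} applied to the cotorsion pair $(\VF,\CA)$ of the very flat type. So the substantive work lies in establishing $\sD(\Cot)\simeq\sD(\CA)$, and I will do this by applying Lemma~\ref{triangulated-quotient-lemma}(b) to $\sH=\Hot(\CA)$, \,$\sX=\Ac(\CA)$, and $\sG=\Hot(\Cot)$ (using the inclusion $\Cot\subset\CA$, which holds because $(\VF,\CA)\le(\Flat,\Cot)$).

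To verify the hypothesis of that lemma, consider any $K^\bu\in\Com(\CA)$. Applying Proposition~\ref{acyclic-dg-cotorsion-pair-prop} to the cotorsion pair $(\Flat,\Cot)$ yields the hereditary complete cotorsion pair $(\Ac(\Flat),\DG(\Cot))$ in $\Com(\sK)$. Take a special preenvelope exact sequence
$$0\rarrow K^\bu\rarrow D^\bu\rarrow A^\bu\rarrow 0$$
in $\Com(\sK)$ with $D^\bu\in\DG(\Cot)\subset\Com(\Cot)$ and $A^\bu\in\Ac(\Flat)$. Since $K^\bu$ has terms in $\CA$ and $D^\bu$ has terms in $\Cot\subset\CA$, and since $\CA$ is closed under cokernels of admissible monomorphisms in $\sK$ (the cotorsion pair $(\VF,\CA)$ being hereditary), the terms of $A^\bu$ lie in $\Flat\cap\CA\subset\CA$. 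Moreover $A^\bu$ is acyclic in $\sK$, so by Lemma~\ref{finite-coresol-dim-periodicity} (which applies because $(\VF,\CA)$ is of the very flat type, making coresolution dimensions of objects of $\sK$ with respect to $\CA$ uniformly bounded) the complex $A^\bu$ is acyclic in $\CA$. The mapping cone of $K^\bu\to D^\bu$ computed in $\Hot(\CA)$ has terms in $\CA$ and the same homology as $A^\bu$, hence is acyclic in $\sK$ and (again by Lemma~\ref{finite-coresol-dim-periodicity}) belongs to $\Ac(\CA)$. Thus Lemma~\ref{triangulated-quotient-lemma}(b) produces a triangulated equivalence
$$\Hot(\Cot)\big/\bigl(\Hot(\Cot)\cap\Ac(\CA)\bigr)\simeq\Hot(\CA)/\Ac(\CA)=\sD(\CA).$$

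It remains to identify $\Hot(\Cot)\cap\Ac(\CA)$ with $\Ac(\Cot)$; this is where the flat-type hypothesis on $(\Flat,\Cot)$ enters, and I expect this step to be the main conceptual point of the argument (the rest being essentially formal). The inclusion $\Ac(\Cot)\subset\Hot(\Cot)\cap\Ac(\CA)$ is obvious. Conversely, a complex in $\Cot$ that is acyclic in $\CA$ is in particular acyclic in $\sK$, and by condition~(1) of Lemma~\ref{ac=ac-bco-equivalent-conditions}\,---\,which is satisfied because $(\Flat,\Cot)$ is of the flat type\,---\,such a complex is acyclic in $\Cot$. Combining the two equivalences $\sD(\Cot)\simeq\sD(\CA)$ and $\sD(\CA)\simeq\sD(\sK)$, both induced by the stated inclusion functors, yields the theorem.
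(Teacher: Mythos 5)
Your argument is correct, but it follows a genuinely different route from the paper on the substantive half. The paper treats $\sD(\CA)\simeq\sD(\sK)$ exactly as you do, via Lemma~\ref{very-flat-type-conventional-derived-equivalence}, but for the cotorsion part it does not argue internally at all: it invokes the external result \cite[Proposition~10.6]{PS6}, which says that the cotorsion periodicity condition of Lemma~\ref{ac=ac-bco-equivalent-conditions}(1) already makes $\sD(\Cot)\rarrow\sD(\sK)$ an equivalence, and the equivalence $\sD(\Cot)\simeq\sD(\CA)$ then comes for free by two-out-of-three. You instead prove $\sD(\Cot)\simeq\sD(\CA)$ directly with the paper's own machinery: the complete cotorsion pair $(\Ac(\Flat),\>\DG(\Cot))$ in $\Com(\sK)$ from Proposition~\ref{acyclic-dg-cotorsion-pair-prop} gives, for every complex in $\CA$, a termwise monomorphism into a complex in $\Cot$ with cokernel in $\Ac(\Flat)$, so the cone is an acyclic complex in $\sK$ with terms in $\CA$, hence lies in $\Ac(\CA)$ by Lemma~\ref{finite-coresol-dim-periodicity} (uniform boundedness of $\CA$\+coresolution dimensions being guaranteed by the very flat-type hypothesis and Lemma~\ref{finite-coresolution-dimensions-are-bounded}); then Lemma~\ref{triangulated-quotient-lemma}(b) applies, and the identification $\Hot(\Cot)\cap\Ac(\CA)=\Ac(\Cot)$ is precisely Lemma~\ref{ac=ac-bco-equivalent-conditions}(1)\,$\Leftrightarrow$\,(2). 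This is the same technique the paper uses for Theorem~\ref{flat-type-contraderived-equivalences-theorem} and Lemmas~\ref{coderived-F-to-Flat-Verdier-quotient}\+-\ref{contraderived-C-to-CA-Verdier-quotient}, so your proof fits naturally into the paper's toolkit; what it buys is self-containedness, avoiding the citation to \cite{PS6}, at the modest cost of routing through the auxiliary pair $(\VF,\CA)$, which the cited result does not need (it yields $\sD(\Cot)\simeq\sD(\sK)$ from cotorsion periodicity alone). Two incidental remarks: the observations that $A^\bu$ has terms in $\Flat\cap\CA$ and is acyclic in $\CA$ are harmless but not actually used\,---\,only the cone of $K^\bu\rarrow D^\bu$ matters\,---\,and your statement asserts equivalences for both inclusion-induced functors separately, which your argument indeed delivers.
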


\begin{proof}
 The functor $\sD(\CA)\rarrow\sD(\sK)$ is a triangulated equivalence by
Lemma~\ref{very-flat-type-conventional-derived-equivalence}.
 The functor $\sD(\Cot)\rarrow\sD(\sK)$ is a triangulated equivalence
whenever any one of the equivalent conditions of
Lemma~\ref{ac=ac-bco-equivalent-conditions}, or more specifically
the condition of Lemma~\ref{ac=ac-bco-equivalent-conditions}(1),
is satisfied.
 This is the result of~\cite[Proposition~10.6]{PS6}.
\end{proof}

 The following lemma illustrates the point that the equivalent
conditions of Lemma~\ref{ac=ac-bctr-equivalent-conditions} represent
a strong version of the ``flat and very flat periodicity property''
(while Lemma~\ref{ac=ac-bco-equivalent-conditions} is concerned with
the ``cotorsion periodicity property'').

\begin{lem} \label{flat-veryflat-periodicity}
 Let\/ $\sK$ be a Grothendieck category, $(\VF,\CA)$ be a cotorsion pair
of the very flat type in\/ $\sK$, and\/ $\Flat\subset\sK$ be a resolving
full subcategory closed under direct summands in\/ $\sK$ such that\/
$\VF\subset\Flat$.
 Assume that any one of the equivalent conditions of
Lemma~\ref{ac=ac-bctr-equivalent-conditions} is satisfied.
 Then any complex in\/ $\VF$ that is acyclic in\/ $\Flat$ is
acyclic in\/~$\VF$.
\end{lem}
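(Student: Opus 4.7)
My plan is to take a special preenvelope of $P^\bu$ with respect to an auxiliary cotorsion pair in $\Com(\sK)$, argue that the middle term is a complex of projectives in $\Flat\cap\CA$ which is simultaneously Becker-contraacyclic there, and then collapse it to a contractible complex via the ``identity-map trick''. From contractibility the claim will follow by standard closure properties.

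Concretely, apply Proposition~\ref{acyclic-dg-cotorsion-pair-prop} to the cotorsion pair $(\VF,\CA)$ to obtain the hereditary complete cotorsion pair $(\Ac(\VF),\DG(\CA))$ in $\Com(\sK)$, and choose a special preenvelope
\[
0 \lrarrow P^\bu \lrarrow D^\bu \lrarrow A^\bu \lrarrow 0
\]
with $D^\bu\in\DG(\CA)\subset\Com(\CA)$ and $A^\bu\in\Ac(\VF)$. Since $P^n,A^n\in\VF$ and $\VF$ is closed under extensions, each $D^n$ lies in $\VF\cap\CA$. Both $P^\bu$ and $A^\bu$ are acyclic in $\sK$, hence so is $D^\bu$; combined with $D^\bu\in\DG(\CA)$, Lemma~\ref{DG-B-intersect-Ac-K-is-Ac-B} applied to $(\VF,\CA)$ yields $D^\bu\in\Ac(\CA)$, so every $Z^n(D^\bu)\in\CA$. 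The resulting short exact sequence of cocycles
\[
0 \lrarrow Z^n(P^\bu) \lrarrow Z^n(D^\bu) \lrarrow Z^n(A^\bu) \lrarrow 0
\]
has $Z^n(P^\bu)\in\Flat$ by hypothesis on $P^\bu$ and $Z^n(A^\bu)\in\VF\subset\Flat$ since $A^\bu$ is acyclic in $\VF$; the resolving class $\Flat$ is closed under extensions, so $Z^n(D^\bu)\in\Flat\cap\CA$. Thus $D^\bu$ is acyclic in the exact category $\Flat\cap\CA$, and the standing hypothesis (Lemma~\ref{ac=ac-bctr-equivalent-conditions}(2)) upgrades this to $D^\bu\in\Ac^\bctr(\Flat\cap\CA)$.

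By Lemma~\ref{inj-proj-objects-in-intersected-classes}(b) applied to $(\VF,\CA)$ with $\sE=\Flat$, the projective objects of $\Flat\cap\CA$ are precisely $\VF\cap\CA$, so $D^\bu$ is itself a complex of projectives in $\Flat\cap\CA$. Feeding $D^\bu$ into the defining property of Becker-contraacyclicity with the identity map $D^\bu\to D^\bu$ forces $\id_{D^\bu}$ to be null-homotopic, i.e., $D^\bu$ is contractible. A contractible complex splits as $D^n\cong Z^n(D^\bu)\oplus Z^{n+1}(D^\bu)$, so each $Z^n(D^\bu)$ is a direct summand of $D^n\in\VF\cap\CA$, and $\VF$ being closed under direct summands gives $Z^n(D^\bu)\in\VF$. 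Finally, since $(\VF,\CA)$ is hereditary, $\VF$ is closed under kernels of admissible epimorphisms, and the cocycle sequence with $Z^n(D^\bu),Z^n(A^\bu)\in\VF$ forces $Z^n(P^\bu)\in\VF$, so $P^\bu$ is acyclic in $\VF$.

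The main obstacle is the conceptual leap of noticing that the preenvelope for $(\Ac(\VF),\DG(\CA))$ produces a $D^\bu$ whose terms are projective in $\Flat\cap\CA$, making the flat-type hypothesis immediately promote Becker-contraacyclicity to contractibility. The remaining technical content is pinpointing the class of $Z^n(D^\bu)$: placing it in $\CA$ uses Lemma~\ref{DG-B-intersect-Ac-K-is-Ac-B}, and placing it in $\Flat$ uses the closure of the resolving class $\Flat$ under extensions; together, these are what let the cocycle calculation close up.
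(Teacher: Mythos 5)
Your proof is correct, but it takes a genuinely different route from the paper's. The paper embeds the given complex $G^\bu$ into a finite coresolution $0\rarrow G^\bu\rarrow C^{0,\bu}\rarrow\dotsb\rarrow C^{d-1,\bu}\rarrow Q^\bu\rarrow0$ by contractible complexes in $\VF\cap\CA$, using that $\VF^\inj=\VF\cap\CA$ (Lemma~\ref{inj-proj-objects-in-left-right-classes}(a)) and that the coresolution dimensions with respect to $\CA$ are uniformly bounded by the very flat-type constant~$d$ (Lemma~\ref{very-flat-type-cotorsion-pair-lemma}); the terminal complex $Q^\bu$ is then acyclic in $\Flat\cap\CA$, hence contractible, and the conclusion descends back through the $d$~steps via the two-out-of-three property of acyclicity. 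You instead take a single special preenvelope $0\rarrow P^\bu\rarrow D^\bu\rarrow A^\bu\rarrow0$ for the complex-level cotorsion pair $(\Ac(\VF),\DG(\CA))$ of Proposition~\ref{acyclic-dg-cotorsion-pair-prop}, place the cocycles of $D^\bu$ in $\CA$ via Lemma~\ref{DG-B-intersect-Ac-K-is-Ac-B} and in $\Flat$ via the cocycle short exact sequences, identify the terms of $D^\bu$ as projective objects of $\Flat\cap\CA$ by Lemma~\ref{inj-proj-objects-in-intersected-classes}(b), and then conclude contractibility by exactly the identity-map observation that the paper offers as its ``alternative'' justification; the transfer back to $P^\bu$ is done in one step by hereditariness of $(\VF,\CA)$ applied to the cocycle sequences. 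The paper's argument is more elementary (it needs only enough injectives in $\VF$, the dimension bound, and B\"uhler's acyclicity lemma), while yours leans on the heavier machinery of the cotorsion pairs in $\Com(\sK)$; in exchange, your construction never invokes the bound~$d$ explicitly, closes up in a single three-term cocycle computation, and would go through assuming only condition~(2) of Lemma~\ref{ac=ac-bctr-equivalent-conditions} for a hereditary complete cotorsion pair generated by a set. One small point to make explicit in a write-up: the exactness of $0\rarrow Z^n(P^\bu)\rarrow Z^n(D^\bu)\rarrow Z^n(A^\bu)\rarrow0$ uses that all three complexes are acyclic in the abelian category~$\sK$ (the obstruction to lifting a cocycle of $A^\bu$ lies in $H^{n+1}(P^\bu)=0$); this is standard and your use of it is correct.
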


\begin{proof}
 The proof of this lemma uses the inclusion
$\Ac(\Flat\cap\CA)\subset\Ac^\bctr(\Flat\cap\CA)$ from
Lemma~\ref{ac=ac-bctr-equivalent-conditions}(2).

 By Lemma~\ref{inj-proj-objects-in-left-right-classes}(a),
there are enough injective objects in the exact category $\VF$,
and $\VF^\inj=\VF\cap\CA$.
 It follows that for any complex $G^\bu$ in $\VF$ there exists
a contractible complex $C^\bu$ in $\VF\cap\CA$ together with
a termwise admissible monomorphism $G^\bu\rarrow C^\bu$ of
complexes in~$\VF$.
 So we have a termwise short exact sequence of complexes
$0\rarrow G^\bu\rarrow C^\bu\rarrow G^{1,\bu}\rarrow0$
such that $G^{1,\bu}$ is again a complex in~$\VF$.
 Applying the same construction to the complex $G^{1,\bu}$ in $\VF$
and proceeding in this way, we obtain a termwise admissible exact
sequence $0\rarrow G^\bu\rarrow C^{0,\bu}\rarrow C^{1,\bu}
\rarrow\dotsb\rarrow C^{d-1,\bu}\rarrow Q^\bu\rarrow0$ of complexes
in $\VF$, where $C^{i,\bu}$ is a contractible complex in $\VF\cap\CA$
for every $0\le i\le d-1$.
 Here we take $d$~to be an integer such that the homological dimension
of $\VF$ does not exceed~$d$.
 Then the coresolution dimensions of all the objects of $\sK$ with
respect to $\CA$ also do not exceed~$d$ by
Lemma~\ref{very-flat-type-cotorsion-pair-lemma}, so $Q^\bu$ is
actually a complex in $\VF\cap\CA$.

 Now if the complex $G^\bu$ is acyclic in $\Flat$, then so is
the complex~$Q^\bu$ (by~\cite[Corollary~3.6 and the dual version
of Proposition~7.6]{Bueh}).
 Applying the condition of
Lemma~\ref{ac=ac-bctr-equivalent-conditions}(1) to the complex
$P^\bu=F^\bu=Q^\bu$, we see that the complex $Q^\bu$ is contractible.
 Alternatively, one can say that $Q^\bu$ is an acyclic complex of
projective objects in $\Flat\cap\CA$, so if $Q^\bu$ is
Becker-contraacyclic by
Lemma~\ref{ac=ac-bctr-equivalent-conditions}(2), then it follows
that $Q^\bu$ is contractible.
 Thus the complex $Q^\bu$ is acyclic in~$\VF$.
 Using~\cite[Corollary~3.6]{Bueh} again, we conclude that
the complex $G^\bu$ is acyclic in~$\VF$.
\end{proof}

\begin{lem} \label{contraacyclic-in-Cot-same-as-in-CA}
 Let\/ $\sK$ be a Grothendieck category, $(\VF,\CA)$ be a cotorsion pair
of the very flat type in\/ $\sK$, and $(\Flat,\Cot)$ be a cotorsion pair
of the flat type such that $(\VF,\CA)\le(\Flat,\Cot)$.
 Then a complex in\/ $\Cot$ is Becker-contraacyclic in\/ $\CA$ if and
only if it is Becker-contraacyclic in\/ $\Cot$, that is,
$\Ac^\bctr(\Cot)=\Com(\Cot)\cap\Ac^\bctr(\CA)$.
\end{lem}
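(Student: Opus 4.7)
The plan is to prove the two inclusions separately. The inclusion $\Ac^\bctr(\Cot)\subset\Com(\Cot)\cap\Ac^\bctr(\CA)$ is immediate: the nested pair $(\VF,\CA)\le(\Flat,\Cot)$ and Corollary~\ref{contraderived-B2-to-B1-well-defined} give $\Ac^\bctr(\Cot)\subset\Ac^\bctr(\CA)$, and $\Ac^\bctr(\Cot)$ is tautologically contained in $\Com(\Cot)$. For the reverse inclusion, take $D^\bu\in\Com(\Cot)\cap\Ac^\bctr(\CA)$ and produce a special precover sequence in $\Com(\sK)$ with respect to the hereditary complete cotorsion pair $(\Com(\Flat),\Ac^\bctr(\Cot))$ from Proposition~\ref{all-contraacyclic-cotorsion-pair-prop},
\[
 0\lrarrow B^\bu\lrarrow A^\bu\lrarrow D^\bu\lrarrow 0,
\]
with $A^\bu\in\Com(\Flat)$ and $B^\bu\in\Ac^\bctr(\Cot)$. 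Because $\Cot$ is closed under extensions and both $B^\bu$ and $D^\bu$ have terms in $\Cot$, the terms of $A^\bu$ also lie in $\Cot$; thus $A^\bu\in\Com(\Flat\cap\Cot)$.

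The main step, and the key obstacle, is to show that $A^\bu$ is contractible; both halves of the flat-type hypothesis enter here. First, $B^\bu\in\Ac^\bctr(\Cot)\subset\Ac^\bctr(\CA)$ by the previous paragraph, and $D^\bu\in\Ac^\bctr(\CA)$ by assumption. Since $\Ac^\bctr(\CA)$ is closed under extensions (as the right-hand class of a cotorsion pair), we get $A^\bu\in\Com(\Flat)\cap\Ac^\bctr(\CA)$. By Lemma~\ref{flat-type-convenient-restatement-lemma} for the flat-type cotorsion pair $(\Flat,\Cot)$, this intersection equals $\Ac(\Flat\cap\CA)$, so $A^\bu$ is acyclic in $\sK$ with cocycles in $\Flat\cap\CA$. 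On the other hand, $A^\bu$ is a complex in $\Cot$ acyclic in $\sK$; the cotorsion periodicity-type condition Lemma~\ref{ac=ac-bco-equivalent-conditions}(1), which is another ingredient of the flat-type assumption, forces the cocycles of $A^\bu$ to lie in $\Cot$ as well. Combining these, the cocycles $Z^n$ of $A^\bu$ lie in $\Flat\cap\Cot$. Each admissible short exact sequence $0\to Z^n\to A^n\to Z^{n+1}\to 0$ in $\sK$ then splits, since $Z^{n+1}\in\Flat$ and $Z^n\in\Cot$ give $\Ext^1_\sK(Z^{n+1},Z^n)=0$. Hence $A^\bu$ is contractible.

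To conclude, a contractible complex is automatically Becker-contraacyclic in any exact category containing it, so $A^\bu\in\Ac^\bctr(\Cot)$. In the short exact sequence of complexes $0\to B^\bu\to A^\bu\to D^\bu\to 0$ in $\Com(\sK)$, both $B^\bu$ and $A^\bu$ belong to $\Ac^\bctr(\Cot)$. Since the cotorsion pair $(\Com(\Flat),\Ac^\bctr(\Cot))$ in $\Com(\sK)$ is hereditary by Proposition~\ref{all-contraacyclic-cotorsion-pair-prop}, its right-hand class is closed under cokernels of admissible monomorphisms, and therefore $D^\bu\in\Ac^\bctr(\Cot)$, as required.
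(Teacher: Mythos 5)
Your proof is correct, but it takes a genuinely different route from the paper's. The paper argues directly with the definition of Becker-contraacyclicity via $\Hom$ complexes: for a complex $C^\bu$ in $\Cot$, contraacyclicity in $\CA$ means acyclicity of $\Hom_\sK(P^\bu,C^\bu)$ for all $P^\bu$ in $\Com(\VF\cap\CA)$, and contraacyclicity in $\Cot$ means acyclicity of $\Hom_\sK(F^\bu,C^\bu)$ for all $F^\bu$ in $\Com(\Flat\cap\Cot)$; both conditions are then reduced to the intermediate condition involving complexes in $\Flat\cap\CA$, using approximations inside that exact category together with the inclusion $\Ac^\bctr(\Flat\cap\CA)\subset\Ac^\bco(\Flat\cap\CA)$ and the orthogonality of $\Ac^\bco(\Flat)$ to $\Com(\Cot)$ from Proposition~\ref{coacyclic-all-cotorsion-pair-prop}. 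You instead take a special precover $0\to B^\bu\to A^\bu\to D^\bu\to0$ for the cotorsion pair $(\Com(\Flat),\>\Ac^\bctr(\Cot))$, prove the middle term $A^\bu$ contractible by combining the two periodicity ingredients of the flat-type hypothesis (via Lemma~\ref{flat-type-convenient-restatement-lemma} and Lemma~\ref{ac=ac-bco-equivalent-conditions}(1)), and conclude by heredity of that cotorsion pair in $\Com(\sK)$. This is exactly the style of argument the paper uses elsewhere, e.g.\ in the implication (10)\,$\Rightarrow$\,(9) of Theorem~\ref{sandwiched-equivalent-conditions-theorem}, and every step checks out: the extension-closure of $\Cot$ and of $\Ac^\bctr(\CA)$ gives $A^\bu\in\Com(\Flat\cap\Cot)\cap\Ac^\bctr(\CA)$, the flat-type equalities place its cocycles in $\Flat\cap\Cot$, the splitting of the cocycle sequences follows from $\Ext^1_\sK(\Flat,\Cot)=0$, and closure of $\Ac^\bctr(\Cot)$ under cokernels of monomorphisms finishes the argument. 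What your approach buys is a more self-contained deduction from the approximation machinery; what the paper's buys is an explicit identification of the relevant classes of projective objects ($\VF\cap\CA$ versus $\Flat\cap\Cot$), which makes transparent why the two contraacyclicity notions agree.
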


\begin{proof}
 The proof of this lemma uses the inclusion
$\Ac^\bctr(\Flat\cap\CA)\subset\Ac(\Flat\cap\CA)$ from
Lemma~\ref{ac=ac-bctr-equivalent-conditions}(2) and the inclusion
$\Ac(\Flat\cap\CA)\subset\Ac^\bco(\Flat\cap\CA)$ from
Lemma~\ref{ac=ac-bco-equivalent-conditions}(5).

 By Lemma~\ref{inj-proj-objects-in-left-right-classes}(b), the class
of projective objects in $\CA$ coincides with $\VF\cap\CA$, while
the class of projective objects in $\Cot$ coincides with
$\Flat\cap\Cot$.
 Let $C^\bu$ be a complex in~$\Cot$.
 We need to prove that the complex $\Hom_\sK(P^\bu,C^\bu)$ is acyclic
for all complexes $P^\bu$ in $\VF\cap\CA$ if and only if the complex
$\Hom_\sK(F^\bu,C^\bu)$ is acyclic for all complexes $F^\bu$ in
$\Flat\cap\Cot$.
 We will show that both the conditions are equivalent to the complex
$\Hom_\sK(G^\bu,C^\bu)$ being acyclic for all complexes $G^\bu$ in
$\Flat\cap\CA$.

 Notice that $\VF\cap\CA$ is also the class of all projective objects
in $\Flat\cap\CA$
(by Lemma~\ref{inj-proj-objects-in-intersected-classes}(b) applied
to the cotorsion pair $(\sA,\sB)=(\VF,\CA)$ and the full subcategory
$\sE=\Flat$ in~$\sK$).
 Similarly, $\Flat\cap\Cot$ is the class of all injective objects in
$\Flat\cap\CA$
(Lemma~\ref{inj-proj-objects-in-intersected-classes}(a) applied
to the cotorsion pair $(\sA,\sB)=(\Flat,\Cot)$ and the full subcategory
$\sE=\CA$ in~$\sK$).
 Following the proof of Corollary~\ref{flat-type-definition-cor},
for any complex $G^\bu$ in $\Flat\cap\CA$ there exists a complex
$F^\bu$ in $\Flat\cap\Cot$ together with a morphism of complexes
$G^\bu\rarrow F^\bu$ with the cone belonging to
$\Ac^\bco(\Flat\cap\CA)$.
 Similarly, for any complex $G^\bu$ in $\Flat\cap\CA$ there exists
a complex $P^\bu$ in $\VF\cap\CA$ together with a morphism of complexes
$P^\bu\rarrow G^\bu$ with the cone belonging to
$\Ac^\bctr(\Flat\cap\CA)$.

 Now we have $\Ac^\bctr(\Flat\cap\CA)\subset\Ac^\bco(\Flat\cap\CA)$,
as mentioned in the first paragraph of this proof.
 Furthermore, a complex in $\Flat\cap\CA$ is Becker-coacyclic in
$\Flat$ if and only if it is Becker-coacyclic in $\Flat\cap\CA$,
and we have $\Ac^\bco(\Flat\cap\CA)\subset\Ac^\bco(\Flat)$, as per
the proof of Lemma~\ref{flat-type-convenient-restatement-lemma}.
 It remains to recall that, for any complex
$A^\bu\in\Ac^\bco(\Flat)$ and any complex $C^\bu\in\Com(\Cot)$,
the complex $\Hom_\sK(A^\bu,C^\bu)$ is acyclic by
Proposition~\ref{coacyclic-all-cotorsion-pair-prop} and
Lemma~\ref{Ext1-as-homotopy-Hom-lemma}.
\end{proof}

\begin{thm} \label{flat-type-contraderived-equivalences-theorem}
 Let\/ $\sK$ be a Grothendieck category, $(\VF,\CA)$ be a cotorsion pair
of the very flat type in\/ $\sK$, and $(\Flat,\Cot)$ be a cotorsion pair
of the flat type such that $(\VF,\CA)\le(\Flat,\Cot)$.
 Then \par
\textup{(a)} the inclusion of exact categories\/ $\VF\rarrow\Flat$
induces an equivalence of the conventional derived categories
$$
 \sD(\VF)\simeq\sD(\Flat);
$$ \par
\textup{(b)} the inclusion of exact categories\/ $\VF\rarrow\Flat$
induces an equivalence of the Becker coderived categories
$$
 \sD^\bco(\VF)\simeq\sD^\bco(\Flat);
$$ \par
\textup{(c)} the inclusion of exact categories\/ $\Cot\rarrow\CA$
induces an equivalence of the Becker contraderived categories
$$
 \sD^\bctr(\Cot)\simeq\sD^\bctr(\CA).
$$
\end{thm}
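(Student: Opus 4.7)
The plan is to derive parts~(b) and~(c) simultaneously from Theorem~\ref{nested-cotorsion-pairs-adjunction-theorem} applied to the nested cotorsion pairs $(\VF,\CA)\le(\Flat,\Cot)$, and then to deduce~(a) from~(b). Theorem~\ref{nested-cotorsion-pairs-adjunction-theorem} supplies an adjunction
$$
F\colon\Hot(\VF\cap\CA)\rightleftarrows\Hot(\Flat\cap\Cot)\,\colon G
$$
whose translations via the identifications $\sD^\bco(\sA)\simeq\Hot(\sA\cap\sB)\simeq\sD^\bctr(\sB)$ of Corollary~\ref{cotorsion-pair-co-contra-equivalence} are precisely the functor $\sD^\bco(\VF)\to\sD^\bco(\Flat)$ of~(b) and the functor $\sD^\bctr(\Cot)\to\sD^\bctr(\CA)$ of~(c). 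Thus (b) and~(c) reduce to showing that $F$ and $G$ are triangulated equivalences.

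For this I would factor $F$ and $G$ through the conventional derived category of the exact category $\Flat\cap\CA$. The flat-type hypothesis on $(\Flat,\Cot)$, unpacked via Corollary~\ref{flat-type-definition-cor}, asserts exactly that $\Flat\cap\CA$ is an exact category of the flat type, giving the triple identification
$$
\sD^\bco(\Flat\cap\CA)\;=\;\sD(\Flat\cap\CA)\;=\;\sD^\bctr(\Flat\cap\CA).
$$
Corollary~\ref{coderived-of-E-intersect-A-well-behaved} applied to $(\Flat,\Cot)$ with the coresolving subcategory $\sE=\CA$ yields an equivalence $\Hot(\Flat\cap\Cot)\simeq\sD^\bco(\Flat\cap\CA)$, and dually Corollary~\ref{contraderived-of-E-intersect-B-well-behaved} applied to $(\VF,\CA)$ with the resolving subcategory $\sE=\Flat$ yields $\Hot(\VF\cap\CA)\simeq\sD^\bctr(\Flat\cap\CA)$. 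Concatenating these with the triple identification produces natural equivalences $\Hot(\VF\cap\CA)\simeq\sD(\Flat\cap\CA)\simeq\Hot(\Flat\cap\Cot)$. To match this composite with $F$, I would unfold the construction of $F$ from the proof of Theorem~\ref{nested-cotorsion-pairs-adjunction-theorem}: for $P^\bu\in\Com(\VF\cap\CA)$ one chooses a short exact sequence $0\to P^\bu\to U^\bu\to A^\bu\to 0$ with $U^\bu$ a complex in $\Flat\cap\Cot$ and $A^\bu$ Becker-coacyclic in $\Flat$, and sets $F(P^\bu)=U^\bu$. Because $\CA$ is closed under cokernels of admissible monomorphisms in $\sK$, the cone $A^\bu$ actually has terms in $\Flat\cap\CA$; and since the injective objects of $\Flat$ and of $\Flat\cap\CA$ both coincide with $\Flat\cap\Cot$ (Lemmas~\ref{inj-proj-objects-in-left-right-classes}(a) and~\ref{inj-proj-objects-in-intersected-classes}(a)), $A^\bu$ is Becker-coacyclic as a complex in $\Flat\cap\CA$ as well. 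Thus $P^\bu\simeq U^\bu$ in $\sD^\bco(\Flat\cap\CA)$, identifying $F$ with the composite equivalence above. The symmetric computation, based on the projective comparison of Lemma~\ref{inj-proj-objects-in-intersected-classes}(b) and the closure of $\Flat$ under kernels of admissible epimorphisms, identifies $G$ with the dual composite. This proves~(b) and~(c).

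For part~(a): $\VF$ has finite homological dimension by Corollary~\ref{very-flat-type-cotorsion-pair-cor}, so Lemma~\ref{exact-category-of-finite-homol-dim}(a) gives $\sD(\VF)=\sD^\bco(\VF)$; meanwhile the flat-type assumption on $(\Flat,\Cot)$ forces $\Ac(\Flat)=\Ac^\bco(\Flat)$ and hence $\sD(\Flat)=\sD^\bco(\Flat)$ via Lemma~\ref{ac=ac-bco-equivalent-conditions}\,(1)$\Leftrightarrow$(4). The triangulated functor induced by the inclusion $\VF\to\Flat$ is the same on either Verdier quotient, so~(a) is the transcription of~(b). The main obstacle will be the book-keeping in the second paragraph, namely verifying that the auxiliary cones $A^\bu$ and $B^\bu$ appearing in the definitions of $F$ and $G$ land in $\Com(\Flat\cap\CA)$ and are Becker-(co/contra)acyclic there; this reduces, through Lemma~\ref{inj-proj-objects-in-intersected-classes}, to the comparison of injective and projective objects in the pertinent exact subcategories.
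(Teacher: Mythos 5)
Your argument is correct in substance, but it takes a genuinely different route from the paper. The paper proves (a)--(b) and (c) by applying Lemma~\ref{triangulated-quotient-lemma} directly to $\Hot(\Flat)$ (with $\sE=\Hot(\VF)$) and to $\Hot(\CA)$ (with $\sG=\Hot(\Cot)$): for each complex it takes a special precover/preenvelope sequence in $\Com(\sK)$ with respect to the cotorsion pairs $(\Com(\VF),\>\Ac^\bctr(\CA))$ and $(\Ac^\bco(\Flat),\>\Com(\Cot))$ of Propositions~\ref{all-contraacyclic-cotorsion-pair-prop} and~\ref{coacyclic-all-cotorsion-pair-prop}, uses Lemma~\ref{flat-type-convenient-restatement-lemma} to see that the cone is Becker-coacyclic in $\Flat$ (resp.\ Becker-contraacyclic in $\CA$), and then identifies the kernels by two auxiliary statements, Lemma~\ref{flat-veryflat-periodicity} (giving $\Com(\VF)\cap\Ac(\Flat)=\Ac(\VF)$) and Lemma~\ref{contraacyclic-in-Cot-same-as-in-CA}. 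You instead pivot on the kernel category $\Flat\cap\CA$: Corollaries~\ref{coderived-of-E-intersect-A-well-behaved} and~\ref{contraderived-of-E-intersect-B-well-behaved} present $\Hot(\Flat\cap\Cot)$ and $\Hot(\VF\cap\CA)$ as $\sD^\bco(\Flat\cap\CA)$ and $\sD^\bctr(\Flat\cap\CA)$, the flat-type hypothesis (via Corollary~\ref{flat-type-definition-cor}, applicable to the given $(\VF,\CA)$ by Proposition~\ref{independent-of-very-flat-prop}) merges these into a single category, and the functors of (b) and (c), read through Theorem~\ref{nested-cotorsion-pairs-adjunction-theorem} as $F$ and $G$, are matched with the resulting chain of equivalences; (a) is then deduced exactly as in the paper from $\sD(\VF)=\sD^\bco(\VF)$ and $\sD(\Flat)=\sD^\bco(\Flat)$. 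The paper's route yields the extra information that $\Hot(\VF)\rarrow\sD^\bco(\Flat)$ and $\Hot(\Cot)\rarrow\sD^\bctr(\CA)$ are Verdier localizations with explicitly computed kernels (reproving the flat/very flat periodicity and Lemma~\ref{contraacyclic-in-Cot-same-as-in-CA} on the way), while your route is shorter given the earlier corollaries and makes it transparent that everything is governed by the flat-type property of $\Flat\cap\CA$. One point to make explicit in your book-keeping: matching $F$ with your composite requires knowing that the comparison functor $\sD^\bco(\Flat\cap\CA)\rarrow\sD^\bco(\Flat)$ is an equivalence (which follows by two-out-of-three from Corollaries~\ref{coderived-of-E-intersect-A-well-behaved} and~\ref{coderived-of-A-well-behaved}, since the triangle over $\Hot(\Flat\cap\Cot)$ commutes); equivalently, simply factor the functor $\Hot(\VF\cap\CA)\rarrow\sD^\bco(\Flat)$ through $\sD^\bco(\Flat\cap\CA)$ and observe that each factor is an equivalence, which avoids any object-by-object identification of~$F$.
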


\begin{proof}
 First of all, let us explain why the induced triangulated functors
are well-defined.
 Any exact functor of exact categories induces a triangulated functor
between their derived categories; hence the triangulated functor
$\sD(\VF)\rarrow\sD(\Flat)$ in part~(a).
 The induced triangulated functor between the Becker coderived
categories $\sD^\bco(\VF)\rarrow\sD^\bco(\Flat)$ in part~(b) exists by
Corollary~\ref{coderived-A1-to-A2-well-defined}.
 The induced triangulated functor between the Becker contraderived
categories $\sD^\bctr(\Cot)\rarrow\sD^\bctr(\CA)$ in part~(c) exists by
Corollary~\ref{contraderived-B2-to-B1-well-defined}.

 Furthermore, it should be pointed out that the three assertions~(a\+-c)
are equivalent restatements of one another, and the triangulated
categories involved in them are all the same.
 Under the assumptions of the theorem we have
$\Ac^\bco(\VF)=\Ac(\VF)$ by
Lemmas~\ref{inj-proj-objects-in-left-right-classes}(a)
and~\ref{exact-category-of-finite-homol-dim}(a),
and $\Ac^\bco(\Flat)=\Ac(\Flat)$ by
Lemma~\ref{ac=ac-bco-equivalent-conditions}(4).
 Thus $\sD^\bco(\VF)=\sD(\VF)$ and $\sD^\bco(\Flat)=\sD(\Flat)$, so
$\sD(\VF)\rarrow\sD(\Flat)$ and $\sD^\bco(\VF)\rarrow\sD^\bco(\Flat)$
is one and the same functor.
 So parts~(a) and~(b) are equivalent restatements of each other.
 On the other hand, we have $\sD^\bco(\VF)\simeq\sD^\bctr(\CA)$
and $\sD^\bco(\Flat)\simeq\sD^\bctr(\Cot)$ by
Corollary~\ref{cotorsion-pair-co-contra-equivalence}.
 These two triangulated equivalences, viewed as identifications,
make the functor $\sD^\bco(\VF)\rarrow\sD^\bco(\Flat)$ left adjoint
to the functor $\sD^\bctr(\Cot)\rarrow\sD^\bctr(\CA)$ by
Theorem~\ref{nested-cotorsion-pairs-adjunction-theorem}.
 So proving that the functor in part~(b) is a triangulated equivalence
is equivalent to proving that the functor in part~(c) is a triangulated
equivalence.

 Now let us prove the assertions of parts~(a\+-b) and~(c) independently
of each other.
 In order to prove the triangulated equivalence~(a\+-b), we apply
Lemma~\ref{triangulated-quotient-lemma}(a) to the triangulated
category $\sH=\Hot(\Flat)$ with the full triangulated subcategories
$\sX=\Ac(\Flat)=\Ac^\bco(\Flat)$ and $\sE=\Hot(\VF)$.

 Let $F^\bu$ be a complex in~$\Flat$.
 By Proposition~\ref{all-contraacyclic-cotorsion-pair-prop}, we
have a hereditary complete cotorsion pair $(\Com(\VF),\>\Ac^\bctr(\CA))$
in the abelian category $\Com(\sK)$.
 Let $0\rarrow B^\bu\rarrow P^\bu\rarrow F^\bu\rarrow0$ be a special
precover exact sequence~\eqref{special-precover-sequence} for
the object $F^\bu\in\Com(\sK)$ with respect to this cotorsion pair.
 So we have $P^\bu\in\Com(\VF)$ and $B^\bu\in\Ac^\bctr(\CA)$.
 Since the class $\Flat$ is closed under kernels of epimorphisms in
$\sK$, we actually have $B^\bu\in\Com(\Flat)\cap\Ac^\bctr(\CA)$.

 By Lemma~\ref{flat-type-convenient-restatement-lemma}, it follows
that $B^\bu\in\Ac^\bco(\Flat)$.
 In view of Lemma~\ref{totalizations-are-co-contra-acyclic}(a), we
can conclude that the cone of the morphism $P^\bu\rarrow F^\bu$ is
Becker-coacyclic in~$\Flat$.
 So the assumption of Lemma~\ref{triangulated-quotient-lemma}(a)
is satisfied.
 It remains to recall that $\Com(\VF)\cap\Ac(\Flat)=\Ac(\VF)$ by
Lemma~\ref{flat-veryflat-periodicity} in order to finish the proof of
the assertion that the functor in parts~(a\+-b) is a triangulated
equivalence.

 In order to prove the triangulated equivalence~(c), we apply
Lemma~\ref{triangulated-quotient-lemma}(b) to the triangulated
category $\sH=\Hot(\CA)$ with the full triangulated subcategories
$\sX=\Ac^\bctr(\CA)$ and $\sG=\Hot(\Cot)$.

 Let $C^\bu$ be a complex in $\CA$.
 By Proposition~\ref{coacyclic-all-cotorsion-pair-prop}, we have
a hereditary complete cotorsion pair $(\Ac^\bco(\Flat),\>\Com(\Cot))$
in the abelian category $\Com(\sK)$.
 Let $0\rarrow C^\bu\rarrow D^\bu\rarrow A^\bu\rarrow0$ be a special
preenvelope exact sequence~\eqref{special-preenvelope-sequence} for
the object $C^\bu\in\Com(\sK)$ with respect to this cotorsion pair.
 So we have $D^\bu\in\Com(\Cot)$ and $A^\bu\in\Ac^\bco(\Flat)$.
 Since the class $\CA$ is closed under cokernels of monomorphisms in
$\sK$, we actually have $A^\bu\in\Com(\CA)\cap\Ac^\bco(\Flat)$.

 By Lemma~\ref{flat-type-convenient-restatement-lemma}, it follows
that $A^\bu\in\Ac^\bctr(\CA)$.
 In view of Lemma~\ref{totalizations-are-co-contra-acyclic}(a), this
means that the cone of the morphism $C^\bu\rarrow D^\bu$ is
Becker-contraacyclic in~$\CA$.
 So the assumption of Lemma~\ref{triangulated-quotient-lemma}(b)
is satisfied.
 It remains to recall that $\Com(\Cot)\cap\Ac^\bctr(\CA)=
\Ac^\bctr(\Cot)$ by Lemma~\ref{contraacyclic-in-Cot-same-as-in-CA}
in order to conclude that the functor in part~(c) is a triangulated
equivalence.
\end{proof}

\begin{cor} \label{veryflat-flat-type-square-of-triang-equivalences}
 Let\/ $\sK$ be a Grothendieck category, $(\VF,\CA)$ be a cotorsion pair
of the very flat type in\/ $\sK$, and $(\Flat,\Cot)$ be a cotorsion pair
of the flat type such that $(\VF,\CA)\le(\Flat,\Cot)$.
 Then there is a commutative square diagram of triangulated equivalences
$$
 \xymatrix{
  \sD^{\varnothing=\bco}(\VF) \ar@{=}[d] \ar@<2pt>[r]
  & \sD^{\varnothing=\bco}(\Flat) \ar@{=}[d] \ar@<2pt>@{-}[l] \\
  \sD^\bctr(\CA) \ar@<2pt>@{-}[r]
  & \sD^\bctr(\Cot) \ar@<2pt>[l]
 }
$$
 Here the notation\/ $\sD^{\varnothing=\bco}(\sE)$ for a derived
category\/ $\sE$ means that\/ $\Ac(\sE)=\Ac^\bco(\sE)$, so\/
$\sD(\sE)=\sD^\bco(\sE)$.
 The vertical triangulated equivalences are provided by
Corollary~\ref{cotorsion-pair-co-contra-equivalence}.
 The upper horizontal triangulated equivalence is provided by
Theorem~\ref{flat-type-contraderived-equivalences-theorem}(a\+-b).
 The lower horizontal triangulated equivalence is provided by
Theorem~\ref{flat-type-contraderived-equivalences-theorem}(c).
\end{cor}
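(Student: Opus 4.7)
The proof will be a straightforward assembly of the three results cited in the statement, with one preliminary verification and one appeal to Theorem~\ref{nested-cotorsion-pairs-adjunction-theorem} to secure commutativity.

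First I would check that the top row is meaningful, i.e.\ that $\sD(\VF)=\sD^\bco(\VF)$ and $\sD(\Flat)=\sD^\bco(\Flat)$. For $\VF$, since $(\VF,\CA)$ is of very flat type, the exact category $\VF$ has finite homological dimension by Corollary~\ref{very-flat-type-cotorsion-pair-cor}, and it has enough injective objects by Lemma~\ref{inj-proj-objects-in-left-right-classes}(a); Lemma~\ref{exact-category-of-finite-homol-dim}(a) then delivers the equality $\sD(\VF)=\sD^\bco(\VF)$. For $\Flat$, the equality $\Ac(\Flat)=\Ac^\bco(\Flat)$ is condition~(4) of Lemma~\ref{ac=ac-bco-equivalent-conditions}, which holds because $(\Flat,\Cot)$ is of flat type.

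Next, the vertical equivalences are a direct application of Corollary~\ref{cotorsion-pair-co-contra-equivalence} to each of the two cotorsion pairs, factoring through the homotopy category of the respective kernel, $\Hot(\VF\cap\CA)$ on the left and $\Hot(\Flat\cap\Cot)$ on the right. The upper horizontal equivalence is Theorem~\ref{flat-type-contraderived-equivalences-theorem}(a--b), and the lower horizontal equivalence is Theorem~\ref{flat-type-contraderived-equivalences-theorem}(c). So all four arrows of the square are already known to be triangulated equivalences.

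The only remaining content is commutativity of the square. Here I would apply Theorem~\ref{nested-cotorsion-pairs-adjunction-theorem} to the nested pair $(\VF,\CA)\le(\Flat,\Cot)$: under the vertical identifications of Corollary~\ref{cotorsion-pair-co-contra-equivalence}, the upper horizontal functor $\sD^\bco(\VF)\to\sD^\bco(\Flat)$ is precisely the left adjoint $F\:\Hot(\VF\cap\CA)\to\Hot(\Flat\cap\Cot)$ of the theorem, while the lower horizontal functor $\sD^\bctr(\Cot)\to\sD^\bctr(\CA)$ is its right adjoint $G\:\Hot(\Flat\cap\Cot)\to\Hot(\VF\cap\CA)$. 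Since both $F$ and $G$ are equivalences by Theorem~\ref{flat-type-contraderived-equivalences-theorem}, they form a pair of mutually quasi-inverse adjoint equivalences between these two homotopy categories, which is exactly what commutativity of the displayed square asserts. There is no serious obstacle at this stage; all the substantive work has been carried out in the preceding sections, and the corollary merely packages it into a single diagram.
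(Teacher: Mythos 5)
Your proposal is correct and follows essentially the same route as the paper: verify $\Ac^\bco(\VF)=\Ac(\VF)$ and $\Ac^\bco(\Flat)=\Ac(\Flat)$, quote the cited equivalences, and obtain commutativity from Theorem~\ref{nested-cotorsion-pairs-adjunction-theorem} together with the observation that two adjoint equivalences are mutually inverse. No discrepancies.
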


\begin{proof}
 One has $\Ac^\bco(\VF)=\Ac(\VF)$ by
Lemma~\ref{exact-category-of-finite-homol-dim}(a)
and $\Ac^\bco(\Flat)=\Ac(\Flat)$ by
Lemma~\ref{ac=ac-bco-equivalent-conditions}(4).
 The square diagram is commutative by
Theorem~\ref{nested-cotorsion-pairs-adjunction-theorem}
(as two category equivalences are adjoint if and only if they are
mutually inverse).
\end{proof}

\begin{ex} \label{very-flat-type-is-flat-type-example}
 Any cotorsion pair of the very flat type is a cotorsion pair of
the flat type.
 Indeed, let $\sK$ be a Grothendieck category and $(\VF,\CA)$ be
a cotorsion pair of the very flat type in~$\sK$.
 Put $\Flat=\VF$ and $\Cot=\CA$.

 Then the exact category $\Flat\cap\CA$ is split (i.~e., all
the admissible short exact sequences in $\Flat\cap\CA$ are split), so
all the objects are both projective and injective in $\Flat\cap\CA$.
 Hence all the three classes $\Ac^\bctr(\Flat\cap\CA)$, \
$\Ac^\bco(\Flat\cap\CA)$, and $\Ac(\Flat\cap\CA)$ coincide with
the class of contractible complexes in $\Flat\cap\CA$.
 In other words, one can say that \emph{any split exact category is of
the flat type}.
 So the conditions of Corollary~\ref{flat-type-definition-cor}(1) or~(2)
are satisfied for $\Flat\cap\CA$ in this example.

 Thus all the examples of cotorsion pairs of the very flat type in
Examples~\ref{projective-cotorsion-pair-is-very-flat-type-ex}\+-%
\ref{flat-projdim1-modules-and-comodules-exs} can be viewed as
examples of cotorsion pairs of the flat type.
\end{ex}

\begin{ex} \label{flat-cotorsion-pair-in-modules-flat-type}
 Let $A$ be an associative ring.
 Consider the category of left $A$\+modules $\sK=A\Modl$.
 Following Example~\ref{projective-cotorsion-pair-is-very-flat-type-ex},
the two classes $\VF=A\Modl_\proj$ and $\CA=A\Modl$ form a cotorsion
pair $(\VF,\CA)$ of the very flat type in~$\sK$.

 A left $A$\+module $C$ is said to be \emph{cotorsion} (in the sense
of Enochs~\cite[Section~2]{En2}) if $\Ext^1_A(F,C)=0$ for all flat
left $A$\+modules~$F$.
 Let $\Flat=A\Modl_\flat$ be the class of flat left $A$\+modules
and $\Cot=A\Modl^\cot$ be the class of cotorsion left $A$\+modules.
 Following~\cite[Section~2]{BBE} or~\cite[Theorem~6.19]{GT} (see also,
e.~g., \cite[Section~4.3]{Pphil}), the pair of classes $(\Flat,\Cot)$
is a hereditary complete cotorsion pair generated by a set of objects
in $A\Modl$.
 Obviously, we have $(\VF,\CA)\le(\Flat,\Cot)$.

 In this context, the condition of
Lemma~\ref{ac=ac-bctr-equivalent-conditions}(1) says that, given
a complex of flat left $A$\+modules $F^\bu$ that is acyclic as a complex
in $A\Modl$, the $A$\+modules of cocycles of $F^\bu$ are flat if and
only if, for every complex of projective left $A$\+modules $P^\bu$,
all morphisms of complexes $P^\bu\rarrow F^\bu$ are homotopic to zero.
 This is a result
of Neeman~\cite[Theorem~8.6\,(i)\,$\Leftrightarrow$\,(iii)]{Neem}.
 The condition of
Lemma~\ref{ac=ac-bco-equivalent-conditions}(1) says that, given
a complex of cotorsion left $A$\+modules $C^\bu$ that is acyclic as
a complex in $A\Modl$, the $A$\+modules of cocycles of $C^\bu$ must be
cotorsion.
 This is the cotorsion periodicity theorem of Bazzoni,
Cort\'es-Izurdiaga, and Estrada~\cite[Theorem~1.2(2),
Proposition~4.8(2), or Theorem~5.1(2)]{BCE}.
 Thus the cotorsion pair $(\Flat,\Cot)=(A\Modl_\flat,\>A\Modl^\cot)$
in $\sK=A\Modl$ is of the flat type.

 In this example, the assertion of Lemma~\ref{flat-veryflat-periodicity}
claims that, in any complex of projective left $A$\+modules with flat
$A$\+modules of cocycles, the $A$\+modules of cocycles are actually
projective.
 This is a restatement of the Benson--Goodearl flat and projective
periodicity theorem~\cite[Theorem~2.5]{BG}, rediscovered by
Neeman in~\cite[Remark~2.15]{Neem}.
 The condition of Corollary~\ref{flat-type-definition-cor}(1) claims
that the exact category of flat left $A$\+modules $\Flat\cap\CA=
\Flat=A\Modl_\flat$ is of the flat type.
 This observation was made in~\cite[Theorems~7.14 and~7.18]{Pphil}.

 Theorem~\ref{flat-type-conventional-derived-equivalence-theorem}
claims that the inclusion $A\Modl^\cot\rarrow A\Modl$ induces
an equivalence of the conventional derived categories
$\sD(A\Modl^\cot)\simeq\sD(A\Modl)$.
 This was observed in~\cite[Theorem~7.17]{Pphil}.
 Theorem~\ref{flat-type-contraderived-equivalences-theorem}(a) claims
that the inclusion $A\Modl_\proj\rarrow A\Modl_\flat$ induces
an equivalence of the conventional derived categories
$\Hot(A\Modl_\proj)=\sD(A\Modl_\proj)\simeq\sD(A\Modl_\flat)$.
 This observation was mentioned in~\cite[Theorem~7.13]{Pphil}.
 Theorem~\ref{flat-type-contraderived-equivalences-theorem}(c) claims
that the inclusion $A\Modl^\cot\rarrow A\Modl$ induces an equivalence
of the Becker contraderived categories
$\sD^\bctr(A\Modl^\cot)\simeq\sD^\bctr(A\Modl)$.
 This was observed in~\cite[Theorem~7.19]{Pphil}.
\end{ex}

\begin{ex} \label{flat-cotorsion-pair-in-qcoh-flat-type}
 Let $X$ be a quasi-compact semi-separated scheme.
 Consider the Grothendieck category $\sK=X\Qcoh$ of quasi-coherent
sheaves on~$X$.
 Following
Example~\ref{very-flat-and-very-flaprojective-in-qcoh-exs}(1),
the classes $\VF=X\Qcoh_\vfl$ of very flat quasi-coherent sheaves
on $X$ and $\CA=X\Qcoh^\cta$ of contraadjusted quasi-coherent sheaves
on $X$ form a cotorsion pair $(\VF,\CA)$ of the very flat type in~$\sK$.

 A quasi-coherent sheaf $\cC$ on $X$ is said to be
\emph{cotorsion}~\cite{EE} if $\Ext_X^1(\cF,\cC)=0$ for all flat
quasi-coherent sheaves $\cF$ on~$X$.
 Denote the class of flat quasi-coherent sheaves by $X\Qcoh_\flat
\subset X\Qcoh$ and the class of cotorsion quasi-coherent sheaves
by $X\Qcoh^\cot\subset X\Qcoh$.

 The pair of classes $\Flat=X\Qcoh_\flat$ and $\Cot=X\Qcoh^\cot$ is
a hereditary complete cotorsion pair generated by a set of objects
in~$\sK$.
 Indeed, the deconstructibility of $X\Qcoh_\flat$ and the existence
of flat covers and cotorsion envelopes in $X\Qcoh$ are the results
of~\cite[Proposition~3.3 and Corollary~4.2]{EE} valid for any
scheme~$X$, while the assertion that the class $X\Qcoh_\flat$ is
generating in $X\Qcoh$ holds for quasi-compact semi-separated
schemes $X$ by~\cite[Section~2.4]{M-n} or~\cite[Lemma~A.1]{EP}
(see also~\cite[Corollary~4.1.11]{Pcosh}).
 Obviously, we have $(\VF,\CA)\le(\Flat,\Cot)$.

 In this context, the condition of
Corollary~\ref{flat-type-definition-cor}(1) or~(2) says that,
in the exact category $X\Qcoh_\flat^\cta=X\Qcoh_\flat\cap X\Qcoh^\cta
=\Flat\cap\CA$, the classes of acyclic, Becker-coacyclic, and
Becker-contraacyclic complexes coincide.
 This is the result of~\cite[Corollary~5.5.11]{Pcosh}.
 Thus the cotorsion pair $(\Flat,\Cot)=(X\Qcoh_\flat,\>X\Qcoh^\cot)$
in $\sK=X\Qcoh$ is of the flat type.

 The condition of
Lemma~\ref{ac=ac-bco-equivalent-conditions}(1) says that, in any
acyclic complex of cotorsion quasi-coherent sheaves on $X$,
the sheaves of cocycles are also cotorsion.
 This result, called the \emph{cotorsion periodicity for quasi-coherent
sheaves}, was first claimed in~\cite[Theorem~3.3]{CET}, but
the proof of~\cite[Lemma~3.2]{CET} was erroneous.
 A correct proof of the cotorsion periodicity for quasi-coherent
sheaves appeared in~\cite[Theorem~10.2 and Corollary~10.4]{PS6},
while the error in~\cite{CET} was subsequently fixed
in~\cite[Theorem~6.3(2) and Remark~6.7]{EGilO}.
 See~\cite[Remark~10.3]{PS6} for a discussion.

 In this example,
Theorem~\ref{flat-type-conventional-derived-equivalence-theorem}
claims that the inclusion $X\Qcoh^\cot\rarrow X\Qcoh$ induces
an equivalence of the conventional derived categories
$\sD(X\Qcoh^\cot)\rarrow\sD(X\Qcoh)$.
 This was observed in~\cite[Corollary~10.7]{PS6}.
 Theorem~\ref{flat-type-contraderived-equivalences-theorem}(a) claims
that the inclusion $X\Qcoh_\vfl\rarrow X\Qcoh_\flat$ induces
an equivalence of the conventional derived categories
$\sD(X\Qcoh_\vfl)\simeq\sD(X\Qcoh_\flat)$.
 This result is due to Estrada and Sl\'avik~\cite[Corollary~6.2]{ES}.
 Theorem~\ref{flat-type-contraderived-equivalences-theorem}(c) claims
that the inclusion $X\Qcoh^\cot\rarrow X\Qcoh^\cta$ induces
an equivalence of the Becker contraderived categories
$\sD^\bctr(X\Qcoh^\cot)\simeq\sD^\bctr(X\Qcoh^\cta)$.
 A closely related result for contraherent cosheaves can be found
in~\cite[Theorem~5.5.10]{Pcosh} (see~\cite[Corollaries~4.7.4(a)
and~4.7.5(a), and Lemmas~4.8.2 and~4.8.4(a)]{Pcosh} for the connection
between the quasi-coherent sheaves and contraherent cosheaves in
this context).
\end{ex}

\begin{ex} \label{flat-cotorsion-pair-in-qcoh-A-modules-flat-type}
 Let $X$ be a quasi-compact semi-separated scheme and $\cA$ be
a quasi-coherent quasi-algebra over~$X$ (as in
Example~\ref{very-flat-and-very-flaprojective-in-qcoh-exs}(2)).
 Consider the Grothendieck category $\sK=\cA\Qcoh$ of quasi-coherent
left $\cA$\+modules on~$X$.
 Let us denote by $\Ext_\cA^*({-},{-})$ the $\Ext$ groups in
the abelian category $\cA\Qcoh$.
 Following
Example~\ref{very-flat-and-very-flaprojective-in-qcoh-exs}(2),
the classes $\VF=\cA\Qcoh_\vflp$ of very flaprojective quasi-coherent
left $\cA$\+modules and $\CA=\cA\Qcoh^{X\dcta}$ of $X$\+contraadjusted
quasi-coherent left $\cA$\+modules form a cotorsion pair $(\VF,\CA)$
of the very flat type in~$\sK$.

 A quasi-coherent left $\cA$\+module $\cF$ is said to be \emph{flat}
if, for every affine open subscheme $U\subset X$, the left
$\cA(U)$\+module $\cF(U)$ is flat.
 It suffices to check this condition for the affine open subschemes $U$
belonging to any given affine open covering of~$X$
\cite[Lemma~1.24(a) and Section~3.4]{Pdomc}.
 A quasi-coherent left $\cA$\+module $\cC$ is said to be
\emph{$\cA$\+cotorsion} (or simply \emph{cotorsion}) if
$\Ext_\cA^1(\cF,\cC)=0$ for all flat quasi-coherent left
$\cA$\+modules~$\cF$ \,\cite[Section~4.3]{Pdomc}.
 Denote the class of flat quasi-coherent left $\cA$\+modules by
$\cA\Qcoh_\flat\subset\cA\Qcoh$ and the class of $\cA$\+cotorsion
quasi-coherent left $\cA$\+modules by $\cA\Qcoh^{\cA\dcot}\subset
\cA\Qcoh$.

 The pair of classes $\Flat=\cA\Qcoh_\flat$ and
$\Cot=\cA\Qcoh^{\cA\dcot}$ is a hereditary complete cotorsion pair
in $\cA\Qcoh$ \,\cite[Theorem~4.18 and Remark~4.20]{Pdomc}.
 Similarly to~\cite[Proposition~3.3]{EE}
or~\cite[Lemma~B.3.5]{Pcosh}, one can prove that the class of flat
quasi-coherent left $\cA$\+modules is deconstructible in $\cA\Qcoh$;
so this cotorsion pair is generated by a set of objects.
 One can easily see that $(\VF,\CA)\le(\Flat,\Cot)$ (since, in
the context of
Example~\ref{very-flat-and-very-flaprojective-in-modules-exs}(2),
all $A/R$\+very flaprojective $A$\+modules are flat as $A$\+modules
by~\cite[Remark~1.29]{Pdomc}).

 One expects the cotorsion pair
$(\Flat,\Cot)=(\cA\Qcoh_\flat,\>\cA\Qcoh^{\cA\dcot})$ in
$\sK=\cA\Qcoh$ to be of the flat type.
 In particular, the condition of
Lemma~\ref{ac=ac-bco-equivalent-conditions}(1) holds
by~\cite[Theorem~5.5]{Pdomc}.
 The condition of
Lemma~\ref{ac=ac-bctr-equivalent-conditions}(2) may be also provable
by developing the quasi-coherent/contraherent $\cA$\+module version
of the theory of~\cite[Section~5]{Pcosh} (such
as~\cite[Corollary~5.1.13]{Pcosh}).
 But this has not been worked out yet.
\end{ex}

\begin{ex} \label{flat-cotorsion-pair-in-comodules-flat-type}
 Let $A$ be an associative ring and $\cE$ be a coring over~$A$
(as in Example~\ref{flat-projdim1-modules-and-comodules-exs}(2)).
 Assume that $\cE$ is a flat right $A$\+module, and consider
the Grothendieck category $\sK=\cE\Comodl$ of left $\cE$\+comodules.
 Let us denote by $\Ext_\cE^*({-},{-})$ the Ext groups in the abelian
category $\cE\Comodl$.
 Assuming further that conditions~($*$) and~(${*}{*}$) are
satisfied, Example~\ref{flat-projdim1-modules-and-comodules-exs}(2)
provides us with a cotorsion pair $(\VF,\CA)=
(\cE\Comodl^{A\dpdo}_{A\dflat},\allowbreak\>
(\cE\Comodl^{A\dpdo}_{A\dflat})^{\perp_1})$ of the very flat
type in~$\sK$.

 Denote by $\cE\Comodl_{A\dflat}\subset\cE\Comodl$ the full subcategory
of \emph{$A$\+flat} $\cE$\+comodules, i.~e., $\cE$\+comodules whose
underlying $A$\+module is flat.
 A left $\cE$\+comodule $\cC$ is said to be \emph{cotorsion} if
$\Ext_\cE^1(\cF,\cC)=0$ for all $A$\+flat left $\cE$\+comodules~$\cF$
\,\cite[Section~C.2]{Pcosh}, \cite[Section~5]{Pflcc}.
 Denote the class of cotorsion left $\cE$\+comodules by
$\cE\Comodl^\cot\subset\cE\Comodl$.

 It follows from~($*$) that $\cE\Comodl_{A\dflat}$ is a resolving full
subcategory in $\cE\Comodl$; it is also obviously closed under direct
summands.
 Similarly to the discussion in
Example~\ref{flat-projdim1-modules-and-comodules-exs}(2), one can
use the Hill lemma~\cite[Theorem~7.10]{GT},
\cite[Theorem~2.1]{Sto-hill} in the module category $A\Modl$
together with the fact that the class of flat modules $A\Modl_\flat$
is deconstructible (by~\cite[Section~2]{BBE}) in order to show that
the class of $A$\+flat comodules $\cE\Comodl_{A\dflat}$ is
deconstructible in $\cE\Comodl$.

 By the Eklof--Trlifaj theorem (both
Theorems~\ref{loc-pres-eklof-trlifaj}
and~\ref{efficient-eklof-trlifaj} are applicable), it follows that
$(\Flat,\Cot)=(\cE\Comodl_{A\dflat},\allowbreak\>\cE\Comodl^\cot)$ is
a hereditary complete cotorsion pair generated by a set of objects
in $\cE\Comodl$.
 By construction, we have $\VF\subset\Flat$; so $(\VF,\CA)\le
(\Flat,\Cot)$.
 Furthermore, under the assumptions~($*$) and~(${*}{*}$),
the cotorsion pair $(\Flat,\Cot)$ satisfies the condition
of Lemma~\ref{ac=ac-bco-equivalent-conditions}(1)
by~\cite[Corollary~5.5]{Pflcc}.
 It would be interesting to know whether the cotorsion pair
$(\Flat,\Cot)=(\cE\Comodl_{A\dflat},\allowbreak\>\cE\Comodl^\cot)$
in $\sK=\cE\Comodl$ is of the flat type (i.~e., in other words,
whether the equivalent conditions of
Lemma~\ref{ac=ac-bctr-equivalent-conditions} hold for it).
\end{ex}

 Our final example in this section is \emph{not} a potential example
of a cotorsion pair of the flat type.
 Rather, it is an example of an \emph{exact category of the flat type}
arising as the left-hand class of a hereditary complete cotorsion
pair in a \emph{non-Grothendieck} abelian category.

\begin{ex} \label{flat-contramodules-exact-category-flat-type}
 Let $\fR$ be a complete, separated topological ring where open right
ideals form a base of neighborhoods of zero.
 We refer to~\cite[Section~5]{PR}, \cite[Section~2.1]{Prev}, 
\cite[Section~7]{Pflcc}, or~\cite[Section~3]{Pbc} for the definition
of the abelian category of \emph{left\/ $\fR$\+contramodules}
$\fR\Contra$.
 The contramodules are module-like structures with infinite summation
operations understood algebraically and subject to the natural axioms.
 The category $\fR\Contra$ is \emph{not} Grothendieck, because
the filtered colimit functors are usually \emph{not} exact in it;
instead, $\fR\Contra$ is a locally presentable abelian category
with enough projective objects.

 The definition of a \emph{flat} left $\fR$\+contramodule can be
found in~\cite[Section~5]{PR}, \cite[Section~3.3]{Prev},
\cite[Section~8]{Pflcc}, or~\cite[Section~3]{Pbc}.
 Assuming that $\fR$ \emph{has a countable base of neighborhoods of
zero}, the full subcategory of flat left $\fR$\+contramodules
$\fR\Contra_\flat$ is resolving in $\fR\Contra$ \,\cite[Corollary~6.8,
Lemma~6.9, Corollary~6.13, and Corollary~6.15]{PR},
\cite[Proposition~3.10]{Prev}, \cite[Lemma~8.4(a)]{Pflcc},
\cite[Lemma~4.2]{Pbc}.
 Under the same assumption, the full subcategory $\fR\Contra_\flat$
is the left-hand class of a hereditary complete cotorsion pair
$(\fR\Contra_\flat,\allowbreak\>\fR\Contra^\cot)$ generated by a set
of objects in $\fR\Contra$ \,\cite[Corollaries~7.1, 7.6,
and~7.8]{PR}, \cite[Proposition~8.1 and the subsequent
paragraph]{Pbc}.

 By Lemma~\ref{inj-proj-objects-in-left-right-classes}(a), it follows
that there are enough injective objects in the exact category
$\fR\Contra_\flat$, and the class of injective objects in
$\fR\Contra_\flat$ coincides with the class of flat cotorsion
left $\fR$\+contramodules $\fR\Contra_\flat^\cot=\fR\Contra_\flat
\cap\fR\Contra^\cot$.
 By Lemma~\ref{proj-resolving-inj-coresolving}(a), there are enough
projective objects in $\fR\Contra_\flat$, and the class of projective
objects in $\fR\Contra_\flat$ coincides with the class of projective
objects in $\fR\Contra$.

 Assume further that $\fR$ has a countable base of neighborhoods of
zero \emph{consisting of open two-sided ideals}.
 Then the three classes of acyclic, Becker-coacyclic, and
Becker-contraacyclic complexes in the exact category $\fR\Contra_\flat$
coincide with each other
by~\cite[Theorem~11.1\,(i$^{\mathrm{c}}$)\,$\Leftrightarrow$\,%
(ii$^{\mathrm{c}}$)\,$\Leftrightarrow$\,(vi$^{\mathrm{c}}$)]{Pbc}.
 Moreover, for the exact category $\sF=\fR\Contra_\flat$, the functors
$\Hot(\sF_\proj)\rarrow\sD^\bctr(\sF)$ and
$\Hot(\sF^\inj)\rarrow\sD^\bco(\sF)$ are triangulated
equivalences by~\cite[Corollary~11.2]{Pbc}.
 Thus $\sF=\fR\Contra_\flat$ is an exact category of the flat type, in
the sense of the definition in the beginning of
Section~\ref{flat-type-secn}.
\end{ex}

\Section{A Sandwiched Cotorsion Pair}  \label{sandwiched-secn}

 Let $\sK$ be a Grothendieck category with a cotorsion pair
$(\VF,\CA)$ of the very flat type and a cotorsion pair $(\Flat,\Cot)$
of the flat type such that $(\VF,\CA)\le(\Flat,\Cot)$.
 Let $(\sF,\sC)$ be a hereditary complete cotorsion pair generated by
a set of objects in~$\sK$.
 Assume that the cotorsion pair $(\sF,\sC)$ is sandwiched between
$(\VF,\CA)$ and $(\Flat,\Cot)$, in the sense that
$(\VF,\CA)\le(\sF,\sC)\le(\Flat,\Cot)$.

 What are the conditions that need to be satisfied in order to claim
that the cotorsion pair $(\sF,\sC)$ is of the flat type?
 What are the natural equivalent restatements of such conditions?
 These are the questions we address in this section. 

\begin{lem} \label{coderived-F-to-Flat-Verdier-quotient}
 Let\/ $\sK$ be a Grothendieck category with a cotorsion pair
$(\VF,\CA)$ of the very flat type and a cotorsion pair $(\Flat,\Cot)$
of the flat type.
 Let $(\sF,\sC)$ be a hereditary complete cotorsion pair generated by
a set of objects in\/~$\sK$.
 Assume that $(\VF,\CA)\le(\sF,\sC)\le(\Flat,\Cot)$.
 Then the triangulated functor\/ $\sD^\bco(\sF)\rarrow\sD^\bco(\Flat)$
induced by the inclusion of exact categories\/ $\sF\rarrow\Flat$ is
a triangulated Verdier quotient functor with the kernel category
equivalent to the Verdier quotient category
$$
 \frac{\Hot(\sF)\cap\Ac^\bco(\Flat)}{\Ac^\bco(\sF)}.
$$
\end{lem}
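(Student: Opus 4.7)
The plan is to apply Lemma~\ref{triangulated-quotient-lemma}(a) to the triangulated category $\sH=\Hot(\Flat)$ with the full triangulated subcategory $\sE=\Hot(\sF)$ and the thick subcategory $\sX=\Ac^\bco(\Flat)$. Once the hypothesis of that lemma is verified, we will obtain a triangulated equivalence
$$
 \Hot(\sF)/(\Hot(\sF)\cap\Ac^\bco(\Flat))\simeq\sD^\bco(\Flat).
$$
By Corollary~\ref{coderived-A1-to-A2-well-defined} applied to the nested pair $(\sF,\sC)\le(\Flat,\Cot)$, the inclusion $\Ac^\bco(\sF)\subset\Ac^\bco(\Flat)$ holds, and hence $\Ac^\bco(\sF)\subset\Hot(\sF)\cap\Ac^\bco(\Flat)$.
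The induced functor $\sD^\bco(\sF)\rarrow\sD^\bco(\Flat)$ then factors as the Verdier quotient of $\sD^\bco(\sF)=\Hot(\sF)/\Ac^\bco(\sF)$ by the thick subcategory $(\Hot(\sF)\cap\Ac^\bco(\Flat))/\Ac^\bco(\sF)$, composed with the equivalence above, yielding exactly the description claimed.

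The main step will be to verify the hypothesis of Lemma~\ref{triangulated-quotient-lemma}(a): for every $F^\bu\in\Com(\Flat)$, construct a morphism $G^\bu\rarrow F^\bu$ in $\Hot(\Flat)$ with $G^\bu\in\Com(\sF)$ and mapping cone in $\Ac^\bco(\Flat)$. Applying Proposition~\ref{all-contraacyclic-cotorsion-pair-prop} to the cotorsion pair $(\sF,\sC)$ provides the hereditary complete cotorsion pair $(\Com(\sF),\Ac^\bctr(\sC))$ in $\Com(\sK)$. A special precover sequence of $F^\bu$ with respect to this cotorsion pair yields a short exact sequence $0\rarrow B^\bu\rarrow G^\bu\rarrow F^\bu\rarrow0$ in $\Com(\sK)$ with $G^\bu\in\Com(\sF)$ and $B^\bu\in\Ac^\bctr(\sC)$. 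Since $\sF\subset\Flat$ and the class $\Flat$ is closed under kernels of admissible epimorphisms in $\sK$, this is in fact a termwise admissible short exact sequence of complexes in $\Flat$.

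The hard part will be to show that $B^\bu$ lies in $\Ac^\bco(\Flat)$, and this is precisely where the assumption that $(\Flat,\Cot)$ is of the flat type enters the argument. By Corollary~\ref{contraderived-B2-to-B1-well-defined} applied to the nested pair $(\VF,\CA)\le(\sF,\sC)$, we have $\Ac^\bctr(\sC)\subset\Ac^\bctr(\CA)$, so that $B^\bu$ lies in $\Com(\Flat)\cap\Ac^\bctr(\CA)$. Lemma~\ref{flat-type-convenient-restatement-lemma} identifies this intersection with $\Com(\CA)\cap\Ac^\bco(\Flat)$, whence $B^\bu\in\Ac^\bco(\Flat)$. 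To transfer coacyclicity from $B^\bu$ to the mapping cone of $G^\bu\rarrow F^\bu$, I will invoke Lemma~\ref{totalizations-are-co-contra-acyclic}(a): the totalization of the short exact sequence $0\rarrow B^\bu\rarrow G^\bu\rarrow F^\bu\rarrow 0$ is Becker-coacyclic in $\Flat$, and a standard iterated-cone argument then produces the distinguished triangle $B^\bu\rarrow G^\bu\rarrow F^\bu\rarrow B^\bu[1]$ in $\sD^\bco(\Flat)$. Comparing with the mapping cone triangle identifies the cone of $G^\bu\rarrow F^\bu$ with $B^\bu[1]$ in $\sD^\bco(\Flat)$, which vanishes since $B^\bu\in\Ac^\bco(\Flat)$; that is, the cone lies in $\Ac^\bco(\Flat)$, as required.
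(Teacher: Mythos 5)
Your proposal is correct and follows essentially the same route as the paper: the same application of Lemma~\ref{triangulated-quotient-lemma}(a) with $\sH=\Hot(\Flat)$, $\sX=\Ac^\bco(\Flat)$, $\sE=\Hot(\sF)$, the same special precover with respect to the cotorsion pair $(\Com(\sF),\>\Ac^\bctr(\sC))$ from Proposition~\ref{all-contraacyclic-cotorsion-pair-prop}, and the same chain Corollary~\ref{contraderived-B2-to-B1-well-defined} $\to$ Lemma~\ref{flat-type-convenient-restatement-lemma} $\to$ Lemma~\ref{totalizations-are-co-contra-acyclic}(a) to show the cone is Becker-coacyclic in $\Flat$. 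The only cosmetic difference is that you spell out the factorization identifying the kernel category, which the paper leaves implicit.
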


\begin{proof}
 Notice first of all that $\Ac^\bco(\sF)\subset\Ac^\bco(\Flat)$ by
Corollary~\ref{coderived-A1-to-A2-well-defined}; so the triangulated
functor $\sD^\bco(\sF)\rarrow\sD^\bco(\Flat)$ is well-defined.
 We apply Lemma~\ref{triangulated-quotient-lemma}(a) to the triangulated
category $\sH=\Hot(\Flat)$ with the triangulated subcategories
$\sX=\Ac^\bco(\Flat)$ and $\sE=\Hot(\sF)$.
 It suffices to check the assumption of
Lemma~\ref{triangulated-quotient-lemma}(a) in order to prove both
the desired assertions.

 Indeed, let $H^\bu$ be an object of $\Hot(\Flat)$.
 By Proposition~\ref{all-contraacyclic-cotorsion-pair-prop}, the pair
of classes $(\Com(\sF),\>\Ac^\bctr(\sC))$ is a hereditary complete
cotorsion pair in $\Com(\sK)$.
 Let $0\rarrow B^\bu\rarrow F^\bu\rarrow H^\bu\rarrow0$ be a special
precover exact sequence for the object $H^\bu\in\Com(\sK)$ with
respect to this cotorsion pair.
 So we have $F^\bu\in\Com(\sF)$ and $B^\bu\in\Ac^\bctr(\sC)$.
 Since the class $\Flat$ is closed under kernels of epimorphisms in
$\sK$, we actually have $B^\bu\in\Com(\Flat)\cap\Ac^\bctr(\sC)$.

 By Corollary~\ref{contraderived-B2-to-B1-well-defined}, this implies
that $B^\bu\in\Com(\Flat)\cap\Ac^\bctr(\CA)$.
 By Lemma~\ref{flat-type-convenient-restatement-lemma}, it follows
that $B^\bu\in\Ac^\bco(\Flat)$.
 Using Lemma~\ref{totalizations-are-co-contra-acyclic}(a), we can
conclude that the cone of the morphism of complexes $F^\bu\rarrow
H^\bu$ is Becker-coacyclic in $\Flat$, as desired.
\end{proof}

\begin{lem} \label{coderived-VF-F-Flat-adjunction}
 Let\/ $\sK$ be a Grothendieck category with a cotorsion pair
$(\VF,\CA)$ of the very flat type and a cotorsion pair $(\Flat,\Cot)$
of the flat type.
 Let $(\sF,\sC)$ be a hereditary complete cotorsion pair generated by
a set of objects in\/~$\sK$.
 Assume that $(\VF,\CA)\le(\sF,\sC)\le(\Flat,\Cot)$.
 Then the triangulated equivalence\/
$\sD^\bco(\VF)\simeq\sD^\bco(\Flat)$ from
Theorem~\ref{flat-type-contraderived-equivalences-theorem}(b) makes
the triangulated functor\/ $\sD^\bco(\VF)\rarrow\sD^\bco(\sF)$
induced by the inclusion\/ $\VF\rarrow\sF$ left adjoint to
the triangulated functor\/ $\sD^\bco(\sF)\rarrow\sD^\bco(\Flat)$
induced by the inclusion\/ $\sF\rarrow\Flat$.
\end{lem}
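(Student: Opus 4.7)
The plan is to show that for every $K\in\sD^\bco(\VF)$ and $L\in\sD^\bco(\sF)$ the triangulated functor $F_2$ induces a natural isomorphism
$$
 \Hom_{\sD^\bco(\sF)}(F_1 K,\,L)\simeq
 \Hom_{\sD^\bco(\Flat)}(F_2 F_1 K,\,F_2 L).
$$
Since $F_2F_1$ coincides with the equivalence $E$ of Theorem~\ref{flat-type-contraderived-equivalences-theorem}(b), this is precisely the adjunction isomorphism asserted in the lemma.

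By Corollary~\ref{coderived-of-A-well-behaved} together with Lemma~\ref{inj-proj-objects-in-left-right-classes}(a), one can represent $K$ by a complex $K^\bu\in\Com(\VF\cap\CA)=\Com(\VF^\inj)$ and $L$ by a complex $L^\bu\in\Com(\sF\cap\sC)=\Com(\sF^\inj)$. Since $L^\bu$ is a complex of $\sF$\+injectives, the standard fact that Hom out of a Becker-coacyclic complex into a complex of injectives vanishes gives $\Hom_{\sD^\bco(\sF)}(F_1 K,L)=\Hom_{\Hot(\sF)}(K^\bu,L^\bu)=H^0\Hom_\sK(K^\bu,L^\bu)$. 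Next, I construct a replacement of $L^\bu$ in $\sD^\bco(\Flat)$: using the cotorsion pair $(\Ac^\bco(\Flat),\Com(\Cot))$ in $\Com(\sK)$ from Proposition~\ref{coacyclic-all-cotorsion-pair-prop}, fix a special preenvelope short exact sequence
$$
 0\lrarrow L^\bu\lrarrow M^\bu\lrarrow A^\bu\lrarrow 0
$$
in $\Com(\sK)$ with $M^\bu\in\Com(\Cot)$ and $A^\bu\in\Ac^\bco(\Flat)$. Because $L^n\in\sF\subset\Flat$ and $A^n\in\Flat$, and $\Flat$ is closed under extensions, one has $M^\bu\in\Com(\Flat\cap\Cot)=\Com(\Flat^\inj)$, and the same vanishing argument produces $\Hom_{\sD^\bco(\Flat)}(F_2 F_1 K,F_2 L)=H^0\Hom_\sK(K^\bu,M^\bu)$, with the $F_2$\+induced map being the one arising from $L^\bu\to M^\bu$.

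Applying $\Hom_\sK(K^\bu,-)$ to the short exact sequence, the $\Ext^1$\+vanishings $\Ext^1_\sK(K^i,L^j)=0$ (which hold because $K^i\in\VF\subset\sF={}^{\perp_1}\sC$ and $L^j\in\sC$) yield a short exact sequence of complexes of abelian groups
$$
 0\lrarrow\Hom_\sK(K^\bu,L^\bu)\lrarrow\Hom_\sK(K^\bu,M^\bu)
 \lrarrow\Hom_\sK(K^\bu,A^\bu)\lrarrow 0.
$$
Thus it suffices to prove that $\Hom_\sK(K^\bu,A^\bu)$ is acyclic; this is the main technical obstacle.

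To handle it, I convert the $\Ac^\bco(\Flat)$\+condition on $A^\bu$ into an $\Ac^\bctr(\CA)$\+condition. Since $\sC$ is closed under cokernels of admissible monomorphisms in $\sK$, and $L^n\in\sC$, $M^n\in\Cot\subset\sC$, one has $A^n\in\sC\subset\CA$; so $A^\bu\in\Com(\CA)\cap\Ac^\bco(\Flat)$, and Lemma~\ref{flat-type-convenient-restatement-lemma} (applicable because $(\Flat,\Cot)$ is of the flat type) identifies this intersection with $\Com(\Flat)\cap\Ac^\bctr(\CA)$, giving in particular $A^\bu\in\Ac^\bctr(\CA)$. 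By Lemma~\ref{inj-proj-objects-in-left-right-classes}(b) applied to $(\VF,\CA)$, the class of projective objects in $\CA$ coincides with $\VF\cap\CA$, so $K^\bu$ is a complex of projectives in $\CA$. Applying the definition of Becker-contraacyclicity to each shift of $K^\bu$, one obtains $\Hom_{\Hot(\sK)}(K^\bu,A^\bu[n])=H^n\Hom_\sK(K^\bu,A^\bu)=0$ for all $n\in\boZ$, so $\Hom_\sK(K^\bu,A^\bu)$ is indeed acyclic. This establishes the desired natural isomorphism and concludes the proof.
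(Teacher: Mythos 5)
Your proof is correct. The paper organizes the argument differently: it first establishes (Lemma~\ref{coderived-F-to-Flat-Verdier-quotient}) that $\sD^\bco(\sF)\rarrow\sD^\bco(\Flat)$ is a Verdier quotient functor, and then reduces the adjunction to a single semiorthogonality claim, namely $\Hom_{\sD^\bco(\sF)}(E^\bu,C^\bu)=0$ for all $E^\bu\in\Com(\VF)$ and all $C^\bu$ in the kernel $\Com(\sF)\cap\Ac^\bco(\Flat)$; that claim is proved by replacing $C^\bu$ with a homotopy-equivalent-in-$\sD^\bco(\sF)$ complex in $\Com(\sF\cap\sC)\cap\Ac^\bco(\Flat)$ and then applying exactly the conversion you use. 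You instead verify the adjunction isomorphism directly, resolving $L^\bu$ by its special preenvelope with respect to the cotorsion pair $(\Ac^\bco(\Flat),\>\Com(\Cot))$ and running a three-term Hom-complex argument. The key mechanism is identical in both proofs: the flat-type identity $\Com(\CA)\cap\Ac^\bco(\Flat)=\Com(\Flat)\cap\Ac^\bctr(\CA)$ of Lemma~\ref{flat-type-convenient-restatement-lemma}, followed by the orthogonality of complexes of projective objects of $\CA$ (that is, objects of $\VF\cap\CA$) against $\Ac^\bctr(\CA)$. Your route is somewhat more self-contained, since it does not rely on the Verdier-quotient description of $\sD^\bco(\sF)\rarrow\sD^\bco(\Flat)$, at the price of a longer explicit computation; the paper's route has the advantage that the kernel description it uses is needed anyway for Corollary~\ref{coderived-VF-to-F-fully-faithful} and Theorem~\ref{sandwiched-recollement-theorem}. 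All the individual steps you take (the termwise $\Ext^1$-vanishing giving the short exact sequence of Hom complexes, the membership $A^\bu\in\Com(\sC)\cap\Ac^\bco(\Flat)$ via heredity of $(\sF,\sC)$, and the final acyclicity of $\Hom_\sK(K^\bu,A^\bu)$) check out.
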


\begin{proof}
 In view of Lemma~\ref{coderived-F-to-Flat-Verdier-quotient},
it suffices to show that $\Hom_{\sD^\bco(\sF)}(E^\bu,C^\bu)=0$
for all complexes $E^\bu\in\Com(\VF)$ and
$C^\bu\in\Com(\sF)\cap\Ac^\bco(\Flat)$.
 By Corollary~\ref{coderived-of-A-well-behaved}, there exists
a morphism of complexes $C^\bu\rarrow B^\bu$ with
$B^\bu\in\Com(\sF\cap\sC)$ and the cone belonging to $\Ac^\bco(\sF)$.
 By Corollary~\ref{coderived-A1-to-A2-well-defined}, we have
$\Ac^\bco(\sF)\subset\Ac^\bco(\Flat)$; hence
$B^\bu\in\Com(\sF\cap\sC)\cap\Ac^\bco(\Flat)$.

 Furthermore, $\Hom_{\sD^\bco(\sF)}(E^\bu,C^\bu)\simeq
\Hom_{\sD^\bco(\sF)}(E^\bu,B^\bu)\simeq\Hom_{\Hot(\sF)}(E^\bu,B^\bu)$,
since $B^\bu\in\Com(\sF\cap\sC)=\Com(\sF^\inj)$.
 Now we have $B^\bu\in\Com(\CA)\cap\Ac^\bco(\Flat)$, and
by Lemma~\ref{flat-type-convenient-restatement-lemma} it follows that
$B^\bu\in\Ac^\bctr(\CA)$.
 It remains to refer to
Proposition~\ref{all-contraacyclic-cotorsion-pair-prop}
and Lemma~\ref{Ext1-as-homotopy-Hom-lemma} for the assertion that
$E^\bu\in\Com(\VF)$ and $B^\bu\in\Ac^\bctr(\CA)$ imply
$\Hom_{\Hot(\sK)}(E^\bu,B^\bu)=0$.
\end{proof}

\begin{cor} \label{coderived-VF-to-F-fully-faithful}
 Let\/ $\sK$ be a Grothendieck category with a cotorsion pair
$(\VF,\CA)$ of the very flat type and a cotorsion pair $(\Flat,\Cot)$
of the flat type.
 Let $(\sF,\sC)$ be a hereditary complete cotorsion pair generated by
a set of objects in\/~$\sK$.
 Assume that $(\VF,\CA)\le(\sF,\sC)\le(\Flat,\Cot)$.
 Then the triangulated functor\/ $\sD^\bco(\VF)\rarrow\sD^\bco(\sF)$
induced by the inclusion of exact categories\/ $\VF\rarrow\sF$ is
fully faithful.
\end{cor}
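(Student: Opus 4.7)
My plan is to derive this fully-faithfulness as a purely formal consequence of the adjunction of Lemma~\ref{coderived-VF-F-Flat-adjunction} combined with the triangulated equivalence\/ $\sD^\bco(\VF)\simeq\sD^\bco(\Flat)$ provided by Theorem~\ref{flat-type-contraderived-equivalences-theorem}(b). Write $F\:\sD^\bco(\VF)\rarrow\sD^\bco(\sF)$ for the functor induced by the inclusion\/ $\VF\rarrow\sF$, and let $G\:\sD^\bco(\sF)\rarrow\sD^\bco(\VF)$ be the composition of the functor\/ $\sD^\bco(\sF)\rarrow\sD^\bco(\Flat)$ induced by\/ $\sF\rarrow\Flat$ with a chosen quasi-inverse of the equivalence\/ $\sD^\bco(\VF)\simeq\sD^\bco(\Flat)$. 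By Lemma~\ref{coderived-VF-F-Flat-adjunction}, $F$ is left adjoint to~$G$. Moreover, $GF$ is, up to isomorphism, the functor\/ $\sD^\bco(\VF)\rarrow\sD^\bco(\Flat)$ induced by the inclusion\/ $\VF\rarrow\Flat$ followed by the chosen quasi-inverse; since this induced functor is \emph{itself} the equivalence being inverted, the endofunctor $GF$ of\/ $\sD^\bco(\VF)$ is naturally isomorphic to the identity.

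It then suffices to invoke the purely formal categorical fact that, in any adjunction $F\dashv G$ in which $GF$ is fully faithful, the functor $F$~itself is fully faithful. Faithfulness is immediate: the action of $GF$ on morphism groups factorizes as\/ $\Hom(x,y)\rarrow\Hom(Fx,Fy)\rarrow\Hom(GFx,GFy)$, and since the composite is injective, so is the first arrow. For fullness, given a morphism $g\:Fx\rarrow Fy$, fullness of $GF$ produces $f\:x\rarrow y$ with $GF(f)=G(g)$; naturality of the unit $\eta\:\id\rarrow GF$ then yields $G(F(f))\circ\eta_x=GF(f)\circ\eta_x=\eta_y\circ f$, so that $F(f)$ and $g$ have the same image $G(g)\circ\eta_x$ under the adjunction bijection\/ $\Hom(Fx,Fy)\simeq\Hom(x,GFy)$, forcing $F(f)=g$.

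Since $GF\simeq\id$ is trivially fully faithful, the corollary follows. No serious obstacle is involved: once the preceding Lemma~\ref{coderived-VF-F-Flat-adjunction} and the equivalence of Theorem~\ref{flat-type-contraderived-equivalences-theorem}(b) are in hand, what remains is merely a formal diagram chase.
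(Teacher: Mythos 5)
Your proposal is correct, and it reaches the conclusion by a somewhat different mechanism than the paper. The paper combines the same adjunction from Lemma~\ref{coderived-VF-F-Flat-adjunction} with Lemma~\ref{coderived-F-to-Flat-Verdier-quotient}, which exhibits $\sD^\bco(\sF)\rarrow\sD^\bco(\Flat)$ as a Verdier quotient functor, and then quotes the Gabriel--Zisman fact that an adjoint functor is fully faithful if and only if its adjoint is a localization functor. You bypass Lemma~\ref{coderived-F-to-Flat-Verdier-quotient} altogether: you note that $GF$ is naturally isomorphic to the identity, since the composite of the functors induced by $\VF\rarrow\sF$ and $\sF\rarrow\Flat$ is the functor induced by $\VF\rarrow\Flat$ (this compatibility is immediate from the universal property of the Verdier quotient), and the latter is precisely the equivalence of Theorem~\ref{flat-type-contraderived-equivalences-theorem}(b) used as the identification in the adjunction lemma; then you apply the purely formal observation that for an adjunction $F\dashv G$ with $GF$ fully faithful, $F$ is itself fully faithful. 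That formal step is carried out correctly (in the fullness argument the appeal to naturality of the unit is superfluous: the transpose of $F(f)$ is $GF(f)\circ\eta_x=G(g)\circ\eta_x$, which is the transpose of $g$, and injectivity of the adjunction bijection already gives $F(f)=g$), and it is a prudent route in that you never need to claim that the unit itself is an isomorphism. The trade-off: your argument is more elementary and needs neither Lemma~\ref{coderived-F-to-Flat-Verdier-quotient} nor the Gabriel--Zisman citation at this point (though the proof of the adjunction lemma itself relies on the former), while the paper's route packages fully faithfulness together with the Verdier-quotient description of the adjoint functor, which is the form that feeds into the recollement of Theorem~\ref{sandwiched-recollement-theorem}.
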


\begin{proof}
 This is a corollary of
Lemmas~\ref{coderived-F-to-Flat-Verdier-quotient}
and~\ref{coderived-VF-F-Flat-adjunction}.
 The point is that a left/right adjoint triangulated functor is
fully faithful if and only if its right/left adjoint is a Verdier
quotient functor~\cite[Proposition~1.3]{GZ}.
\end{proof}

\begin{lem} \label{contraderived-C-to-CA-Verdier-quotient}
 Let\/ $\sK$ be a Grothendieck category with a cotorsion pair
$(\VF,\CA)$ of the very flat type and a cotorsion pair $(\Flat,\Cot)$
of the flat type.
 Let $(\sF,\sC)$ be a hereditary complete cotorsion pair generated by
a set of objects in\/~$\sK$.
 Assume that $(\VF,\CA)\le(\sF,\sC)\le(\Flat,\Cot)$.
 Then the triangulated functor\/ $\sD^\bctr(\sC)\rarrow\sD^\bctr(\CA)$
induced by the inclusion of exact categories\/ $\sC\rarrow\CA$ is
a triangulated Verdier quotient functor with the kernel category
equivalent to the Verdier quotient category
$$
 \frac{\Hot(\sC)\cap\Ac^\bctr(\CA)}{\Ac^\bctr(\sC)}.
$$
\end{lem}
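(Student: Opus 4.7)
The plan is to prove this lemma by a direct dualization of the argument used in Lemma~\ref{coderived-F-to-Flat-Verdier-quotient}, swapping the roles of coacyclic and contraacyclic and using the ``other'' cotorsion pair in the category of complexes. First I would note that the induced functor $\sD^\bctr(\sC)\rarrow\sD^\bctr(\CA)$ is well-defined by Corollary~\ref{contraderived-B2-to-B1-well-defined}, since $(\sF,\sC)\le(\VF,\CA)^{\mathrm{op}}$ in the sense that $\sC\subset\CA$, giving $\Ac^\bctr(\sC)\subset\Ac^\bctr(\CA)$. To obtain both the Verdier quotient statement and the identification of its kernel, I would then apply Lemma~\ref{triangulated-quotient-lemma}(b) to the triangulated category $\sH=\Hot(\CA)$ with full triangulated subcategories $\sX=\Ac^\bctr(\CA)$ and $\sG=\Hot(\sC)$; the kernel of the induced functor is then $\sG\cap\sX/(\sG\cap\sX\cap\Ac^\bctr(\sC))$, which simplifies to the stated expression because $\Ac^\bctr(\sC)\subset\Ac^\bctr(\CA)\cap\Hot(\sC)$.

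The substantive step is verifying the hypothesis of Lemma~\ref{triangulated-quotient-lemma}(b): for every $H^\bu\in\Hot(\CA)$, I need to construct a morphism $H^\bu\rarrow C^\bu$ with $C^\bu\in\Hot(\sC)$ and cone in $\Ac^\bctr(\CA)$. For this I would invoke Proposition~\ref{coacyclic-all-cotorsion-pair-prop} applied to the cotorsion pair $(\sF,\sC)$, which gives the hereditary complete cotorsion pair $(\Ac^\bco(\sF),\>\Com(\sC))$ in $\Com(\sK)$. Taking a special preenvelope sequence for the object $H^\bu\in\Com(\sK)$ with respect to this cotorsion pair yields a short exact sequence
$$
 0\lrarrow H^\bu\lrarrow C^\bu\lrarrow A^\bu\lrarrow0
$$
with $C^\bu\in\Com(\sC)$ and $A^\bu\in\Ac^\bco(\sF)$.

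The crucial step, where the flat type hypothesis enters, is to show that $A^\bu$ actually belongs to $\Ac^\bctr(\CA)$. Since $\CA$ is closed under cokernels of admissible monomorphisms in $\sK$ and both $H^\bu$ and $C^\bu$ have terms in $\CA$, we have $A^\bu\in\Com(\CA)$. By Corollary~\ref{coderived-A1-to-A2-well-defined} applied to the nested pair $(\sF,\sC)\le(\Flat,\Cot)$, we also have $A^\bu\in\Ac^\bco(\Flat)$. Thus $A^\bu\in\Com(\CA)\cap\Ac^\bco(\Flat)$, and Lemma~\ref{flat-type-convenient-restatement-lemma} (which characterizes cotorsion pairs of the flat type precisely by the coincidence $\Com(\CA)\cap\Ac^\bco(\Flat)=\Com(\Flat)\cap\Ac^\bctr(\CA)$) immediately gives $A^\bu\in\Ac^\bctr(\CA)$, as needed.

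Finally, Lemma~\ref{totalizations-are-co-contra-acyclic}(a) tells us that the totalization of the short exact sequence $0\rarrow H^\bu\rarrow C^\bu\rarrow A^\bu\rarrow0$ is Becker-contraacyclic in $\CA$; since $A^\bu$ is also Becker-contraacyclic in $\CA$, the standard triangle relating the cone of $H^\bu\rarrow C^\bu$ to this totalization and to $A^\bu$ lets us conclude that the cone is Becker-contraacyclic in $\CA$, completing the verification of the hypothesis of Lemma~\ref{triangulated-quotient-lemma}(b). The main subtlety, as in Lemma~\ref{coderived-F-to-Flat-Verdier-quotient}, is the passage from $\Ac^\bco(\sF)\subset\Ac^\bco(\Flat)$ to $\Ac^\bctr(\CA)$, and this is exactly where the flat-type assumption on $(\Flat,\Cot)$ does the essential work.
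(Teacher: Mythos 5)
Your proposal is correct and follows essentially the same route as the paper's own proof: well-definedness via Corollary~\ref{contraderived-B2-to-B1-well-defined}, Lemma~\ref{triangulated-quotient-lemma}(b) applied to $\sH=\Hot(\CA)$, $\sX=\Ac^\bctr(\CA)$, $\sG=\Hot(\sC)$, a special preenvelope for the cotorsion pair $(\Ac^\bco(\sF),\>\Com(\sC))$ from Proposition~\ref{coacyclic-all-cotorsion-pair-prop}, then Corollary~\ref{coderived-A1-to-A2-well-defined}, Lemma~\ref{flat-type-convenient-restatement-lemma}, and Lemma~\ref{totalizations-are-co-contra-acyclic}(a). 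The only blemish is the phrase ``$(\sF,\sC)\le(\VF,\CA)^{\mathrm{op}}$'': the relevant nested pair is simply $(\VF,\CA)\le(\sF,\sC)$, which is exactly the setting in which Corollary~\ref{contraderived-B2-to-B1-well-defined} yields $\Ac^\bctr(\sC)\subset\Ac^\bctr(\CA)$, so your conclusion there is right despite the odd notation.
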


\begin{proof}
 Notice first of all that $\Ac^\bctr(\sC)\subset\Ac^\bctr(\CA)$ by
Corollary~\ref{contraderived-B2-to-B1-well-defined}; so the triangulated
functor $\sD^\bctr(\sC)\rarrow\sD^\bctr(\CA)$ is well-defined.
 We apply Lemma~\ref{triangulated-quotient-lemma}(b) to the triangulated
category $\sH=\Hot(\CA)$ with the triangulated subcategories
$\sX=\Ac^\bctr(\CA)$ and $\sG=\Hot(\sC)$.
 It suffices to check the assumption of
Lemma~\ref{triangulated-quotient-lemma}(b) in order to prove both
the desired assertions.

 Indeed, let $E^\bu$ be an object of $\Hot(\CA)$.
 By Proposition~\ref{coacyclic-all-cotorsion-pair-prop}, the pair of
classes $(\Ac^\bco(\sF),\>\Com(\sC))$ is a hereditary complete cotorsion
pair in $\Com(\sK)$.
 Let $0\rarrow E^\bu\rarrow C^\bu\rarrow A^\bu\rarrow0$ be a special
preenvelope exact sequence for the object $E^\bu\in\Com(\sK)$ with
respect to this cotorsion pair.
 So we have $C^\bu\in\Com(\sC)$ and $A^\bu\in\Ac^\bco(\sF)$.
 Since the class $\CA$ is closed under cokernels of monomorphisms in
$\sK$, we actually have $A^\bu\in\Com(\CA)\cap\Ac^\bco(\sF)$.

 By Corollary~\ref{coderived-A1-to-A2-well-defined}, it follows that
$A^\bu\in\Com(\CA)\cap\Ac^\bco(\Flat)$.
 By Lemma~\ref{flat-type-convenient-restatement-lemma}, this implies
that $A^\bu\in\Ac^\bctr(\CA)$.
 Using Lemma~\ref{totalizations-are-co-contra-acyclic}(a), we can
conclude that the cone of the morphism of complexes $E^\bu\rarrow
C^\bu$ is Becker-contraacyclic in $\CA$, as desired.
\end{proof}

\begin{lem} \label{contraderived-Cot-C-CA-adjunction}
 Let\/ $\sK$ be a Grothendieck category with a cotorsion pair
$(\VF,\CA)$ of the very flat type and a cotorsion pair $(\Flat,\Cot)$
of the flat type.
 Let $(\sF,\sC)$ be a hereditary complete cotorsion pair generated by
a set of objects in\/~$\sK$.
 Assume that $(\VF,\CA)\le(\sF,\sC)\le(\Flat,\Cot)$.
 Then the triangulated equivalence\/
$\sD^\bctr(\Cot)\simeq\sD^\bctr(\CA)$ from
Theorem~\ref{flat-type-contraderived-equivalences-theorem}(c) makes
the triangulated functor\/ $\sD^\bctr(\Cot)\rarrow\sD^\bctr(\sC)$
induced by the inclusion\/ $\Cot\rarrow\sC$ right adjoint to
the triangulated functor\/ $\sD^\bctr(\sC)\rarrow\sD^\bctr(\CA)$
induced by the inclusion\/ $\sC\rarrow\CA$.
\end{lem}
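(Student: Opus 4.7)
The plan is to dualize the argument of Lemma~\ref{coderived-VF-F-Flat-adjunction}. The functor $\sD^\bctr(\Cot)\to\sD^\bctr(\sC)$ is well-defined by Corollary~\ref{contraderived-B2-to-B1-well-defined}, and its composition with $\sD^\bctr(\sC)\to\sD^\bctr(\CA)$ recovers the equivalence $\sD^\bctr(\Cot)\simeq\sD^\bctr(\CA)$ of Theorem~\ref{flat-type-contraderived-equivalences-theorem}(c). Since Lemma~\ref{contraderived-C-to-CA-Verdier-quotient} presents $\sD^\bctr(\sC)\to\sD^\bctr(\CA)$ as a Verdier quotient functor, standard triangulated machinery reduces the stated adjunction to verifying the right-orthogonality
$$\Hom_{\sD^\bctr(\sC)}(E^\bu,C^\bu)=0$$
for every $E^\bu\in\Hot(\sC)\cap\Ac^\bctr(\CA)$ and every $C^\bu\in\Com(\Cot)$.

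To establish this vanishing, I will first apply Corollary~\ref{contraderived-of-B-well-behaved} to the cotorsion pair $(\sF,\sC)$, obtaining a short exact sequence $0\rarrow B^\bu\rarrow A^\bu\rarrow E^\bu\rarrow 0$ in $\Com(\sK)$ with $A^\bu\in\Com(\sF\cap\sC)$ and $B^\bu\in\Ac^\bctr(\sC)$. Since $\sF\cap\sC=(\sC)_\proj$ by Lemma~\ref{inj-proj-objects-in-left-right-classes}(b), this yields
$$\Hom_{\sD^\bctr(\sC)}(E^\bu,C^\bu)\simeq\Hom_{\sD^\bctr(\sC)}(A^\bu,C^\bu)\simeq\Hom_{\Hot(\sK)}(A^\bu,C^\bu).$$

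Next, I will verify that $A^\bu\in\Ac^\bco(\Flat)$. The inclusion $\Ac^\bctr(\sC)\subset\Ac^\bctr(\CA)$ given by Corollary~\ref{contraderived-B2-to-B1-well-defined}, combined with the thickness of $\Ac^\bctr(\CA)$, places $A^\bu$ in $\Com(\sF\cap\sC)\cap\Ac^\bctr(\CA)\subset\Com(\Flat)\cap\Ac^\bctr(\CA)$. Lemma~\ref{flat-type-convenient-restatement-lemma}, which encodes the flat-type hypothesis on $(\Flat,\Cot)$ together with the auxiliary $(\VF,\CA)$, identifies this latter class with $\Com(\CA)\cap\Ac^\bco(\Flat)$, and in particular shows $A^\bu\in\Ac^\bco(\Flat)$.

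Finally, because Proposition~\ref{coacyclic-all-cotorsion-pair-prop} applied to $(\Flat,\Cot)$ gives $\Ac^\bco(\Flat)={}^{\perp_1}\Com(\Cot)$ in $\Com(\sK)$, we obtain $\Ext^1_{\Com(\sK)}(A^\bu,C^\bu)=0$. The hypothesis of Lemma~\ref{Ext1-as-homotopy-Hom-lemma} holds since $A^n\in\sF\subset\Flat$ and $C^n\in\Cot$ force $\Ext^1_\sK(A^n,C^n)=0$, so the $\Ext$ vanishing translates into $\Hom_{\Hot(\sK)}(A^\bu,C^\bu[1])=0$; shift invariance of the class $\Com(\sF\cap\sC)\cap\Ac^\bctr(\CA)$ then upgrades this to $\Hom_{\Hot(\sK)}(A^\bu,C^\bu)=0$. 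The hardest step will be locating the right appeal to Lemma~\ref{flat-type-convenient-restatement-lemma}: it is the only point where the flat-type hypothesis on $(\Flat,\Cot)$ actively enters, and without it one could not pass from Becker-contraacyclicity in $\CA$ to Becker-coacyclicity in $\Flat$, which is precisely what enables the Proposition~\ref{coacyclic-all-cotorsion-pair-prop} orthogonality.
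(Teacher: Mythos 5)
Your proposal is correct and follows essentially the same route as the paper's proof: reduce via the Verdier quotient description of Lemma~\ref{contraderived-C-to-CA-Verdier-quotient} to the orthogonality of the kernel objects against $\Com(\Cot)$, replace the kernel object by a complex in $\Com(\sF\cap\sC)=\Com(\sC_\proj)$ using Corollary~\ref{contraderived-of-B-well-behaved}, pass from $\Com(\Flat)\cap\Ac^\bctr(\CA)$ to $\Ac^\bco(\Flat)$ via Lemma~\ref{flat-type-convenient-restatement-lemma}, and conclude with Proposition~\ref{coacyclic-all-cotorsion-pair-prop} and Lemma~\ref{Ext1-as-homotopy-Hom-lemma}. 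The only cosmetic differences are notational and your more explicit handling of the degree shift, so nothing further is needed.
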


\begin{proof}
 In view of Lemma~\ref{contraderived-C-to-CA-Verdier-quotient},
it suffices to show that $\Hom_{\sD^\bctr(\sC)}(C^\bu,D^\bu)=0$
for all complexes $C^\bu\in\Com(\sC)\cap\Ac^\bctr(\CA)$ and
$D^\bu\in\Com(\Cot)$.
 By Corollary~\ref{contraderived-of-B-well-behaved}, there exists
a morphism of complexes $A^\bu\rarrow C^\bu$ with
$A^\bu\in\Com(\sF\cap\sC)$ and the cone belonging to $\Ac^\bctr(\sC)$.
 By Corollary~\ref{contraderived-B2-to-B1-well-defined}, we have
$\Ac^\bctr(\sC)\subset\Ac^\bctr(\CA)$; hence
$A^\bu\in\Com(\sF\cap\sC)\cap\Ac^\bctr(\CA)$.

 Furthermore, $\Hom_{\sD^\bctr(\sC)}(C^\bu,D^\bu)\simeq
\Hom_{\sD^\bctr(\sC)}(A^\bu,D^\bu)\simeq\Hom_{\Hot(\sC)}(A^\bu,D^\bu)$,
since $A^\bu\in\Com(\sF\cap\sC)=\Com(\sC_\proj)$.
 Now we have $A^\bu\in\Com(\Flat)\cap\Ac^\bctr(\CA)$, and
by Lemma~\ref{flat-type-convenient-restatement-lemma} it follows that
$A^\bu\in\Ac^\bco(\Flat)$.
 It remains to refer to
Proposition~\ref{coacyclic-all-cotorsion-pair-prop}
and Lemma~\ref{Ext1-as-homotopy-Hom-lemma} for the assertion that
$A^\bu\in\Ac^\bco(\Flat)$ and $D^\bu\in\Com(\Cot)$ imply
$\Hom_{\Hot(\sK)}(A^\bu,D^\bu)=0$.
\end{proof}

\begin{cor} \label{contraderived-Cot-to-C-fully-faithful}
 Let\/ $\sK$ be a Grothendieck category with a cotorsion pair
$(\VF,\CA)$ of the very flat type and a cotorsion pair $(\Flat,\Cot)$
of the flat type.
 Let $(\sF,\sC)$ be a hereditary complete cotorsion pair generated by
a set of objects in\/~$\sK$.
 Assume that $(\VF,\CA)\le(\sF,\sC)\le(\Flat,\Cot)$.
 Then the triangulated functor\/ $\sD^\bctr(\Cot)\rarrow\sD^\bctr(\sC)$
induced by the inclusion of exact categories\/ $\Cot\rarrow\sC$ is
fully faithful.
\end{cor}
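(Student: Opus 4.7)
The plan is to deduce this corollary directly from the two preceding lemmas, in perfect analogy with the proof of Corollary~\ref{coderived-VF-to-F-fully-faithful}. The only inputs required are Lemmas~\ref{contraderived-C-to-CA-Verdier-quotient} and~\ref{contraderived-Cot-C-CA-adjunction}, together with the standard categorical fact that in any adjunction between triangulated functors, the left adjoint is a Verdier quotient functor if and only if the right adjoint is fully faithful (see \cite[Proposition~1.3]{GZ}).

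Concretely, Lemma~\ref{contraderived-Cot-C-CA-adjunction} provides an adjunction in which the functor $\sD^\bctr(\sC)\rarrow\sD^\bctr(\CA)$ induced by the inclusion $\sC\rarrow\CA$ is left adjoint to the functor $\sD^\bctr(\Cot)\rarrow\sD^\bctr(\sC)$ induced by the inclusion $\Cot\rarrow\sC$, under the identification $\sD^\bctr(\Cot)\simeq\sD^\bctr(\CA)$ from Theorem~\ref{flat-type-contraderived-equivalences-theorem}(c). Lemma~\ref{contraderived-C-to-CA-Verdier-quotient} tells us that this left adjoint is a triangulated Verdier quotient functor. Applying the cited fact, we conclude that its right adjoint $\sD^\bctr(\Cot)\rarrow\sD^\bctr(\sC)$ is fully faithful, which is exactly the assertion of the corollary.

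There is no real obstacle here: all the conceptual work has been done in the two preceding lemmas (and ultimately in Lemma~\ref{flat-type-convenient-restatement-lemma}, which is where the flat-type hypothesis enters to identify the relevant intersections of $\Com(\CA)\cap\Ac^\bco(\Flat)$ with $\Ac^\bctr(\CA)$ inside the appropriate subcategories). The present corollary is simply the formal dual of Corollary~\ref{coderived-VF-to-F-fully-faithful}, and its proof is a one-line invocation of the Verdier quotient / fully faithful adjoint principle.
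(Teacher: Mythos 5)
Your proposal is correct and follows exactly the paper's own argument: the paper proves this corollary by combining Lemma~\ref{contraderived-C-to-CA-Verdier-quotient} and Lemma~\ref{contraderived-Cot-C-CA-adjunction} with the fact from \cite[Proposition~1.3]{GZ} that a right adjoint is fully faithful if and only if its left adjoint is a Verdier quotient functor, just as in Corollary~\ref{coderived-VF-to-F-fully-faithful}. Nothing is missing.
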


\begin{proof}
 This is a corollary of
Lemmas~\ref{contraderived-C-to-CA-Verdier-quotient}
and~\ref{contraderived-Cot-C-CA-adjunction}.
 The proof is similar to that of
Corollary~\ref{coderived-VF-to-F-fully-faithful}.
\end{proof}

\begin{cor} \label{F-to-Flat-isomorphic-to-C-to-CA-cor}
 Let\/ $\sK$ be a Grothendieck category with a cotorsion pair
$(\VF,\CA)$ of the very flat type and a cotorsion pair $(\Flat,\Cot)$
of the flat type.
 Let $(\sF,\sC)$ be a hereditary complete cotorsion pair generated by
a set of objects in\/~$\sK$.
 Assume that $(\VF,\CA)\le(\sF,\sC)\le(\Flat,\Cot)$.
 Then the triangulated equivalences from
Corollaries~\ref{cotorsion-pair-co-contra-equivalence}
and~\ref{veryflat-flat-type-square-of-triang-equivalences}
identify the functor
$$
 \sD^\bco(\sF)\lrarrow\sD^\bco(\Flat)
$$
induced by the inclusion of exact categories\/ $\sF\rarrow\Flat$
with the functor
$$
 \sD^\bctr(\sC)\lrarrow\sD^\bctr(\CA)
$$
induced by the inclusion of exact categories\/ $\sC\rarrow\CA$.
 Consequently, there is a natural equivalence of triangulated
categories
$$
 \frac{\Hot(\sF)\cap\Ac^\bco(\Flat)}{\Ac^\bco(\sF)}\simeq
 \frac{\Hot(\sC)\cap\Ac^\bctr(\CA)}{\Ac^\bctr(\sC)}.
$$
\end{cor}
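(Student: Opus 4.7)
The plan is to verify the identification directly on the core $\Hot(\sF\cap\sC)$ of the cotorsion pair $(\sF,\sC)$. By Corollary~\ref{cotorsion-pair-co-contra-equivalence}, the composite inclusions $\Hot(\sF\cap\sC)\hookrightarrow\Hot(\sF)\to\sD^\bco(\sF)$ and $\Hot(\sF\cap\sC)\hookrightarrow\Hot(\sC)\to\sD^\bctr(\sC)$ are both triangulated equivalences, so the equivalence $\alpha\colon\sD^\bco(\sF)\simeq\sD^\bctr(\sC)$ restricts to the identity on $\Hot(\sF\cap\sC)$, and every object of $\sD^\bco(\sF)$ is isomorphic to one coming from $\Com(\sF\cap\sC)$. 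Hence it suffices to show that the two compositions $\sD^\bco(\sF)\to\sD^\bctr(\CA)$ involved in the claimed identification agree naturally on objects $K\in\Com(\sF\cap\sC)$.

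For such a $K$, one immediately has $F'\circ\alpha(K)=K$ in $\sD^\bctr(\CA)$, since $F'$ is induced by the inclusion $\sC\hookrightarrow\CA$ and $\alpha(K)=K$ in $\Hot(\sC)$. To compute $\beta\circ F(K)$, I unfold $\beta$ as the composition $\sD^\bco(\Flat)\simeq\sD^\bctr(\Cot)\simeq\sD^\bctr(\CA)$, with the first equivalence from Corollary~\ref{cotorsion-pair-co-contra-equivalence} applied to $(\Flat,\Cot)$ and the second from Theorem~\ref{flat-type-contraderived-equivalences-theorem}(c). Following the proof of Corollary~\ref{coderived-of-A-well-behaved}, pick a special preenvelope short exact sequence $0\to K\to Y\to C\to 0$ in $\Com(\sK)$ with $Y\in\Com(\Flat\cap\Cot)$ and $C\in\Ac^\bco(\Flat)$, obtained from the cotorsion pair $(\Ac^\bco(\Flat),\Com(\Flat\cap\Cot))$ in $\Com(\Flat)$ coming from Proposition~\ref{coacyclic-all-cotorsion-pair-prop} applied to $(\Flat,\Cot)$. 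Then $\beta\circ F(K)=Y$ viewed as an object of $\sD^\bctr(\CA)$ via the inclusion $\Cot\hookrightarrow\CA$.

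The crux is then to show that $Y\cong K$ in $\sD^\bctr(\CA)$, equivalently that $C\in\Ac^\bctr(\CA)$. Since $K\in\Com(\sF\cap\sC)\subset\Com(\CA)$ and $Y\in\Com(\Cot)\subset\Com(\CA)$, and the class $\CA$ is closed under cokernels of monomorphisms in $\sK$, one has $C\in\Com(\CA)$. Together with $C\in\Ac^\bco(\Flat)\subset\Com(\Flat)$, this places $C$ in $\Com(\Flat\cap\CA)\cap\Ac^\bco(\Flat)$, which under the flat-type hypothesis on $(\Flat,\Cot)$ coincides with $\Com(\Flat)\cap\Ac^\bctr(\CA)\subset\Ac^\bctr(\CA)$ by Lemma~\ref{flat-type-convenient-restatement-lemma}. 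Therefore $C\in\Ac^\bctr(\CA)$ as required, and $\beta\circ F(K)\cong K\cong F'\circ\alpha(K)$, which establishes the first assertion after extending the natural isomorphism from the dense subcategory $\Hot(\sF\cap\sC)$ to all of $\sD^\bco(\sF)$.

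The consequential equivalence of Verdier quotient categories then follows by combining this identification with Lemma~\ref{coderived-F-to-Flat-Verdier-quotient}, which exhibits $F$ as a Verdier quotient functor with kernel $\frac{\Hot(\sF)\cap\Ac^\bco(\Flat)}{\Ac^\bco(\sF)}$, and Lemma~\ref{contraderived-C-to-CA-Verdier-quotient}, which exhibits $F'$ as a Verdier quotient functor with kernel $\frac{\Hot(\sC)\cap\Ac^\bctr(\CA)}{\Ac^\bctr(\sC)}$: because $\alpha$ and $\beta$ are triangulated equivalences identifying $F$ with $F'$, these two kernel subcategories must be naturally equivalent as triangulated categories.
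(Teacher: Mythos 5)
Your argument is correct in substance, but it takes a genuinely different route from the paper. The paper's own proof is purely formal: under the stated identifications, the functor $\sD^\bco(\sF)\to\sD^\bco(\Flat)$ and the functor $\sD^\bctr(\sC)\to\sD^\bctr(\CA)$ are both right adjoint to $\sD^\bco(\VF)\to\sD^\bco(\sF)$ (by Lemma~\ref{coderived-VF-F-Flat-adjunction} and Theorem~\ref{nested-cotorsion-pairs-adjunction-theorem}), or alternatively both left adjoint to $\sD^\bctr(\Cot)\to\sD^\bctr(\sC)$ (by Theorem~\ref{nested-cotorsion-pairs-adjunction-theorem} and Lemma~\ref{contraderived-Cot-C-CA-adjunction}), so they are isomorphic by uniqueness of adjoints; naturality comes for free. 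Your direct computation on the core $\Hot(\sF\cap\sC)$, with the key step $C\in\Com(\CA)\cap\Ac^\bco(\Flat)\subset\Ac^\bctr(\CA)$ supplied by Lemma~\ref{flat-type-convenient-restatement-lemma}, uses exactly the kind of calculation the paper performs inside Lemmas~\ref{coderived-F-to-Flat-Verdier-quotient} and~\ref{contraderived-C-to-CA-Verdier-quotient}, and it has the merit of exhibiting the comparison isomorphism explicitly (the chain map $K\to Y$); the deduction of the equivalence of kernels in your last paragraph coincides with the paper's.

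The one place where you are too quick is naturality. You construct, for each $K\in\Com(\sF\cap\sC)$, an isomorphism $F'\alpha(K)=K\cong Y=\beta F(K)$ in $\sD^\bctr(\CA)$, but an object-wise family of isomorphisms is not yet an isomorphism of functors, and ``extending from the dense subcategory'' is only legitimate once the family is known to be natural on $\Hot(\sF\cap\sC)$. This can be repaired with tools already in the paper: for a morphism $f\colon K_1\to K_2$, the morphism $\beta F(f)\colon Y_1\to Y_2$ is the homotopy class making the relevant square commute in $\sD^\bco(\Flat)$; since $Y_2$ is a complex of injective objects of the exact category $\Flat$, the semiorthogonality underlying Corollary~\ref{coderived-of-A-well-behaved} gives $\Hom_{\sD^\bco(\Flat)}(K_1,Y_2)\simeq\Hom_{\Hot(\sK)}(K_1,Y_2)$, so the square commutes already in $\Hot(\CA)$ and hence in $\sD^\bctr(\CA)$. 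Two smaller points: the implication ``$C\in\Ac^\bctr(\CA)$ implies $K\cong Y$ in $\sD^\bctr(\CA)$'' should cite Lemma~\ref{totalizations-are-co-contra-acyclic}(a) applied in the exact category $\CA$ (the sequence $0\to K\to Y\to C\to0$ is termwise admissible in $\CA$ because $\sC,\Cot\subset\CA$ and $\CA$ is closed under cokernels of monomorphisms), and you should note that $\beta$ as you unfold it agrees with the identification in the statement because the square of Corollary~\ref{veryflat-flat-type-square-of-triang-equivalences} commutes.
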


\begin{proof}
 Presuming the triangulated equivalences from
Corollaries~\ref{cotorsion-pair-co-contra-equivalence}
and~\ref{veryflat-flat-type-square-of-triang-equivalences}
as identifications, the functor
$\sD^\bco(\sF)\rarrow\sD^\bco(\Flat)$ is right adjoint to
the functor $\sD^\bco(\VF)\rarrow\sD^\bco(\sF)$ by
Lemma~\ref{coderived-VF-F-Flat-adjunction}, while the functor
$\sD^\bctr(\sC)\lrarrow\sD^\bctr(\CA)$ is right adjoint to
the functor $\sD^\bco(\VF)\rarrow\sD^\bco(\sF)$ by
Theorem~\ref{nested-cotorsion-pairs-adjunction-theorem}.
 Being two right adjoints to one and the same triangulated functor
$\sD^\bco(\VF)\rarrow\sD^\bco(\sF)$, the two triangulated functors
$\sD^\bco(\sF)\rarrow\sD^\bco(\Flat)$ and $\sD^\bctr(\sC)\lrarrow
\sD^\bctr(\CA)$ are consequently naturally isomorphic.

 Alternatively, the functor $\sD^\bco(\sF)\rarrow\sD^\bco(\Flat)$
is left adjoint to the functor $\sD^\bctr(\Cot)\rarrow\sD^\bctr(\sC)$
by Theorem~\ref{nested-cotorsion-pairs-adjunction-theorem}, while
the functor $\sD^\bctr(\sC)\lrarrow\sD^\bctr(\CA)$ is left adjoint
to the functor $\sD^\bctr(\Cot)\rarrow\sD^\bctr(\sC)$ by
Lemma~\ref{contraderived-Cot-C-CA-adjunction}.
 Being two left adjoints to one and the same triangulated functor
$\sD^\bctr(\Cot)\rarrow\sD^\bctr(\sC)$, the two triangulated functors
$\sD^\bco(\sF)\rarrow\sD^\bco(\Flat)$ and $\sD^\bctr(\sC)\lrarrow
\sD^\bctr(\CA)$ are consequently naturally isomorphic.

 The second assertion of the corollary is obtained by passing to
the kernel categories of the two isomorphic Verdier quotient functors
and using Lemmas~\ref{coderived-F-to-Flat-Verdier-quotient}
and~\ref{contraderived-C-to-CA-Verdier-quotient} for the descriptions
of the kernel categories.
\end{proof}

 In the commutative diagram in the following theorem, we use
the notation $\sD^{\varnothing=\bco}(\sE)$ to point out the fact
that $\Ac(\sE)=\Ac^\bco(\sE)$ and therefore $\sD(\sE)=\sD^\bco(\sE)$
for the exact category $\sE$ in question.
 This notation was introduced in
Corollary~\ref{veryflat-flat-type-square-of-triang-equivalences}.

\begin{thm} \label{sandwiched-recollement-theorem}
 Let\/ $\sK$ be a Grothendieck category with a cotorsion pair
$(\VF,\CA)$ of the very flat type and a cotorsion pair $(\Flat,\Cot)$
of the flat type.
 Let $(\sF,\sC)$ be a hereditary complete cotorsion pair generated by
a set of objects in\/~$\sK$.
 Assume that $(\VF,\CA)\le(\sF,\sC)\le(\Flat,\Cot)$.
 Then there is a recollement of triangulated categories
$$
 \xymatrix{
  && \sD^\bco(\sF) \ar@{=}[d] \ar@/_1.5pc/@{->>}[lld]
  && \sD^{\varnothing=\bco}(\VF) \ar@<2pt>[d] \ar@{>->}[ll] \\
  \frac{\Hot(\sF)\cap\Ac^\bco(\Flat)}{\Ac^\bco(\sF)} 
  \ar@{>->}[rr] \ar@{=}[d]
  && \sD^\bco(\sF) \ar@{->>}[rr] \ar@{=}[d]
  && \sD^{\varnothing=\bco}(\Flat) \ar@<2pt>@{-}[u] \ar@{=}[d] \\
  \frac{\Hot(\sC)\cap\Ac^\bctr(\CA)}{\Ac^\bctr(\sC)} \ar@{>->}[rr]
  && \sD^\bctr(\sC) \ar@{->>}[rr] \ar@{=}[d]
  && \sD^\bctr(\CA) \ar@<-2pt>@{-}[d] \\
  && \sD^\bctr(\sC) \ar@/^1.5pc/@{->>}[llu]
  && \sD^\bctr(\Cot) \ar@{>->}[ll] \ar@<-2pt>[u]
 }
$$
\end{thm}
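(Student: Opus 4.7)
The plan is to assemble the recollement by invoking the results of the preceding lemmas and corollaries, so essentially all the substantive work is already done and what remains is the identification of functors and a routine appeal to the standard characterization of recollements.

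First I would fix the two vertical identifications in the diagram. The equality $\sD^\bco(\sF)\simeq\Hot(\sF\cap\sC)\simeq\sD^\bctr(\sC)$ is Corollary~\ref{cotorsion-pair-co-contra-equivalence} applied to the cotorsion pair $(\sF,\sC)$. The chain of identifications $\sD^{\varnothing=\bco}(\VF)\simeq\sD^{\varnothing=\bco}(\Flat)\simeq\sD^\bctr(\CA)\simeq\sD^\bctr(\Cot)$ comes from Corollary~\ref{veryflat-flat-type-square-of-triang-equivalences} (this is where the hypotheses on $(\VF,\CA)$ and $(\Flat,\Cot)$ are used). Next, Corollary~\ref{F-to-Flat-isomorphic-to-C-to-CA-cor} identifies the two horizontal Verdier quotient functors $\sD^\bco(\sF)\to\sD^\bco(\Flat)$ and $\sD^\bctr(\sC)\to\sD^\bctr(\CA)$ under these identifications, and simultaneously identifies the two descriptions of the kernel subcategory on the left. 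That the two horizontal arrows are honest Verdier quotients with kernels as stated is Lemma~\ref{coderived-F-to-Flat-Verdier-quotient} (top row) and Lemma~\ref{contraderived-C-to-CA-Verdier-quotient} (bottom row).

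Now I would verify that the common Verdier quotient functor $\sD^\bco(\sF)=\sD^\bctr(\sC)\twoheadrightarrow\sD^{\varnothing=\bco}(\Flat)=\sD^\bctr(\CA)$ admits both a fully faithful left and a fully faithful right adjoint; these are the two curved arrows in the top and bottom rows of the diagram. For the left adjoint, one takes the inclusion $\sD^{\varnothing=\bco}(\VF)\hookrightarrow\sD^\bco(\sF)$: fully faithfulness is Corollary~\ref{coderived-VF-to-F-fully-faithful}, and the required adjunction with the top Verdier quotient is Lemma~\ref{coderived-VF-F-Flat-adjunction}. For the right adjoint, one takes the inclusion $\sD^\bctr(\Cot)\hookrightarrow\sD^\bctr(\sC)$: fully faithfulness is Corollary~\ref{contraderived-Cot-to-C-fully-faithful}, and the adjunction with the bottom Verdier quotient is Lemma~\ref{contraderived-Cot-C-CA-adjunction}.

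Finally, I would invoke the standard fact that, given a Verdier quotient functor $j^\ast\colon\mathcal M\twoheadrightarrow\mathcal Q$ that possesses both a fully faithful left adjoint $j_!$ and a fully faithful right adjoint $j_\ast$, the embedding $i_\ast\colon\mathcal S\hookrightarrow\mathcal M$ of the kernel subcategory $\mathcal S=\ker j^\ast$ automatically acquires both a left adjoint $i^\ast$ and a right adjoint $i^!$, yielding a recollement $(\mathcal S,\mathcal M,\mathcal Q)$; the curly surjection arrows $\sD^\bco(\sF)\twoheadrightarrow\mathcal S$ and $\sD^\bctr(\sC)\twoheadrightarrow\mathcal S$ in the top-left and bottom-left corners of the diagram are the resulting $i^\ast$ and~$i^!$. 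Since there is no serious obstacle here, the only task is bookkeeping: keeping track of which identification turns which previously proved adjunction/quotient statement into the corresponding edge of the recollement square.
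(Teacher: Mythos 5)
Your proposal is correct and follows essentially the same route as the paper: the author likewise assembles the recollement from Lemmas~\ref{coderived-F-to-Flat-Verdier-quotient} and~\ref{contraderived-C-to-CA-Verdier-quotient} (the two Verdier quotient sequences), Corollary~\ref{F-to-Flat-isomorphic-to-C-to-CA-cor} (identification of the two middle rows), and Lemmas~\ref{coderived-VF-F-Flat-adjunction} and~\ref{contraderived-Cot-C-CA-adjunction} (the two adjunctions), then deduces the curvilinear adjoints by the same standard recollement fact. Your version merely makes explicit the full-faithfulness citations and the vertical identifications that the paper leaves implicit.
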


\begin{proof}
 One has $\sD^\bco(\VF)=\sD(\VF)$ by
Lemma~\ref{exact-category-of-finite-homol-dim}(a)
and $\sD^\bco(\Flat)=\sD(\Flat)$ by
Lemma~\ref{ac=ac-bco-equivalent-conditions}(4).
 The Verdier quotient sequence in the second row is the result
of Lemma~\ref{coderived-F-to-Flat-Verdier-quotient}, and the one in
the third row is the result of
Lemma~\ref{contraderived-C-to-CA-Verdier-quotient}.
 The commutative diagram formed by the two middle rows is provided
by Corollary~\ref{F-to-Flat-isomorphic-to-C-to-CA-cor}.
 The adjunction in the upper rightmost square is the result of
Lemma~\ref{coderived-VF-F-Flat-adjunction}, and the one in
the lower rightmost square is the result of
Lemma~\ref{contraderived-Cot-C-CA-adjunction}.
 This proves the existence of the recollement, which in turn implies
the existence of the curvilinear adjoint functors and the Verdier
quotient sequences in the upper and lower lines.
\end{proof}

\begin{thm} \label{sandwiched-equivalent-conditions-theorem}
 Let\/ $\sK$ be a Grothendieck category with a cotorsion pair
$(\VF,\CA)$ of the very flat type and a cotorsion pair $(\Flat,\Cot)$
of the flat type.
 Let $(\sF,\sC)$ be a hereditary complete cotorsion pair generated by
a set of objects in\/~$\sK$.
 Assume that $(\VF,\CA)\le(\sF,\sC)\le(\Flat,\Cot)$.
 Then the following ten conditions are equivalent:
\begin{enumerate}
\item the cotorsion pair $(\sF,\sC)$ is of the flat type;
\item the triangulated functor\/ $\sD^\bco(\VF)\rarrow\sD^\bco(\sF)$
induced by the inclusion\/ $\VF\rarrow\sF$ is a triangulated
equivalence;
\item the triangulated functor\/ $\sD^\bco(\VF)\rarrow\sD^\bco(\sF)$
induced by the inclusion\/ $\VF\rarrow\sF$ is essentially surjective;
\item the triangulated functor\/ $\sD^\bco(\sF)\rarrow\sD^\bco(\Flat)$
induced by the inclusion\/ $\sF\rarrow\Flat$ is a triangulated
equivalence;
\item any complex in\/ $\sF$ that is Becker-coacyclic in\/ $\Flat$ is
also Becker-coacyclic in\/~$\sF$;
\item the triangulated functor\/ $\sD^\bctr(\Cot)\rarrow\sD^\bctr(\sC)$
induced by the inclusion\/ $\Cot\rarrow\sC$ is a triangulated
equivalence;
\item the triangulated functor\/ $\sD^\bctr(\Cot)\rarrow\sD^\bctr(\sC)$
induced by the inclusion\/ $\Cot\rarrow\sC$ is essentially surjective;
\item the triangulated functor\/ $\sD^\bctr(\sC)\rarrow\sD^\bctr(\CA)$
induced by the inclusion\/ $\sC\rarrow\CA$ is a triangulated
equivalence;
\item any complex in\/ $\sC$ that is Becker-contraacyclic in\/ $\CA$ is
also Becker-con\-tra\-acyclic in\/~$\sC$;
\item two periodicity properties hold simultaneously:
\begin{enumerate}
\renewcommand{\theenumii}{\alph{enumii}}
\item in any acyclic complex in\/ $\sK$ with the terms belonging to\/
$\sF$ and the objects of cocycles belonging to\/ $\Flat$, the objects
of cocycles actually belong to\/~$\sF$;
\item in any acyclic complex in\/ $\sK$ with the terms belonging to\/
$\sC$, the objects of cocycles also belong to\/~$\sC$.
\end{enumerate}
\end{enumerate}
\end{thm}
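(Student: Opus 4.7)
The plan is to split the proof into three blocks: the equivalences among (2)--(9) via the recollement of Theorem~\ref{sandwiched-recollement-theorem}, the equivalence (5)~$\Leftrightarrow$~(10) via a periodicity argument applied to $(\sF,\sC)$, and the equivalence (1)~$\Leftrightarrow$~(10) via Corollary~\ref{flat-type-definition-cor}.

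For the first block, the full faithfulness established in Corollaries~\ref{coderived-VF-to-F-fully-faithful} and~\ref{contraderived-Cot-to-C-fully-faithful} immediately gives (2)~$\Leftrightarrow$~(3) and (6)~$\Leftrightarrow$~(7), since a fully faithful triangulated functor is an equivalence iff it is essentially surjective. The kernel descriptions in Lemmas~\ref{coderived-F-to-Flat-Verdier-quotient} and~\ref{contraderived-C-to-CA-Verdier-quotient} combined with the automatic containments from Corollaries~\ref{coderived-A1-to-A2-well-defined} and~\ref{contraderived-B2-to-B1-well-defined} yield (4)~$\Leftrightarrow$~(5) and (8)~$\Leftrightarrow$~(9), as a Verdier quotient functor is an equivalence precisely when its kernel vanishes. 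The recollement of Theorem~\ref{sandwiched-recollement-theorem} forces (2)~$\Leftrightarrow$~(4) and (6)~$\Leftrightarrow$~(8), because in a recollement the fully faithful outer adjoint is an equivalence iff the Verdier quotient is an equivalence iff the kernel category is trivial. Finally, Corollary~\ref{F-to-Flat-isomorphic-to-C-to-CA-cor} identifies the functors in (4) and~(8), closing the circle.

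For the second block, the key inputs are $\Ac^\bco(\Flat) = \Ac(\Flat)$ (Lemma~\ref{ac=ac-bco-equivalent-conditions}(4) for $(\Flat,\Cot)$), $\Ac^\bco(\sF) \subset \Ac(\sF)$ (Corollary~\ref{coacyclic-in-A-are-acyclic-in-A}), and the equivalence of (10b) with $\Ac(\sF) = \Ac^\bco(\sF)$ provided by Theorem~\ref{cotorsion-periodity-type-equivalent-conditions} applied to $(\sF,\sC)$. Assuming (5): any $Y^\bu \in \Ac(\sF)$ has cocycles in $\sF \subset \Flat$, so $Y^\bu \in \Com(\sF) \cap \Ac^\bco(\Flat) \subset \Ac^\bco(\sF)$, giving (10b); and any $X^\bu \in \Com(\sF)$ acyclic in $\sK$ with cocycles in $\Flat$ lies in $\Ac^\bco(\sF) \subset \Ac(\sF)$, giving (10a). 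Conversely, (10a) and (10b) together yield $\Com(\sF) \cap \Ac(\Flat) \subset \Ac(\sF) = \Ac^\bco(\sF)$, which is~(5).

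For the third block, Corollary~\ref{flat-type-definition-cor} applied to $(\sF,\sC)$ with auxiliary pair $(\VF,\CA)$ shows that (1) is equivalent to the conjunction of (II), which is precisely (10b), together with a condition (I). Unravelling (I) by using Proposition~\ref{all-contraacyclic-cotorsion-pair-prop} and Lemma~\ref{Ext1-as-homotopy-Hom-lemma} to rewrite the Hom-vanishing clause as $F^\bu \in \Ac^\bctr(\CA)$, and then using Lemma~\ref{flat-type-convenient-restatement-lemma} for the flat-type pair $(\Flat,\Cot)$ to identify this (for $F^\bu$ with terms in $\sF \cap \CA \subset \Flat \cap \CA$) with $F^\bu$ having cocycles in $\Flat$, reduces (I) to the restricted version of (10a): for every $F^\bu \in \Com(\sF \cap \CA)$ acyclic in $\sK$ with cocycles in $\Flat$, the cocycles lie in $\sF$. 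Call this restricted statement (10a)$'$. The implication (10)~$\Rightarrow$~(1) then follows at once from the trivial (10a)~$\Rightarrow$~(10a)$'$. The main obstacle is the reverse implication (10a)$'$~+~(10b)~$\Rightarrow$~(10a): given $X^\bu \in \Com(\sF)$ acyclic in $\sK$ with cocycles in $\Flat$, one must upgrade the conclusion from complexes in $\sF \cap \CA$ to complexes in $\sF$. My plan is to use the restricted very flat-type cotorsion pair $(\VF, \sF \cap \CA)$ in $\sF$ (which exists by Lemma~\ref{restricting-cotorsion-pair-lemma}(a) and is of the very flat type by Lemma~\ref{co-resolving-Ext-agrees} and Corollary~\ref{very-flat-type-cotorsion-pair-cor}) to produce a finite termwise-admissible exact sequence
\begin{equation*}
 0 \lrarrow X^\bu \lrarrow C^{0,\bu} \lrarrow \dotsb \lrarrow C^{d-1,\bu} \lrarrow Q^\bu \lrarrow 0
\end{equation*}
of complexes in $\sF$, with each $C^{i,\bu}$ a contractible complex in $\sF \cap \CA$ and $Q^\bu$ a complex in $\sF \cap \CA$. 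The complex $Q^\bu$ is automatically acyclic in $\sK$ from the long exact cohomology sequence. The delicate step, where (10b) and Theorem~\ref{cotorsion-periodity-type-equivalent-conditions} enter, is to show that the cocycles of $Q^\bu$ lie in $\Flat$ so that (10a)$'$ applies; this is genuinely subtle because $\Flat$ is only closed under extensions and kernels of admissible epimorphisms, not under cokernels of admissible monomorphisms, so the cocycles of $Q^\bu$ do not obviously inherit flatness from those of $X^\bu$. Once the cocycles of $Q^\bu$ are placed in $\sF$ via (10a)$'$, a reverse induction along the coresolution, using closure of $\sF$ under kernels of admissible epimorphisms at each step, transfers this conclusion back to give cocycles of $X^\bu$ in $\sF$.
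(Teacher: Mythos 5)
Your first two blocks are correct and essentially reproduce the paper's own argument: the equivalences among (2)--(9) come from the same adjunction and Verdier quotient statements (Lemmas~\ref{coderived-F-to-Flat-Verdier-quotient}, \ref{coderived-VF-F-Flat-adjunction}, \ref{contraderived-C-to-CA-Verdier-quotient}, \ref{contraderived-Cot-C-CA-adjunction}, Corollaries~\ref{coderived-VF-to-F-fully-faithful}, \ref{contraderived-Cot-to-C-fully-faithful}, and~\ref{F-to-Flat-isomorphic-to-C-to-CA-cor}), and your proof of (5)\,$\Leftrightarrow$\,(10) via the chain $\Ac^\bco(\sF)\subset\Ac(\sF)\subset\Com(\sF)\cap\Ac(\Flat)=\Com(\sF)\cap\Ac^\bco(\Flat)$ together with Theorem~\ref{cotorsion-periodity-type-equivalent-conditions} is exactly the paper's. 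Your reduction of condition~(I) of Corollary~\ref{flat-type-definition-cor} to the restricted statement (10a)$'$ (using Proposition~\ref{all-contraacyclic-cotorsion-pair-prop}, Lemma~\ref{Ext1-as-homotopy-Hom-lemma} and Lemma~\ref{flat-type-convenient-restatement-lemma}) is also correct, and it yields (10)\,$\Rightarrow$\,(1) exactly as in the paper.

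The gap is in your claimed implication (1)\,$\Rightarrow$\,(10), that is, (10a)$'$\,$+$\,(10b)\,$\Rightarrow$\,(10a): the decisive step -- showing that the cocycles of the cosyzygy complex $Q^\bu$ lie in $\Flat$, so that (10a)$'$ applies to it -- is left unproven, and your hint that (10b) together with Theorem~\ref{cotorsion-periodity-type-equivalent-conditions} supplies it does not work as stated, since those only give $\Ac^\bco(\sF)=\Ac(\sF)$ and say nothing about flatness of the cocycles of $Q^\bu$ (while the naive cocycle-by-cocycle argument fails for precisely the reason you note). The step is in fact fillable, but by different means: $X^\bu\in\Ac(\Flat)=\Ac^\bco(\Flat)$ because $(\Flat,\Cot)$ is of the flat type, each $C^{i,\bu}$ is contractible, and $\Ac^\bco(\Flat)$ is a thick subcategory of $\Hot(\Flat)$ containing the totalizations of termwise admissible short exact sequences (Lemma~\ref{totalizations-are-co-contra-acyclic}(a)); an induction along your finite exact sequence of complexes then gives $Q^\bu\in\Ac^\bco(\Flat)=\Ac(\Flat)$, i.e.\ the cocycles of $Q^\bu$ are flat, with no use of (10b) at all. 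Alternatively -- and this is what the paper does -- the hard direction of your third block can be avoided entirely: from (1), Theorem~\ref{flat-type-contraderived-equivalences-theorem}(b,c) applied to the very flat-type pair $(\VF,\CA)$ and the flat-type pair $(\sF,\sC)$ gives (2) and (8) immediately, and the loop then closes through your own implications (2)\,$\Rightarrow$\,(4)\,$\Rightarrow$\,(5)\,$\Rightarrow$\,(10), so that only the easy direction (10)\,$\Rightarrow$\,(1) is ever needed.
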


\begin{proof}
 (2)~$\Longleftrightarrow$~(3)
 The triangulated functor $\sD^\bco(\VF)\rarrow\sD^\bco(\sF)$ is
always fully faithful by
Corollary~\ref{coderived-VF-to-F-fully-faithful}.

 (2)~$\Longleftrightarrow$~(4)
 The triangulated functors $\sD^\bco(\VF)\rarrow\sD^\bco(\sF)$
and $\sD^\bco(\sF)\rarrow\sD^\bco(\Flat)$ are adjoint by
Lemma~\ref{coderived-VF-F-Flat-adjunction}.

 (4)~$\Longleftrightarrow$~(5)
 The triangulated functor $\sD^\bco(\sF)\rarrow\sD^\bco(\Flat)$ is
an equivalence if and only if $\Hot(\sF)\cap\Ac^\bco(\Flat)=
\Ac^\bco(\sF)$ by Lemma~\ref{coderived-F-to-Flat-Verdier-quotient}.

 (6)~$\Longleftrightarrow$~(7)
 The triangulated functor $\sD^\bctr(\Cot)\rarrow\sD^\bctr(\sC)$ is
always fully faithful by
Corollary~\ref{contraderived-Cot-to-C-fully-faithful}.

 (6)~$\Longleftrightarrow$~(8)
 The triangulated functors $\sD^\bctr(\Cot)\rarrow\sD^\bctr(\sC)$
and $\sD^\bctr(\sC)\rarrow\sD^\bctr(\CA)$ are adjoint by
Lemma~\ref{contraderived-Cot-C-CA-adjunction}.

 (8)~$\Longleftrightarrow$~(9)
 The triangulated functor $\sD^\bctr(\sC)\rarrow\sD^\bctr(\CA)$ is
an equivalence if and only if $\Hot(\sC)\cap\Ac^\bctr(\CA)=
\Ac^\bctr(\sC)$ by Lemma~\ref{contraderived-C-to-CA-Verdier-quotient}.

 (4)~$\Longleftrightarrow$~(8)
 The triangulated functors $\sD^\bco(\sF)\rarrow\sD^\bco(\Flat)$
and $\sD^\bctr(\sC)\rarrow\sD^\bctr(\CA)$ are isomorphic by
Corollary~\ref{F-to-Flat-isomorphic-to-C-to-CA-cor}.

 (5)~$\Longleftrightarrow$~(9)
 One has $\Hot(\sF)\cap\Ac^\bco(\Flat)=\Ac^\bco(\sF)$ if and only if
$\Hot(\sC)\cap\Ac^\bctr(\CA)=\Ac^\bctr(\sC)$ by
Corollary~\ref{F-to-Flat-isomorphic-to-C-to-CA-cor}.

 (5)~$\Longleftrightarrow$~(10)
 Condition~(5) says that $\Hot(\sF)\cap\Ac^\bco(\Flat)=\Ac^\bco(\sF)$.
 Notice the inclusions and equality
$$
 \Ac^\bco(\sF)\subset\Ac(\sF)\subset\Com(\sF)\cap\Ac(\Flat)=
 \Com(\sF)\cap\Ac^\bco(\Flat).
$$
 Here $\Ac^\bco(\sF)\subset\Ac(\sF)$ by
Corollary~\ref{coacyclic-in-A-are-acyclic-in-A} and
$\Ac(\Flat)=\Ac^\bco(\Flat)$ by
Lemma~\ref{ac=ac-bco-equivalent-conditions}(4)
(since the cotorsion pair $(\Flat,\Cot)$ is of the flat type),
while one has $\Ac(\sF)\subset\Ac(\Flat)$ since the inclusion
functor $\sF\rarrow\Flat$ is exact.

 Hence (5)~holds if and only if $\Ac^\bco(\sF)=\Ac(\sF)$ \emph{and}
$\Ac(\sF)=\Com(\sF)\cap\Ac(\Flat)$.
 Now the equation $\Ac(\sF)=\Com(\sF)\cap\Ac(\Flat)$ is a restatement
of condition~(10a), while the equation $\Ac^\bco(\sF)=\Ac(\sF)$
is equivalent to condition~(10b) by
Theorem~\ref{cotorsion-periodity-type-equivalent-conditions}\,%
(2)\,$\Leftrightarrow$\,(4).

 (10)~$\Longrightarrow$~(9)
 Let $D^\bu$ be a complex belonging to $\Com(\sC)\cap\Ac^\bctr(\CA)$.
 By Proposition~\ref{all-contraacyclic-cotorsion-pair-prop}, we have
a hereditary complete cotorsion pair $(\Com(\sF),\>\Ac^\bctr(\sC))$
in $\Com(\sK)$.
 Let $0\rarrow D^\bu\rarrow C^\bu\rarrow F^\bu\rarrow0$ be a special
preenvelope exact sequence~\eqref{special-preenvelope-sequence} for
the object $D^\bu\in\Com(\sK)$ with respect to this cotorsion pair.
 So we have $C^\bu\in\Ac^\bctr(\sC)$ and $F^\bu\in\Com(\sF)$.
 Since the class $\sC$ is closed under cokernels of monomorphisms in
$\sK$, we also have $F^\bu\in\Com(\sC)$.
 Moreover, since $\Ac^\bctr(\sC)\subset\Ac^\bctr(\CA)$ by
Corollary~\ref{contraderived-B2-to-B1-well-defined} and the class
$\Ac^\bctr(\CA)$ is closed under cokernels of monomorphisms in
$\Com(\sK)$ by Proposition~\ref{all-contraacyclic-cotorsion-pair-prop},
we actually have $F^\bu\in\Com(\sF\cap\sC)\cap\Ac^\bctr(\CA)$.

 By Lemma~\ref{flat-type-convenient-restatement-lemma}, we have
$\Com(\Flat)\cap\Ac^\bctr(\CA)=\Ac(\Flat\cap\CA)$; so
$F^\bu\in\Ac(\Flat\cap\CA)$.
 Now condition~(10a) says that $\Com(\sF)\cap\Ac(\Flat)=\Ac(\sF)$;
hence $F^\bu\in\Ac(\sF)$.
 Condition~(10b) essentially says that $\Com(\sC)\cap\Ac(\CA)=\Ac(\sC)$
(as any acyclic complex in $\CA$ is acyclic in~$\sK$;
cf.\ Lemma~\ref{finite-coresol-dim-periodicity});
hence $F^\bu\in\Ac(\sC)$.
 It follows that the objects of cocycles of the acyclic complex $F^\bu$
in $\sK$ belong to $\sF\cap\sC$; so $F^\bu\in\Ac(\sF\cap\sC)$.
 As the exact category $\sF\cap\sC$ is split (all the admissible short
exact sequences in $\sF\cap\sC$ are split), we can conclude that
the complex $F^\bu$ is contractible in $\sF\cap\sC$.

 Hence $F^\bu\in\Ac^\bctr(\sC)$.
 As we also have $C^\bu\in\Ac^\bctr(\sC)$, in view of
Lemma~\ref{totalizations-are-co-contra-acyclic}(a) it follows that
$D^\bu\in\Ac^\bctr(\sC)$.
 Alternatively, one can observe that the short exact sequence
$0\rarrow D^\bu\rarrow C^\bu\rarrow F^\bu\rarrow0$ is termwise split;
so the complex $D^\bu$ is actually homotopy equivalent to~$C^\bu$.

 (1)~$\Longrightarrow$~(2) holds by
Theorem~\ref{flat-type-contraderived-equivalences-theorem}(b).

 (1)~$\Longrightarrow$~(8) holds by
Theorem~\ref{flat-type-contraderived-equivalences-theorem}(c).

 (10)~$\Longrightarrow$~(1)
 Let us check the conditions of
Lemmas~\ref{ac=ac-bctr-equivalent-conditions}(1)
and~\ref{ac=ac-bco-equivalent-conditions}(1) for the cotorsion pair
$(\VF,\CA)$, the class $\sF$, and the cotorsion pair $(\sF,\sC)$.
 Given a complex $F^\bu\in\sF\cap\CA\subset\Flat\cap\CA$ that is
acyclic in $\sK$, the objects of cocycles of $F^\bu$ belong to
$\sF$ if and only if they belong to $\Flat$.
 That is what condition~(10a) says.
 Thus the condition of Lemma~\ref{ac=ac-bctr-equivalent-conditions}(1)
holds for $\sF$, since it holds for~$\Flat$.
 On the other hand, the condition of
Lemma~\ref{ac=ac-bco-equivalent-conditions}(1) for the cotorsion
pair $(\sF,\sC)$ coincides with~(10b).
\end{proof}

\Section{The Flaprojective Conjecture}

 Let $R\rarrow A$ be a homomorphism of associative rings.
 The definition of a \emph{cotorsion left $R$\+module} was given in
Example~\ref{flat-cotorsion-pair-in-modules-flat-type}.
 The following lemma was already mentioned in
Example~\ref{very-flat-and-very-flaprojective-in-modules-exs}(2).

\begin{lem} \label{restriction-of-scalars-cotorsion}
 Any cotorsion $A$\+module is also cotorsion as an $R$\+module.
\end{lem}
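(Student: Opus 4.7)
The plan is to reduce $\Ext^1_R(G,C)$ for a flat left $R$-module $G$ to $\Ext^1_A(F,C)$ for a certain flat left $A$-module $F$ constructed from $G$, and then invoke the assumed $A$-cotorsion hypothesis on $C$. The natural candidate is $F = A\ot_R G$, viewed as a left $A$-module via the left factor.

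First I would verify that $F = A\ot_R G$ is flat as a left $A$-module whenever $G$ is flat as a left $R$-module. This follows from the associativity of tensor product: for any right $A$-module $M$, one has $M\ot_A(A\ot_R G)\simeq M\ot_R G$ as abelian groups, and the right-hand side is an exact functor of $M$ because $G$ is flat over $R$.

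Next, using the standard $(A\ot_R{-},\text{restriction of scalars})$ adjunction, I would establish a natural isomorphism
$$
\Ext^n_A(A\ot_R G,\,C)\;\simeq\;\Ext^n_R(G,\,C)
$$
for all $n\ge0$ and, in particular, for $n=1$. Concretely, pick a projective resolution $P_\bu\to G$ in $R\Modl$. The functor $A\ot_R{-}$ carries projective left $R$-modules to projective left $A$-modules (since it carries $R$ to $A$ and preserves coproducts and direct summands), so $A\ot_R P_\bu$ is a complex of projective left $A$-modules. Moreover, because $G$ is flat over $R$, one has $\Tor^R_{>0}(A,G)=0$, so $A\ot_R P_\bu\to A\ot_R G$ is an admissible projective resolution in $A\Modl$. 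The tensor-restriction adjunction $\Hom_A(A\ot_R P_\bu,C)\simeq\Hom_R(P_\bu,C)$ is a natural isomorphism of complexes of abelian groups, and passing to cohomology yields the desired identification of $\Ext$-groups.

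To finish, since $C$ is cotorsion as a left $A$-module and $A\ot_R G$ is flat as a left $A$-module, we have $\Ext^1_A(A\ot_R G,C)=0$, hence $\Ext^1_R(G,C)=0$. As this holds for every flat left $R$-module $G$, the $R$-module $C$ is cotorsion. The only step requiring any attention is the comparison of $\Ext$-groups under change of rings; there is no serious obstacle since $A\ot_R{-}$ preserves projectivity of modules and $G$ being $R$-flat guarantees exactness of the transformed resolution.
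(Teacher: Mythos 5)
Your proof is correct. The paper itself only cites references ([Pcosh, Lemma~1.3.4(a)], [Pdomc, Lemma~1.6(a)], [PBas, Lemma~2.3]) rather than giving an argument, and your change-of-rings computation --- $A\ot_R G$ is $A$-flat for $G$ an $R$-flat module, $\Tor^R_{>0}(A,G)=0$ makes $A\ot_R P_\bu$ a projective resolution, and the adjunction identifies $\Ext^1_A(A\ot_R G,C)$ with $\Ext^1_R(G,C)$ --- is exactly the standard argument those references employ.
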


\begin{proof}
 This is~\cite[Lemma~1.3.4(a)]{Pcosh}, \cite[Lemma~1.6(a)]{Pdomc},
or~\cite[Lemma~2.3]{PBas}.
\end{proof}

 We will say that a left $A$\+module is \emph{$R$\+cotorsion} if it is
cotorsion as an $R$\+module.
 A left $A$\+module $F$ is said to be \emph{flaprojective relative
to~$R$} (or~\emph{$A/R$\+flaprojective}) \cite[Section~1.6]{Pdomc},
\cite[Section~3]{PBas} if $\Ext_A^1(F,C)=0$ for all $R$\+cotorsion
left $A$\+modules~$C$ (cf.\ the definition of an $A/R$\+very
flaprojective $A$\+module in
Example~\ref{very-flat-and-very-flaprojective-in-modules-exs}(2)).

 Denote the class of $R$\+cotorsion left $A$\+modules by
$A\Modl^{R\dcot}\subset A\Modl$ and the class of $A/R$\+flaprojective
left $A$\+modules by $A\Modl_{A/R\dflpr}\subset A\Modl$.

 Clearly, all projective left $A$\+modules are $A/R$\+flaprojective.
 It follows from Lemma~\ref{restriction-of-scalars-cotorsion} that
all $A/R$\+flaprojective $A$\+modules are flat as $A$\+modules
(see~\cite[Remark~1.29]{Pdomc}).
 So we have $A\Modl_\proj\subset A\Modl_{A/R\dflpr}\subset
A\Modl_\flat$ and $A\Modl^\cot\subset A\Modl^{R\dcot}$ (in the notation
of Examples~\ref{projective-cotorsion-pair-is-very-flat-type-ex}
and~\ref{flat-cotorsion-pair-in-modules-flat-type}).
{\hbadness=1150\par}

\begin{prop} \label{flaprojective-cotorsion-pair-prop}
 For any homomorphism of associative rings $R\rarrow A$, the pair of
classes $(A\Modl_{A/R\dflpr},\>A\Modl^{R\dcot})$ is a hereditary
complete cotorsion pair generated by a set of objects in $A\Modl$.
 A left $A$\+module $F$ is $A/R$\+flaprojective if and only if $F$ is
a direct summand of an $A$\+module filtered by $A$\+modules of
the form $A\ot_RG$, where $G$ ranges over the flat left $R$\+modules.
\end{prop}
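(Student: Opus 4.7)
The plan is to realise the pair as the cotorsion pair generated by an explicit set of induced modules, and then read off completeness, the hereditary property, and the filtration description from the Eklof--Trlifaj machinery already assembled in the paper. Let $\sS_R\subset R\Modl_\flat$ be a set of flat left $R$\+modules generating the flat cotorsion pair $(R\Modl_\flat,R\Modl^\cot)$ in $R\Modl$, whose existence is recalled in the introduction citing~\cite{BBE}; after adjoining $R$ itself (which is flat and projective, so does not change the generated cotorsion pair), we may assume $R\in\sS_R$. Set
$$
 \sS=\{A\ot_RG\mid G\in\sS_R\}\subset A\Modl.
$$
The key technical input is the $\Ext$ comparison isomorphism
$$
 \Ext^1_A(A\ot_RG,\,C)\simeq\Ext^1_R(G,\,C|_R)
$$
for every $G\in R\Modl_\flat$ and every $C\in A\Modl$. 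This I would prove by picking a projective resolution $P_\bu\rarrow G$ of $G$ over~$R$: since $G$ is flat over $R$, the induced complex $A\ot_RP_\bu\rarrow A\ot_RG$ is exact, and its terms are projective left $A$\+modules, so it is a projective resolution of $A\ot_RG$ over~$A$; the tensor-restriction adjunction then identifies $\Hom_A(A\ot_RP_\bu,C)$ with $\Hom_R(P_\bu,C|_R)$, yielding the isomorphism. Applied to $G\in\sS_R$, this gives $\sS^{\perp_1}=A\Modl^{R\dcot}$, so the cotorsion pair generated by $\sS$ in $A\Modl$ is exactly $(A\Modl_{A/R\dflpr},\>A\Modl^{R\dcot})$.

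Completeness is then immediate from Theorem~\ref{loc-pres-eklof-trlifaj}(a): $A\Modl$ is a Grothendieck category, $\sS$ is a set, and the left-hand class $A\Modl_{A/R\dflpr}$ contains all projective $A$\+modules and is therefore generating. For hereditarity, it suffices to show that $A\Modl^{R\dcot}$ is closed under cokernels of monomorphisms. Given a short exact sequence $0\rarrow C'\rarrow C\rarrow C''\rarrow0$ of $A$\+modules with $C'$ and $C$ both $R$\+cotorsion, restrict to $R$: the hereditary property of the flat cotorsion pair in $R\Modl$ gives $\Ext^2_R(F,C')=0$ for every flat $R$\+module $F$, so the long exact sequence $\Ext^1_R(F,C)\rarrow\Ext^1_R(F,C'')\rarrow\Ext^2_R(F,C')$ forces $\Ext^1_R(F,C'')=0$ and $C''$ is $R$\+cotorsion.

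For the filtration description, set $\sT=\{A\ot_RG\mid G\in R\Modl_\flat\}$. The $\Ext$ comparison above shows $\sT\subset A\Modl_{A/R\dflpr}$, and Eklof's Lemma~\ref{eklof-lemma} then gives $\Fil(\sT)\subset A\Modl_{A/R\dflpr}$; since the latter class is closed under direct summands, $\Fil(\sT)^\oplus\subset A\Modl_{A/R\dflpr}$. Conversely, because $A=A\ot_RR\in\sS$, the class $\Fil(\sS)$ is generating in $A\Modl$, so Theorem~\ref{loc-pres-eklof-trlifaj}(b) yields $A\Modl_{A/R\dflpr}=\Fil(\sS)^\oplus\subset\Fil(\sT)^\oplus$, and equality follows. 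The only substantive step is the $\Ext$ comparison isomorphism, which is the bridge between the induced cotorsion pair in $A\Modl$ and the flat cotorsion pair in $R\Modl$; once it is in hand, the remaining assertions are formal consequences of the results recalled in Section~\ref{preliminaries-secn}.
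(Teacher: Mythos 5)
Your proposal is correct, and it is worth noting that it is genuinely self-contained where the paper is not: the paper's own ``proof'' of
Proposition~\ref{flaprojective-cotorsion-pair-prop} consists of citations to \cite[Theorem~3.5]{PBas}, \cite[Proposition~3.3]{Pctrl} (for the cotorsion pair, its completeness, and the filtration description) and \cite[Lemma~1.30]{Pdomc} (for hereditarity), whereas you reprove the statement directly. Your route is essentially the expected one behind those references: the comparison isomorphism $\Ext^i_A(A\ot_RG,C)\simeq\Ext^i_R(G,C|_R)$ for $R$\+flat $G$ (valid because $\Tor^R_{>0}(A,G)=0$, so $A\ot_RP_\bu$ is an $A$\+projective resolution of $A\ot_RG$, and the tensor--restriction adjunction applies), which identifies $\sS^{\perp_1}$ with $A\Modl^{R\dcot}$ for $\sS=\{A\ot_RG\mid G\in\sS_R\}$; then Theorem~\ref{loc-pres-eklof-trlifaj}(a\+-b) gives completeness and $A\Modl_{A/R\dflpr}=\Fil(\sS)^\oplus$ (using $A=A\ot_RR\in\sS$, so that free modules lie in $\Fil(\sS)$ and this class is generating), Lemma~\ref{eklof-lemma} gives the reverse inclusion $\Fil(\sT)^\oplus\subset A\Modl_{A/R\dflpr}$ for the larger class $\sT$ of all induced modules $A\ot_RG$ with $G$ flat, and hereditarity follows from restricting a short exact sequence to $R$ and using that the flat cotorsion pair over $R$ is hereditary (here you implicitly also use $\Ext^1_R(F,C)=0$ for the middle term, which holds since $C$ is $R$\+cotorsion). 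What your version buys is transparency and independence from the external references, at the cost of not recording the slightly sharper fact implicit in your own argument, namely that one may take $G$ to range over the set $\sS_R$ only; the paper's citation route buys brevity and defers the bookkeeping to results proved elsewhere in greater generality (e.g.\ the relative construction of \cite{Pctrl} for arbitrary base cotorsion pairs).
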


\begin{proof}
 This is~\cite[Theorem~3.5]{PBas} and a special case
of~\cite[Proposition~3.3]{Pctrl}.
 For the assertion that the cotorsion pair in question is hereditary,
see~\cite[Lemma~1.30]{Pdomc}.
\end{proof}

\begin{lem} \label{R-cotorsion-A-module-periodicity-lemma}
 Let $C^\bu$ be a complex of $R$\+cotorsion left $A$\+modules that is
acyclic as a complex in $A\Modl$.
 Then the $A$\+modules of cocycles of the complex $C^\bu$ are
also $R$\+cotorsion.
\end{lem}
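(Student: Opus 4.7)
The idea is to transfer the question from the category $A\Modl$ to the category $R\Modl$ via restriction of scalars, and then invoke the cotorsion periodicity theorem of Bazzoni, Cort\'es-Izurdiaga, and Estrada (\cite[Theorem~5.1(2)]{BCE}, discussed in Section~\ref{introd-periodicity-subsecn}) applied to the ring~$R$.

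First I would observe that the restriction of scalars functor $A\Modl\rarrow R\Modl$ is exact and, being forgetful, preserves kernels and cokernels of morphisms. Consequently, if $C^\bu$ is acyclic as a complex in $A\Modl$, then it is also acyclic as a complex in $R\Modl$, and moreover the $A$\+modules of cocycles $Z^n(C^\bu)$ coincide, as $R$\+modules, with the cocycles of the underlying complex in $R\Modl$.

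Next, by the definition of an $R$\+cotorsion left $A$\+module, each term $C^n$ is a cotorsion left $R$\+module. Thus the complex $C^\bu$, viewed in $R\Modl$, is an acyclic complex of cotorsion $R$\+modules. Applying the cotorsion periodicity theorem~\cite[Theorem~5.1(2)]{BCE} to the ring~$R$, we conclude that the $R$\+modules of cocycles of $C^\bu$ are cotorsion as $R$\+modules. By the observation of the previous paragraph, these are the same as the $A$\+modules of cocycles of $C^\bu$ with their induced $R$\+module structure; so the $A$\+modules of cocycles are $R$\+cotorsion by definition.

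There is really no obstacle here: the entire content of the lemma is that ``$R$\+cotorsion'' is a condition on the underlying $R$\+module, so that cotorsion periodicity over~$R$ transfers verbatim. The only thing one has to verify --- and this is immediate from exactness of restriction of scalars --- is that the notion of acyclicity and the formation of cocycles are compatible with passage from $A\Modl$ to $R\Modl$.
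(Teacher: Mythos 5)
Your proof is correct and is exactly the argument the paper gives: restrict scalars to $R$, observe that the terms are cotorsion $R$\+modules and the complex stays acyclic with the same cocycles, and apply the cotorsion periodicity theorem of Bazzoni, Cort\'es-Izurdiaga, and Estrada over~$R$. Nothing further is needed.
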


\begin{proof}
 Apply the cotorsion periodicity theorem~\cite[Theorem~5.1(2)]{BCE}
to the ring $R$ and the complex $C^\bu$ viewed as a complex of
$R$\+modules.
\end{proof}

\begin{cor} \label{flaprojective-coacyclic=acyclic}
 In the exact category of $A/R$\+flaprojective left $A$\+modules,
the classes of acyclic and Becker-coacyclic complexes coincide,
$$
 \Ac^\bco(A\Modl_{A/R\dflpr})=\Ac(A\Modl_{A/R\dflpr}).
$$
 So one has\/ $\sD^\bco(A\Modl_{A/R\dflpr})=
\sD(A\Modl_{A/R\dflpr})$.
\end{cor}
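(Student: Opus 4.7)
The plan is to recognize this as an immediate application of the equivalence of periodicity-type conditions established in Theorem~\ref{cotorsion-periodity-type-equivalent-conditions}, with the key input being the cotorsion periodicity theorem for the ring~$R$ via Lemma~\ref{R-cotorsion-A-module-periodicity-lemma}.

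First, I would invoke Proposition~\ref{flaprojective-cotorsion-pair-prop} to note that the pair $(\sA,\sB)=(A\Modl_{A/R\dflpr},\>A\Modl^{R\dcot})$ is a hereditary complete cotorsion pair generated by a set of objects in the Grothendieck category $\sK=A\Modl$. This places us exactly in the setup of Theorem~\ref{cotorsion-periodity-type-equivalent-conditions}, which asserts the equivalence of four conditions; in particular, condition~(2) (the desired equality $\Ac^\bco(\sA)=\Ac(\sA)$) is equivalent to condition~(4) (every acyclic complex in $\sK$ with terms in $\sB$ has its cocycle objects in~$\sB$).

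Next I would verify condition~(4) for $\sB=A\Modl^{R\dcot}$: this is precisely Lemma~\ref{R-cotorsion-A-module-periodicity-lemma}, which is itself a direct consequence of applying the Bazzoni--Cort\'es-Izurdiaga--Estrada cotorsion periodicity theorem to the underlying ring~$R$. Combining these two ingredients yields $\Ac^\bco(A\Modl_{A/R\dflpr})=\Ac(A\Modl_{A/R\dflpr})$, and the final identification $\sD^\bco(A\Modl_{A/R\dflpr})=\sD(A\Modl_{A/R\dflpr})$ follows by taking the Verdier quotient of $\Hot(A\Modl_{A/R\dflpr})$ by this common thick subcategory.

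There is no real obstacle here; the work has all been done in the preceding results. The only conceptual point worth highlighting in the write-up is why Lemma~\ref{R-cotorsion-A-module-periodicity-lemma} suffices: the acyclicity of a complex of left $A$-modules in the category $A\Modl$ coincides with its acyclicity in $R\Modl$ (forgetting scalars is exact and conservative), so the $R$-cotorsion periodicity theorem transfers verbatim from $R\Modl$ to the forgetful image in $A\Modl$.
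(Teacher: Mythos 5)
Your proposal is correct and follows exactly the paper's own route: apply Theorem~\ref{cotorsion-periodity-type-equivalent-conditions}\,(4)\,$\Rightarrow$\,(2) to the cotorsion pair $(A\Modl_{A/R\dflpr},\>A\Modl^{R\dcot})$ furnished by Proposition~\ref{flaprojective-cotorsion-pair-prop}, with condition~(4) supplied by Lemma~\ref{R-cotorsion-A-module-periodicity-lemma} (i.e.\ cotorsion periodicity over~$R$, transferred via restriction of scalars). Nothing is missing; this matches the paper's proof essentially verbatim.
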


\begin{proof}
 The assertion follows from
Proposition~\ref{flaprojective-cotorsion-pair-prop}
and Lemma~\ref{R-cotorsion-A-module-periodicity-lemma} by virtue of
Theorem~\ref{cotorsion-periodity-type-equivalent-conditions}\,%
(4)\,$\Rightarrow$\,(2) applied to the cotorsion pair
$(\sA,\sB)=(A\Modl_{A/R\dflpr},\>A\Modl^{R\dcot})$
in the category of left $A$\+modules $\sK=A\Modl$.
\end{proof}

\begin{cor} \label{flaprojective-conjecture-corollary}
 For any homomorphism of associative rings $R\rarrow A$,
the following ten conditions are equivalent:
\begin{enumerate}
\item the cotorsion pair $(A\Modl_{A/R\dflpr},\>A\Modl^{R\dcot})$ in
$A\Modl$ is of the flat type;
\item the triangulated functor\/ $\Hot(A\Modl_\proj)\rarrow
\sD(A\Modl_{A/R\dflpr})$ induced by the inclusion of additive/exact
categories $A\Modl_\proj\rarrow A\Modl_{A/R\dflpr}$ is a triangulated
equivalence;
\item the triangulated functor\/ $\Hot(A\Modl_\proj)\rarrow
\sD(A\Modl_{A/R\dflpr})$ induced by the inclusion of additive/exact
categories $A\Modl_\proj\rarrow A\Modl_{A/R\dflpr}$ is essentially
surjective;
\item the triangulated functor\/ $\sD(A\Modl_{A/R\dflpr})\rarrow
\sD(A\Modl_\flat)$ induced by the inclusion of exact categories
$A\Modl_{A/R\dflpr}\rarrow A\Modl_\flat$ is a triangulated equivalence;
\item any complex in $A\Modl_{A/R\dflpr}$ that is Becker-coacyclic
in $A\Modl_\flat$ is also Becker-coacyclic in $A\Modl_{A/R\dflpr}$;
\item the triangulated functor\/ $\sD^\bctr(A\Modl^\cot)\rarrow
\sD^\bctr(A\Modl^{R\dcot})$ induced by the inclusion of exact categories
$A\Modl^\cot\rarrow A\Modl^{R\dcot}$ is a triangulated equivalence;
\item the triangulated functor\/ $\sD^\bctr(A\Modl^\cot)\rarrow
\sD^\bctr(A\Modl^{R\dcot})$ induced by the inclusion of exact categories
$A\Modl^\cot\rarrow A\Modl^{R\dcot}$ is essentially surjective;
\item the triangulated functor\/ $\sD^\bctr(A\Modl^{R\dcot})\rarrow
\sD^\bctr(A\Modl)$ induced by the inclusion of exact/abelian categories
$A\Modl^{R\dcot}\rarrow A\Modl$ is a triangulated equivalence;
\item any complex in $A\Modl^{R\dcot}$ that is Becker-contraacyclic
in $A\Modl$ is also Becker-contraacyclic in $A\Modl^{R\dcot}$;
\item in any acyclic complex of left $A$\+modules whose terms are
$A/R$\+flaprojective and whose modules of cocycles are flat as
$A$\+modules, the modules of cocycles are actually $A/R$\+flaprojective.
\end{enumerate}
\end{cor}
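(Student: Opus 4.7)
\medskip

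The plan is to specialize Theorem~\ref{sandwiched-equivalent-conditions-theorem} to the Grothendieck category $\sK = A\Modl$ with the three cotorsion pairs $(\VF,\CA) = (A\Modl_\proj,\>A\Modl)$, \,$(\sF,\sC) = (A\Modl_{A/R\dflpr},\>A\Modl^{R\dcot})$, and $(\Flat,\Cot) = (A\Modl_\flat,\>A\Modl^\cot)$. First, I would verify that the hypotheses of the theorem are satisfied: the pair $(\VF,\CA)$ is of the very flat type by Example~\ref{projective-cotorsion-pair-is-very-flat-type-ex}; the pair $(\Flat,\Cot)$ is of the flat type by Example~\ref{flat-cotorsion-pair-in-modules-flat-type}; and the pair $(\sF,\sC)$ is a hereditary complete cotorsion pair generated by a set of objects in $A\Modl$ by Proposition~\ref{flaprojective-cotorsion-pair-prop}. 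The nesting $A\Modl_\proj \subset A\Modl_{A/R\dflpr} \subset A\Modl_\flat$ is immediate from the discussion preceding Proposition~\ref{flaprojective-cotorsion-pair-prop}.

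Once the theorem applies, the ten conditions~(1)--(10) of the theorem translate to the ten conditions of the corollary by the following identifications of triangulated categories. On the coderived side, $\sD^\bco(\VF) = \Hot(A\Modl_\proj)$ since $A\Modl_\proj$ is a split exact category (any admissible short exact sequence of projectives splits); the equality $\sD^\bco(\sF) = \sD(A\Modl_{A/R\dflpr})$ is exactly Corollary~\ref{flaprojective-coacyclic=acyclic}; and $\sD^\bco(\Flat) = \sD(A\Modl_\flat)$ because the flat cotorsion pair is of the flat type. On the contraderived side, $\sD^\bctr(\CA) = \sD^\bctr(A\Modl)$, $\sD^\bctr(\sC) = \sD^\bctr(A\Modl^{R\dcot})$, and $\sD^\bctr(\Cot) = \sD^\bctr(A\Modl^\cot)$, with no further identifications needed. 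This makes conditions~(1)--(9) of the theorem literal restatements of conditions~(1)--(9) of the corollary.

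Finally, I would handle the reduction of condition~(10) of the theorem to condition~(10) of the corollary. Clause~(10a) of the theorem asserts that in any acyclic complex in $A\Modl$ with terms in $A\Modl_{A/R\dflpr}$ and cocycles in $A\Modl_\flat$, the cocycles lie in $A\Modl_{A/R\dflpr}$; this is verbatim condition~(10) of the corollary. Clause~(10b) of the theorem asserts that any acyclic complex in $A\Modl$ with terms in $A\Modl^{R\dcot}$ has its cocycles in $A\Modl^{R\dcot}$, and this is automatic by Lemma~\ref{R-cotorsion-A-module-periodicity-lemma} (the cotorsion periodicity theorem of Bazzoni--Cort\'es-Izurdiaga--Estrada applied to the ring $R$). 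Thus the conjunction~(10a)\,\&\,(10b) collapses to~(10a) alone, completing the identification. There is no substantive obstacle here, since all the difficulty was absorbed into Theorem~\ref{sandwiched-equivalent-conditions-theorem} and the cotorsion periodicity theorem for $R$; the proof is essentially bookkeeping plus invocation of Lemma~\ref{R-cotorsion-A-module-periodicity-lemma} to knock out the cotorsion-periodicity half of condition~(10).
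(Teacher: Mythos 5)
Your proposal is correct and follows essentially the same route as the paper: specialize Theorem~\ref{sandwiched-equivalent-conditions-theorem} to the three cotorsion pairs $(A\Modl_\proj,\>A\Modl)$, $(A\Modl_{A/R\dflpr},\>A\Modl^{R\dcot})$, $(A\Modl_\flat,\>A\Modl^\cot)$, identify $\sD^\bco(A\Modl_\proj)$ with $\Hot(A\Modl_\proj)$ (split exact category) and the coderived with the conventional derived categories via Corollary~\ref{flaprojective-coacyclic=acyclic} and the flat-type property, and drop clause~(10b) using Lemma~\ref{R-cotorsion-A-module-periodicity-lemma}. This matches the paper's proof in all essentials.
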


\begin{proof}
 This is a corollary of
Theorem~\ref{sandwiched-equivalent-conditions-theorem}.
 Consider the three cotorsion pairs
$(\VF,\CA)\allowbreak=(A\Modl_\proj,\>A\Modl)$,
\ $(\sF,\sC)=(A\Modl_{A/R\dflpr},\>A\Modl^{R\dcot})$, and
$(\Flat,\Cot)=(A\Modl_\flat,\>A\Modl^\cot)$ in the category of left
$A$\+modules $\sK=A\Modl$.
 Then the assumptions of
Theorem~\ref{sandwiched-equivalent-conditions-theorem} are satisfied
by Examples~\ref{projective-cotorsion-pair-is-very-flat-type-ex}
and~\ref{flat-cotorsion-pair-in-modules-flat-type},
Lemma~\ref{restriction-of-scalars-cotorsion},
and Proposition~\ref{flaprojective-cotorsion-pair-prop}.

 The conditions~(1\+-10) of the corollary correspond to
the conditions~(1\+-10) of the theorem, with the following changes:
\begin{itemize}
\item Condition~(10b) of
Theorem~\ref{sandwiched-equivalent-conditions-theorem} always holds
in the situation at hand by
Lemma~\ref{R-cotorsion-A-module-periodicity-lemma}, so it is dropped.
\item One has $\sD^\bco(A\Modl_\flat)=\sD(A\Modl_\flat)$ by
Lemma~\ref{ac=ac-bco-equivalent-conditions}(4)
or~\cite[Theorem~7.18]{Pphil}, and also
$\sD^\bco(A\Modl_{A/R\dflpr})=\sD(A\Modl_{A/R\dflpr})$
by Corollary~\ref{flaprojective-coacyclic=acyclic}.
 For these reasons, all mentions of the coderived categories in
conditions~(2\+-4) are replaced with the conventional
derived categories.
\item The exact category $A\Modl_\proj$ is split, so all mentions of
its (co)derived category $\sD^\bco(A\Modl_\proj)$ or
$\sD(A\Modl_\proj)$ are replaced with the homotopy category
$\Hot(A\Modl_\proj)$.
\end{itemize}
\end{proof}

 The property of Corollary~\ref{flaprojective-conjecture-corollary}(10)
was already formulated as
Conjecture~\ref{flaprojective-conjecture} in the Introduction.
 Let us state it again with the added benefit of the whole list of
equivalent formulations.

\begin{conj}[Flaprojective Conjecture]
 All the equivalent conditions of
Corollary~\ref{flaprojective-conjecture-corollary} hold for all
homomorphisms of associative rings $R\rarrow A$.
\end{conj}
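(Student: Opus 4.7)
The plan is to verify condition~(10) of Corollary~\ref{flaprojective-conjecture-corollary} directly. Given an acyclic complex $F^\bu$ of $A/R$\+flaprojective left $A$\+modules whose $A$\+modules of cocycles $Z^n$ are flat, I aim to show each $Z^n$ is $A/R$\+flaprojective, i.e., $\Ext^1_A(Z^n, C) = 0$ for every $R$\+cotorsion left $A$\+module~$C$. A standard reduction is available: splicing the short exact sequences $0\to Z^n\to F^n\to Z^{n+1}\to 0$ and using $\Ext^1_A(F^n,C)=0$ (since each $F^n$ is $A/R$\+flaprojective), the vanishing of $\Ext^1_A(Z^n,C)$ for all~$n$ is equivalent to the acyclicity of the complex of abelian groups $\Hom_A(F^\bu,C)$.

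Next I would exploit the structural characterization in Proposition~\ref{flaprojective-cotorsion-pair-prop}: every $A/R$\+flaprojective module is a direct summand of a module filtered by $A\ot_R G$ with $G$ a flat left $R$\+module. The adjunction $\Hom_A(A\ot_R G,\,C)\simeq\Hom_R(G,C)$ combined with the fact that $C$ is cotorsion as an $R$\+module (Lemma~\ref{restriction-of-scalars-cotorsion}) reduces, on each generator of the filtration, the relevant Hom group to $\Hom_R$ from a flat $R$\+module to a cotorsion $R$\+module. The hope would be to leverage the cotorsion periodicity theorem over~$R$ (in the form of Lemma~\ref{R-cotorsion-A-module-periodicity-lemma}) to conclude acyclicity of $\Hom_A(F^\bu,C)$, propagating the vanishing through the filtrations and through the Eklof lemma (Lemma~\ref{eklof-lemma}).

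The hard part will be the passage from termwise information to complex-level information. The filtration of each $F^n$ by modules $A\ot_R G$ is a termwise phenomenon that need not respect the differential of $F^\bu$; and while $F^\bu$ is acyclic as a complex of $A$\+modules, and hence also as a complex of $R$\+modules after restriction of scalars, its cocycles $Z^n$ need not be flat as $R$\+modules, so the cotorsion periodicity theorem for~$R$ cannot be invoked directly on the restricted complex. A successful proof will likely require either a strengthening of the classical periodicity theorem replacing flatness over~$R$ by some weaker condition preserved under restriction of scalars from~$A$, or a technique for replacing $F^\bu$ by a quasi-isomorphic complex whose terms genuinely decompose as $A\ot_R G^n$ for a complex $G^\bu$ of flat $R$\+modules without disturbing the flatness of the $A$\+cocycles. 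Absent such new input---which the present paper does not supply---the conjecture genuinely remains open, and the role of the equivalences (1)\+-(9) of Corollary~\ref{flaprojective-conjecture-corollary} is to offer alternative lines of attack (e.g.\ via the Becker contraderived functor $\sD^\bctr(A\Modl^{R\dcot})\to\sD^\bctr(A\Modl)$ in condition~(8)) which might be more tractable than the direct periodicity statement.
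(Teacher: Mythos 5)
This statement is a conjecture, and the paper does not prove it: what the paper actually establishes is the equivalence of the ten conditions (Corollary~\ref{flaprojective-conjecture-corollary}), together with the special case of Example~\ref{Noetherian-finite-Krull-dim-example}, where all flat left $R$\+modules are assumed to have finite projective dimension, so that the cotorsion pair $(A\Modl_{A/R\dflpr},\>A\Modl^{R\dcot})$ is itself of the very flat type and condition~(1) holds. Your proposal is consistent with this state of affairs: the opening reduction to the acyclicity of $\Hom_A(F^\bu,C)$ for all $R$\+cotorsion $C$ is correct (it is the same splicing argument as in Lemma~\ref{Hom-criterion-of-acyclicity-in-cotorsion-pair}), and the obstruction you identify --- the filtrations supplied by Proposition~\ref{flaprojective-cotorsion-pair-prop} are a termwise datum that need not interact with the differential, while the cocycles are only flat over $A$ and not over $R$, so no periodicity theorem over $R$ can be applied to the restricted complex --- is precisely why the problem remains open. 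Since you rightly stop short of claiming a proof, there is nothing to correct; the only point your sketch omits is the paper's partial positive result just mentioned (bounded projective dimension of flat $R$\+modules, e.g.\ $R$ commutative Noetherian of finite Krull dimension), which settles the conjecture in that case without any periodicity input.
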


 In the next corollary, it is assumed that flat left $R$\+modules
have finite projective dimensions.
 In particular, all Noetherian commutative rings $R$ of finite Krull
dimension have this property~\cite[Corollaire~II.3.2.7]{RG}, as well as
all associative rings $R$ of the cardinality not exceeding~$\aleph_n$
for a finite integer~$n$ \,\cite[Th\'eor\`eme~7.10]{GJ}.

\begin{cor}
 Let $R\rarrow A$ be a homomorphism of associative rings.
 Assume that all flat left $R$\+modules have finite projective
dimensions.
 Then all the equivalent conditions of
Corollary~\ref{flaprojective-conjecture-corollary} hold for the ring
homomorphism $R\rarrow A$.
 In other words, Conjecture~\ref{flaprojective-conjecture} is true
for $R\rarrow A$.
\end{cor}

\begin{proof}
 The point is that the projective dimensions of modules are never
increased by transfinitely iterated extensions (in view of
the Eklof lemma, see Lemma~\ref{eklof-lemma}).
 Furthermore, if $G$ is a flat left $R$\+module of projective
dimension~$n$, then the projective dimension of the $A$\+module
$A\ot_RG$ does not exceed~$n$.
 If all flat left $R$\+modules have finite projective dimensions,
then such projective dimensions are uniformly bounded by some fixed
integer~$d$, by Lemma~\ref{finite-projective-dimensions-are-bounded}.
 In view of Proposition~\ref{flaprojective-cotorsion-pair-prop},
it follows that the projective dimensions of all $A/R$\+flaprojective
left $A$\+modules do not exceed~$d$.

 So, in the situation at hand, the cotorsion pair
$(A\Modl_{A/R\dflpr},\>A\Modl^{R\dcot})$ in $A\Modl$ is of the very
flat type in the sense of Section~\ref{very-flat-type-secn}.
 By Example~\ref{very-flat-type-is-flat-type-example}, it follows
that the cotorsion pair $(A\Modl_{A/R\dflpr},\>A\Modl^{R\dcot})$ is
of the flat type.
 Thus the condition of
Corollary~\ref{flaprojective-conjecture-corollary}(1) is satisfied
for the ring homomorphism $R\rarrow A$.
 See also the discussion in~\cite[Section~8.4]{Pdomc}.

 Let us spell out a direct proof of the assertion that
Conjecture~\ref{flaprojective-conjecture} (i.~e., the property
of Corollary~\ref{flaprojective-conjecture-corollary}(10)) holds
whenever all flat $R$\+modules have finite projective dimensions.
 According to the argument above, there exists an integer~$d\ge0$
such that the projective dimensions of all $A/R$\+flaprojective
left $A$\+modules do not exceed~$d$.
 Let $F^\bu$ be a complex of $A/R$\+flaprojective left $A$\+modules
such that $F^\bu$ is an acyclic complex with flat $A$\+modules of
cocycles.

 Pick a contractible complex of projective left $A$\+modules $P_0^\bu$
together with a termwise surjective morphism of complexes of
$A$\+modules $P_0^\bu\rarrow F^\bu$.
 Denote by $F_1^\bu$ the kernel of the morphism of complexes
$P_0^\bu\rarrow F^\bu$, and pick a contractible complex of projective
left $A$\+modules $P_1^\bu$ together with a termwise surjective
morphism of complexes of modules $P_1^\bu\rarrow F_1^\bu$.
 Proceeding in this way, we arrive to a finite exact sequence of
complexes of $A$\+modules $0\rarrow F_d^\bu\rarrow P_{d-1}^\bu
\rarrow\dotsb\rarrow P_0^\bu\rarrow F^\bu\rarrow0$, where $P_i^\bu$
are contractible complexes of projective $A$\+modules.
 Since the projective dimensions of the terms of the complex $F^\bu$
do not exceed~$d$, the complex $F_d^\bu$ is a complex of projective
$A$\+modules.

 Notice that all the complexes $F_i^\bu$ are acyclic (since the kernel
of a termwise surjective morphism of acyclic complexes is acyclic).
 Furthermore, the functor assigning to an acyclic complex its module of
cocycles is exact.
 Denote the image of the differential $F^{n-1}\rarrow F^n$ by $G^n$,
the images of the differentials $F_i^{n-1}\rarrow F_i^n$ by $G_i^n$,
and the images of the differentials $P_i^{n-1}\rarrow P_i^n$ by~$Q_i^n$.
 Then, for every degree $n\in\boZ$, we have a finite exact sequence of
$A$\+modules $0\rarrow G_d^n\rarrow Q_{d-1}^n\rarrow\dotsb\rarrow
Q_0^n\rarrow G^n\rarrow0$.

 The $A$\+modules $G^n$ are flat by assumption, and the $A$\+modules
$Q_i^n$ are projective (since $P_i^\bu$ are contractible complexes of
projective $A$\+modules).
 It follows that the $A$\+modules $G_d^n$ are flat.
 Now $F_d^\bu$ is an acyclic complex of projective $A$\+modules with
flat $A$\+modules of cocycles, so the flat and projective periodicity
theorem~\cite[Theorem~2.5]{BG}, \cite[Remark~2.15]{Neem} is applicable,
and we arrive to the conclusion that the $A$\+modules $G_d^n$ are
actually projective.
 Thus the projective dimensions of the $A$\+modules $G^n$ do not
exceed~$d$ (in view of the same finite exact sequence above).

 Finally, we recall that $(A\Modl_{A/R\dflpr},\>A\Modl^{R\dcot})$ is
a hereditary cotorsion pair in $A\Modl$ by
Proposition~\ref{flaprojective-cotorsion-pair-prop}, so the full
subcategory of $A/R$\+flaprojective left $A$\+modules
$A\Modl_{A/R\dflpr}$ is resolving in $A\Modl$.
 The resolution dimensions of the $A$\+modules $G^n$ with respect to
$A\Modl_{A/R\dflpr}$ do not exceed the projective dimensions of $G^n$,
which do not exceed~$d$.
 For every $n\in\boZ$, the finite exact sequence
$0\rarrow G^n\rarrow F^n\rarrow F^{n+1}\rarrow\dotsb\rarrow F^{n+d-1}
\rarrow G^{n+d}\rarrow0$ is a resolution of the $A$\+module $G^{n+d}$
with $A$\+modules $F^i\in A\Modl_{A/R\dflpr}$.
 Since the resolution dimension does not depend on the choice of
a resolution~\cite[Lemma~2.1]{Zhu}, \cite[Proposition~2.3]{Sto0},
\cite[Corollary~A.5.2]{Pcosh}, we can conclude that
$G^n\in A\Modl_{A/R\dflpr}$.
 Thus the $A$\+modules of cocycles of the complex $F^\bu$ are
$A/R$\+flaprojective, as desired.
\end{proof}

 A nonaffine (quasi-compact semi-separated scheme) version of
Corollary~\ref{flaprojective-conjecture-corollary}(8)
for Noetherian commutative rings $R$ of finite Krull dimension
can be found in~\cite[Theorem~15.22]{Pdomc}.

\Section{The Relatively Cotorsion Conjecture}

 Let $R\rarrow A$ be a homomorphism of associative rings.
 Assume that the left $R$\+module $A$ is projective.
 We will say that a left $A$\+module $F$ is \emph{$R$\+projective}
if $F$ is projective as a left $R$\+module.
 In this section, we are interested in left $A$\+modules $F$ that are
simultaneously flat (i.~e., $A$\+flat) and $R$\+projective.

 We will say that a left $A$\+module $C$ is \emph{cotorsion relative
to~$R$} (or \emph{$A/R$\+cotorsion}) if $\Ext^1_A(F,C)=0$ for all
$R$\+projective flat left $A$\+modules~$F$.
 Denote the class of $R$\+projective left $A$\+modules by
$A\Modl^{R\dproj}\subset A\Modl$, the class of $R$\+projective flat
left $A$\+modules by $A\Modl_\flat^{R\dproj}=A\Modl^{R\dproj}\cap
A\Modl_\flat\subset A\Modl$, and the class of $A/R$\+cotorsion
left $A$\+modules by $A\Modl^{A/R\dcot}\subset A\Modl$.

 Clearly, all projective left $A$\+modules are flat and $R$\+projective
(since we are assuming that the left $R$\+module $A$ is projective).
 So we have $A\Modl_\proj\subset A\Modl_\flat^{R\dproj}\subset
A\Modl_\flat$ and $A\Modl^\cot\subset A\Modl^{A/R\dcot}$
(in the notation
of Examples~\ref{projective-cotorsion-pair-is-very-flat-type-ex}
and~\ref{flat-cotorsion-pair-in-modules-flat-type}).

\begin{prop} \label{relatively-cotorsion-cotorsion-pair-prop}
 Let $R\rarrow A$ be a homomorphism of associative rings such that
$A$ is projective as a left $R$\+module.
 Then the pair of classes
$(A\Modl_\flat^{R\dproj},\>A\Modl^{A/R\dcot})$ is a hereditary
complete cotorsion pair generated by a set of objects in $A\Modl$.
\end{prop}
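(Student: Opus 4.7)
The plan is to realize $(\sF,\sC) = (A\Modl_\flat^{R\dproj},\>A\Modl^{A/R\dcot})$ as the cotorsion pair in the Grothendieck category $A\Modl$ generated by a set of ``small'' $R$-projective flat $A$-modules, then invoke Theorem~\ref{loc-pres-eklof-trlifaj} and read off heredity from the closure properties of $\sF$. The argument parallels the proof of Proposition~\ref{flaprojective-cotorsion-pair-prop}, but with the roles of ``flat'' and ``projective'' in a sense swapped, and with $R$-projectivity replacing $R$-cotorsion as the side condition.

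First I would establish the closure properties of $\sF=A\Modl_\flat^{R\dproj}$ in $A\Modl$. The class $\sF$ is obviously closed under coproducts and direct summands. Given any admissible short exact sequence $0\to L\to M\to N\to 0$ in $A\Modl$ with $M,N\in A\Modl^{R\dproj}$, the sequence is $R$-split (since $N$ is $R$-projective), hence $L$ is an $R$-direct summand of $M$ and therefore $R$-projective; combined with the corresponding closure properties of the class of flat $A$-modules, this shows that $\sF$ is closed under extensions and kernels of epimorphisms in $A\Modl$. Since $A\Modl_\proj\subset\sF$, the class $\sF$ is in particular generating in $A\Modl$.

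Second, and this is the main technical step, I would establish that $\sF$ is deconstructible in $A\Modl$. Setting $\kappa=\max(|A|,\aleph_0)$, I claim that every $F\in\sF$ is filtered by $A$-submodules from $\sF$ of cardinality at most $\kappa$. Given a subset $X\subset F$ with $|X|\le\kappa$, one produces by a Kaplansky-type construction an $A$-submodule $G\subset F$ with $X\subset G$ and $|G|\le\kappa$ such that $G$ is simultaneously a \emph{pure} $A$-submodule of $F$ (so that $G$ and $F/G$ are flat as $A$-modules) and an $R$-direct summand of $F$ (so that $G$ and $F/G$ are projective over $R$, using Kaplansky's decomposition of the $R$-projective module $F$ as an $R$-direct sum of countably generated projective $R$-modules). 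Iterating in the standard transfinite manner yields a $\kappa$-filtration of $F$ with factors in $\sF$. This is exactly the situation covered by Proposition~3.3 of \cite{Pctrl}, to which I would appeal.

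Third, let $\sS\subset\sF$ be a set of representatives of isomorphism classes of elements of $\sF$ of cardinality at most $\kappa$, and let $(\sF',\sC')$ be the cotorsion pair generated by $\sS$ in $A\Modl$. The class $\Fil(\sS)$ is generating (it contains all projective $A$-modules), and $\Fil(\sS)\subset\sF$ by Lemma~\ref{eklof-lemma} applied to the closure properties of step one. By Theorem~\ref{loc-pres-eklof-trlifaj}(a,b), the cotorsion pair $(\sF',\sC')$ is complete and $\sF'=\Fil(\sS)^\oplus$. Combined with the deconstructibility from step two, this yields $\sF'=\Fil(\sS)^\oplus=\sF$, and consequently $\sC'=\sS^{\perp_1}=\sF^{\perp_1}=A\Modl^{A/R\dcot}$ by the definition of the right-hand class. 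The cotorsion pair is hereditary by the closure of $\sF$ under kernels of epimorphisms established in step one (equivalently, by condition~(1) of the characterization of hereditary cotorsion pairs in the Preliminaries section).

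The main obstacle is the deconstructibility assertion in the second step: producing, within a single cardinality bound $\kappa$, $A$-submodules $G\subset F$ that are simultaneously pure (for the flat $A$-module aspect of $F/G$) and $R$-split (for the $R$-projective aspect of $F/G$). The two individual approximation processes --- Kaplansky--Enochs for flat $A$-modules and Kaplansky's theorem for projective $R$-modules --- are classical, but synchronizing them via a back-and-forth that closes off simultaneously in both respects is the technically delicate point, and is handled in the general framework of \cite{Pctrl}.
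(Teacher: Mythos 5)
Your proposal is correct and follows the same overall strategy as the paper: establish that the class $\sF=A\Modl_\flat^{R\dproj}$ is deconstructible, generate a cotorsion pair by a set of small representatives, apply the Eklof--Trlifaj theorem (Theorem~\ref{loc-pres-eklof-trlifaj}) to get completeness and the identification $\sF'=\Fil(\sS)^\oplus=\sF$, and read off heredity from closure of $\sF$ under kernels of epimorphisms. The only real divergence is in how deconstructibility is obtained. You propose a hands-on synchronized Kaplansky-type back-and-forth, producing submodules that are simultaneously pure as $A$\+submodules and partial sums of a fixed Kaplansky decomposition over $R$; this is sound, and it is essentially the content of the Hill-lemma-based fact that an intersection of deconstructible classes is deconstructible. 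The paper instead assembles the statement from citations: Kaplansky's theorem~\cite{Kap} for deconstructibility of projective $R$\+modules, \cite[Proposition~2.3]{Pctrl} for deconstructibility of the class of $R$\+projective $A$\+modules in $A\Modl$, \cite{BBE} or \cite{GT} for deconstructibility of flat $A$\+modules, and \cite[Proposition~2.9(2)]{Sto-hill} for the intersection. One caveat: your appeal to \cite[Proposition~3.3]{Pctrl} is misplaced --- that result concerns the relative (flaprojective-type) cotorsion pair generated by modules $A\ot_RG$ and is the reference for Proposition~\ref{flaprojective-cotorsion-pair-prop}, not for the simultaneous purity/$R$\+splitness approximation; the correct references for your synchronization step are \cite[Proposition~2.9(2)]{Sto-hill} (or the Hill lemma) together with \cite[Proposition~2.3]{Pctrl}. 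Also, your justification that $\Fil(\sS)\subset\sF$ should be phrased as closure of $\sF$ under transfinite extensions (flats are closed under direct limits and extensions; $R$\+projectives are closed under filtrations by the Eklof lemma applied in $R\Modl$), rather than as an application of Lemma~\ref{eklof-lemma} to $\sF$ itself, which is not presented as a ${}^{\perp_1}$\+class at that point of the argument.
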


\begin{proof}
 The key step is to prove that the class of modules
$A\Modl_\flat^{R\dproj}$ is deconstructible in $A\Modl$.
 This is provable using the Hill lemma~\cite[Theorem~7.10]{GT},
\cite[Theorem~2.1]{Sto-hill}.
 Instead of spelling out the details of the argument, we rely on
several references.
 First of all, the class of projective left $R$\+modules is
deconstructible in $R\Modl$ by Kaplansky's
theorem~\cite[Theorem~1]{Kap}, \cite[Lemma~7.12 and
Corollary~7.14]{GT}, \cite[Proposition~2.9(1)]{Sto-hill}.
 Secondly, it follows that the class of $R$\+projective left
$A$\+modules is deconstructible in $A\Modl$
by~\cite[Proposition~2.3]{Pctrl}.
 On the other hand, the class of flat left $A$\+modules is
deconstructible in $A\Modl$ by~\cite[Lemma~1 and Proposition~2]{BBE}
or~\cite[Lemma~6.17]{GT}.
 Finally, by~\cite[Proposition~2.9(2)]{Sto-hill}, the intersection of
any two deconstructible classes is deconstructible.
 Thus the class $A\Modl_\flat^{R\dproj}=A\Modl^{R\dproj}\cap
A\Modl_\flat$ is deconstructible.

 Now let $\sS$ be a set of left $A$\+modules such as
$A\Modl^{R\dproj}_\flat=\Fil(\sS)$.
 Let $(\sF,\sC)$ be the cotorsion pair generated by $\sS$ in $A\Modl$.
 Then we have $A\Modl_\flat^{R\dproj}\subset\sF$ by
Lemma~\ref{eklof-lemma}, and the class $A\Modl_\flat^{R\dproj}$ is
generating in $A\Modl$ since $A\Modl_\proj\subset
A\Modl_\flat^{R\dproj}$.
 Thus, by Theorem~\ref{loc-pres-eklof-trlifaj}
or~\ref{efficient-eklof-trlifaj}, the cotorsion pair $(\sF,\sC)$
is complete, and $\sF=A\Modl_\flat^{R\dproj}$.
 Hence $\sC=A\Modl^{A/R\dcot}$.
 It remains to observe that the class $A\Modl_\flat^{R\dproj}$ is
obviously closed under kernels of epimorphisms in $A\Modl$; so
the cotorsion pair $(A\Modl_\flat^{R\dproj},\>A\Modl^{A/R\dcot})$
is hereditary.
\end{proof}

\begin{lem} \label{R-projective-A-flat-periodicity-lemma}
 Let $F^\bu$ be a complex of $R$\+projective flat left $A$\+modules
such that the $A$\+modules of cocycles of $F^\bu$ are flat.
 Then the $A$\+modules of cocycles of $F^\bu$ are also
$R$\+projective.
\end{lem}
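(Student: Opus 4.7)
The plan is to reduce the lemma to the Benson--Goodearl--Neeman flat and projective periodicity theorem~\cite{BG,Neem} applied to the ring~$R$, by restricting scalars along the ring homomorphism $R\rarrow A$.  I read the hypothesis that the $A$\+modules of cocycles of $F^\bu$ are flat as including the assertion that $F^\bu$ is acyclic as a complex in $A\Modl$; equivalently, that $F^\bu$ is acyclic in the exact category $A\Modl_\flat$ (cf.\ the analogous Lemma~\ref{R-cotorsion-A-module-periodicity-lemma} in the previous section, which is stated with explicit acyclicity).

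The first key observation is that, since $A$ is projective and hence flat as a left $R$\+module by the standing hypothesis of this section, every flat left $A$\+module is flat as a left $R$\+module.  Indeed, for any right $R$\+module $X$ and any flat left $A$\+module $M$, we have the natural isomorphism $X\otimes_R M\simeq(X\otimes_R A)\otimes_A M$, and both $-\otimes_R A$ and $-\otimes_A M$ are exact functors.  In particular, the cocycles $Z^n$ of $F^\bu$, being flat $A$\+modules, are flat $R$\+modules.

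The second observation is that restriction of scalars along $R\rarrow A$ is an exact functor $A\Modl\rarrow R\Modl$, so it preserves kernels and acyclicity.  Thus the $R$\+module cocycles of $F^\bu$, viewed as a complex of left $R$\+modules, coincide with its $A$\+module cocycles $Z^n$, and $F^\bu$ is acyclic as a complex in $R\Modl$.  Since each term $F^n$ is a projective $R$\+module by hypothesis, $F^\bu$ becomes, after restriction of scalars, an acyclic complex of projective $R$\+modules with flat $R$\+module cocycles.

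Applying the Benson--Goodearl--Neeman periodicity theorem to the ring~$R$ then yields that the $R$\+module cocycles of $F^\bu$ are projective as $R$\+modules, i.e., the $A$\+module cocycles $Z^n$ are $R$\+projective, as required.  The argument is a direct transfer via restriction of scalars; the only subtlety, and arguably the main obstacle, is the interpretation of the hypothesis to include acyclicity in $A\Modl$, which is essential for the applicability of the BGN theorem.
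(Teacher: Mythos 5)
Your argument is correct and coincides with the paper's own proof: restrict scalars along $R\rarrow A$, note that flat $A$\+modules are flat over $R$ because $A$ is a projective (hence flat) left $R$\+module, and apply the Benson--Goodearl--Neeman flat and projective periodicity theorem over~$R$. Your remark that the hypothesis implicitly includes acyclicity of $F^\bu$ matches the intended reading in the paper, so there is no gap.
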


\begin{proof}
 Consider $F^\bu$ as a complex of $R$\+modules.
 By assumption, the terms of the complex $F^\bu$ are projective
$R$\+modules.
 Since we are assuming that $A$ is a projective (hence flat) left
$R$\+module, all flat left $A$\+modules are flat as $R$\+modules.
 So the $R$\+modules of cocycles of $F^\bu$ are flat.
 By the flat and projective periodicity theorem for
$R$\+modules (\cite[Remark~2.15]{Neem} or~\cite[Theorem~2.5]{BG}
with~\cite[Proposition~7.6]{CH}), it follows that the $R$\+modules
of cocycles of $F^\bu$ are projective.
\end{proof}

\begin{cor} \label{relatively-cotorsion-conjecture-corollary}
 Let $R\rarrow A$ be a homomorphism of associative rings such that
$A$ is projective as a left $R$\+module.
 Then the following eleven conditions are equivalent:
\begin{enumerate}
\setcounter{enumi}{-1}
\item all acyclic complexes in the exact category
$A\Modl_\flat^{R\dproj}$ are Becker-coacyclic, that is\/
$\Ac(A\Modl_\flat^{R\dproj})\subset\Ac^\bco(A\Modl_\flat^{R\dproj})$
(or equivalently, the classes of acyclic and Becker-coacyclic
complexes in $A\Modl_\flat^{R\dproj}$ coincide, that is\/
$\Ac^\bco(A\Modl_\flat^{R\dproj})=\Ac(A\Modl_\flat^{R\dproj})$);
\item the cotorsion pair $(A\Modl_\flat^{R\dproj},\>A\Modl^{A/R\dcot})$
in $A\Modl$ is of the flat type;
\item the triangulated functor\/ $\Hot(A\Modl_\proj)\rarrow
\sD^\bco(A\Modl_\flat^{R\dproj})$ induced by the inclusion of
additive/exact categories $A\Modl_\proj\rarrow A\Modl_\flat^{R\dproj}$
is a triangulated equivalence;
\item the triangulated functor\/ $\Hot(A\Modl_\proj)\rarrow
\sD^\bco(A\Modl_\flat^{R\dproj})$ induced by the inclusion of
additive/exact categories $A\Modl_\proj\rarrow A\Modl_\flat^{R\dproj}$
is essentially surjective;
\item the triangulated functor\/ $\sD^\bco(A\Modl_\flat^{R\dproj})
\rarrow\sD^\bco(A\Modl_\flat)$ induced by the inclusion of exact
categories $A\Modl_\flat^{R\dproj}\rarrow A\Modl_\flat$ is
a triangulated equivalence;
\item any complex in $A\Modl_\flat^{R\dproj}$ that is Becker-coacyclic
in $A\Modl_\flat$ is also Becker-coacyclic in $A\Modl_\flat^{R\dproj}$;
\item the triangulated functor\/ $\sD^\bctr(A\Modl^\cot)\rarrow
\sD^\bctr(A\Modl^{A/R\dcot})$ induced by the inclusion of exact
categories $A\Modl^\cot\rarrow A\Modl^{A/R\dcot}$ is a triangulated
equivalence;
\item the triangulated functor\/ $\sD^\bctr(A\Modl^\cot)\rarrow
\sD^\bctr(A\Modl^{A/R\dcot})$ induced by the inclusion of exact
categories $A\Modl^\cot\rarrow A\Modl^{A/R\dcot}$ is essentially
surjective;
\item the triangulated functor\/ $\sD^\bctr(A\Modl^{A/R\dcot})\rarrow
\sD^\bctr(A\Modl)$ induced by the inclusion of exact/abelian
categories $A\Modl^{A/R\dcot}\rarrow A\Modl$ is a triangulated
equivalence;
\item any complex in $A\Modl^{A/R\dcot}$ that is Becker-contraacyclic
in $A\Modl$ is also Becker-contraacyclic in $A\Modl^{A/R\dcot}$;
\item in any acyclic complex of $A$\+modules whose terms are
$A/R$\+cotorsion, the modules of cocycles are also $A/R$\+cotorsion.
\end{enumerate}
\end{cor}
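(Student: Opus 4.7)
The plan is to apply Theorem~\ref{sandwiched-equivalent-conditions-theorem} to the three cotorsion pairs
$(\VF,\CA)=(A\Modl_\proj,\>A\Modl)$,
$(\sF,\sC)=(A\Modl_\flat^{R\dproj},\>A\Modl^{A/R\dcot})$, and
$(\Flat,\Cot)=(A\Modl_\flat,\>A\Modl^\cot)$
in the Grothendieck category $\sK=A\Modl$, and then to splice condition~(0) into the resulting chain of equivalences via Theorem~\ref{cotorsion-periodity-type-equivalent-conditions}. The hypotheses of Theorem~\ref{sandwiched-equivalent-conditions-theorem} are provided by Example~\ref{projective-cotorsion-pair-is-very-flat-type-ex} (very flat type of $(\VF,\CA)$), Example~\ref{flat-cotorsion-pair-in-modules-flat-type} (flat type of $(\Flat,\Cot)$), and Proposition~\ref{relatively-cotorsion-cotorsion-pair-prop} (hereditary completeness and set-generatedness of $(\sF,\sC)$); the sandwiching $(\VF,\CA)\le(\sF,\sC)\le(\Flat,\Cot)$ is immediate from the definitions, using the hypothesis that $A$ is projective as a left $R$\+module so that $A\Modl_\proj\subset A\Modl_\flat^{R\dproj}$.

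Conditions~(1)\+-(9) of the corollary then correspond term-by-term with conditions~(1)\+-(9) of Theorem~\ref{sandwiched-equivalent-conditions-theorem}, with two cosmetic adjustments: the split exact category $A\Modl_\proj$ satisfies $\sD^\bco(A\Modl_\proj)=\Hot(A\Modl_\proj)$, and $\sD^\bco(A\Modl_\flat)=\sD(A\Modl_\flat)$ by Lemma~\ref{ac=ac-bco-equivalent-conditions}(4) applied to the flat-type cotorsion pair $(A\Modl_\flat,A\Modl^\cot)$. I emphasize that $\sD^\bco(A\Modl_\flat^{R\dproj})$ is \emph{not} replaced by $\sD(A\Modl_\flat^{R\dproj})$, because that simplification is precisely the content of condition~(0), which has yet to be folded in. Inspecting condition~(10) of Theorem~\ref{sandwiched-equivalent-conditions-theorem}: subcondition~(10a) asserts that an acyclic complex of $R$\+projective flat left $A$\+modules with flat $A$\+modules of cocycles has $R$\+projective cocycles, which is exactly Lemma~\ref{R-projective-A-flat-periodicity-lemma} and so holds unconditionally; subcondition~(10b) is verbatim condition~(10) of the corollary. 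Hence Theorem~\ref{sandwiched-equivalent-conditions-theorem} gives the equivalence of conditions~(1)\+-(10) of the corollary.

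To fold in condition~(0), I would apply Theorem~\ref{cotorsion-periodity-type-equivalent-conditions} to the cotorsion pair $(\sA,\sB)=(\sF,\sC)=(A\Modl_\flat^{R\dproj},\>A\Modl^{A/R\dcot})$ in $\sK=A\Modl$. Condition~(0) of the corollary is then a direct restatement of conditions~(1) (equivalently~(2)) of that theorem, while condition~(10) of the corollary restates condition~(4) of that theorem. Hence (0)~$\Leftrightarrow$~(10), closing the circle of equivalences.

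The only real care-point --- and it is this, rather than any technical obstruction, that merits attention --- is bookkeeping about which two periodicity inputs are being used and at what level. The ``$R$\+side'' flat-and-projective periodicity of Benson--Goodearl--Neeman (applied to the ring~$R$ via restriction of scalars, using $R$\+projectivity of $A$ to keep $A$\+flats among the $R$\+flats) automatically delivers subcondition~(10a) through Lemma~\ref{R-projective-A-flat-periodicity-lemma}; by contrast, the ``$A/R$\+side'' cotorsion periodicity --- subcondition~(10b), i.e.\ condition~(10) of the corollary --- is exactly the conjectural content. This asymmetry is the sole reason that here one must retain $\sD^\bco(A\Modl_\flat^{R\dproj})$ rather than collapse it to $\sD(A\Modl_\flat^{R\dproj})$ as was done in Corollary~\ref{flaprojective-conjecture-corollary}, where the analogous subcondition (with $\sC$ replaced by the class of $R$\+cotorsion $A$\+modules) was disposed of unconditionally via Lemma~\ref{R-cotorsion-A-module-periodicity-lemma}.
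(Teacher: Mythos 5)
Your proposal is correct and follows essentially the same route as the paper: apply Theorem~\ref{sandwiched-equivalent-conditions-theorem} to the triple $(A\Modl_\proj,A\Modl)\le(A\Modl_\flat^{R\dproj},A\Modl^{A/R\dcot})\le(A\Modl_\flat,A\Modl^\cot)$, dispose of subcondition~(10a) via Lemma~\ref{R-projective-A-flat-periodicity-lemma}, replace $\sD^\bco(A\Modl_\proj)$ by $\Hot(A\Modl_\proj)$ since that exact category is split, and obtain (0)~$\Leftrightarrow$~(10) from Theorem~\ref{cotorsion-periodity-type-equivalent-conditions} applied to $(\sF,\sC)$. Your closing remark on the asymmetry with Corollary~\ref{flaprojective-conjecture-corollary} (why $\sD^\bco(A\Modl_\flat^{R\dproj})$ cannot be collapsed to the conventional derived category here) is exactly the right bookkeeping point.
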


\begin{proof}
 This is a corollary of
Theorems~\ref{cotorsion-periodity-type-equivalent-conditions}
and~\ref{sandwiched-equivalent-conditions-theorem}.
 Consider the three cotorsion pairs $(\VF,\CA)=(A\Modl_\proj,\>A\Modl)$,
\ $(\sF,\sC)=(A\Modl_\flat^{R\dproj},\>A\Modl^{A/R\dcot})$, and
$(\Flat,\Cot)=(A\Modl_\flat,\>A\Modl^\cot)$ in the category of left
$A$\+modules $\sK=A\Modl$.
 Then the assumptions of
Theorem~\ref{sandwiched-equivalent-conditions-theorem} are satisfied
by Examples~\ref{projective-cotorsion-pair-is-very-flat-type-ex}
and~\ref{flat-cotorsion-pair-in-modules-flat-type}, and
by Proposition~\ref{relatively-cotorsion-cotorsion-pair-prop}.

 The conditions~(1\+-10) of the corollary correspond to
the conditions~(1\+-10) of
Theorem~\ref{sandwiched-equivalent-conditions-theorem},
with the following changes:
\begin{itemize}
\item Condition~(10a) of
Theorem~\ref{sandwiched-equivalent-conditions-theorem} always holds
in the situation at hand by
Lemma~\ref{R-projective-A-flat-periodicity-lemma}, so it is dropped.
\item The exact category $A\Modl_\proj$ is split, so all mentions of
its coderived category $\sD^\bco(A\Modl_\proj)$ are replaced with
the homotopy category $\Hot(A\Modl_\proj)$.
\end{itemize}

 The corollary also contains an additional equivalent condition~(0),
which was not mentioned in
Theorem~\ref{sandwiched-equivalent-conditions-theorem}.
 We have (0)~$\Longleftrightarrow$~(10) by
Theorem~\ref{cotorsion-periodity-type-equivalent-conditions},
whose assumptions are satisfied for the cotorsion pair
$(\sA,\sB)=(A\Modl_\flat^{R\dproj},\>A\Modl^{A/R\dcot})$ in $\sK=A\Modl$
by Proposition~\ref{relatively-cotorsion-cotorsion-pair-prop}.
\end{proof}

 The property of
Corollary~\ref{relatively-cotorsion-conjecture-corollary}(10)
was already formulated as
Conjecture~\ref{relatively-cotorsion-conjecture} in the Introduction.
 Let us state it again with the added benefit of the whole list of
equivalent formulations.

\begin{conj}[Relatively Cotorsion Conjecture]
 Let $R\rarrow A$ be a homomorphism of associative rings making $A$
a projective left $R$\+module.
 Then all the equivalent conditions of
Corollary~\ref{relatively-cotorsion-conjecture-corollary} hold.
\end{conj}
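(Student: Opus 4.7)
The Relatively Cotorsion Conjecture is open, so what follows is a strategy rather than a worked-out proof. Of the eleven equivalent conditions packaged in Corollary~\ref{relatively-cotorsion-conjecture-corollary}, I would target condition~(10), the periodicity statement: in any acyclic complex of $A/R$\+cotorsion left $A$\+modules, the modules of cocycles are again $A/R$\+cotorsion. This is the cleanest relative analogue of the Bazzoni--Cort\'es-Izurdiaga--Estrada cotorsion periodicity theorem~\cite[Theorem~5.1(2)]{BCE}, and once it is established all other ten forms follow from the corollary.

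The template I would follow is the BCE proof. Given an acyclic complex $C^\bu$ of $A/R$\+cotorsion $A$\+modules with a cocycle $Z$, one uses the splicing of $C^\bu$ to produce short exact sequences with middle terms in $A\Modl^{A/R\dcot}$, and then applies approximation in the complete cotorsion pair $(A\Modl_\flat^{R\dproj},\allowbreak A\Modl^{A/R\dcot})$ of Proposition~\ref{relatively-cotorsion-cotorsion-pair-prop} to reduce to showing $\Ext^1_A(F,Z)=0$ for every $F\in A\Modl_\flat^{R\dproj}$. The key step in~\cite{BCE} is to control this vanishing via a pure-injective envelope of $Z$; what must be produced here is a relative analogue---essentially, a test class inside $A\Modl^{A/R\dcot}$ whose members can be filtered in a way compatible with the deconstructible generation of $A\Modl_\flat^{R\dproj}$ that was established in the proof of Proposition~\ref{relatively-cotorsion-cotorsion-pair-prop}.

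A secondary avenue worth pursuing in parallel is condition~(5): that every complex in $A\Modl_\flat^{R\dproj}$ which is Becker-coacyclic in $A\Modl_\flat$ is already Becker-coacyclic in $A\Modl_\flat^{R\dproj}$. Here one can bring in the Benson--Goodearl and Neeman flat and projective periodicity theorem~\cite{BG,Neem} applied to the ring $R$ itself: any complex of $R$\+projective $A$\+modules, viewed as a complex of projective $R$\+modules, has $R$\+projective cocycles as soon as those cocycles are $R$\+flat. Since $A$ is a projective, hence flat, left $R$\+module, every flat $A$\+module is flat as an $R$\+module; this delivers the $R$\+projective half of the needed conclusion for free. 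What remains is to upgrade this $R$\+side input to genuine Becker-coacyclicity in $A\Modl_\flat^{R\dproj}$ and to integrate it with the cocycle analysis from the BCE-style approach above.

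The principal obstacle, I expect, is that $A\Modl_\flat^{R\dproj}$ is a genuine intersection of two deconstructible classes of rather different flavors: $A$\+flatness on the one hand, $R$\+projectivity on the other. Neither the pure-injective machinery of~\cite{BCE} nor a restriction-of-scalars argument based on Kaplansky's theorem seems to individually detect both properties, and the conjecture is in effect the assertion that these two closure operations remain compatible at the level of infinite resolutions. Finding either a ``relative pure-injective envelope'' that acknowledges the $R$\+module structure of an $A$\+module, or a single transfinite filtration that simultaneously controls the $A$\+flat and $R$\+projective aspects of an $A/R$\+cotorsion $A$\+module, is where I expect the real content of the proof to lie.
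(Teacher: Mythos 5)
The statement you were asked about is a conjecture: the paper does not prove it, but only formulates it together with the list of equivalent conditions in Corollary~\ref{relatively-cotorsion-conjecture-corollary}. You correctly recognize this and offer a strategy rather than a proof, so there is no paper proof to measure you against; the only thing to assess is whether your strategy sketch is sound and whether it adds anything to what the paper already contains.

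Two cautions about the sketch. First, your ``secondary avenue'' via condition~(5) does not buy anything: the input you propose there---the Benson--Goodearl/Neeman flat and projective periodicity theorem applied over $R$, using that $A$ is $R$\+projective hence $R$\+flat---is exactly the content of Lemma~\ref{R-projective-A-flat-periodicity-lemma}, which the paper has already used to discharge condition~(10a) of Theorem~\ref{sandwiched-equivalent-conditions-theorem} and thereby to make conditions~(5) and~(10) equivalent. So the residual difficulty in~(5) is precisely~(10); attacking~(5) with that theorem is not a parallel route but the same open problem restated. Second, in your primary avenue the ``reduction to showing $\Ext^1_A(F,Z)=0$ for every $F\in A\Modl_\flat^{R\dproj}$'' is not a reduction at all---it is the definition of $Z$ being $A/R$\+cotorsion---and the BCE-style argument (as reproduced in the paper's Theorem~\ref{cotorsion-periodity-type-equivalent-conditions}, (4)\,$\Rightarrow$\,(3), following~\cite[Theorem~5.3]{BCE}) runs in the opposite direction: the periodicity property~(10) is the \emph{input} used to show $\DG(\sC)=\Com(\sC)$, not something extracted from the cotorsion-pair approximations of Proposition~\ref{relatively-cotorsion-cotorsion-pair-prop}, which are available unconditionally and do not by themselves yield~(10). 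Your closing paragraph identifies the genuine obstruction correctly: one needs a device (a relative pure-injectivity or a filtration argument) that sees the $A$\+flat and $R$\+projective conditions simultaneously, and nothing in the paper or in your sketch currently supplies it.
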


\bigskip

\end{document}